\tikzstyle{roundbox} = [rectangle, draw, text centered, rounded corners,
\tikzstyle{connector} = [draw, -latex']
\definecolor{MyDarkBlue}{rgb}{0.15,0.25,0.45}
\newif\ifpersonal
\newcommand{\todo}[1]{\textcolor{red}{(Todo: #1)}}
\newcommand\ast{\personal}[1]{\ignorespaces}
\newcommand\ast{\parth}[1]{\ignorespaces}
\newcommand\ast{\todo}[1]{\ignorespaces}
\newcommand{\calA}{\mathcal{A}}
\newcommand{\calC}{\mathcal{C}}
\newcommand{\calD}{\mathcal{D}}
\newcommand{\calF}{\mathcal{F}}
\newcommand{\calG}{\mathcal{G}}
\newcommand{\calI}{\mathcal{I}}
\newcommand{\calL}{\mathcal{L}}
\newcommand{\calO}{\mathcal{O}}
\newcommand{\calP}{\mathcal{P}}
\newcommand{\calS}{\mathcal{S}}
\newcommand{\calX}{\mathcal{X}}
\newcommand{\A}{\mathbb{A}}
\newcommand{\C}{\mathbb{C}}
\newcommand{\D}{\mathbb{D}}
\newcommand{\G}{\mathbb{G}}
\newcommand{\HH}{\mathbb{H}}
\newcommand{\J}{\mathbb{J}}
\newcommand{\LL}{\mathbb{L}}
\newcommand{\N}{\mathbb{N}}
\newcommand{\PP}{\mathbb{P}}
\newcommand{\Q}{\mathbb{Q}}
\newcommand{\R}{\mathbb{R}}
\newcommand{\Y}{\mathbb{Y}}
\newcommand{\Z}{\mathbb{Z}}
\newcommand{\scrA}{\mathscr{A}}
\newcommand{\scrB}{\mathscr{B}}
\newcommand{\scrC}{\mathscr{C}}
\newcommand{\scrD}{\mathscr{D}}
\newcommand{\scrF}{\mathscr{F}}
\newcommand{\scrO}{\mathscr{O}}
\newcommand{\scrT}{\mathscr{T}}
\newcommand{\sfA}{\mathsf{A}}
\newcommand{\sfC}{\mathsf{C}}
\newcommand{\sfD}{\mathsf{D}}
\newcommand{\sfE}{\mathsf{E}}
\newcommand{\sfH}{\mathsf{H}}
\newcommand{\sfK}{\mathsf{K}}
\newcommand{\sfU}{\mathsf{U}}
\newcommand{\sff}{\mathsf{f}}
\newcommand{\frakS}{\mathfrak{S}}
\newcommand{\frakg}{\mathfrak{g}}
\newcommand{\frakh}{\mathfrak{h}}
\newcommand{\frakn}{\mathfrak{n}}
\newcommand{\frakr}{\mathfrak{r}}
\newcommand{\bfH}{\mathbf{H}}
\newcommand{\bfT}{\mathbf{T}}
\newcommand{\bfY}{\mathbf{Y}}
\newcommand{\bfX}{\mathbf{X}}
\newcommand{\bfd}{\mathbf{d}}
\newcommand{\bfv}{\mathbf{v}}
\newcommand{\half}{1/2}
\newcommand{\catMod}{\mathsf{Mod}}
\newcommand{\catmod}{\mathsf{mod}}
\newcommand{\catP}{\mathsf{P}}
\newcommand{\catPC}{\mathsf{P}_{C}}
\newcommand{\barcatPC}{\overline{\mathsf{P}}_{C}}
\newcommand{\Stab}{\mathsf{Stab}}
\newcommand{\Mod}{\textrm{-} \mathsf{Mod}}
\newcommand{\modPi}{\catmod\, \Pi}
\newcommand{\ModPi}{\catMod\, \Pi}
\newcommand{\nilpPi}{\mathsf{nilp}\, \Pi}
\newcommand{\nilp}{\mathsf{nilp}}
\newcommand{\catCoh}{\mathsf{Coh}}
\newcommand{\barcatCoh}{\overline{\mathsf{Coh}}}
\newcommand{\catCohC}{\mathsf{Coh}_C}
\newcommand{\catDb}{\mathsf{D}^\mathsf{b}}
\newcommand{\catDbC}{\mathsf{D}^\mathsf{b}_C}
\newcommand{\kdLambdaqv}{\tensor*[^k]{\mathbf{\Lambda}}{_\qv}}
\newcommand{\dstackPerfps}{\mathbf{Perf}_{\mathsf{ps}}}
\newcommand{\dstackCoh}{\mathbf{Coh}}
\newcommand{\dstackCohps}{\mathbf{Coh}_{\mathsf{ps}}}
\newcommand{\dstackRep}{\mathbf{Rep}}
\newcommand{\stackRep}{\mathfrak{Rep}}
\newcommand{\dLambda}{\mathbf{\Lambda}}
\newcommand{\qv}{\mathcal{Q}}
\newcommand{\qvfin}{\mathcal{Q}_\sff}
\newcommand{\doubleqv}{\overline{\qv}}
\newcommand{\Lalpha}{\check{\alpha}}
\newcommand{\Lomega}{\check{\omega}}
\newcommand{\Ltheta}{\check{\theta}}
\newcommand{\Llambda}{{\check{\lambda}}}
\newcommand{\Lrho}{\check{\rho}}
\newcommand{\rootset}{\Delta}
\newcommand{\rootsetre}{\rootset^{\mathsf{re}}}
\newcommand{\rootsetim}{\rootset^{\mathsf{im}}}
\newcommand{\rootsetfin}{\rootset_\sff}
\newcommand{\rootlattice}{\bfY}
\newcommand{\corootlattice}{\check{\bfY}}
\newcommand{\rootlatticefin}{\rootlattice_\sff}
\newcommand{\corootlatticefin}{\check{\rootlattice}_\sff}
\newcommand{\weightlattice}{\bfX}
\newcommand{\coweightlattice}{\check{\weightlattice}}
\newcommand{\coweightlatticefin}{\coweightlattice_\sff}
\newcommand{\frakgfin}{\frakg_\sff}
\newcommand{\fraknfin}{\frakn_\sff}
\newcommand{\sfex}{\mathsf{ex}}
\newcommand{\rsv}{X}
\newcommand{\ssv}{X_\emptyset}
\newcommand{\psv}{X_J}
\newcommand{\Jcomp}{{J^\mathsf{c}}}
\newcommand{\frakgell}{\mathfrak{g}_{\mathsf{ell}}}
\newcommand{\fraknell}{\mathfrak{n}_{\mathsf{ell}}}
\newcommand{\fraknellLlambda}{\mathfrak{n}_{\mathsf{ell}, \Llambda}}
\newcommand{\fraknellJ}{\mathfrak{n}_{\mathsf{ell}, J}}
\newcommand{\zeroeY}{\mathbb{Y}^{\mathfrak{e},\, 0}}
\newcommand{\gr}{\mathsf{gr}}
\newcommand{\ad}{\mathsf{ad}}
\newcommand{\Hbullet}{\mathsf{H}^\bullet}
\newcommand{\HBMbullet}{\mathsf{H}_\bullet^{\mathsf{BM}}}
\newcommand{\HBMbulletA}{\mathsf{H}_\bullet^A}
\newcommand{\coha}{\mathbf{HA}}
\newcommand{\cohaqv}{\coha_\qv}
\newcommand{\id}{{\mathsf{id}}}
\newcommand{\ev}{\mathsf{ev}}
\newcommand{\Hom}{\mathsf{Hom}}
\newcommand{\calHom}{\mathcal{H}\mathsf{om}}
\newcommand{\calEnd}{\mathcal{E}\mathsf{nd}}
\newcommand{\Aut}{\mathsf{Aut}}
\newcommand{\End}{\mathsf{End}}
\newcommand{\Pic}{\mathsf{Pic}}
\newcommand{\GL}{\mathsf{GL}}
\newcommand{\SL}{\mathsf{SL}}
\newcommand{\pr}{\mathsf{pr}}
\newcommand{\colim@}{%
	\vtop{\m@th\ialign{##\cr
			\hfil$\operator@font colim$\hfil\cr
			\noalign{\nointerlineskip\kern1.5\ex@}\cr
			\noalign{\nointerlineskip\kern-\ex@}\cr}}%
}
\newcommand{\colim}{%
	\mathop{\mathpalette\colim@{\textstyle}}\nmlimits@
}
\declaretheoremstyle[
    spaceabove=1em, spacebelow=1em,
    headfont=\bfseries, notefont=\normalfont, bodyfont=\itshape,
    headpunct={.}, notebraces={}{}, postheadspace={ }
    ]{basic-theorem}
\declaretheoremstyle[
    spaceabove=1em, spacebelow=1em,
    headfont=\bfseries, notefont=\normalfont, bodyfont=\normalfont,
    headpunct={.}, notebraces={(}{)}, postheadspace={ }, qed=$\oslash$,
    ]{basic-definition}
\declaretheoremstyle[
    spaceabove=1em, spacebelow=1em,
    headfont=\itshape, notefont=\normalfont, bodyfont=\normalfont,
    headpunct={.}, notebraces={(}{)}, postheadspace={ }, qed=$\triangle$,
    ]{basic-remark}
\theoremstyle{basic-theorem}
\newtheorem{theorem}{Theorem}[section]
\newtheorem{corollary}[theorem]{Corollary}
\newtheorem{lemma}[theorem]{Lemma}
\newtheorem{proposition}[theorem]{Proposition}
\declaretheorem[style=basic-definition, numberlike=theorem]{definition}
\declaretheorem[style=basic-definition, numberlike=theorem]{notation}
\declaretheorem[style=basic-remark, numberlike=theorem]{remark}
\numberwithin{equation}{section}
\newtheorem{theoremintroduction}{Theorem}
\title[Kleinian orbifolds, Cohomological Hall Algebras, and Yangians]{Kleinian orbifolds, Cohomological Hall Algebras, and Yangians}
\author[F.~Sala]{Francesco Sala}
\address[Francesco Sala]{Università di Pisa, Dipartimento di Matematica, Largo Bruno Pontecorvo 5, 56127 Pisa (PI), Italy}
\address{Kavli IPMU (WPI), UTIAS, The University of Tokyo, Kashiwa, Chiba 277-8583, Japan}
\email{\href{mailto:francesco.sala@unipi.it}{francesco.sala@unipi.it}}
\author[O.~Schiffmann]{Olivier Schiffmann}
\address[Olivier Schiffmann]{Laboratoire de Mathématiques d'Orsay, Université de Paris-Sud Paris-Saclay, B\^at. 425, 91405 Orsay Cedex, France, UMR8628 (CNRS)}
\address{Simion Stoilow Institute of Mathematics, Bucharest, Romania}
\email{\href{mailto:olivier.schiffmann@universite-paris-saclay.fr}{olivier.schiffmann@universite-paris-saclay.fr}}
\author[P.~Shimpi]{Parth Shimpi}
\address[Parth Shimpi]{The Mathematics and Statistics Building, University of Glasgow, University Place, Glasgow G12 8QQ, United Kingdom}
\email{\href{mailto:parth.shimpi@glasgow.ac.uk}{parth.shimpi@glasgow.ac.uk}}
\subjclass[2020]{Primary: 14A20; Secondary: 17B37, 55P99}
\keywords{Cohomological Hall algebras, Yangians, Kleinian
singularities, Stability conditions}
\begin{document} 

\begin{abstract} 
    We establish, for each orbifold crepantly resolving a Kleinian singularity,
    the existence of the cohomological Hall algebra (COHA) of coherent sheaves
    supported on the exceptional locus and explicitly compute this COHA as a
    completion of some positive half of the associated affine Yangian. Tracking
    these categories under derived autoequivalences and the McKay
    correspondence, we show that (1) every point in Bridgeland's space of
    stability conditions on the resolution arises from a Kleinian orbifold, and
    (2) every positive half of the affine Yangian can be recovered from the COHA
    associated to some such stability condition. This provides the first example
    of a family of (pointwise) COHAs defined over the space of stability
    conditions.
\end{abstract}

\maketitle
\thispagestyle{empty}



\textit{Cohomological Hall algebras} (COHAs, for short) associated to
two-dimensional algebro-geometric categories are expected to geometrically
realize \textit{positive halves} of Yangians; this expectation indeed holds true
for the Abelian categories of 0-dimensional sheaves on smooth surfaces
\cite{MMSV} and of preprojective representations of quivers \cite{BD_Okounkov,
SV_Yangians=COHA}.

The present work showcases the `whole' quantum group as an invariant of the
\textit{derived category}, from which \textit{all} positive halves
can be recovered as the COHAs of \textit{hearts of bounded $t$-structures}. The
hearts we consider and their COHAs, and hence also the positive halves of
Yangians, are naturally parametrised over the space of Bridgeland stability
conditions.

The illustration is most lucid in the familiar setting of the McKay correspondence, i.e.\ for the minimal resolution $\pi\colon\rsv\to \ssv$ of a Kleinian surface singularity $\ssv$. The derived category of interest is the full subcategory $\catDb_C(\rsv)\subset \catDb(\rsv)$ containing complexes supported on the $\pi$-exceptional fiber, whose stability manifold has a distinguished connected component $\Stab^\circ(\rsv)$ identified in \cite{Stab_Kleinian}. The quantum group of interest, an \textit{affine Yangian}, arises from the affine ADE quiver $\qv=(I,\Omega)$ associated to $\ssv$.

\begin{theoremintroduction}[{(=\ref{thm:allstabs},~\ref{thm:coha-surface-as-limit2},~\ref{thm:coha-surface-as-limit3})}] \label{thm:genericfirst}
    Let $H\subset \catDb_C(\rsv)$ be the heart of a $t$-structure, arising as
    $\calP(0,1]$ or $\calP[0,1)$ for some stability condition $(Z,\calP)\in
    \Stab^\circ(\rsv)$. Writing $\bfH$ for the derived moduli stack of objects
    in $H$, the Borel--Moore homology $\HBMbullet(\bfH)$ canonically admits the
    structure of a $\N\times \Z I$-graded, topologically complete, cohomological
    Hall algebra $\coha_H$. 

    The construction in fact holds equivariantly with respect to any suitable
    torus $A$, and the COHA $\coha_H^A$ thus obtained is isomorphic (as a graded
    topological algebra) to an explicit limit of subquotients of the
    affine Yangian $\Y_{\qv;\ A}$. In the non-equivariant setting this recovers
    a completion of $\sfU(\frakn)$ for some positive half $\frakn$ of the
    elliptic Lie algebra $\frakgell$ associated to $\qv$. 

    Furthermore, every positive half of $\frakgell$ corresponds to some heart in
    this fashion.
\end{theoremintroduction}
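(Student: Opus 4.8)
The strategy is to prove the statement first for one well-understood heart and then propagate it over all of $\Stab^\circ(\rsv)$. I would proceed in three stages: \textbf{(I)} construct and compute the COHA of a distinguished heart $H_0$; \textbf{(II)} transport the result along tilts, using the structure of $\Stab^\circ(\rsv)$ and the derived autoequivalence group, to reach every heart of the form $\calP(0,1]$ or $\calP[0,1)$; \textbf{(III)} match the resulting family of positive halves with \emph{all} positive halves of $\frakgell$.

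For stage (I), I would take $H_0\subset\catDbC(\rsv)$ to be the heart corresponding, under the derived McKay correspondence, to the category of nilpotent finite-dimensional representations of the preprojective algebra $\Pi_\qv$ of the affine ADE quiver $\qv$ --- a perverse-coherent heart on $\rsv$, and one of the finite-length hearts arising from $\Stab^\circ(\rsv)$. Its derived moduli stack $\bfH_0$ breaks up, over the monoid of effective classes, into the finite-type global quotient stacks $[\Lambda_\bfd/\GL_\bfd]$ of the Lusztig nilpotent varieties, and the stack of short exact sequences is the familiar iterated Grassmannian correspondence of subrepresentations; hence $\HBMbullet(\bfH_0)$ carries the COHA product, equivariantly for the torus $A$. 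That $\coha_{H_0}^A$ is then a completion of the positive half of the affine Yangian $\Y_{\qv;\,A}$ --- and, on specialising the torus parameters, a completion of $\sfU(\fraknell)$ for the corresponding positive half $\fraknell\subset\frakgell$ --- is precisely the computation of \cite{SV_Yangians=COHA,BD_Okounkov}. This establishes the theorem for $H=H_0$.

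Stage (II) uses the description of $\Stab^\circ(\rsv)$ from \cite{Stab_Kleinian} as a covering of a complexified hyperplane-arrangement complement, whose chambers label the finite-length hearts and whose deck transformations are generated by spherical and line-bundle twists; every finite-length heart is thus obtained from $H_0$ by a finite sequence of tilts at rigid simple objects. Across a single such tilt $H\rightsquigarrow H'$ the two derived moduli stacks are related by a correspondence refining the underlying torsion pair, yielding a canonical isomorphism of completed algebras $\coha_H\cong\coha_{H'}$ --- a COHA incarnation of Lusztig's braid symmetries --- under which the monoids of effective classes, and hence the positive halves $\frakn_H,\frakn_{H'}\subset\frakgell$, differ by a reflection of the root system $\Delta(\frakgell)$. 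The non-finite-length hearts, arising from irrational $(Z,\calP)$, are handled by realising $H$ as a limit of finite-length hearts along a path in $\Stab^\circ(\rsv)$ and defining $\coha_H$ as the corresponding limit; the Harder--Narasimhan filtrations supplied by $(Z,\calP)$ bound the effective decompositions of each class, which is what keeps $\bfH$ an honest (ind-)finite-type stack, makes $\HBMbullet(\bfH)$ meaningful, and controls the limit. In every case $\coha_H^A$ thereby appears as an explicit limit of subquotients of $\Y_{\qv;\,A}$, the subquotients recording the successive torsion classes, and non-equivariantly as a completion of $\sfU(\frakn_H)$ for a positive half $\frakn_H$ of $\frakgell$.

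For stage (III) and the final assertion, I would classify the positive halves of $\frakgell$ as the closed, sign-coherent bipartitions $\Delta(\frakgell)=\Delta^+\sqcup(-\Delta^+)$ of its root system, and match these bijectively with the wall-and-chamber combinatorics of $\Stab^\circ(\rsv)$: the support of the grading of $\coha_H$ recovers $\Delta^+$ from $H$, while conversely a prescribed $\Delta^+$ pins down a central charge $Z$ --- genuine when $\Delta^+$ is rational, and a suitable limit otherwise --- with $\calP_Z(0,1]$ realising it. Since the autoequivalence group acts transitively on the finite-length hearts and realises the full reflection group on $\Delta(\frakgell)$, these hearts account for all the rational positive halves, and the limiting construction of stage (II) supplies the rest; surjectivity follows. \textbf{The principal obstacle} is stage (II) for non-finite-length hearts: making $\coha_H$ well defined when $H$ is neither noetherian nor artinian, and proving the limit identification compatible both with the algebra structure and with the matching degeneration on the Yangian side. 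This requires the careful moduli-theoretic foundations --- finiteness of effective decompositions via Harder--Narasimhan theory, (ind-)algebraicity of $\bfH$, and continuity of Borel--Moore homology along paths in $\Stab^\circ(\rsv)$ --- together with a sufficiently robust classification of the positive halves of the toroidal Lie algebra $\frakgell$ and a verification that the wall-crossing isomorphisms assemble into the reflection-group action realising all of them.
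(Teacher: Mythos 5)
The broad skeleton of your plan — establish the result for the preprojective nilpotent heart, then propagate it using the structure of $\Stab^\circ(\rsv)$ and the braid action, and match against a classification of positive halves — does parallel the paper. However, there are three concrete errors/gaps in Stage (II) which is where the real work lies.

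\textbf{(1) Tilts do not give isomorphisms of COHAs.} Your central mechanism for moving between finite-length hearts is that across a single tilt $H\rightsquigarrow H'$ there is ``a canonical isomorphism of completed algebras $\coha_H\cong\coha_{H'}$ --- a COHA incarnation of Lusztig's braid symmetries.'' This is precisely the hypothetical COHA analogue of Cramer's theorem, which the paper's introduction explicitly flags as \emph{unavailable}: a single tilt at a torsion pair is not an equivalence of abelian categories, and the associated moduli stacks are not isomorphic, so there is no such canonical isomorphism. What actually holds is different: the finite-length hearts in $\Stab^\circ(\rsv)$ are precisely those in the $B_\sfex$-orbit of $\catPC(\rsv/\ssv)$, and the relevant autoequivalence of $\catDbC(\rsv)$ restricts to an equivalence of abelian categories between the two hearts, which trivially identifies their COHAs. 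This is a global statement about the braid orbit, not a local wall-crossing statement; and the braid operators on the Yangian side do \emph{not} preserve the negative half $\Y^-_\qv$ --- the paper must work with \emph{truncated} operators $\overline{T}_w$ (Formula~\eqref{eq:overlineT}) for exactly this reason. Treating the wall-crossing as the operative isomorphism would not survive the identification with the Yangian.

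\textbf{(2) The dichotomy is wrong.} You attribute the non-finite-length hearts to ``irrational $(Z,\calP)$.'' In fact the dividing line (Theorem~\ref{thm:allstabs}) is whether $\mathrm{Im}(Z)$ annihilates the primitive imaginary root $\delta$. When $\mathrm{Im}(Z)(\delta)\neq 0$ the standard heart lies in the $B$-orbit of $\catPC(\rsv/\ssv)$ and is finite length regardless of rationality; when $\mathrm{Im}(Z)(\delta)=0$ one gets (up to the braid action) one of the hearts $\catPC(\rsv/\psv)$ or $\barcatPC(\rsv/\psv)$, and the parameter $J$ is read off from which $\alpha_i$ are also killed by $\mathrm{Im}(Z)$. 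Rationality of $Z$ plays a role only for verifying local-finiteness, a separate and minor point.

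\textbf{(3) Defining $\coha_H$ as a limit does not prove the theorem.} The statement asserts that $\HBMbullet(\bfH)$ \emph{canonically} carries a COHA structure, with $\bfH$ the derived moduli stack of objects in $H$. Your proposal only defines $\coha_H$ abstractly as a limit along a path of COHAs of finite-length hearts, which leaves open whether that limit coincides with the Borel--Moore homology of $\bfH$ equipped with its intrinsic convolution product. The substantive content of the paper's Theorem~\ref{thm:coha-surface-as-limit1} and the remark following it is that the approximation of $\catPC(\rsv/\psv)$ by line-bundle twists of $\nilpPi$ (Theorem~\ref{thm:slicing}) can be used to verify the properness/regularity hypotheses (locally rpas, finitely connected, derived lci) on the intrinsic convolution diagram for $\dLambda(\tau_{1/2})$, and only then does one get both the existence of the intrinsic product \emph{and} its agreement with the limiting construction. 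Without this step your definition is not known to be the one in the theorem statement.

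Stage~(I) is correct and matches the paper. Stage~(III) has the right idea but is too vague: the positive halves of $\frakgell$ are only classified up to $W\times\{\pm 1\}$-conjugacy (Jakobsen--Kac), and the elliptic root system has infinite-multiplicity imaginary roots, so ``sign-coherent bipartitions'' and ``reflections of $\Delta(\frakgell)$'' need careful interpretation; the matching actually proceeds by computing the explicit limits $\Delta_J=\bigcup_k\Delta_{J,(k)}$ along the approximation sequence and identifying them with the $\Delta_J$ of \cite{Jakobsen-Kac-Borel-II}.
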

In particular we obtain a geometric realization, at the level of affine Yangians, of each nonstandard half of $\frakgell$ computed by Kac and Jacobsen~\cite{Jakobsen-Kac-Borel, Jakobsen-Kac-Borel-II}. 

\medskip

The analysis of Hall algebras under derived equivalences goes back to Cramer~\cite{Cramer_Hall}, who showed that while two derived-equivalent hereditary Abelian categories $\scrA,\scrB$ may have non-isomorphic extended \textit{functional} (i.e.\ usual) Hall algebras $\bfH_\scrA\not\simeq\bfH_\scrB$, the derived equivalence does induce an identification $\mathbf{DH}_\scrA \simeq \mathbf{DH}_\scrB$ of their \textit{(reduced) Drinfeld doubles} with respect to the natural coproducts. Thus $\bfH_\scrB$ may be viewed as a nonstandard half of $\mathbf{DH}_\scrA$, and vice versa.

The analogue of Cramer's theorem for \textit{cohomological} Hall algebras is currently unavailable for several reasons --- first, there is no known general construction of a coproduct, except in the case of $2$-Calabi--Yau categories (see \cite{DHKSV}). Moreover even when this suitable ``double'' can be defined, its non-standard halves are not immediately realised as COHAs. The above result, in exhibiting that all non-standard halves \textit{can} in fact be naturally completed to COHAs, provides the first compelling evidence for the existence of an analogue of Cramer's theorem.

\subsection*{Hearts of $t$-structures}

Of course it is possible (and necessary) to explicitly classify all hearts $H$
that can arise in Theorem~\ref{thm:genericfirst}. The Kleinian singularity $\ssv$ can be modelled as a quotient
$\C^2/G$ for some finite subgroup $G\subset \SL(2,\C)$, and is hence crepantly
resolved by the Deligne--Mumford stack $\calX_\emptyset \coloneqq [\C^2/G]$. A
dense subset of $\Stab^\circ(\rsv)$ contains stability conditions on images of
$\catCoh(\calX_\emptyset)\subset \catDb(\calX_\emptyset)$ under various
equivalences $\catDb(\calX_\emptyset)\to \catDb(\rsv)$, and indeed, this
property distinguishes the component $\Stab^\circ(\rsv)$ in the first place. The
equivalences in question are the derived McKay
correspondence~\cite{Kleinian_derived} and compositions thereof with the
standard action of the extended affine braid group
$B_\sfex(\qv)\circlearrowright\catDb\rsv$ by spherical twists (see
\S\ref{subsec:reflection-functors} and \S\ref{subsec:geometricbraidgroup}).

There is in fact a family of \textit{Kleinian orbifolds} interpolating the two resolutions $X$ and $\calX_\emptyset$. Writing $\{C_i\;|\; i\in I_\sff\}$ for the set of irreducible $\pi$-exceptional curves in $X$, for each $J\subset I_\sff$ we may freely blow down the curves $\{C_i\;|\; i\notin J\}$ to obtain a \textit{partial resolution} $\psv$ of $\ssv$. This surface $\psv$, which again has Kleinian singularities, is the coarse moduli space of a smooth Deligne--Mumford stack $\calX_J$ which must therefore crepantly resolve $\psv$.

\begin{minipage}[c]{0.6\textwidth}
\begin{align}
	\begin{tikzcd}[ampersand replacement=\&]
        \rsv \arrow[rrd, bend left=30, "\pi"] \arrow[rd, "\pi_J"']
		\&\&  \\
		\&\psv \arrow[r, "\varpi_J"']\& \ssv \\ 
        \calX_J \arrow[ru]
	\end{tikzcd} 
\end{align}
\end{minipage}
\begin{minipage}[c]{0.3\textwidth}
        \includegraphics[width=\textwidth]{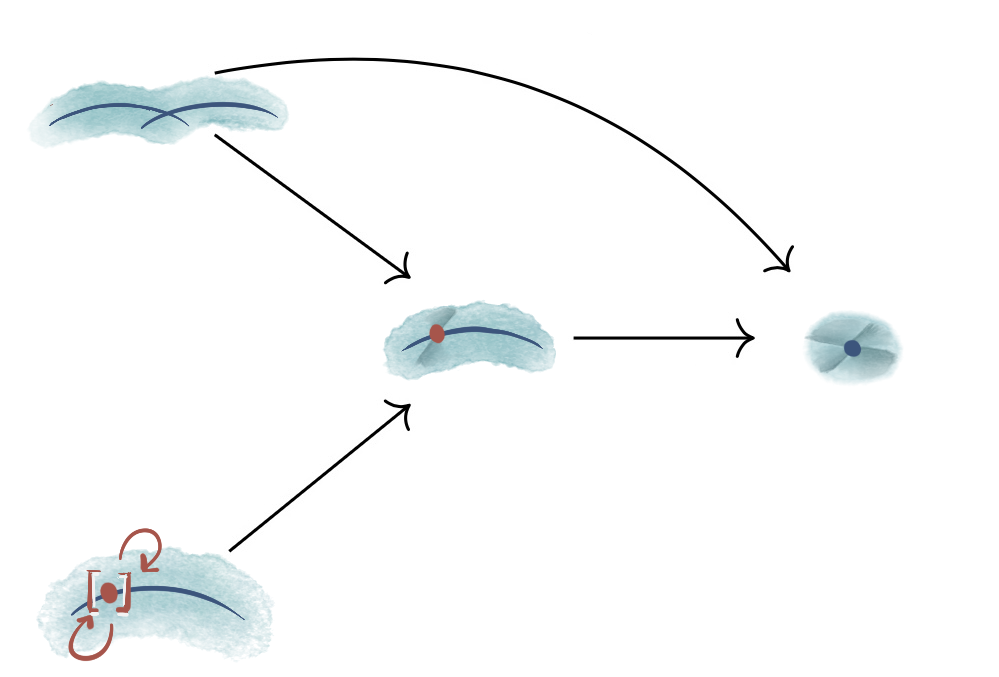}
\end{minipage}

The various orbifolds $\calX_J$ thus obtained, indexed over subsets $J\subset I_\sff$, are derived equivalent and we fix a choice of equivalences $\catDb\calX_J\to \catDb X$ (\S\ref{subsec:psheavesres}). Write $\catP(\rsv/\psv)$ for the image of $\catCoh(\calX_J)$ under this equivalence, and $\catPC(\rsv/\psv)$ for the induced heart $\catP(\rsv/\psv)\cap \catDbC (\rsv)$ in $\catDbC(\rsv)$. We also consider the category $\barcatCoh(\calX_J)$ obtained by tilting $\catCoh(\calX_J)$ in the torsion class of sheaves with $0$-dimensional support, and the $t$-structure $\barcatPC(\rsv/\psv)\subset \catDbC(X)$ it analogously induces.

\begin{theoremintroduction}[{(=\ref{thm:allstabs})}] \label{thm:A} 
    Given any stability condition $(Z,\calP)\in \Stab^\circ(\rsv)$, there exists a subset $J\subset I_\sff$ and an element $b\in B_\sfex(\qv)$ such that the heart $b\cdot \calP(0,1]$ is equal to $\catPC(\rsv/\psv)$ or a shift
    thereof. The heart $b\cdot \calP[0,1)$ in this case, up to said shift, is equal to $\barcatPC(\rsv/\psv)$.
\end{theoremintroduction}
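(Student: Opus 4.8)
The plan is to recast the statement as a classification of the hearts that can arise as $\calP(0,1]$ for $(Z,\calP)\in\Stab^\circ(\rsv)$, and then to match that list against the orbifold hearts $\catPC(\rsv/\psv)$. Two generalities organise the argument. First, inside a connected component of a stability manifold the assignment $(Z,\calP)\mapsto\calP(0,1]$ is locally constant off a locally finite union of real-codimension-one walls, and $\calP[0,1)$ is always the tilt of $\calP(0,1]$ at the torsion pair whose torsion class is the subcategory of phase-$1$ semistable objects. Second — and this is a genuine input — the component $\Stab^\circ(\rsv)$ is normalised, by its construction in \cite{Stab_Kleinian} relative to the base surface $\ssv$, so that every skyscraper $\calO_x$ with $x$ on the exceptional locus has central charge a negative real number; such sheaves are therefore semistable of phase exactly $1$ throughout $\Stab^\circ(\rsv)$, so $\calP(0,1]$ always contains all $0$-dimensional sheaves and $\calP[0,1)$ is exactly the tilt that moves them to degree $-1$. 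Granting the classification, this already forces the first assertion to hold with $b\cdot\calP(0,1]$ literally a shift of $\catPC(\rsv/\psv)$ rather than of $\barcatPC(\rsv/\psv)$ (the latter never contains the $0$-dimensional sheaves); and then $b\cdot\calP[0,1)$ is the $0$-dimensional-support tilt of that shift of $\catPC(\rsv/\psv)$, which is the corresponding shift of $\barcatPC(\rsv/\psv)$ by the very definition of $\barcatCoh(\calX_J)$. So it suffices to treat the first assertion.

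For that, I would use the description of $\Stab^\circ(\rsv)$ from \cite{Stab_Kleinian} as a covering space of the complement of a hyperplane arrangement associated to $\qv$, on which $B_\sfex(\qv)$ acts by deck transformations, together with its explicit combinatorial index for the cells (the loci of constant $\calP(0,1]$). It then remains to identify that index with the set of pairs $(J,b)$, $J\subseteq I_\sff$, $b\in B_\sfex(\qv)$, up to shift. I would proceed in two steps. Step one: realise the orbifold hearts concretely inside $\catDbC(\rsv)$ — for $J=I_\sff$ the heart $\catPC(\rsv/\rsv)$ is the geometric heart $\catCohC(\rsv)$, and for general $J$ the fixed equivalences $\catDb\calX_J\to\catDb\rsv$ of \S\ref{subsec:psheavesres} (built from the derived McKay correspondence \cite{Kleinian_derived}) express $\catPC(\rsv/\psv)$ explicitly and identify its simple objects, which are obtained from those of $\catCohC(\rsv)$ by the reflection functors of \S\ref{subsec:reflection-functors} attached to the blown-down curves $C_i$, $i\notin J$. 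Step two: check that every elementary tilt out of such a heart — i.e.\ every wall of its cell — again yields a heart of the same form: a tilt at a spherical simple is a spherical twist from \S\ref{subsec:geometricbraidgroup} and so changes $b$, while the remaining elementary tilts are the reflection functors above and correspond to blowing a curve up or down, changing $J$ up to an overall shift. Since the family $\{b\cdot\catPC(\rsv/\psv)[k] : b\in B_\sfex(\qv)\}$ is nonempty and, by step two, stable under passing between adjacent cells, and $\Stab^\circ(\rsv)$ is connected, it accounts for every cell — which is the first assertion.

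The step I expect to be the main obstacle is precisely this exhaustion: upgrading the density statement recalled in the introduction — that the $\catCoh(\calX_\emptyset)$-hearts and their braid-group translates are dense in $\Stab^\circ(\rsv)$ — to the assertion that the $B_\sfex(\qv)$-orbit of \emph{all} the orbifold hearts meets \emph{every} cell, with no exotic heart appearing on a wall. Concretely one must rule out walls not visible in the hyperplane-arrangement picture, i.e.\ verify that after transport along the chain of equivalences the category $\catDbC(\rsv)$ carries no semistable objects forcing an unexpected wall-crossing, and that the reflection functors and spherical twists between consecutive cells realise \emph{all} codimension-one crossings. This is where the precise identification of $\Stab^\circ(\rsv)$ in \cite{Stab_Kleinian}, rather than the density statement alone, is indispensable: it fixes both the cell structure and the deck group against which the bookkeeping of the previous paragraph must be matched. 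With that matching in hand, the statement about $\calP[0,1)$ follows from the first paragraph, completing the proof.
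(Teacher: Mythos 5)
Your proposal rests on a claim that is not true: you assert that $\Stab^\circ(\rsv)$ is normalised so that every skyscraper sheaf on the exceptional locus has negative real central charge, and hence is phase-$1$ semistable throughout the component. That is not how $\Stab^\circ(\rsv)$ is cut out. By Theorem~\ref{thm:Bridgelandcovering} the forgetful map $(Z,\calP)\mapsto Z$ surjects from $\Stab^\circ(\rsv)$ onto all of $\frakh_{\mathsf{reg}}/W$, so $Z(\delta)$ ranges over all of $\C^\ast$ and there is no constraint forcing $\calO_x$ to have any particular phase. Everything you build on this — that $\calP(0,1]$ always contains all $0$-dimensional sheaves, that $\calP[0,1)$ is the tilt at the torsion class of $0$-dimensional sheaves, and the claimed dichotomy ruling out $\barcatPC(\rsv/\psv)$ as the heart $\calP(0,1]$ — does not hold. (Separately, even when one has arranged $Z(\delta)\in\R_{<0}$, the torsion class $\calP(1)$ need not be the $0$-dimensional sheaves: when $\Lomega$ sits on a face $\calC^0_J$ of the boundary chamber, $\calP(1)$ contains the whole Artinian categories $\catP_{C_{J'}}(\rsv/\ssv)$ supported on the blown-down curves.)

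The second issue is the exhaustion step. Your plan — show that the collection of hearts of the form $b\cdot\catPC(\rsv/\psv)[k]$ is stable under passing across each wall, then invoke connectedness — is plausible in outline but not carried out, and as you note yourself it is the hard part. You would need a classification of walls and of the elementary tilts that cross them, including the ones emanating from non-Artinian hearts $\catPC(\rsv/\psv)$ with $J\neq\emptyset,I_\sff$ (these do not have finitely many simples, so "tilt at a spherical simple" does not describe all of their elementary tilts); that analysis is essentially the content of the heart-fan machinery developed later in \S\ref{sec:numerical-tilts}, not something one gets for free. The paper's proof of Theorem~\ref{thm:allstabs} sidesteps the cellular combinatorics entirely: writing $Z=-\Ltheta+i\Lomega$ and separating cases by whether $\Lomega(\delta)$ vanishes, it first applies $W$ (and, for the $\Lomega(\delta)=0$ case, Lemma~\ref{lem:fundamentaldomain-WJ}) to move $(\Ltheta,\Lomega)$ into the fundamental domain $-\calD_J+i\calC_J^0$, at which point Theorem~\ref{thm:perverse-stabfuncs} says $Z$ is a stability function on $\catPC(\rsv/\psv)$, and then uses transitivity of the deck group of the regular cover \eqref{eqnintroapprox} to conclude that the slicing is forced up to shift and $B$-action. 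Reworking your proposal along those lines — normalise the central charge by the Weyl group, then let the covering-space structure do the work of matching slicings — is the fix.

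(One small cosmetic point: the statement to prove allows $b\cdot\calP(0,1]$ to be a shift of $\catPC(\rsv/\psv)$ \emph{or} of $\barcatPC(\rsv/\psv)$; your argument as written tries to exclude the $\barcatPC$ alternative, but that exclusion is neither needed nor, for the reasons above, obtainable by your method.)
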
                                                           

This is not surprising, it is in fact expected that the list of $t$-structures
arising from Kleinian orbifolds is exhaustive up to modifications in point
sheaves and the action of $B_\sfex(\qv)$. The expectation, confirmed above for
$t$-structures that admit stability functions, has also been confirmed for
$t$-structures intermediate with respect to $\catPC(\rsv/\ssv)$ in
\cite{Shimpi_Torsion_pairs}.

\subsection*{Cohomological Hall algebras} 

Following~\cite{Porta_Sala_Hall, DPSSV-1} denote by $\dstackCoh_C(\calX_J)$ the derived moduli stack of coherent sheaves on $\calX_J$ (for some $J\subseteq I_\sff$) set-theoretically supported within the exceptional locus, and consider the \textit{convolution diagram}
\begin{align}
	\begin{tikzcd}[ampersand replacement=\&]
        \dstackCoh_C(\calX_J) \times \dstackCoh_C(\calX_J) \&
        \dstackCoh^{\mathsf{ext}}_C(\calX_J) \ar{r}{p} \ar{l}[swap]{q}\&
        \dstackCoh_C(\calX_J) 
	\end{tikzcd}\ .
\end{align}
Here $\dstackCoh^{\mathsf{ext}}_C(\calX_J)$ denotes the derived stack parametrising short exact sequences, and the maps $p,q$ map such a sequence to its middle (resp.\ extreme terms). 

We show this correspondence does in fact induce an associative product on the Borel--Moore homology of $\dstackCoh_C(\calX_J)$, which may be taken equivariantly with respect to any diagonal torus $A\subset \SL(2,\C)$
centralising $G$, and identify the resulting algebra with a quantum group explicitly constructed from the affine ADE quiver $\qv$ associated to $G$.

\begin{theoremintroduction}[{(=\ref{thm:coha-surface-as-limit2},~\ref{thm:coha-surface-as-limit3})}] \label{thm:B}
	For $J \subseteq I_\sff$, $A$, and the morphisms $p,q$ as above, the following hold.
	\begin{enumerate}\itemsep0.2cm
		\item The operation $p_\ast\circ q^!$ on $A$-equivariant Borel--Moore homology canonically endows the topological vector space 
		\begin{align}
			\coha^A_J\coloneqq \sfH_\bullet^A(\dstackCoh_C(\calX_J))
		\end{align}
         with the structure of an $\N \times \Z I$-graded, topologically complete, cohomological Hall algebra.
		
		\item There is an isomorphism of graded topological algebras 
		\begin{align}
            \Phi\colon \coha^A_J \xrightarrow{\quad\sim\quad} \Y^+_{J;A}\ ,
		\end{align} 
        where $\Y^+_{J;A}$ is an explicit limit of subquotients of the affine Yangian $\Y_{\qv;\ A}$ associated to $\qv$.

        For $A=\{\id\}$, $\coha_J$ is equal to the completed enveloping algebra $\widehat{\sfU}(\fraknellJ^+)$ of
		\begin{align}
			\fraknellJ^+\coloneqq \bigoplus_{\alpha \in \Delta^+_\sff \smallsetminus \Delta^+_{\Jcomp}}\hspace{-5pt}\frakg_{\alpha}[s^{\pm 1},t] \ \oplus\   \frakn_{\Jcomp}[t] \ +\  s^{-1}\frakh[s^{-1},t]\  \oplus \ K_-\ ,
		\end{align}
            where $\Delta_\sff$ is the root system associated to the (finite-type) ADE quiver $\qvfin=(I_\sff,\Omega_\sff)$ inside $\qv$, $\Delta_\Jcomp$ is the root subsystem spanned by the simple roots associated to $\Jcomp\coloneqq I_\sff\smallsetminus J$, the algebra $K_-$ is the negative half of the (infinite-dimensional) center of the elliptic Lie algebra $\frakgell$ associated to $\qv$, and $\frakn_{\Jcomp}$ is the standard negative half of the affinization of the $\Jcomp$-Levi subalgebra of the semisimple Lie algebra $\frakg$ associated to $\qvfin$.
	\end{enumerate}
\end{theoremintroduction}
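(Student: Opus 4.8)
Part~(1) will follow from the general COHA formalism of \cite{Porta_Sala_Hall, DPSSV-1}; the substantive content is part~(2), which I would prove by first settling the two geometric extremes $J=I_\sff$ (the full resolution $\rsv$) and $J=\emptyset$ (the orbifold $[\C^2/G]$), and then interpolating along the blow-downs $\pi_J\colon\rsv\to\psv$ using the support stratification of $\catCoh_C(\calX_J)$ and the torsion-pair analysis of \cite{Shimpi_Torsion_pairs}.

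For part~(1): since $\calX_J$ is a smooth Deligne--Mumford stack of pure dimension $2$ crepantly resolving $\psv$, the derived stack $\dstackCoh_C(\calX_J)$, the extension stack $\dstackCoh^{\mathsf{ext}}_C(\calX_J)$, and the correspondence $(p,q)$ --- in which $p$ is representable proper and $q$ is quasi-smooth of finite relative dimension --- satisfy the hypotheses of \cite{Porta_Sala_Hall, DPSSV-1} guaranteeing that $p_\ast\circ q^!$ defines an associative product, associativity being the usual base-change computation over the derived stack of two-step filtrations. Any diagonal torus $A\subset\SL(2,\C)$ centralising $G$ acts on $\calX_J$ compatibly with $\pi_J$, so the construction runs $A$-equivariantly over $\sfH^\bullet_A(\mathrm{pt})$. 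The $\N\times\Z I$-grading comes from the decomposition of $\dstackCoh_C(\calX_J)$ by numerical $K$-theory class, identified with $\N\times\Z I$ via Riemann--Roch; each graded piece has pro-(finite type) equivariant Borel--Moore homology, which supplies the topology making $\coha^A_J$ topologically complete.

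For part~(2), I would proceed as follows. \emph{(i) Representation-theoretic model.} Using the fixed equivalence $\catDb\calX_J\simeq\catDb\rsv$, the derived McKay correspondence \cite{Kleinian_derived}, and \cite{Shimpi_Torsion_pairs}, identify $\catCoh_C(\calX_J)$ (hence $\dstackCoh_C(\calX_J)$) with a moduli stack of finite-dimensional modules over a quiver-with-relations interpolating between $\mathsf{nilp}\,\Pi_\qv$ (at $J=\emptyset$, where $[\C^2/G]$ gives exactly the nilpotent preprojective representations) and exceptionally-supported sheaves on $\rsv$ (at $J=I_\sff$). \emph{(ii) Support filtration.} The exceptional locus of $\calX_J\to\ssv$ stratifies into the chain of $\PP^1$'s $\{C_i\mid i\in J\}$ and the finitely many stacky points $p_k$ resolving the Kleinian singularities of $\psv$, whose local groups are indexed by the connected components of the sub-root-system spanned by $\Jcomp$; this yields a filtration of $\coha^A_J$ by support whose associated pieces are the ``elliptic'' COHA of sheaves on $\{C_i\mid i\in J\}$ inside $\rsv$ (contributing $\bigoplus_{\alpha\in\Delta^+_\sff\smallsetminus\Delta^+_{\Jcomp}}\frakg_\alpha[s^{\pm1},t]$ and the Cartan loop $s^{-1}\frakh[s^{-1},t]$), the preprojective COHAs of the finite ADE quivers on the components of $\Jcomp$ (contributing, after accounting for the surviving $[t]$-loop, the factor $\frakn_{\Jcomp}[t]$), and the classes of $\delta$-type sheaves (contributing $K_-$). \emph{(iii) The map and the relations.} Construct $\Phi$ by sending the Drinfeld generators of $\Y^+_{J;A}$ to the corresponding geometric classes --- fundamental classes of $\calO_{C_i}(\bullet)$ and their loop translates for $i\in J$, classes of simple sheaves at the $p_k$ for the $\Jcomp$-part, diagonal/Cartan classes, $\delta$-dimensional classes --- and verify the defining relations of $\Y^+_{J;A}$ by equivariant localisation, reducing each relation to a sub-configuration of rank $\le 2$ (rank $\le 3$ for the Serre-type relations), exactly as in \cite{SV_Yangians=COHA, BD_Okounkov} and its refinements to the surface/orbifold setting. \emph{(iv) Bijectivity.} Surjectivity follows by induction on $K$-theory class along the support stratification (larger-support strata are reached by convolution from the generators via the extension structure); injectivity follows by matching graded Poincar\'e series over $\sfH^\bullet_A(\mathrm{pt})$, using a PBW basis of the subquotient $\Y^+_{J;A}$ on the algebraic side and the known cohomology of the quiver-variety / symmetric-product pieces of the stratification on the geometric side. \emph{(v) Non-equivariant limit.} Specialising $A=\{\id\}$ kills the equivariant parameters, $\Y^+_{J;A}$ degenerates to $\widehat{\sfU}(\fraknellJ^+)$ with $\fraknellJ^+$ as stated (a Lie-theoretic computation of the classical limit of the $J$-subquotient of the affine Yangian), and the COHA side specialises compatibly because $\sfH_\bullet(\dstackCoh_C(\calX_J))=\sfH^A_\bullet(\dstackCoh_C(\calX_J))\otimes_{\sfH^\bullet_A(\mathrm{pt})}\C$ degreewise and compatibly with the pro-topologies.

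\textbf{Main obstacle.} The crux is the interface between the geometry of $\pi_J$ and the algebra of the subquotient: one must show that precisely the roots of $\Delta^+_\sff\smallsetminus\Delta^+_{\Jcomp}$ retain the full $s^{\pm1}$-loop while those of $\Delta^+_{\Jcomp}$ are truncated to the nilpotent $[t]$-loop, that this matches the support filtration of $\coha^A_J$ on the nose, \emph{and that the pro-topologies agree}, so that one genuinely obtains an isomorphism of graded \emph{topological} algebras with $\Y^+_{J;A}$ identified as the correct limit of subquotients of $\Y_{\qv;A}$. This is a concrete instance of the ``Cramer's theorem for COHAs'' that is otherwise unavailable: passing from the heart $\catCoh_C(\rsv)$ to $\catCoh_C(\calX_J)$ is a tilt, which does not preserve the COHA product, so $\coha^A_J$ must be computed in the new heart essentially from scratch. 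The other genuinely laborious point is verifying the full affine-Yangian relations for the proposed generators --- especially the ``mixed'' relations coupling an exceptional-curve generator to an orbifold-point generator (which reduce to rank-$2$ computations where a $J$-curve meets a contracted sub-chain) and the cubic Serre-type relations. A secondary difficulty is establishing the requisite geometric properties of $\dstackCoh_C(\calX_J)$ --- existence and representability/properness of the extension stack, and the support stratification --- for the non-trivial Deligne--Mumford stacks $\calX_J$ with $\emptyset\neq J\neq I_\sff$.
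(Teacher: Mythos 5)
Your proposal takes a genuinely different route from the paper, and the differences are precisely where the gaps lie.

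You propose to establish part~(1) by directly verifying that $p$ is representable proper and $q$ is quasi-smooth for the correspondence on $\dstackCoh_C(\calX_J)$, and then to establish part~(2) by constructing $\Phi$ from scratch --- sending Drinfeld generators of $\Y^+_{J;A}$ to geometric classes on the orbifold moduli stack and verifying the Yangian relations by equivariant localisation. The paper does neither of these things. Instead, its central technical input is Theorem~\ref{thm:slicing}: for a $\varpi_J$-ample $\Llambda$, the stability condition $(Z_\Llambda,\calP_\Llambda)$ has standard heart $\catPC(\rsv/\ssv)\simeq\nilpPi$ and satisfies $\calP_\Llambda(-\half,\half]=\catPC(\rsv/\psv)\simeq\catCohC(\calA_J)$, and the braid translates $\tau^{-1}\circ(\R L_\Llambda)^{-n}$ carry Harder--Narasimhan slope strata $\nilp^{\leq 0}\Pi$, $\nilp^{>0}\Pi$ onto the slices $\calP_\Llambda(-t_n,\half]$, $\calP_\Llambda(\half,1-t_n]$. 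This exhibits $\dstackCoh_C(\calX_J)$ as an increasing union of locally closed substacks of $\dLambda_\qv$, so the COHA product on $\coha_\qv^A\simeq\Y_\qv^-$ --- already known from \cite{SV_Yangians=COHA, DPSSV-3} --- \emph{induces} a product on the limit $\coha_J^A=\lim_k\coha_{J,(k)}^A$ (Theorem~\ref{thm:coha-surface-as-limit1}); and the identification $\Phi$ is obtained by transporting the known $\Phi\colon\Y_\qv^-\to\coha_\qv^A$ through the approximation (Theorem~\ref{prop:quotients_affine_yangian}, Theorem~\ref{thm:coha-surface-as-limit2}). The classical limit is then read off from the subquotients $\Y_{J,(k)}$ in \S\ref{sec:classical-limit}. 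Part~(1) falls out as a corollary: the paper explicitly remarks that properness/finite-connectedness of $p,q$ on $\dstackCoh_C(\calX_J)$ is \emph{deduced} from the approximation, not proved directly.

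The concrete gaps in your proposal: first, you assert without argument that $p$ is proper and $q$ quasi-smooth for the intermediate stacks $\calX_J$; the paper is explicit that this properness is ``the pivotal question,'' and that it is resolved only via the limiting-COHA machinery. Second, your framing that ``passing from the heart $\catCoh_C(\rsv)$ to $\catCoh_C(\calX_J)$ is a tilt, which does not preserve the COHA product, so $\coha^A_J$ must be computed in the new heart essentially from scratch'' misstates the nature of the problem in a way that foreshadows the missing step. The paper's insight is precisely the opposite: although no single autoequivalence carries one heart to the other, the braid group action identifies each HN stratum of the new heart with a locally closed substack inside the old one, and the COHA product transfers stratum by stratum. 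Your step~(iii) (generators and relations via localisation) would amount to redoing the content of \cite{SV_Yangians=COHA} in the orbifold setting; it is not impossible, but it is not carried out, and it is exactly what the paper's approximation strategy is designed to avoid.
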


An analogous result holds for the derived moduli stacks of objects in $\barcatCoh(\calX_J)$. Theorem~\ref{thm:genericfirst} then follows from Theorems~\ref{thm:A} and \ref{thm:B}, and the observation that the nonstandard positive halves $\fraknellJ^+$ which appear above precisely match those classified in the context of affine root systems by Kac and Jacobsen in~\cite{Jakobsen-Kac-Borel, Jakobsen-Kac-Borel-II}. 

\subsection*{Limiting COHAs and stability arcs} 

The existence of a Hall product boils down to the question of whether the
pushforward $p_\ast$ and Gysin pullback $q^!$ on Borel--Moore homology exist. A
sufficient condition is when $p$ is \textit{locally rpas}\footnote{In the sense
of
\cite[Definition~\ref*{torsion-pairs-def:modified_classes_of_morphisms}--(\ref*{torsion-pairs-item:locally-rpas})]{DPS_Torsion-pairs}.}
and $q$ is quasi-compact, \textit{finitely connected}\footnote{In the sense of
\cite[Definition~\ref*{torsion-pairs-def:modified_classes_of_morphisms}--(\ref*{torsion-pairs-item:finitely-connected})]{DPS_Torsion-pairs}.},
and derived lci. By \cite{Porta_Sala_Hall, DPS_Torsion-pairs} the latter
condition is guaranteed by the $2$-Calabi--Yau setting at hand, and properness
of Quot schemes is the pivotal question that we address via the theory of
\textit{limiting COHAs}, developed by the first- and second-named authors,
together with Diaconescu, Porta, and Vasserot, in
\cite[Part~\ref*{COHA-Yangian-part:COHA-stability-condition}]{DPSSV-3}.

To explain the idea, first employed in~\cite{DPSSV-1} to address $\catCohC(\rsv)$ (i.e.\ the case $J=I_\sff$), we note that COHA of $\catCohC(\calX_\emptyset)$ exists and has been studied extensively in
\cite{SV_generators, SV_Yangians, SV_Yangians=COHA,DPSSV-1}. Indeed, the derived stack $\dLambda_\qv\coloneqq \dstackCoh_C([\C^2/G])$ parametrises $G$-equivariant sheaves supported on the origin of $\C^2$, equivalently nilpotent preprojective representations of $\qv$, and hence is derived lci with quasi-compact connected components of finite type over $\C$.

The existence and computation of Hall product on $\sfH_\bullet^A(\dstackCoh_C(\calX_J))$ are then simultaneously deduced by \textit{approximating} the stack $\dstackCoh_C(\calX_J)$ by a sequence of locally closed substacks $\tensor*[^k]{\dLambda}{_\qv}\subset \dLambda_\qv$, identified along the action of certain braids $b^k\in B_\sfex(\qv)$. In \cite{DPSSV-3} this approximation is achieved in the case $J=I_\sff$ by explicitly identifying a Bridgeland stability condition $(Z,\calP)$ and a sequence of positive real numbers $t_k$ limiting to
$\half$ such that 
\begin{align}\label{eqn:introapproximation}
    \calP(-t_k,1-t_k]=b^k\,\catPC(\rsv/\ssv)\quad \text{and} \quad \calP(-\half,\half]=\catCohC(\rsv)\ . 
\end{align}

\begin{minipage}[c]{0.63\textwidth}
    The novelty in the present work stems from observing, following~\cite{Shimpi_Torsion_pairs}, that the existence of $(Z,P)$ as above is simply an artefact of $\catCohC(\rsv)$ being the infimum of a decreasing sequence
    $\catPC(\psv/\ssv)>b\,\catPC(\psv/\ssv)>b^2\,\catPC(\psv/\ssv)>\cdots$ in the lattice of $t$-structures intermediate with respect to $\catPC(\psv/\ssv)$. In \textit{loc.\ cit.\ }the third-named author furnishes a complete description of said lattice by constructing the \textit{heart fan}, a complete simplicial fan in Euclidean space (the figure alongside corresponds to the $\sfA_2$ singularity) whose cones are naturally associated to intermediate $t$-structures.
\end{minipage}\hspace{0.01\textwidth}
\begin{minipage}[c]{0.36\textwidth}
        \includegraphics[width=\textwidth]{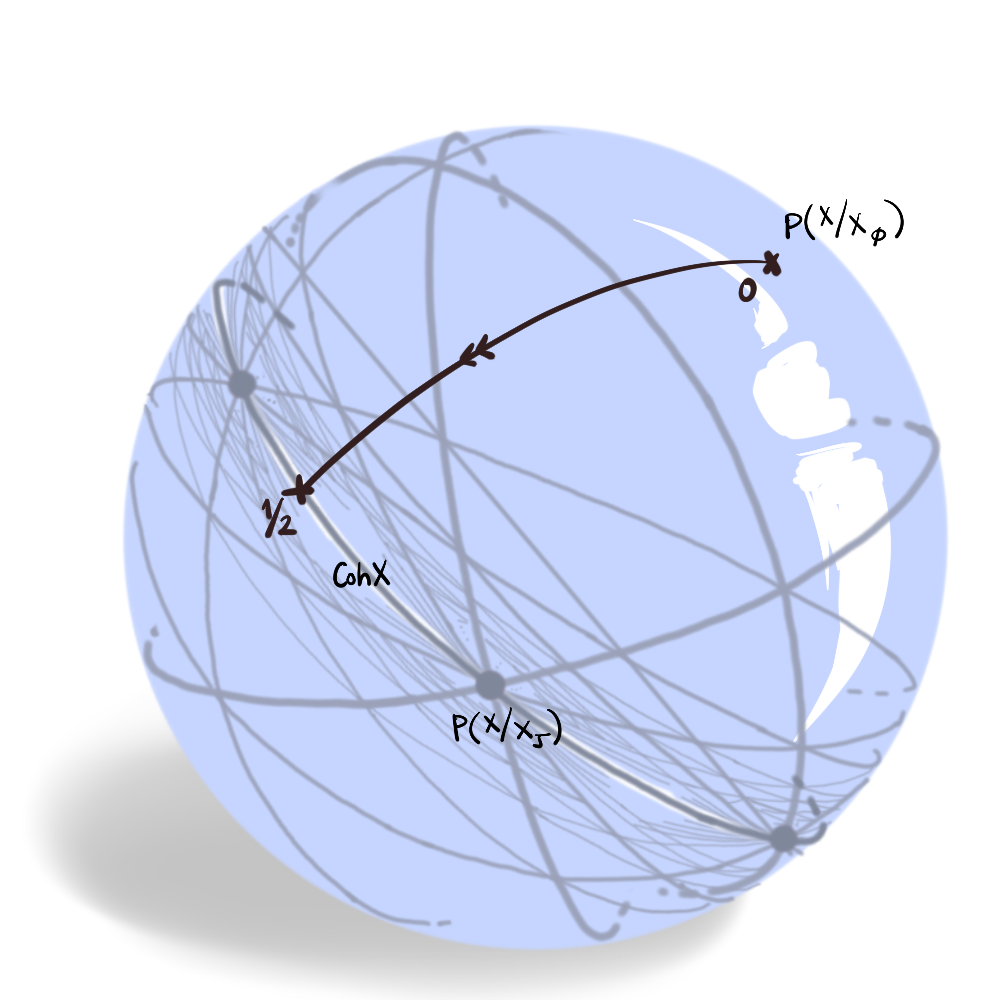}
\end{minipage}

Importantly the hearts $\catPC(\rsv/\psv)\simeq \catCohC(\calX_J)$ are all
intermediate with respect to $\catPC(\rsv/\ssv)$, and can be readily realized as
the infima of analogous sequences in the $B_\sfex(\qv)$-orbit of
$\catPC(\rsv/\ssv)$. The heart fan, being the universal phase diagram for
stability conditions \cite[\S6]{HeartFan}, also readily lets us read off precise
stability conditions $(Z,\calP)$ and real numbers $t_k$ such that the analogue
of the above relation for $\calP(-t_k,1-t_k]$ and $\calP(-\half,\half]$ holds
--- in \S\ref{subsec:stabilityarc}, which may be of independent interest, we
distil the procedure to simply the construction of certain \textit{stability
arcs}, paths $\gamma$ from the unit interval $[0,1]$ into the heart fan, which
can be lifted to paths $\overline\gamma:[0,1]\to \Stab^\circ(\rsv)$ such that
$\gamma(t)$ lies in the heart cone of the standard heart of
$\overline\gamma(t)$.

Thus our techniques recover not only the particular approximation found in \cite{DPSSV-3}, but also \textit{every other} approximation result of that shape.

\subsection*{Towards a sheaf of COHAs on the stability manifold}

It is natural to ask whether one can canonically construct, over
$\Stab^\circ(\rsv)$, a sheaf of associative algebras which fibre-wise recovers
the COHAs of Theorem~\ref{thm:genericfirst}. This turns out to be a highly
non-trivial problem, as we now explain.

Let $ \dstackPerfps(\rsv)$ denote the Toën--Vaquié derived moduli stack of pseudo-perfect complexes on $\rsv$, and let $ \dstackPerfps(\rsv)^{\mathsf{an}}$ be its analytification in the sense of \cite{Porta_Yu_Higher_Analytic_Stacks}. Consider the derived substack\footnote{We thank Mauro Porta for suggesting this definition.}
\begin{align}
	\dstackCoh_{/\Stab^\circ(\rsv)}\subset \Stab^\circ(\rsv)\times \dstackPerfps(\rsv)^{\mathsf{an}}
\end{align}
parametrising pairs $((Z,\calP),E)$, where $(Z,\calP)\in \Stab^\circ(\rsv)$ and $E$ is a pseudo-perfect complex on $\rsv$ that is flat\footnote{In the sense of~\cite[Definition~\ref*{torsion-pairs-def:flatness_wrt_t_structure}]{DPS_Torsion-pairs}.} with respect to the $t$-structure whose heart is $\calP(0,1]$.

Ideally, one would like to define a corresponding 2-Segal space $\calS_\bullet \dstackCoh_{/\Stab^\circ(\rsv)}$, relative to $\Stab^\circ(\rsv)$, together with a convolution diagram over $\Stab^\circ(\rsv)$,
\begin{align}
	\begin{tikzcd}[ampersand replacement=\&]
		\dstackCoh_{/\Stab^\circ(\rsv)} \times \dstackCoh_{/\Stab^\circ(\rsv)} \& \calS_2\dstackCoh_{/\Stab^\circ(\rsv)} \ar{r}{p} \ar{l}[swap]{q}\& \dstackCoh_{/\Stab^\circ(\rsv)}
	\end{tikzcd}\ .
\end{align}

Several difficulties arise in pursuing this program. First, it is not clear that $\dstackCoh_{/\Stab^\circ(\rsv)}$ is a derived Artin stack. Assuming this holds, it remains uncertain whether the sheaf $\pi_\ast \D\Q$ on
$\Stab^\circ(\rsv)$ --- where $\D\Q$ denotes the dualizing complex of
$\dstackCoh_{/\Stab^\circ(\rsv)}$ and $\pi\colon
\dstackCoh_{/\Stab^\circ(\rsv)}\to \Stab^\circ(\rsv)$ is the structure morphism
--- admits a natural algebra structure induced by $p_\ast \circ q^!$. Finally, even if such a structure exists, proving that the fibre of $\pi_\ast \D\Q$ at $(Z,\calP)\in \Stab^\circ(\rsv)$ coincides with the COHA associated with the heart $\calP(0,1]$ would be an extremely delicate task.

We will pursue this program and address these issues in the future.

\subsection*{Outline}

We begin, in \S\ref{sec:affine-quivers}, with a reminder on affine root systems and braid groups to fix our notations. In \S\ref{sec:perverse-coherent-sheaves} we introduce the Kleinian orbifolds, and the identification of the natural hearts therein with Van den Bergh's perverse coherent $t$-structures. This section also proves Theorem~\ref{thm:A}, by explicitly computing stability functions admitted by the orbifold hearts and the
orbits of these under braid group actions. The key result expressing all $t$-structures as limits of suitable translates (under the affine braid group) of $\catPC(\rsv/\ssv)$ is proved in \S\ref{sec:numerical-tilts}, see Theorem~\ref{thm:slicing}. This is then used in \S\ref{sec:limitingCOHAs} and \S\ref{sec:limit-affine-Yangian} to compute explicitly the associated COHAs $\coha^A_J$ as limiting COHAs, and describe their classical limits in terms of elliptic root systems in \S\ref{sec:classical-limit}.

\subsection*{Notation}

Unless otherwise specified, we work over the ground field $\C$ and all the stacks or algebraic varieties which we consider are defined over $\C$. 

For any variety $T$, we write $\catDb_{\ast}(T)$ for the bounded derived category $\catDb_{\ast}(\catCoh(T))$ of the Abelian category $\catCoh(T)$ of coherent sheaves on $T$, where $\ast$ indicates any property imposed on the cohomology objects of the bounded complexes. Similarly, for a sheaf $\calA$ of algebras over $T$, we denote by $\catDb_{\ast}(\calA)$ the bounded derived category $\catDb_{\ast}(\catCoh(\calA))$ of the Abelian category $\catCoh(\calA)$ of coherent sheaves on $T$ which are $\calA$-modules.

\subsection*{Funding}

The first-named author is partially supported by JSPS KAKENHI Grant number JP18\-K13402. This work was also partially supported by the ``National Group for Algebraic and Geometric Structures, and their Applications'' (GNSAGA-INDAM). Moreover, the first-named author acknowledges the MIUR Excellence Department Project awarded to the Department of Mathematics, University of Pisa, CUP I57G22000700001.

The second-named author is partially funded by the PNRR Grant `Cohomological Hall algebras and smooth surfaces and applications', CF 44/14.11.2022. 

The third-named author is supported by ERC Consolidator Grant 101001227 (MMiMMa).

\subsection*{Open access}

The authors have applied a Creative Commons Attribution (CC:BY) licence to any Author Accepted Manuscript version arising from this submission.

\subsection*{Acknowledgments}

Many thanks to Mauro Porta and Timothy De Deyn for enlightening conversations. The first-named author is a member of GNSAGA of INDAM.

\section{Affine quivers: Lie theory and representations}\label{sec:affine-quivers}

Fix a finite subgroup $G\subset \mathsf{SL}(2, \C)$ and let $\qvfin=(I_\sff, \Omega_\sff)$ be a Dynkin quiver of the corresponding ADE type, with vertex set $I_\sff=\{1, \ldots, e\}$ and an arbitrarily chosen edge orientation. The quiver $\qvfin$ admits an \textit{affine} extension $\qv=(I, \Omega)$, where $I\coloneqq I_\sff\cup \{0\}$.

\subsection{Spherical and affine root systems}\label{subsec:affine-quivers-roots-coweights}

The quiver $\qv$ determines an affine Kac--Moody algebra $\frakg$. Write $\{\alpha_0,\ldots,\alpha_e\}$ for its set of simple roots, then the root and coroot lattices of $\frakg$ are, respectively,
\begin{align}
	\rootlattice\coloneqq\bigoplus_{i\in I}\, \Z \alpha_i \quad \text{and} \quad \corootlattice\coloneqq\bigoplus_{i \in I}\, \Z\check{\alpha}_i\ .
\end{align}
The associated subsets of real and imaginary roots are respectively denoted $\rootsetre$ and $\rootsetim$, so that $\rootsetre\cup \rootsetim=\Delta\subset \rootlattice$ is an affine root system. In particular, elements of $\rootsetim$ are precisely those non-zero elements of $\rootlattice$ for which the Cartan pairing
\begin{align}\label{eq:cartan-pairing}
	\langle -, - \rangle\colon \corootlattice\times \rootlattice\longrightarrow \Z
\end{align}
(given by the generalised Cartan matrix associated to $\qv$) vanishes identically, and the kernel $\rootsetim\cup \{0\}$ is a sublattice of $\rootlattice$. It has a minimal positive generator, the \textit{primitive imaginary root}
\begin{align}\label{eq:r_i}
	\delta\coloneqq \sum_{i\in I}\,r_i \cdot \alpha_i\ .
\end{align}
The integers $(r_i)_{i\in I}$ are all positive and satisfy $r_0=1$, and depend only on the Dynkin type of $\qv$. Their sum $h\coloneqq \sum_{i\in I}r_i$ is the \textit{Coxeter number} of $\qv$, and the explicit values of $h$ and $(r_i)_{i\in I}$ can be read off from \cite[Table~Z]{kacInfiniteRootSystems}.

Dual to the root lattice, we also have the \textit{coweight lattice} $\coweightlattice\coloneqq \Hom(\rootlattice,\Z)$ of $\frakg$, with basis given by the \textit{fundamental coweights} $\{\Lomega_0,\ldots,\Lomega_e\}$. We denote the canonical pairing between roots and coweights by
\begin{align}\label{eq:canonical-pairing}
	(-, -)\colon \coweightlattice\times \rootlattice\longrightarrow \Z\ ,
\end{align}
this is such that $(\Lomega_i, \alpha_j)=1$ if $i=j$ and $0$ otherwise. A coweight $\Lomega$ is said to be (\textit{strictly}) \textit{dominant} if $\Lomega(\alpha_i)\geq 0$ (resp.\ $>0$) for all $i\in I$.

Analogously, the finite type quiver $\qvfin$ defines a semisimple Lie algebra $\frakgfin$ with simple roots $\{\alpha_1,\ldots,\alpha_e\}$, and we define the associated root lattice $\rootlatticefin$, coroot lattice $\corootlatticefin$, and coweight lattice $\coweightlatticefin$ with its associated subset of (strictly) dominant coweights. The associated root system $\rootsetfin\subset\rootlatticefin$ is finite, in particular there is a unique \textit{highest root}
\begin{align}
	\varphi \coloneqq \sum_{i\in I_\sff}\,r_i\cdot \alpha_i\ .
\end{align}

It is convenient to view $\rootlatticefin$ as split quotient of $\rootlattice= \Z\delta \oplus \rootlatticefin$. Dualising induces a natural inclusion of coweight lattices. Writing $\{\Llambda_1,\ldots,\Llambda_e\}\subset \coweightlatticefin$ for the fundamental coweights of $\frakgfin$, this map is explicitly given by
\begin{align}\label{eq:Lthetaconditions}
	\begin{tikzcd}[ampersand replacement=\&, row sep=tiny]
		\coweightlatticefin \coloneqq \displaystyle \bigoplus_{i\in I_\sff} \Z\Llambda_i \ar[hook]{r} \& \coweightlattice\coloneqq \displaystyle\bigoplus_{i\in I}\Z\Lomega_i\\
		\Llambda_i \ar[mapsto]{r} \&\Lomega_i - r_i \Lomega_0
	\end{tikzcd}\ .
\end{align}
Furthermore, the Cartan pairing $\corootlatticefin\times\rootlatticefin\to \Z$ is non-degenerate, so there is a natural inclusion $\corootlatticefin \hookrightarrow \coweightlatticefin$ allowing the coroot lattice to be identified with a subgroup of the coweight lattice.

\subsection{Spherical, affine, and extended braid groups}\label{subsec:braid-group-affine-quiver}

Let $W=W_{\mathsf{aff}}$ be the \textit{affine Weyl group} of Dynkin type defined by $G$, i.e.\ the Weyl group of the affine $\mathsf{ADE}$ quiver $\qv$. This is the subgroup of $\GL(\rootlattice)$ generated by the set of \textit{simple reflections} $\{s_0,\ldots,s_e\}$, which act on $\alpha\in \rootlattice$ via
\begin{align}
	s_i(\alpha) \coloneqq \alpha - \langle \Lalpha_i,\alpha \rangle \cdot \alpha_i\ .
\end{align}
The analogously defined \textit{spherical} Weyl group $W_\sff$ associated to $\qvfin$ coincides with the finite subgroup $\langle s_1,\ldots,s_e \rangle \subset W$, and we make this identification henceforth.

The finite-type coweight lattice $\coweightlatticefin$ (hence also the corresponding coroot lattice $\corootlatticefin$) acts on $\rootlattice$ via shear matrices -- the coweight $\Llambda\in \coweightlatticefin$ gives a linear map $\ell_{\Llambda}\colon \rootlattice\to\rootlattice$ given by
\begin{align}\label{eq:ell_Llambda}
	\ell_{\Llambda}(\alpha)\coloneqq\alpha+(\Llambda,\alpha)\,\delta\ .
\end{align}
If $\Llambda\in\corootlattice$, then $\ell_{\Llambda}$ in fact lies in $W\subset \GL(\rootlattice)$. Furthermore $W$ is generated by the subgroups $W_\sff$ and $\{\ell_{\Llambda}\,\vert\, \Llambda\in\corootlattice\}$, giving a decomposition $W=W_\sff\ltimes \corootlatticefin$ where the action $W_\sff\circlearrowright\corootlatticefin$ is the dual of $W_\sff\circlearrowright \rootlatticefin$.

Likewise we define the \textit{extended affine Weyl group} as the subgroup of $\GL(\rootlattice)$ generated by $W_\sff$ and $\{\ell_{\Llambda}\,\vert\, \Llambda\in\coweightlatticefin\}$, i.e.\ the semidirect product $W_\sfex \coloneqq W_\sff\ltimes \coweightlatticefin$. This naturally contains $W$ as a subgroup, we have the identity $W_\sfex  = \Gamma\ltimes W$ where $\Gamma$ is the group of outer automorphisms of the underlying diagram of $\qv$. Accordingly, this subgroup $\Gamma$ acts on $\rootlattice$ by permutating simple roots.

\begin{remark}
    Note that the definition of $\ell_{\Llambda}$ differs from that in \cite[Formula~(\ref*{COHA-Yangian-eq:t_lambda})]{DPSSV-3} by a sign. 
\end{remark}

Each element $w$ of the Weyl group has a well-defined \textit{length} $\ell(w)$ given by the minimal number of simple reflections it factors into, and the length function on $W$ can be extended to one on $W_\sfex=\Gamma\ltimes W$ via $\ell(\gamma\ltimes w)=\ell(w)$. The length functions respect the inclusions $W_\sff\subset W\subset W_\sfex$, i.e.\ the length functions on $W_\sff$ and $W$ agree with the restrictions of that on $W_\sfex$.

In particular, the finite group $W_\sff$ has a unique longest element which we denote $w_0$. Then conjugation by $w_0$ is an automorphism of $W_\sff$ which permutes simple reflections, thus defining an involution of the underlying Dynkin graph $\kappa\in \Gamma$ such that
\begin{align}\label{eq:dynkininvolution}
    w_0s_iw_0=w_{\kappa(i)}\quad\text{and}\quad w_0(\alpha_i)=-\alpha_{\kappa(i)}
\end{align}
for all $i\in I_\sff$ \cite[Lemma~1.2]{iyamaTitsConeIntersections}. We also define the \textit{extended Weyl element}
\begin{align}
	\widetilde{w}_0\coloneqq \kappa\ltimes w_0\, \in \, \Gamma\ltimes W_\sff \;\simeq\; W_\sfex\ .
\end{align}

The length function allows us to define the braid group associated to each Weyl group. For $\ast\in \{\sff, \mathsf{aff}, \sfex\}$, the corresponding \textit{braid group} $B_\ast$ is defined via the presentation
\begin{align}\label{eq:defBex}
    B_\ast \coloneqq \left\langle\; \{T_w\,\vert\, w\in W_\ast\}\, \vert \, T_vT_w=T_{vw}\quad \text{whenever }\,\ell(vw)=\ell(v) + \ell(w) \right\rangle\ .
\end{align}
The \textit{spherical} braid group $B_\sff$, associated to $W_\sff$, can be identified with the subgroup $\langle T_1,\ldots,T_e\rangle\subset B$. Each braid group naturally surjects onto its associated Weyl group via $T_w\mapsto w$, and these surjections are compatible with the inclusions $W_\sff\hookrightarrow W$ and $B_\sff\hookrightarrow B$.

We abbreviate $T_{\Llambda}\coloneqq T_{t_{\Llambda}}$ for each coweight $\Llambda\in \coweightlatticefin$ and $T_i\coloneqq T_{s_i}$ for each affine simple reflection. For each $\Llambda\in \coweightlatticefin$ we also define an element $L_{\Llambda}\in B_\sfex$ by first expressing $\Llambda$ as a difference of dominant vectors $\Llambda=\Llambda_1-\Llambda_2$ and declaring
\begin{align}\label{def:Llambda}
    L_{\Llambda} \coloneqq T_{\Llambda_1}T_{\Llambda_2}^{-1}\ ,
\end{align}
noting that the definition does not depend on the choice of the decomposition of $\Llambda$. Elements of the above form commute with each other, and $\{L_\Llambda\,\vert\, \Llambda\in \coweightlatticefin\}$ is a subgroup of $B_\sfex$ isomorphic to the finite-type coweight lattice.

Since the length functions on $W$ and $W_\sff$ can be considered restrictions of that on $W_\sfex$, the \textit{affine braid group} $B$ coincides with the subgroup $\langle T_0,\ldots,T_e\rangle \subset B_\sfex$, and there is a decomposition $B_\sfex = \Gamma \ltimes B$.

Likewise, the \textit{finite braid group} $B_\sff$ coincides with the subgroup of $\langle T_1,\ldots,T_e \rangle \subset B_\sfex$. We recall the following result.
\begin{proposition}[{\cite[\S3.3]{Mc}}]\label{prop:characterization-extended-affine-braid-group}
	The extended affine braid group $B_\sfex$ is generated by the subgroups $B_\sff$ and $\{L_{\Llambda}\,\vert\,\Llambda\in\coweightlatticefin\}$, subject to the relations
	\begin{align}
		\begin{cases}
			T_iL_{\Llambda}=L_{\Llambda} T_i  & \text{when $s_i(\Llambda)=\Llambda$}\ ,\\[4pt]
			L_{\Llambda}=T_iL_{\Llambda-\Lalpha_i} T_i & \text{when $s_i(\Llambda)=\Llambda-\Lalpha_i$}\ .
		\end{cases}
	\end{align}
\end{proposition}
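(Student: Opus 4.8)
The statement is the Bernstein presentation of $B_\sfex$, and the plan is to derive it from the description $B_\sfex=\Gamma\ltimes B$ recorded above, in which $B=\langle T_0,\dots,T_e\rangle$ is the Artin--Tits group of $W$. Two things are to be checked: that $B_\sff$ together with $\{L_\Llambda\}$ generate $B_\sfex$; and that the two displayed families of relations, adjoined to the defining relations of the subgroups $B_\sff$ and $\{L_\Llambda\}\cong\coweightlatticefin$, give a presentation.

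For generation the one input needed is a length-additivity lemma: if $\Llambda,\Lmu\in\coweightlatticefin$ are dominant, then $\ell(t_\Llambda t_\Lmu)=\ell(t_\Llambda)+\ell(t_\Lmu)$, both sides equalling $\sum_{\alpha\in\rootsetfin^+}(\Llambda+\Lmu,\alpha)$. Hence $T_{\Llambda_1}T_{\Llambda_2}=T_{\Llambda_1+\Llambda_2}$ for dominant coweights, so $L_\Llambda=T_{\Llambda_1}T_{\Llambda_2}^{-1}$ is independent of the chosen decomposition $\Llambda=\Llambda_1-\Llambda_2$ into dominant pieces, and $\Llambda\mapsto L_\Llambda$ is a homomorphism $\coweightlatticefin\to B_\sfex$ restricting to $\Llambda\mapsto T_{t_\Llambda}$ on the dominant cone. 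Every length-zero element $\pi\in\Gamma\subset W_\sfex$ can be written $\pi=t_\Llambda u^{-1}$ with $u\in W_\sff$ and $\Llambda$ dominant (its minuscule translation part); since $\ell(t_\Llambda)=\ell(\pi u)=\ell(u)$ the braid relation gives $T_\pi T_u=T_{t_\Llambda}=L_\Llambda$, i.e.\ $T_\pi=L_\Llambda T_u^{-1}\in\langle B_\sff,\{L_\Llambda\}\rangle$. Similarly the affine generator $T_0$ is recovered from the length-additive factorisation $t_{\Lvarphi}=s_0\,s_\varphi$ in $W$ ($\varphi$ the highest root, $\Lvarphi$ its coroot, $s_\varphi\in W_\sff$) as $T_0=L_{\Lvarphi}T_{s_\varphi}^{-1}$. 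This proves $B_\sfex=\langle B_\sff,\{L_\Llambda\}\rangle$, and the two displayed families of relations now hold in $B_\sfex$ by the same bookkeeping: the first by taking a dominant decomposition $\Llambda=\Llambda_1-\Llambda_2$ with $(\Llambda_j,\alpha_i)=0$, so that $T_i$ commutes with each $T_{\Llambda_j}$ (a one-line length check); the second by the analogous length-additive computation when $(\Llambda,\alpha_i)=1$.

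It remains to see that no further relations are required. Let $\widetilde{B}$ be the abstract group presented as in the statement, and $f\colon\widetilde{B}\to B_\sfex$ the homomorphism it carries, surjective by the previous paragraph. To prove $f$ injective one produces a normal form: the displayed relations allow every $L_\Llambda$ to be moved leftward past the finite braid generators (shifting $\Llambda$ by simple coroots and spawning letters of $B_\sff$ along the way), so that each element of $\widetilde{B}$ can be written $L_\Llambda\beta$ with $\Llambda\in\coweightlatticefin$ and $\beta\in B_\sff$; injectivity of $f$ on such expressions is then elementary, since the translation part of the image in $W_\sfex$ recovers $\Llambda$ and hence $\beta$, and the subgroups $B_\sff$, $\{L_\Llambda\}$ of $\widetilde{B}$ are already known to inject into $B_\sfex$. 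The one genuine obstacle I anticipate is proving this rewriting confluent --- equivalently, that the normal form is unique --- which is exactly Bernstein's lemma; the rest is routine length bookkeeping, and we refer to \cite[\S3.3]{Mc} for a full account.
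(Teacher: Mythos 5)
The paper does not prove this proposition: it is stated with a citation to \cite[\S3.3]{Mc} and no argument is given, so there is no in-paper proof to compare against.

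Your sketch correctly identifies the standard route to the Bernstein presentation: length-additivity of dominant translations (which makes $\Llambda\mapsto L_\Llambda$ a well-defined homomorphism and shows that $L_\Llambda$ does not depend on the dominant decomposition), recovery of the length-zero generators $T_\pi$ for $\pi\in\Gamma$ and of $T_0$ from $B_\sff$ and $\{L_\Llambda\}$, verification of the two displayed relations by further length bookkeeping, and the rewriting-to-normal-form argument for sufficiency of the relations, whose confluence (Bernstein's lemma) you rightly single out as the technical heart and defer to Macdonald --- which is a sensible thing to do for a result the paper itself leaves as a citation. Two small points worth flagging, neither of which affects the correctness of the outline. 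First, for the first displayed relation you want to choose the dominant decomposition $\Llambda=\Llambda_1-\Llambda_2$ with $(\Llambda_j,\alpha_i)=0$ for \emph{both} $j$; this is always possible but needs a one-line step (replace $\Llambda_j$ by $\Llambda_j-(\Llambda_1,\alpha_i)\Llambda_i$, which preserves dominance since $(\Llambda_i,\alpha_k)=0$ for $k\neq i$). Second, with the paper's sign convention $\ell_\Llambda(\alpha)=\alpha+(\Llambda,\alpha)\delta$ (which, as the paper itself notes, is opposite to that of the companion paper), the length-additive factorisation giving $T_0$ should be $t_{\check\varphi}=s_\varphi s_0$ rather than $s_0 s_\varphi$, and correspondingly the translation part of a $\Gamma$-element in your dominant normal form acquires a sign; since one may just as well use $L_{\check\varphi}^{-1}=L_{-\check\varphi}$, this is a matter of bookkeeping rather than substance.
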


\subsection{Actions on coweights}

We recall classical facts and constructions from Coxeter theory, concerning the natural action of the Weyl group $W\subset \GL(\rootlattice)$ on the vector space of coweights $\coweightlattice\otimes\R \simeq \Hom(\rootlattice,\R)$.

The affine root system $\Delta\subset \rootlattice$ cuts a hyperplane arrangement in $\coweightlattice\otimes\R$ that is preserved by the $W$-action, and in particular there is an induced action on the set of maximal chambers (i.e.\ cones which are not proper faces of other cones). It is standard (see e.g.\ \cite{Humphreys_Coxeter}) that this set of chambers is a union of three orbits, which can be represented by the cones
\begin{align}
	\calC^+
        &\coloneqq
        \big\{\theta\in \coweightlattice\otimes \R \;\vert\;
            (\theta, w_0\alpha_i) \geq 0 \text{ for all }i\in I\big\}\ , \\
	\calC^-
        &\coloneqq \big\{\theta\in \coweightlattice\otimes \R \;\vert\;
        (\theta, w_0\alpha_i) \leq 0 \text{ for all }i\in I\big\}\ , \\
    \calC^0
        &\coloneqq \big\{\theta\in \coweightlattice\otimes \R \;\vert\;
        (\theta,\alpha_i) \geq 0\text{ for all }i\in I_\sff\ , \;
        (\theta, \delta)=0\big\}\ .
\end{align}
The action of $W$ on the orbit of $\calC^\pm$ is faithful, while $\calC^0$ is stabilised by ${1}\ltimes\corootlattice_\sff\subset W_\sff\ltimes \corootlattice_\sff = W$ so that $W_\sff\subset W$ acts faithfully and transitively on the orbit $W\cdot \calC^0$. Thus we have a decomposition into distinct chambers
\begin{align}\label{eq:titsdecomp}
    \coweightlattice\otimes \R = \bigcup_{w\in W}w\calC^+\; \cup \bigcup_{w\in W_\sff}w\calC^0\; \cup \bigcup_{w\in W}w\calC^-\ .
\end{align}

Note that the faces of the cones $\calC^0$ (resp.\ $\calC^\pm$) are in bijection with subsets $J\subset I_\sff$ (resp.\ $J\subset I$), and we write
\begin{align}
    \calC^0_J\coloneqq\{\theta\in \calC^0\,\vert\, (\theta,\alpha_i) = 0 \text{ for all }i\in I_\sff\smallsetminus J\}\ .
\end{align}

\medskip

In what follows it is necessary to consider actions of subgroups of $W$ which stabilise the faces of $\calC^0$, namely for $J\subset I_\sff$ we consider the subgroup $W(J)\subset W$ which fixes the face $\calC^0_J$ pointwise. Evidently this contains the parabolic subgroup $\langle s_{i}\,\vert\, i\in I_\sff\smallsetminus J\rangle$ and the coroot lattice $1\ltimes \corootlatticefin$.

Say $J\subset I_\sff$ is \textit{connected} if the full subquiver of $\qvfin$ spanned by $J$ is so. In this case $J$ spans a Dynkin quiver of $\mathsf{ADE}$ type, and hence we can read off the positive integers $(r(J)_i\,\vert\, i\in J)$ from \cite[Table~Z]{kacInfiniteRootSystems} as we did for $J=I_\sff$. For each such $J$, we can thus define a root
\begin{align}
	\alpha_{J}\coloneqq\delta - \sum_{i\in J} r(J)_i \cdot \alpha_i \, \in \rootlattice\ .
\end{align}

\begin{lemma}
    \label{lem:fundamentaldomain-WJ}
    Given $J\subset I_\sff$, a fundamental domain for the action of $W(J)$ on the half-space
    $\{\delta>0\}$ in $\coweightlattice\otimes\R$ is given by
    \begin{align}
        \calD_J\coloneqq\left\{ \theta\in \coweightlattice\otimes
        \R\;\middle\vert\;
        \begin{array}{l}
            (\theta,\delta) > 0\quad\text{and}\quad
            (\theta, \alpha_i) \leq 0\text{ for each }i\in I_\sff\smallsetminus J\\
            (\theta, 2\delta-\alpha_{\Jcomp})\geq 0
            \text{ for each connected component }\Jcomp\subset I_\sff\smallsetminus J
        \end{array}\right\}\ .
    \end{align}

    \begin{proof}
        Given any $\theta\in\{\delta>0\}\subset \coweightlattice\otimes \R$, we
        show that there exists a $w\in W(J)$ with $w\theta\in \calD_J$. Suppose
        first, for simplicity, that $\Jcomp=I_\sff\smallsetminus J$ is connected.

        Let $W'\subset W$ be the subgroup generated by simple reflections
        $\{s_{i}\,\vert\,i\in \Jcomp\}$ and the simple coroots
        $\{1\ltimes \check\alpha_{i}\,\vert\, i\in \Jcomp\}$, and note that $W'$ lies in $W(J)$. We see that
		\begin{align}
			W' = \langle s_i\,\vert\,i\in \Jcomp \rangle \ltimes
            \corootlattice_{\Jcomp} \ ,
		\end{align}
        where $\corootlattice_{\Jcomp}\subset \corootlatticefin$ is the
        sublattice spanned by $\{\alpha_{i}\,\vert\, i\in \Jcomp\}$, so
        that $W'$ is isomorphic to the affine Weyl group determined by the
        Dynkin quiver spanned by $\Jcomp$.

        Thus $W'$ acts on an affine root lattice $\rootlattice'$ of this Dynkin
        type, say with simple roots $\{\beta_i\,\vert\,i\in \Jcomp\}$ and
        $\beta_0$, and primitive positive imaginary root $\delta'=\beta_0 +
        \sum_{i\in \Jcomp} r(\Jcomp)_i \cdot  \beta_i$.

        Given $\theta\in \coweightlattice\otimes \R$ such that $\theta(\delta)>0$, define a coweight $\theta'\in \Hom(\rootlattice',\R)$ by
		\begin{align}
			\theta'(\beta_0)=\theta(\alpha_{\Jcomp})\ \quad\text{and}\quad \theta'(\beta_i)=\theta(\alpha_{i}) \quad\text{for }i\in \Jcomp\ .
		\end{align}
        A straightforward computation shows that for $w\in W'$, we have
		\begin{align}
			w\cdot \theta'(\beta_0)=w\cdot \theta(\alpha_{\Jcomp})\ \quad\text{and}\quad w\cdot\theta'(\beta_i)=w\cdot \theta(\alpha_{i}) \quad\text{for }i\in \Jcomp\ .
		\end{align}
        In particular $\theta'(\delta')=\theta(\delta)>0$, so there exists a $w\in W'$ such that $w\theta'$ lies in the Weyl chamber\footnote{The particular Weyl chamber chosen here is the image of the dominant chamber in the longest word $w_0'\in \langle s_{i}\,\vert\, i\in \Jcomp\rangle$.}
		\begin{align}
			\{\vartheta\in \Hom(\rootlattice',\R)\,\vert\, \vartheta(\beta_i)\leq 0 \text{ for all }i\in \Jcomp\;\text{and}\; \vartheta(2\delta'-\beta_0)\geq 0\} \ .
		\end{align}
        The above conditions translate to $w\theta \in \calD_J$ as required.

        Note that for $w\in W'$ as above and for any $i\in I_\sff$, the coordinates $\theta(\alpha_i)$ and $w\theta(\alpha_i)$ can differ only if $i$ is connected to some vertex in $\Jcomp\subset\qvfin$. Thus in the event that $\Jcomp$ spans a disconnected subquiver of $\qvfin$, say with connected components given by $J_1,\ldots,J_n\subset I_\sff$, we can first run the above algorithm for the pair $(J_1,\theta)$ and find a $w_1\in W$ such that
		\begin{align}
			w_1\theta(2\delta-\alpha_{J_1})\geq 0\ \quad\text{and}\quad
            w_1\theta (\alpha_i)\leq 0 \quad \text{ for all } i\in J_1\ ,
		\end{align}
        and then run the algorithm again for the pair $(J_2,w_1\theta)$ to find a $w_2$, noting that $w_2w_1\theta$ continues to satisfy the above constraint.
    \end{proof}
\end{lemma}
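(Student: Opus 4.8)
\emph{Proof proposal.} The plan is to identify $\calD_J$ with the part over $\{\delta>0\}$ of a closed alcove of a suitable affine Weyl group and then invoke the classical fact that such an alcove is a fundamental domain for that group (see e.g.\ \cite{Humphreys_Coxeter}). Two reductions are needed before this applies: breaking $\Jcomp\coloneqq I_\sff\smallsetminus J$ into connected components, and, on each component, passing to the affine root datum it spans.

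First I would reduce to the case that $\Jcomp$ is connected. Write $\Jcomp=J_1\sqcup\cdots\sqcup J_n$ for its decomposition into connected components of the full subquiver of $\qvfin$ on $\Jcomp$. The inequalities defining $\calD_J$ involve only $(\theta,\delta)$ together with the coordinates $\{(\theta,\alpha_i)\mid i\in\Jcomp\}$, and those attached to distinct $J_r$ use disjoint coordinate sets. For each $r$ the subgroup $W_{J_r}\coloneqq\langle s_i\mid i\in J_r\rangle\ltimes\corootlattice_{J_r}\subseteq W(J)$ fixes $\delta$, and --- since $s_i(\alpha)=\alpha-\langle\Lalpha_i,\alpha\rangle\alpha_i$ and $\ell_{\Llambda}(\alpha)=\alpha+(\Llambda,\alpha)\delta$ --- it alters $(\theta,\alpha_j)$ only for $j\in J_r$ or $j$ adjacent in $\qvfin$ to $J_r$. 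Because distinct components of $\Jcomp$ are pairwise non-adjacent, applying an element of $W_{J_r}$ does not disturb the inequalities attached to the other components; so one may enforce the constraints for the components in turn, and it suffices to treat $\Jcomp$ connected.

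With $\Jcomp$ connected, note that $\delta=\alpha_\Jcomp+\sum_{i\in\Jcomp}r(\Jcomp)_i\alpha_i$, so $\rootlattice_{\Jcomp}^{\sfaf}\coloneqq\Z\alpha_\Jcomp\oplus\bigoplus_{i\in\Jcomp}\Z\alpha_i\subseteq\rootlattice$ is an affine root lattice of the $\mathsf{ADE}$ type spanned by $\Jcomp$, with simple roots $\beta_0\coloneqq\alpha_\Jcomp$ and $\beta_i\coloneqq\alpha_i$ ($i\in\Jcomp$) and primitive imaginary root $\delta$; moreover $W_\Jcomp\coloneqq\langle s_i\mid i\in\Jcomp\rangle\ltimes\corootlattice_\Jcomp\subseteq W(J)$ preserves $\rootlattice_{\Jcomp}^{\sfaf}$ and acts on it as the associated affine Weyl group. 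Restriction along $\rootlattice_{\Jcomp}^{\sfaf}\hookrightarrow\rootlattice$ is $W_\Jcomp$-equivariant; it carries a functional $\theta$ with $(\theta,\delta)>0$ to one, $\theta'$, with $\theta'(\delta)=(\theta,\delta)>0$, and it maps $\calD_J$ onto
\begin{align}
	\bigl\{\,\vartheta : \vartheta(\delta)>0,\ \vartheta(\beta_i)\leq 0\ \text{for }i\in\Jcomp,\ \vartheta(2\delta-\beta_0)\geq 0\,\bigr\}\ ,
\end{align}
of which $\calD_J$ is in fact the entire preimage. This region is the image, under the longest element $w_0'$ of $\langle s_i\mid i\in\Jcomp\rangle$, of the fundamental alcove of $W_\Jcomp$ over $\{\delta>0\}$, hence a closed fundamental domain for $W_\Jcomp$ there. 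Since the coordinates $(\theta,\alpha_i)$ with $i\in J$ are unconstrained on both sides and an interior point of the alcove has trivial $W_\Jcomp$-stabiliser, pulling back shows that $\calD_J$ is a closed fundamental domain for $W_\Jcomp$, and in particular meets every $W(J)$-orbit, on $\{\delta>0\}$; the first reduction then reassembles this into the general case.

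The step I expect to be most delicate is the equivariance of the restriction map and the bookkeeping around the group. For the reflections, equivariance reduces to checking that $s_i(\alpha_\Jcomp)=\alpha_\Jcomp-\langle\Lalpha_i,\alpha_\Jcomp\rangle\alpha_i$ mimics the affine-node reflection of type $\Jcomp$, which is exactly the identity $\langle\Lalpha_i,\alpha_\Jcomp\rangle+\sum_{k\in\Jcomp}r(\Jcomp)_k\langle\Lalpha_i,\alpha_k\rangle=\langle\Lalpha_i,\delta\rangle=0$; a similar but heavier computation handles the translations $\ell_{\check\alpha_i}$, $i\in\Jcomp$. One should also be careful about exactly which subgroup of $W$ the statement refers to: what the above gives is that $\calD_J$ meets every $W(J)$-orbit and is a closed fundamental domain for $\prod_r W_{J_r}$, whereas the full pointwise stabiliser of $\calC^0_J$ contains translation directions in $\corootlatticefin$ beyond those of $\prod_r W_{J_r}$ --- so for that group the term fundamental domain should be understood in the weaker sense of a set meeting every orbit, rather than meeting each orbit exactly once.
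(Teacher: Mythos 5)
Your proof is correct and follows essentially the same route as the paper's: reduce to connected $\Jcomp$ by acting component-by-component (the paper does this last, you do it first), then on a connected $\Jcomp$ compare against an affine root datum of the type spanned by $\Jcomp$ via a restriction map, and invoke the standard alcove decomposition. The only genuine differences are organizational: you realize $\rootlattice^{\sfaf}_\Jcomp=\Z\alpha_\Jcomp\oplus\bigoplus_{i\in\Jcomp}\Z\alpha_i$ literally as a sublattice of $\rootlattice$ with the $\beta_0$-identity $\langle\Lalpha_i,\alpha_\Jcomp\rangle=-\sum_k r(\Jcomp)_k\langle\Lalpha_i,\alpha_k\rangle$ driving equivariance, whereas the paper phrases the same content as an abstract restriction $\theta\mapsto\theta'$; these are the same argument.

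Your closing caveat is worth preserving: the proof (yours and the paper's) establishes only that $\calD_J$ meets every $W(J)$-orbit on $\{\delta>0\}$, and indeed in the strong sense $\calD_J$ is not a fundamental domain for all of $W(J)$ --- translations $\ell_{\Llambda}$ with $\Llambda\in\corootlatticefin$ orthogonal to $\{\alpha_i\mid i\in\Jcomp\}$ preserve $\calD_J$ nontrivially once $J\neq\emptyset$ and $\theta(\delta)\neq 0$. The lemma as stated is thus slightly overclaimed, but the only use downstream (in Theorem~\ref{thm:allstabs}) requires exactly the weaker property, so no harm results.
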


\subsection{Nilpotent representations of preprojective algebras}

The Lie theoretic constructions above manifest themselves naturally in the theory of preprojective representations of $\qv$ and associated stability data, we now recall the constructions.

Consider the \textit{double quiver} $\doubleqv=(I, \Omega\cup \Omega^\ast)$, where
\begin{align}
	\begin{tikzcd}[ampersand replacement=\&]
		\Omega^\ast \coloneqq\Big\{i\ar{r}{e^\ast}\& j \;\Big\vert\; j\ar{r}{e}\& i\, \text{ is an edge in } \Omega\Big\}
	\end{tikzcd}\ .
\end{align}
The \textit{preprojective algebra} $\Pi$ associated to $\qv$ is defined as a quotient of the path algebra of $\doubleqv$ by the relations
\begin{align}
	\sum_{e\in\Omega}(e^\ast e -e e^\ast)=0\ .
\end{align}
Representations (i.e.\ right modules) of this algebra form an Abelian category $\ModPi$, this has a full subcategory $\modPi$ consisting of finite dimensional representations. Writing $e_i\in \Pi$ for the idempotent (i.e.\ lazy path) at the $i$th vertex of $\doubleqv$, each representation $M\in \modPi$ thus has underlying vector space $\bigoplus_{i\in I}e_iMe_i$ and can accordingly be associated a \textit{dimension vector} $\dim(M)\in \N^I$.

In particular since $\qv$ has no edge-loops, there is for any $i\in I$ a unique $\Pi$-module $S_i$ with dimension $\dim(S_i)_j=\delta_{ij}$, i.e.\ the vector space underlying $S_i$ is one-dimensional and supported on the $i$th vertex. Evidently, each such $S_i$ is simple.

We write $\nilpPi$ for the smallest extension-closed full subcategory of $\modPi$ containing the simple modules $S_0,\ldots,S_e$. Thus $M\in \modPi$ lies in $\nilpPi$ if and only if it admits a finite Jordan--H\"older composition series with each composition factor isomorphic to some $S_i$. Such modules are called \textit{nilpotent}, and the terminology is justified by the following evident fact.
\begin{proposition}
    Let $\calI$ be the two-sided ideal of $\Pi$ which is generated by all the arrows of $\doubleqv$. Then $M\in \modPi$ lies in $\nilpPi$ if and only if $M\cdot \calI^\ell = 0$ for some $\ell \geq 1$.
\end{proposition}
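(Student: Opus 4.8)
The plan is to translate both sides into statements about filtrations whose successive layers are annihilated by $\calI$, and then invoke the defining minimality of $\nilpPi$ among extension-closed subcategories containing the simples $S_i$.

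First I would record the elementary identification $\Pi/\calI\cong\bigoplus_{i\in I}\C e_i$. Indeed $\calI$ is the image in $\Pi$ of the two-sided ideal of the path algebra $\C\doubleqv$ generated by all arrows, and the preprojective relations $\sum_{e\in\Omega}(e^\ast e-e e^\ast)$ already lie in that arrow ideal (they are homogeneous of arrow-degree $2$); hence $\Pi/\calI$ is the path algebra of $\doubleqv$ modulo all arrows, namely the semisimple algebra $\bigoplus_{i\in I}\C e_i$. It follows that a $\Pi$-module is annihilated by $\calI$ precisely when it is a direct sum of copies of the $S_i$, and this sum is finite when the module lies in $\modPi$.

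For the implication ``$M\calI^\ell=0\Rightarrow M\in\nilpPi$'' I would use the finite descending chain $M\supseteq M\calI\supseteq M\calI^2\supseteq\cdots\supseteq M\calI^\ell=0$. Each subquotient $M\calI^k/M\calI^{k+1}$ is a module over $\Pi/\calI$, hence by the previous step a finite direct sum of copies of the $S_i$; so $M$ is built from the $S_i$ by finitely many extensions and therefore lies in the extension-closed category $\nilpPi$. For the converse I would introduce the full subcategory $\calN\subseteq\modPi$ of modules $M$ with $M\calI^\ell=0$ for some $\ell\geq1$, and check that it contains each $S_i$ (clear, since $S_i\calI=0$) and is closed under extensions: given $0\to M'\to M\to M''\to0$ with $M'\calI^a=0$ and $M''\calI^b=0$, one has $M\calI^b\subseteq M'$ and hence $M\calI^{a+b}=0$. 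Minimality of $\nilpPi$ then forces $\nilpPi\subseteq\calN$, which is exactly the assertion.

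I do not expect a genuine obstacle here — the statement is essentially bookkeeping — but the two points deserving a line of justification are the identification $\Pi/\calI\cong\bigoplus_{i\in I}\C e_i$ (one must observe that the preprojective relations sit inside the arrow ideal and so do not complicate the quotient) and the finite-dimensionality of the layers $M\calI^k/M\calI^{k+1}$ in the first implication, which is immediate since $M\in\modPi$.
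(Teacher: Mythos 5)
The paper does not actually supply a proof of this proposition; it is labelled an ``evident fact'' and stated without argument, so there is no authorial proof to compare against. Your argument is a correct and clean verification of the unproved claim. Both directions are handled properly: the identification $\Pi/\calI\cong\bigoplus_{i\in I}\C e_i$ is justified by observing that the preprojective relations are arrow-degree $2$ and hence already lie in $\calI$; the forward implication uses the finite filtration $M\supseteq M\calI\supseteq\cdots\supseteq M\calI^{\ell}=0$ with semisimple layers; and the converse correctly exhibits the category $\calN$ of modules killed by some power of $\calI$ as an extension-closed full subcategory containing every $S_i$, so that the defining minimality of $\nilpPi$ forces $\nilpPi\subseteq\calN$. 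The only thing I would add for completeness is a half-sentence noting that the inclusion $M\calI^{b}\subseteq M'$ in the extension-closure step follows because the image of $M\calI^{b}$ under the surjection $M\twoheadrightarrow M''$ is $M''\calI^{b}=0$, and that the semisimple layers $M\calI^{k}/M\calI^{k+1}$ being finite length uses $M\in\modPi$; you flag both points, so the argument is complete.
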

Nilpotent representations form Serre subcategories of $\ModPi$ and $\modPi$, in particular $\nilpPi$ is Abelian. The existence of Jordan--H\"older filtrations shows that the Grothendieck group $\sfK_0(\nilpPi)$ is freely generated (as a $\Z$-module) by the classes $\{[S_0],\ldots,[S_e]\}$, and we fix once and for all the identification
\begin{align}\label{eq:K0rootidentification}
	\begin{tikzcd}[ampersand replacement=\&, row sep=tiny]
		\sfK_0(\nilpPi) \ar{r}{\sim} \&\rootlattice\\
		{[S_i]} \ar[mapsto]{r}\& w_0(\alpha_{i})
    \end{tikzcd}\ .
\end{align}
Explicitly, the involution $\kappa\in \Gamma$ allows us to compute the classes as
\begin{align}\label{eq:simpleclasses}
    [S_0]\mapsto 2\delta-\alpha_0\quad \text{and} \quad [S_i]\mapsto -\alpha_{\kappa(i)} \quad \text{for } i\in I_\sff\ .
\end{align}

\subsection{Braid autoequivalences}\label{subsec:reflection-functors}

Following \cite{BIRS09,SY13} we recall how the action $W\circlearrowright \rootlattice$ can be categorified to an action of the braid group $B$ on the bounded derived category $\catDb(\nilpPi)$, noting the canonical identifications $\sfK_0(\catDb(\nilpPi))=\sfK_0(\nilpPi)\simeq \rootlattice$.

The generator $s_i\in W$ acts on $\rootlattice$ via a simple reflection, the corresponding lift $T_i\in B$ acts via a \textit{derived reflection functor}. We shall now describe its construction. Let $e_i$ be the primitive idempotent of $\Pi$ corresponding to the vertex $i\in I$. This defines a two-sided ideal $I_i$ of $\Pi$ by
\begin{align}
	I_i\coloneqq \Pi(1-e_i)\Pi\ .
\end{align}
As $\qv$ has no edge loops, $I_i$ is a codimension $1$ ideal and the quotient $\Pi/I_i$ is precisely the simple module $S_i$. Furthermore each $I_i$ is a \textit{classical tilting} $\Pi$-module (i.e.\ tilting $\Pi$-module of projective dimension $\leq 1$), and all other tilting modules can be obtained as products of the $I_i$'s as follows.
\begin{theorem}[{\cite[Theorems~2.20, 2.21, and 2.26]{SY13}, \cite[Theorem~6.2]{IR08}}]\label{thm:reflection-functors}
    Each element $w\in W$ can be assigned a unique ideal $I_w\subset \Pi$ of finite codimension, such that the following conditions are satisfied.
	\begin{enumerate}\itemsep0.2cm
        \item The neutral element $e\in W$ is assigned the trivial ideal $I_e\coloneqq\Pi$.
        \item The simple reflections $s_i$ are assigned the ideals $I_{s_i}\coloneqq I_i$.
        \item Corresponding to each factorisation $w=uv$ with $\ell(w)=\ell(u)+\ell(v)$, there are natural isomorphisms of $(\Pi,\Pi)$-bimodules
		\begin{align}
			I_w \, \simeq \, I_u\cdot I_v \, \simeq\, I_u \otimes_\Pi I_v \, \simeq\, I_u\otimes_\Pi^\LL I_v\ .
		\end{align}
	\end{enumerate}
    Each $I_w$ is a tilting $\Pi$--module of projective dimension at most one, and multiplication in $\Pi$ gives a canonical isomorphism of algebras $\End_{\Pi}(I_w)\simeq \Pi$. Moreover, the above correspondence is a bijection between $W$ and the set of isomorphism classes of classical tilting $\Pi$-modules.
\end{theorem}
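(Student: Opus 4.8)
The plan is to present $w\mapsto I_w$ as an instance of the tilting-mutation calculus for preprojective algebras, whose engine is the hypothesis that $\qv$ is non-Dynkin (here in fact affine): then $\Pi$ is a prime Noetherian algebra of global dimension $2$, and $\catDb(\nilpPi)$ is a $2$-Calabi--Yau triangulated category --- functorially $\Ext^2_\Pi(M,N)\cong D\Hom_\Pi(N,M)$ on finite-dimensional modules --- in which each simple $S_i$ is a $2$-spherical object. Moreover the family $\{S_i\}_{i\in I}$ forms an $\sfA$-type configuration: adjacent vertices of $\qv$ give a one-dimensional $\Ext^1_\Pi(S_i,S_j)$, while non-adjacent ones give completely orthogonal spherical objects. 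The single quiver $\widetilde{\sfA}_1$ is exceptional (there $\dim\Ext^1_\Pi(S_0,S_1)=2$), but then $W$ is the infinite dihedral group, reduced expressions are unique, and Step~2 below is vacuous, while Steps~1 and 3 go through unchanged.

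\emph{Step 1: generators and length-additive products.} First I would establish the core homological input (due to Iyama--Reiten) that each $I_i=\Pi(1-e_i)\Pi$ is a classical tilting module of projective dimension $\leq 1$ with $\End_\Pi(I_i)\simeq\Pi$ via multiplication; the mechanism is the exact sequence $0\to I_i\to\Pi\to S_i\to 0$ combined with $\mathrm{pd}_\Pi S_i=2$ (the preprojective Koszul complex resolves $S_i$ precisely because $\qv$ is non-Dynkin) and the $2$-Calabi--Yau structure, together with primeness of $\Pi$ for the endomorphism ring. Equivalently $-\otimes_\Pi^{\LL}I_i$ is an autoequivalence of $\catDb(\Pi)$ which restricts to $\catDb(\nilpPi)$, acting on $\sfK_0(\nilpPi)\simeq\rootlattice$ as the reflection $s_i$ under the identification \eqref{eq:K0rootidentification}, and which one identifies --- up to shift --- with the Seidel--Thomas twist along $S_i$. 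I would then prove the one-step compatibility: whenever $\ell(s_iw')=\ell(w')+1$ and $I_{w'}$ is tilting of projective dimension $\leq 1$ with $\End_\Pi(I_{w'})\simeq\Pi$, one has $\Tor_{>0}^\Pi(I_i,I_{w'})=0$, the multiplication map $I_i\otimes_\Pi I_{w'}\to I_iI_{w'}$ is an isomorphism, and $I_iI_{w'}$ is again tilting of projective dimension $\leq 1$ with endomorphism ring $\Pi$ --- the inductive control being furnished by $\sfK_0(\nilpPi)\simeq\rootlattice$, which pins down the class of the finite-dimensional module $\Pi/I_iI_{w'}$ and hence the required vanishing. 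Iterating over a reduced word yields, for each $w\in W$ and each reduced expression $w=s_{i_1}\cdots s_{i_n}$, a tilting module $I_{i_1}\otimes_\Pi\cdots\otimes_\Pi I_{i_n}$ with all the stated properties, together with the comparison isomorphisms $I_uI_v\simeq I_u\otimes_\Pi I_v\simeq I_u\otimes_\Pi^{\LL}I_v$ for $\ell(uv)=\ell(u)+\ell(v)$.

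\emph{Step 2: well-definedness --- the crux.} By Matsumoto's theorem, invariance of $I_w$ under change of reduced expression reduces to a single braid move; for affine $\mathsf{ADE}$ quivers other than $\widetilde{\sfA}_1$ these are the relations $I_i\otimes_\Pi I_j\simeq I_j\otimes_\Pi I_i$ for non-adjacent $i,j$ and $I_i\otimes_\Pi I_j\otimes_\Pi I_i\simeq I_j\otimes_\Pi I_i\otimes_\Pi I_j$ for adjacent $i,j$. Conceptually these are the commutation of orthogonal spherical twists and the $\sfA_2$-braid relation of Seidel--Thomas; the difficulty is that passing from such an identity of autoequivalences of $\catDb(\nilpPi)$ to an \emph{isomorphism of $\Pi$-bimodules} is not automatic, because the finite-dimensional modules do not generate $\catDb(\Pi)$. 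I therefore expect the technical heart of the proof to be the direct bimodule computation, following \cite{BIRS09,IR08,SY13}: identifying $\Pi/(I_iI_jI_i)$ and $\Pi/(I_jI_iI_j)$ with one and the same explicit module supported on $\{i,j\}$, and likewise in the commuting case. Once this is settled, $I_w$ is well defined for all $w\in W$, with $I_e=\Pi$ and $I_{s_i}=I_i$ by construction, and the product formula of Step~1 globalises.

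\emph{Step 3: the bijection.} Injectivity of $w\mapsto I_w$ is then immediate: $I_v\simeq I_w$ forces $-\otimes_\Pi^{\LL}I_v\simeq-\otimes_\Pi^{\LL}I_w$ as autoequivalences, hence equality of the induced automorphisms of $\sfK_0(\nilpPi)\simeq\rootlattice$; these are $v$ and $w$ (up to inversion) acting in $\GL(\rootlattice)$, so $v=w$ by faithfulness of $W\hookrightarrow\GL(\rootlattice)$. For surjectivity one invokes transitivity of tilting mutation: every classical tilting module of projective dimension $\leq 1$ in the mutation component of $\Pi$ is obtained from $\Pi$ by finitely many mutations, each of which is multiplication by some $I_i$ by Step~1, hence equals some $I_w$; the remaining point --- that for $\qv$ affine this component exhausts all classical tilting modules --- is where the affine hypothesis is genuinely needed, and follows from the classification of such modules (equivalently, of the non-commutative crepant resolutions of $\C^2/G$ lying in this component). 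The principal obstacle is the bimodule braid relation of Step~2; the completeness statement closing Step~3 is the secondary difficulty.
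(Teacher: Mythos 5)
This is a citation: the paper does not prove the theorem but attributes it to Sekiya--Yamaura and Iyama--Reiten, so there is no in-house argument to compare against. Measured against the actual proofs in those references, your outline of Steps~1 and~2 is the right one --- the one-step compatibility $I_iI_{w'}\simeq I_i\otimes_\Pi^{\LL}I_{w'}$ when $\ell(s_iw')=\ell(w')+1$, and the reduction via Matsumoto to the two bimodule braid identities $I_iI_j=I_jI_i$ and $I_iI_jI_i=I_jI_iI_j$, are indeed the engine in BIRS09~\S III.1 and its descendants. You correctly flag that the spherical-twist identity of Seidel--Thomas on $\catDb(\nilpPi)$ does not immediately yield the bimodule statement and that a direct calculation (or a filtration argument as in BIRS) is required; since you do not carry it out, that part is an IOU, not a proof, but it is the right IOU.

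Step~3 is where you go astray. First, injectivity: the argument ``$I_v\simeq I_w$ implies $-\otimes^{\LL}_\Pi I_v\simeq -\otimes^{\LL}_\Pi I_w$'' needs the map $w\mapsto [-\otimes^{\LL}_\Pi I_w]$ to factor through isomorphism classes of bimodules, which is fine, but you also need that the induced automorphism of $\sfK_0(\nilpPi)\simeq\rootlattice$ actually \emph{is} $w$ --- that is precisely the statement being established inductively in Step~1, so stating it here is circular unless you record it there (IR08 and SY13 track the dimension vector of $\Pi/I_w$, which is $\sum_{\alpha\in\Delta^+\cap w(-\Delta^+)}\alpha$, and read $w$ off from that). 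Second and more seriously, surjectivity: the ``classification of NCCRs of $\C^2/G$'' does not give what you want. Uniqueness of NCCRs says the endomorphism algebra of any such tilting module is Morita equivalent to $\Pi$ --- a statement that is already part of the theorem's conclusion --- and says nothing about which tilting modules occur. Both IR08 (Theorem~6.2, for completed preprojective algebras of \emph{arbitrary} non-Dynkin quivers) and SY13 (Theorem~2.26) prove surjectivity by showing directly that any classical tilting module, suitably normalised, has a simple top $S_i$ that can be peeled off by $I_i$, so it lies in the mutation orbit of $\Pi$ and hence is some $I_w$. Your side remark that affineness is ``genuinely needed'' is also misplaced in this form: the $W$--bijection is not special to the affine case; in IR08 it holds for all non-Dynkin quivers over the completed algebra, and the role of affineness in SY13 (and in this paper) is to guarantee that the uncompleted $\Pi$ is Noetherian, so that the tilting machinery and moduli-theoretic arguments apply without completion.
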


As a consequence, for each $w\in W$ the corresponding classical tilting module determines mutually quasi-inverse autoequivalences of $\catDb(\ModPi)$ given by
\begin{align}
	\R T_w \coloneqq \R\Hom_{\Pi}(I_w,-)\quad \text{and} \quad \LL T_w\coloneqq (-)\otimes^\LL_{\Pi}I_w\ .
\end{align}
These functors restrict to autoequivalences of the categories $\catDb(\modPi)$ and $\catDb(\nilpPi)$, see \cite[Theorem~2.3 and Lemma~2.22]{SY13}. Further, Theorem~\ref{thm:reflection-functors} also gives relations between the functors, and we have the following result.

\begin{theorem}[{{\cite[Proposition~2.27]{SY13}\cite[Theorem~6.6-(2)]{IR08}}}]\label{thm:twist}
    Each factorisation $w=uv$ in $W$ with $\ell(w)=\ell(u)+\ell(v)$ gives rise to natural isomorphisms of functors
	\begin{align}
		\R T_w \simeq \R T_u \circ \R T_v \quad \text{and}\quad \LL T_w \simeq \LL T_v \circ \LL T_u\ .
	\end{align}
    Consequently, the assignment $T_w\mapsto \R T_w$ defines a (left) action of the affine braid group $B$ on $\catDb(\ModPi)$, which restricts to actions on $\catDb(\modPi)$ and $\catDb(\nilpPi)$.
\end{theorem}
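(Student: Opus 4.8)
The plan is to construct the $B$-action straight from the presentation in \eqref{eq:defBex}: a (weak) left action of $B$ on $\catDb(\ModPi)$ --- that is, a homomorphism from $B$ to the group of isomorphism classes of autoequivalences --- amounts to autoequivalences $\Phi_w$ indexed by $w\in W$ together with isomorphisms $\Phi_v\circ\Phi_w\simeq\Phi_{vw}$ whenever $\ell(vw)=\ell(v)+\ell(w)$, with no further relations imposed. The candidate is of course $\Phi_w\coloneqq\R T_w$. The only input I would need from representation theory is Theorem~\ref{thm:reflection-functors}; recall from it that each $I_w$ is a classical tilting $\Pi$-module carrying a canonical identification $\End_\Pi(I_w)\simeq\Pi$, so that, as recalled above, $\R T_w=\R\Hom_\Pi(I_w,-)$ and $\LL T_w=(-)\otimes_\Pi^\LL I_w$ are mutually quasi-inverse autoequivalences of $\catDb(\ModPi)$, and that $I_e=\Pi$, whence $\R T_e\simeq\id\simeq\LL T_e$.

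First I would settle the composition law for the $\LL T$'s, since there it follows most cleanly from associativity of $\otimes_\Pi^\LL$. Fix a factorisation $w=uv$ with $\ell(w)=\ell(u)+\ell(v)$. Theorem~\ref{thm:reflection-functors} supplies an isomorphism of $(\Pi,\Pi)$-bimodules $I_w\simeq I_u\otimes_\Pi^\LL I_v$, and for any $M\in\catDb(\ModPi)$ one has
\begin{align}
\LL T_v\bigl(\LL T_u(M)\bigr)=(M\otimes_\Pi^\LL I_u)\otimes_\Pi^\LL I_v\;\simeq\;M\otimes_\Pi^\LL(I_u\otimes_\Pi^\LL I_v)\;\simeq\;M\otimes_\Pi^\LL I_w=\LL T_w(M)\ ,
\end{align}
naturally in $M$; hence $\LL T_w\simeq\LL T_v\circ\LL T_u$. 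Passing to quasi-inverses immediately yields $\R T_w\simeq(\LL T_v\circ\LL T_u)^{-1}\simeq(\LL T_u)^{-1}\circ(\LL T_v)^{-1}\simeq\R T_u\circ\R T_v$, which is the stated isomorphism for the $\R T$'s. (One may instead argue for the $\R T$'s directly via derived tensor--Hom adjunction, but then one must be attentive to which copy of $\Pi$ acts on which side of $I_u$ and $I_v$.)

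With these isomorphisms in hand, the action is assembled formally. Reindexing the factorisation $w=uv$ as $vw$ above shows that $\R T_{vw}\simeq\R T_v\circ\R T_w$ whenever $\ell(vw)=\ell(v)+\ell(w)$, and together with $\R T_e\simeq\id$ this is exactly the data required by the presentation \eqref{eq:defBex} for a left action of $B$ on $\catDb(\ModPi)$. It restricts to $\catDb(\modPi)$ and $\catDb(\nilpPi)$ because each $\R T_w$ preserves these subcategories (cf.\ \cite{SY13}). Symmetrically, $T_w\mapsto\LL T_w$ reverses the order of composition, and so defines a right action.

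I expect the main obstacle to be not conceptual but organisational: essentially all of the content sits in Theorem~\ref{thm:reflection-functors} (the existence, tilting property, and multiplicativity of the ideals $I_w$), which I would take as given; what remains is to keep the left and right $\Pi$-module structures on the bimodules $I_w$ straight so that the associativity and adjunction isomorphisms invoked above are the correct ones, and --- if one wants a genuinely coherent $B$-action rather than a weak one --- to check that the natural isomorphisms $\R T_{vw}\simeq\R T_v\circ\R T_w$ can be chosen compatibly over all factorisations, a routine but laborious verification which I would either perform directly or import from \cite{SY13}.
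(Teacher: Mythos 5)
Your proposal is correct and follows essentially the same route as the cited references \cite[Proposition~2.27]{SY13}, \cite[Theorem~6.6-(2)]{IR08}: all the content is indeed concentrated in the bimodule isomorphism $I_w \simeq I_u \otimes^\LL_\Pi I_v$ of Theorem~\ref{thm:reflection-functors}, from which the composition law for $\LL T_{(-)}$ follows by associativity of $\otimes^\LL_\Pi$, the law for $\R T_{(-)}$ by passing to quasi-inverses, and the braid action by the presentation~\eqref{eq:defBex}. The one caveat you flag yourself --- coherence of the isomorphisms $\R T_{vw}\simeq\R T_v\circ\R T_w$ across all factorisations, needed for a strict rather than merely weak action --- is the only nontrivial residue, and is handled in the references; the rest is bookkeeping of the $(\Pi,\Pi)$-bimodule structures exactly as you describe.
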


\begin{remark}
    In the Grothendieck group $\sfK_0(\nilpPi)\simeq \rootlattice$, the complex $\R T_w(S_i)$ has class $w\cdot [S_i]$ for all $i\in I$ and $w\in W$.  Thus the action $B\circlearrowright \catDb(\nilpPi)$ naturally induces the action $W\circlearrowright \rootlattice$.
\end{remark}

It is straightforward to extend the above action to an action of $B_\sfex=\Gamma\ltimes B$, by considering the natural action of $\Gamma$ on $\qv$ by outer automorphisms. Thus any $\pi\in\Gamma$ gives rise to an algebra automorphism $\pi\colon \Pi\to \Pi$, which produces autoequivalences of the associated module categories and their derived categories.

\begin{corollary}\label{cor:twist}
	The actions of $B$ and $\Gamma$ on $\catDb(\ModPi)$ give rise to an action of the extended affine braid group $B_\sfex$ on $\catDb(\ModPi)$, which restricts to actions on the subcategories $\catDb(\modPi)$ and $\catDb (\nilpPi)$.
\end{corollary}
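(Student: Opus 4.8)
The plan is to assemble the extended action from the two pieces already in hand—the $B$-action of Theorem~\ref{thm:twist} and the $\Gamma$-action by functoriality of $\Pi\mapsto\Pi$ under diagram automorphisms—by checking that they are compatible with the semidirect product relations $B_\sfex=\Gamma\ltimes B$. First I would make the $\Gamma$-action precise: each $\pi\in\Gamma$ permutes the vertices and arrows of $\doubleqv$ and preserves the preprojective relation $\sum_{e\in\Omega}(e^\ast e-ee^\ast)=0$, hence induces an algebra automorphism $\pi\colon\Pi\to\Pi$; restriction of scalars along $\pi^{-1}$ then gives an exact autoequivalence $\pi_\ast$ of $\ModPi$ preserving $\modPi$ (dimension vectors are permuted by $\pi$) and $\nilpPi$ (the generating simples $S_i$ are permuted, $\pi_\ast S_i\simeq S_{\pi(i)}$), whence an autoequivalence of $\catDb(\ModPi)$ and its two subcategories. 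Since $(\pi\sigma)_\ast\simeq\pi_\ast\sigma_\ast$ canonically, this already gives a $\Gamma$-action.

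The essential point is then the mixed relation: for $\pi\in\Gamma$ and a simple reflection $s_i\in W$, the automorphism $\pi$ carries the idempotent $e_i$ to $e_{\pi(i)}$ and hence the ideal $I_i=\Pi(1-e_i)\Pi$ to $I_{\pi(i)}$; more generally $\pi(I_w)=I_{\pi w\pi^{-1}}$ for all $w\in W$, using the bijection of Theorem~\ref{thm:reflection-functors} together with the compatibility $I_{uv}\simeq I_u\otimes_\Pi I_v$ for reduced factorisations (which $\pi$ respects, as $\ell(\pi w\pi^{-1})=\ell(w)$). From this one gets a natural isomorphism $\pi_\ast\circ\R T_w\simeq \R T_{\pi w\pi^{-1}}\circ\pi_\ast$ of functors on $\catDb(\ModPi)$—concretely, $\R\Hom_\Pi(I_w,-)$ transported across the automorphism $\pi$ becomes $\R\Hom_\Pi(\pi(I_w),-)=\R\Hom_\Pi(I_{\pi w\pi^{-1}},-)$. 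Declaring $T_\gamma\mapsto \pi_\ast$ for $\gamma\in\Gamma$ and $T_w\mapsto\R T_w$ for $w\in W$, one checks these assignments satisfy the defining relations of $B_\sfex=\Gamma\ltimes B$, so by the universal property they extend to an action of $B_\sfex$ on $\catDb(\ModPi)$.

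Finally, one observes that every functor in sight—$\R T_w$, $\LL T_w$, and $\pi_\ast$—preserves $\catDb(\modPi)$ and $\catDb(\nilpPi)$: for the $\R T_w$ this is Theorem~\ref{thm:twist}, and for $\pi_\ast$ it follows from $\pi_\ast S_i\simeq S_{\pi(i)}$ and exactness. Hence the $B_\sfex$-action restricts to the two subcategories, completing the proof. The only mildly delicate step is verifying the conjugation identity $\pi(I_w)=I_{\pi w\pi^{-1}}$ and upgrading it to a \emph{natural} isomorphism of functors compatible with the tensor-product relations of Theorem~\ref{thm:reflection-functors}; once that coherence is in place, the semidirect-product bookkeeping is routine.
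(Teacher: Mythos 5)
Your proposal is correct and matches the paper's (unstated) reasoning. The paper offers no proof, merely remarking that each $\pi\in\Gamma$ induces an algebra automorphism of $\Pi$, hence autoequivalences of the module and derived categories, and that the extension to a $B_\sfex$-action is ``straightforward''; your write-up supplies precisely the details being elided, namely the $\Gamma$-action by restriction of scalars, the ideal identity $\pi(I_w)=I_{\pi w\pi^{-1}}$ (which follows from $\pi(I_i)=I_{\pi(i)}$, length preservation, and the tensor-compatibility in Theorem~\ref{thm:reflection-functors}), the resulting natural isomorphism $\pi_\ast\circ\R T_w\simeq\R T_{\pi w\pi^{-1}}\circ\pi_\ast$ verifying the semidirect-product relation, and the preservation of the subcategories $\catDb(\modPi)$ and $\catDb(\nilpPi)$ via $\pi_\ast S_i\simeq S_{\pi(i)}$.
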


Extending the notation of Theorem~\ref{thm:twist}, given any $w\in W_\sfex$ we write $\R T_w$ and $\LL T_w$ for the autoequivalences associated to the elements $T_w,T_w^{-1}\in B_\sfex$ respectively. For the simple reflections $1\ltimes s_i$ for $i\in I$, it is convenient to abbreviate the associated functors as $\R T_i$, $\LL T_i$ respectively. Likewise for the elements $t_\Llambda\in W_\sfex$ associated to coweights $\Llambda\in \coweightlattice$, we abbreviate the associated functors as $\R T_\Llambda$ and $\LL T_\Llambda$.

Likewise given a coweight $\Llambda=\Llambda_1-\Llambda_2$ where $\Llambda_1,\Llambda_2$ are dominant, we introduce the notation
\begin{align}
	\R L_{\Llambda}\coloneqq\R T_{\Llambda_1}\circ \LL T_{\Llambda_2}
\end{align}
for the autoequivalence corresponding to the element $L_\Llambda=T_{\Llambda_1}T_{\Llambda_2}^{-1}\in B_\sfex$.

\subsection{Harder--Narasimhan strata}

Vectors in the coweight space $\coweightlattice\otimes \R$, viewed as additive functions $\sfK_0(\nilpPi)\to \R$ on the Grothendieck group, can be used to construct stability conditions on the category $\nilpPi$ (see e.g. \cite[\S2.3]{Ginzburg_Quiver}).
\begin{definition}\label{def:slopefunction}
	Given $\theta\in \coweightlattice\otimes \R$, the \textit{$\theta$--slope} of a non-zero module $M\in \nilpPi$ is defined as
	\begin{align}
		\mu_{\theta}(M) \coloneqq \frac{( \widetilde{w}_0(\theta)\,,\,[M])}{\text{dim}(M)}\ ,
	\end{align}
    where $\text{dim}(M)$ is the dimension of the vector space underlying $M$ (equivalently, the number of simples appearing in its composition
    series).
\end{definition}
Explicitly, if $S_i$, for $i\in I$, appears $d_i$ times in a Jordan--Holder filtration of $M$ and $\theta=\sum_i \theta_i\Lomega_i$, then the slope is computed as
\begin{align}
	\mu_\theta(M)=\frac{\theta_0d_0+\cdots+\theta_ed_e}{d_0+\cdots+d_e}\ .
\end{align}

\begin{definition}
    We say the module $M$ is \textit{$\theta$--semistable} if the inequality $\mu_{\theta}(M')\leq \mu_{\Ltheta}(M)$ holds for every non-zero submodule $M'\subseteq M$. If the inequality is strict for every proper submodule, then we say $M$ is \textit{$\theta$--stable}.
\end{definition}

Given $\theta\in \coweightlattice\otimes\R$, each finite--dimensional nilpotent $\Pi$-module $M$ has a unique filtration
\begin{align}\label{eq:HN-filtration-representations}
    0 = M_{s+1}\subset M_s\subset \cdots\subset M_1=M\ ,
\end{align}
called its \textit{Harder-Narasimhan (HN) filtration}, such that the composition factors $M_i/M_{i+1}$ (called \textit{HN} factors) are all $\theta$--semistable, and the slopes $\mu_i \coloneqq \mu_{\theta}(M_i/M_{i+1})$ satisfy $\mu_s>\cdots>\mu_2>\mu_1$.

\begin{definition}\label{def:kappa}
	Given $\theta\in \coweightlattice\otimes\R$ and an interval $\kappa\subset\Q$, the \textit{Harder--Narasimhan stratum} $\nilp^{\kappa}_{\theta}\, \Pi$ is the full subcategory of $\nilpPi$ containing modules whose Harder--Narasimhan factors all have $\theta$--slopes in $\kappa$.
\end{definition}

We suppress $\theta$ from notation when its choice is clear, and use obvious shorthands (such as $\nilp^{\geqslant 0}\, \Pi$ for the stratum corresponding to $\kappa=[0,\infty)$) when convenient.

\section{Kleinian orbifolds and the stability manifold}\label{sec:perverse-coherent-sheaves}

The finite group $G\subset \SL(2,\C)$ acts on the affine space $\A^2$, and the quotient $\ssv\coloneqq \A^2/G$ has an isolated singularity at the origin.

For the remainder of the paper we fix a diagonal torus $A\subset \GL(2,\C)$ centralising $G$, so that $\ssv$ is equipped with an $A$-action. For $G$ of type $\sfA$, the diagonal torus $A$ could be $\{1\}$, $\G_m$, or $\G_m\times\G_m$, while for $G$ of types $\sfD$ or $\sfE$, it could be $\{1\}$ or $\G_m$.

\subsection{Kleinian orbifolds}\label{subsec:kleinian-orbifolds}

The Kleinian singularity $\ssv$ has an $A$-equivariant crepant resolution $\pi\colon \rsv\to \ssv$, which may be constructed as a Nakajima quiver variety parametrising $\delta$-dimensional stable representations of the affine preprojective algebra $\Pi$ (determined by $G$ as in \S\ref{sec:affine-quivers}).

The resolution has a connected exceptional fibre $C$ over the singular point, with the underlying reduced subscheme $C_\mathsf{red}$ isomorphic to a union of $\PP^1$s with nodal intersections. By the celebrated McKay correspondence \cite{GSV_McKay} these irreducible components $C_i\subset C_\text{red}$ can be naturally indexed over the vertices $i\in I_\sff$ of the finite-type Dynkin quiver $\qvfin$ associated to $G$, in a way that distinct vertices $i,j\in I_\sff$ are connected by an arrow in $\qvfin$ if and only if $C_i\cap C_j\neq \emptyset$.

There is in fact a family of Deligne--Mumford stacks $\calX_J$ (indexed over subsets $J\subseteq I_\sff$) which crepantly and $A$-equivariantly resolve the singular surface $\ssv$. These \textit{non-commutative resolutions}, which we call \textit{Kleinian orbifolds}, are constructed as follows -- the choice of $J\subseteq I_\sff$ determines an $A$-equivariant factorisation of $\pi$ given by
\begin{align}
	\begin{tikzcd}[ampersand replacement=\&]
	    \rsv \arrow[rr, "\pi"] \arrow[rd, "\pi_J"']
	    \&\& \ssv \\
	    \&\psv \arrow[ru, "\varpi_J"']\&
	\end{tikzcd}
\end{align}
where $\pi_J$ blows down the curves $\{C_i\,\vert\, i\in \Jcomp\}$, i.e.\ $\pi_J$ contracts each $C_i$ for $i\in \Jcomp\coloneqq I_\sff\smallsetminus J$ to a point and is an isomorphism away from this locus. The surface $\psv$ thus obtained has isolated Kleinian singularities. Consequently, there exists a \textit{canonical} stack $\calX_J$ associated to it --- this is a smooth Deligne--Mumford stack whose coarse moduli space is $\psv$, constructed as in \cite[Note~2.9 and proof of Proposition~2.8]{vistoliIntersectionTheoryAlgebraic}. The composite $\calX_J\to \psv\to \ssv$ is a crepant resolution.

The orbifold $\calX_J$, which adds stacky structure to the singularities of $\psv$, can equivalently realised by equipping $\psv$ with a non-commutative structure sheaf $\mathcal{A}_J$. The quiver variety $\rsv$ has a tautological vector bundle $\calP=\bigoplus_{i\in I}\calP_i$ that is globally generated and tilting, with $\calP_0=\scrO_\rsv$ and each $\calP_i$ indecomposable of rank $\delta_i$. Considering the sub-bundle 
\begin{align}
	\calP_J\coloneqq\bigoplus_{i\in I\smallsetminus J}\calP_i\ ,
\end{align}
Bertsch \cite[Theorem~1.4]{bertschSemiClassicalNoncommutativeResolutions} shows that the sheaf of $\calO_{\psv}$-algebras
\begin{align}
	\calA_J\coloneqq\pi_{J\ast}\calEnd_\rsv(\calP_J)
\end{align}
is Morita equivalent to $\calX_J$ i.e.\ $\catCoh(\calA_J)\simeq \catCoh(\calX_J)$ as (stacks of) Abelian categories. In particular the choice $J=\emptyset$ recovers the special case $\pi_\emptyset=\pi$, where $\calX_\emptyset=[\A^2/G]$ and $\Gamma(\ssv,\mathcal{A}_\emptyset)=\Pi$, and the Morita equivalences $\catCoh([\A^2/G]) \simeq \catmod(\C[\A^2]\ast G) \simeq \catmod(\Pi)$ are well known \cite{reitenTwodimensionalTameMaximal}.

\subsection{Derived equivalences}\label{subsec:psheavesres}

All crepant resolutions of $\psv$ are known to have equivalent derived categories, and specific equivalences between $\catDb(\rsv)$ and $\catDb(\calX_J)\simeq \catDb(\calA_J)$ can be chosen by noting that the morphism $\pi_J$ and the bundle $\calP_J$ satisfy the hypotheses of \cite[Propositions~3.2.7 and 3.3.1]{VdB_Flops}.

\begin{theorem}[{\cite[Proposition~3.3.1]{VdB_Flops}}]\label{thm:VdBmainthm}
    There are $A$--equivariant quasi-inverse equivalences
    \begin{align}
        \begin{tikzcd}[ampersand replacement=\&, column sep=10em]
            \catDb(\rsv)
            \arrow[r, "{\R\pi_{J\ast}\R\calHom(\calP_J,-)}", shift left=1]
            \& \catDb(\calA_J)
            \arrow[l, "{\pi_J^{-1}(-)
            \otimes^{\LL}_{\pi_J^{-1}(\calA_J)}\calP_J}", shift left=1]
        \end{tikzcd}\label{vdbequiv}
	\end{align}
    under which the heart of the standard $t$-structure $\catCoh(\calA_J)$ is identified with the heart of a \textit{perverse $t$-structure} $\catP(\rsv/\psv)\subset\catDb(\rsv)$, defined as the positive tilt of $\catCoh(\rsv)$ in the torsion pair
    \begin{align}\label{eq:pervtorsion}
    	\begin{split}
        \scrT_J&\coloneqq \left\{\calF\in \catCoh(\rsv)\, \middle\vert\,
            \begin{matrix}
                \R^1\pi_{J\ast}(\calF)=0\,\text{ and } \Hom(\calF, \calG)=0\\[2pt]
                 \text{for all $\calG\in \catCoh(\rsv)$ with }\R\pi_{J\ast}(\calG)=0
            \end{matrix}\right\}\ ,\\[2pt]
        \scrF_J&\coloneqq \left\{\calF\in \catCoh(\rsv)\;\middle\vert\, \R^0\pi_{J\ast} (\calF)=0\right\}\ .
        \end{split}
    \end{align}
\end{theorem}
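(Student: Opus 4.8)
The plan is to deduce the statement from Van den Bergh's machinery for flops and noncommutative crepant resolutions, so that the substance of the proof is the verification of hypotheses together with one heart identification. I would first record the geometric input: $\pi_J\colon\rsv\to\psv$ is a projective birational morphism of normal surfaces, with $\rsv$ smooth and $\psv$ carrying isolated Gorenstein (hence rational) Kleinian singularities, so that $\R\pi_{J\ast}\scrO_\rsv=\scrO_\psv$; moreover every fibre of $\pi_J$ has dimension at most $1$, the exceptional fibre being a union of $\PP^1$'s indexed by $\Jcomp$. These are precisely the conditions under which Van den Bergh's perverse $t$-structures are defined and, by \cite[\S3]{VdB_Flops}, realised as tilts of $\catCoh(\rsv)$.

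Next I would check that $\calP_J$ is a tilting bundle for $\pi_J$: it is locally free, being a direct summand of the tautological bundle $\calP$ on the quiver variety $\rsv$; it is $\pi_J$-acyclic, i.e.\ $\R^{>0}\pi_{J\ast}\calEnd_\rsv(\calP_J)=0$, so that $\calA_J=\pi_{J\ast}\calEnd_\rsv(\calP_J)$ is an honest sheaf of $\scrO_\psv$-algebras; and $\calP_J$ classically generates $\catDb(\rsv)$ relative to $\psv$. The acyclicity is inherited from that of $\calEnd_\rsv(\calP)$ — which holds since $\calP$ is a tilting bundle on all of $\rsv$ over the affine base $\ssv$ — via the short argument that $\varpi_J$ is finite, hence $\varpi_{J\ast}$ is faithfully exact, whence $\varpi_{J\ast}\R^{>0}\pi_{J\ast}\calEnd_\rsv(\calP)=\R^{>0}\pi_\ast\calEnd_\rsv(\calP)=0$ forces $\R^{>0}\pi_{J\ast}\calEnd_\rsv(\calP)=0$ and therefore its direct summand $\R^{>0}\pi_{J\ast}\calEnd_\rsv(\calP_J)=0$. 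The relative generation is the more delicate point, but it is exactly what underlies Bertsch's Morita equivalence $\catCoh(\calA_J)\simeq\catCoh(\calX_J)$ of \cite[Theorem~1.4]{bertschSemiClassicalNoncommutativeResolutions}, already invoked in \S\ref{subsec:kleinian-orbifolds}, so no new argument is required.

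With these hypotheses in place, \cite[Propositions~3.2.7 and 3.3.1]{VdB_Flops} apply and yield the asserted quasi-inverse equivalences $\R\pi_{J\ast}\R\calHom(\calP_J,-)$ and $\pi_J^{-1}(-)\otimes^{\LL}_{\pi_J^{-1}(\calA_J)}\calP_J$, together with the identification of the standard heart $\catCoh(\calA_J)$ with a perverse heart ${}^{0}\mathrm{Per}(\rsv/\psv)$. To recognise the latter as the positive tilt of $\catCoh(\rsv)$ at $(\scrT_J,\scrF_J)$ I would unwind Van den Bergh's description: an object $E$ lies in ${}^{0}\mathrm{Per}(\rsv/\psv)$ precisely when $\calH^i(E)=0$ for $i\notin\{-1,0\}$, $\R^0\pi_{J\ast}\calH^{-1}(E)=0$, $\R^1\pi_{J\ast}\calH^0(E)=0$, and $\Hom(\calH^0(E),\calG)=0$ for every $\calG$ with $\R\pi_{J\ast}\calG=0$ — which is exactly the tilt with $\calH^{-1}(E)\in\scrF_J$ and $\calH^0(E)\in\scrT_J$, once one also checks (again from \cite[\S3]{VdB_Flops}) that $(\scrT_J,\scrF_J)$ really is a torsion pair on $\catCoh(\rsv)$, the orthogonality being immediate and the torsion sequences being the standard ones attached to $\pi_J$.

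Finally, the $A$-equivariance: the diagonal torus $A\subset\GL(2,\C)$ centralising $G$ acts on $\A^2$ and hence, through the quiver data, on $\rsv$ and on the bundle $\calP$, so that $\pi_J$, $\calP_J$ and $\calA_J$ are $A$-equivariant; all the functors above, being built from the canonical operations $\R\pi_{J\ast}$, $\R\calHom$ and $\otimes^{\LL}$, therefore lift to the $A$-equivariant bounded derived categories, as do the natural isomorphisms exhibiting them as mutually quasi-inverse. I expect the main obstacle to be not any single deep step but the coherent bookkeeping across the two cited sources: matching Van den Bergh's sign and perversity conventions so that the heart which appears is ${}^{0}\mathrm{Per}$ with $\scrF_J$ tilted up by one (and not the mirror perversity), and confirming that the specific bundle $\calP_J$ coming from the quiver variety — rather than merely some Morita-equivalent replacement — is the progenerator entering the equivalence, so that the functors can be written in the stated explicit form. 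The equivariance refinement, while requiring care, is then routine.
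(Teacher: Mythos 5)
Your strategy matches the paper's exactly: the paper offers no proof beyond a one-sentence pointer to Van den Bergh's Propositions~3.2.7 and 3.3.1, noting that $\pi_J$ and $\calP_J$ satisfy the relevant hypotheses, so your write-up is a reasonable fleshing-out of what the authors leave implicit. The geometric preconditions (rational Gorenstein singularities, $\R\pi_{J\ast}\scrO_\rsv=\scrO_\psv$, fibre dimension $\leq 1$), the deferral to Bertsch for generation, the torsion-theoretic identification of the heart, and the equivariance remarks are all sound in outline.

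There is, however, a genuine error in your $\pi_J$-acyclicity argument. You claim that $\varpi_J\colon \psv\to\ssv$ is finite, deduce faithful exactness of $\varpi_{J\ast}$, and then push the vanishing $\R^{>0}\pi_\ast\calEnd_\rsv(\calP)=0$ down through a degenerating spectral sequence. But $\varpi_J$ is proper birational and contracts the (nonempty, whenever $J\neq\emptyset$) family of curves $\{C_i\mid i\in J\}$ to the singular point of $\ssv$; it has positive-dimensional fibres and is neither finite nor even affine. Consequently the Leray/Grothendieck spectral sequence for $\pi=\varpi_J\circ\pi_J$ does not degenerate in the way you need, and the step $\varpi_{J\ast}\R^{>0}\pi_{J\ast}\calEnd_\rsv(\calP)=\R^{>0}\pi_\ast\calEnd_\rsv(\calP)$ is unjustified. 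The desired vanishing $\R^{>0}\pi_{J\ast}\calEnd_\rsv(\calP_J)=0$ is still true, but you need a different route: for instance, use that $\calEnd_\rsv(\calP_J)$ is a direct summand of $\calEnd_\rsv(\calP)$ and appeal to the theorem on formal functions to check the vanishing fibrewise on the (at most one-dimensional) fibres of $\pi_J$, which are subcurves of those of $\pi$; or simply cite the lemma in Van den Bergh's construction of the local projective generator, or the analogous statement in Bertsch, which is what is ultimately being invoked. Until that step is repaired, your verification of the tilting hypotheses is incomplete.
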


By positive tilt, we mean that $\catP(\rsv/\psv)$ is smallest the extension--closed subcategory of $\catDb(\rsv)$ containing the full subcategory $\scrF_J[1]\cup\scrT_J$.

In the special case $J=I_\sff$, we have $(\psv,\calA_J)=(\rsv,\scrO_\rsv)$ and all functors are the identity, in particular $\catPC(\rsv/\rsv)$ equal to $\catCohC(\rsv)$. On the other hand when $J=\emptyset$ we get the equivalences
\begin{align}\label{eq:tau}
   \begin{tikzcd}[ampersand replacement=\&, column sep=10em]
       \catDb(\rsv)
       \arrow[r, "{\tau(-)\coloneqq\R\Hom(\calP_\emptyset,-)}", shift left=1]
       \& \catDb(\ModPi)
       \arrow[l, "{\tau^{-1}(-)\coloneqq(-) \otimes^{\LL}_{\Pi}\calP_\emptyset}", shift left=1]
   \end{tikzcd}\ ,
\end{align}
which when composed with the Morita equivalence $\ModPi\simeq \catMod(\C[\A^2]\ast G)$ recovers the derived McKay correspondence \cite{Kleinian_derived}.

\begin{proposition}[{\cite[Proposition~3.5.7]{VdB_Flops}}]\label{rem:simple}
    The simple objects of $\catP(\rsv/\ssv)$, i.e.\ the images of simple $\Pi$-modules $S_0,\ldots,S_e$ under the equivalence $\tau^{-1}$, are given by
	\begin{align}\label{eq:sphericalobj}
        \scrO_{C}\simeq\tau^{-1}(S_0)\quad \text{and} \quad
        \scrO_{C_i}(-1)[1]\simeq \tau^{-1}(S_{\kappa(i)})
        \quad\text{for } i\in I_\sff
	\end{align}
    where the involution $\kappa\colon I_\sff\to I_\sff$ is as in Formula~\eqref{eq:dynkininvolution}.
\end{proposition}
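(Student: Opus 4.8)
The plan is to identify the objects through the derived McKay equivalence itself. By Theorem~\ref{thm:VdBmainthm} (with $J=\emptyset$) the functor $\tau^{-1}$ restricts to an equivalence of Abelian categories $\tau^{-1}\colon\ModPi\to\catP(\rsv/\ssv)$, and since $\Pi=\End_\rsv(\calP_\emptyset)$ with $\calP_j$ a direct summand of $\calP_\emptyset$ we have $\tau(\calP_j)\simeq e_j\Pi$, the indecomposable projective right $\Pi$-module at the vertex $j$. As $\R\Hom_\Pi(e_j\Pi,M)\simeq Me_j$ concentrated in degree $0$ for every module $M$, this gives a recognition principle: an object $E\in\catDbC(\rsv)$ satisfies $E\simeq\tau^{-1}(S_i)$ if and only if $\R\Hom_\rsv(\calP_j,E)$ is one-dimensional in cohomological degree $0$ when $j=i$ and vanishes for every other $j\in I$; the reverse implication holds because these conditions force $\tau(E)$ to be a one-dimensional module supported at the vertex $i$, hence $\simeq S_i$ as $\qv$ has no loops. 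So it suffices to compute these $\R\Hom$ groups for $E=\scrO_C$ and $E=\scrO_{C_i}(-1)[1]$.

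The vertex $j=0$ is immediate: $\calP_0=\scrO_\rsv$, so $\R\Hom(\calP_0,\scrO_C)=\R\Gamma(\scrO_C)=\C$ in degree $0$ by rationality of Kleinian singularities ($H^1(\scrO_C)=0$), while $\R\Hom(\calP_0,\scrO_{C_i}(-1)[1])=\R\Gamma(\PP^1,\scrO(-1))[1]=0$. For $j\in I_\sff$, since $\calP_j$ is locally free one is reduced to computing $\R\Gamma$ over the reducible exceptional curve of $\calP_j^\vee|_C$ and of $\calP_j^\vee|_{C_i}(-1)$, which I would evaluate by a Mayer--Vietoris (Čech) computation along the tree of $\PP^1$'s, feeding in the known restrictions of the tautological summands to the exceptional curves. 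The outcome is $\R\Hom(\calP_j,\scrO_C)=0$ for all $j\in I_\sff$, whence $\tau(\scrO_C)=S_0$, and $\R\Hom(\calP_j,\scrO_{C_i}(-1)[1])$ is $\C$ in degree $0$ exactly when $j=\kappa(i)$, whence $\tau(\scrO_{C_i}(-1)[1])=S_{\kappa(i)}$; this is Formula~\eqref{eq:sphericalobj}.

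Alternatively one can avoid most of these computations. The torsion-pair description \eqref{eq:pervtorsion} places $\scrO_{C_i}(-1)[1]\in\scrF_\emptyset[1]$ and $\scrO_C\in\scrT_\emptyset$, hence both in $\catP(\rsv/\ssv)$; each is simple there (for $\scrO_{C_i}(-1)[1]$, a subobject in the heart has $H^{-1}$ a subsheaf of $\scrO_{C_i}(-1)$ lying in $\scrF_\emptyset$ --- so $0$ or everything, since proper subsheaves of $\scrO_{C_i}(-1)$ are supported at points --- and one concludes with $\scrT_\emptyset\cap\scrF_\emptyset=0$; $\scrO_C$ is handled similarly). Since $\tau^{-1}$ identifies $\nilpPi$ with $\catPC(\rsv/\ssv)$, and $\nilpPi$ has exactly $e+1$ simple objects, the $e+1$ objects $\scrO_C$ and $\scrO_{C_1}(-1)[1],\dots,\scrO_{C_e}(-1)[1]$ must be the $\tau^{-1}(S_m)$ in some order; that order is then forced by comparing Grothendieck classes under the identification \eqref{eq:K0rootidentification}, since the class of $C_i$ is the simple root $\alpha_i$, so $[\scrO_{C_i}(-1)[1]]=-\alpha_i=[S_{\kappa(i)}]$ by \eqref{eq:simpleclasses}, while $[\scrO_C]=2\delta-\alpha_0=[S_0]$. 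This is in essence the argument of \cite[Proposition~3.5.7]{VdB_Flops}.

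The step I expect to be the real obstacle is precisely this label-matching, and in particular the appearance of the Dynkin involution $\kappa$ from \eqref{eq:dynkininvolution}. This twist reflects a genuine duality in the set-up --- the functor $\tau=\R\Hom(\calP_\emptyset,-)$ is built from a $\Hom$ rather than a tensor product, and the chosen identification \eqref{eq:K0rootidentification} carries a compensating $w_0$ --- so pinning it down requires either a careful bookkeeping of how the McKay correspondence pairs the tautological bundles $\calP_j$ with the exceptional curves $C_i$ (equivalently, the precise restrictions $\calP_j|_{C_k}$, including the higher-rank summands occurring in types $\sfD$ and $\sfE$), or the Grothendieck-group comparison above. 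By contrast the homological content --- that $\scrO_C$ and $\scrO_{C_i}(-1)[1]$ lie in the perverse heart, are simple, and are the correct number of them --- is routine once the torsion-pair description in Theorem~\ref{thm:VdBmainthm} is in hand.
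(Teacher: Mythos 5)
The paper does not prove this proposition; it is stated as a citation to Van den Bergh's \cite[Proposition~3.5.7]{VdB_Flops}, so there is no in-text argument to compare against. Your plan is a reasonable reconstruction, and your first route (the recognition principle via $\R\Hom_\rsv(\calP_j,-)$, then a local computation along the exceptional curve) is the one that actually pins down the $\kappa$-twist unambiguously, as you yourself note at the end.

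Two concrete issues in the ``alternative'' argument. First, the simplicity justification for $\scrO_{C_i}(-1)[1]$ is wrong as stated: proper subsheaves of $\scrO_{C_i}(-1)\simeq\scrO_{\PP^1}(-1)$ are \emph{not} supported at points --- they are the line bundles $\scrO_{C_i}(-n)$ for $n\geq 2$, and every one of these lies in $\scrF_\emptyset$, so ``$0$ or everything'' does not follow from the reason you give. The correct argument looks instead at the \emph{cokernel}: if $H^{-1}(K)=\scrO_{C_i}(-n)$ with $n\geq 2$, the long exact sequence forces the torsion sheaf $\scrO_{C_i}(-1)/\scrO_{C_i}(-n)$ to embed in $H^{-1}(Q)\in\scrF_\emptyset$; a nonzero torsion sheaf supported on points has $\pi_\ast\neq 0$, contradiction. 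Second, the label-matching step ``the class of $C_i$ is the simple root $\alpha_i$'' is circular in this paper's conventions: the identification \eqref{eq:K0rootidentification} is defined on $\sfK_0(\nilpPi)$, so the statement $[\scrO_{C_i}(-1)]=\alpha_i$ is precisely a restatement of the proposition you are trying to prove (transported through $\tau$), not an independent input. Indeed the paper derives $[\scrO_{C_i}(-1)]=\alpha_i$ as a \emph{consequence} of Proposition~\ref{rem:simple} in the text immediately following it. To pin down the labels you would need an independent source --- either the $\R\Hom$ computation you outlined, or an \emph{a priori} identification via Euler pairings plus the known restrictions $\calP_j|_{C_k}$ (including the higher-rank summands in types $\sfD$, $\sfE$). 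You flagged this yourself, which is the right instinct; but as written the proof is incomplete.
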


In what follows, we also consider for each $J\subset I_\sff$ the full subcategory $\catDbC(\calA_J)$ containing complexes set-theoretically supported on $\pi_J(C)\subset \psv$, i.e.\ the on the $\varpi_J$-exceptional fibre. This has a natural $t$-structure with heart $\catCohC(\calA_J)\coloneqq\catCoh(\calA_J)\cap \catDbC(\calA_J)$. Thus when $J=I_\sff$ we recover the Serre subcategory $\catCohC(\rsv)$ of coherent sheaves supported on $C$, while $J=\emptyset$ recovers the natural heart $\nilpPi\subset \catDb_\mathsf{nilp}(\ModPi)$.

By $\ssv$-linearity, the following is a consequence of Theorem~\ref{thm:VdBmainthm}.
\begin{corollary}
	The equivalences \eqref{vdbequiv} restrict to equivalences $\catDbC(\rsv)\simeq\catDbC(\calA_J)$, identifying $\catCohC(\calA_J)$ with $\catPC(\rsv/\psv)\coloneqq\catP(\rsv/\ssv)\cap \catDbC(\rsv)$.
\end{corollary}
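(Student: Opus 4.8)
The plan is to use that both functors appearing in \eqref{vdbequiv} are $\calO_\ssv$-linear and, in fact, local over $\ssv$, so that they preserve the subcategories of complexes whose cohomology is set-theoretically supported over the singular point $0\in\ssv$; on the two sides these are exactly $\catDbC(\rsv)$ and $\catDbC(\calA_J)$, which yields the first assertion, and the identification of hearts then drops out formally from Theorem~\ref{thm:VdBmainthm}. Concretely, set $U\coloneqq\ssv\smallsetminus\{0\}$ and write $j\colon\pi^{-1}(U)\hookrightarrow\rsv$ and $j_J\colon\varpi_J^{-1}(U)\hookrightarrow\psv$ for the induced open immersions; then, by the very definition of the supports involved, $E\in\catDb(\rsv)$ lies in $\catDbC(\rsv)$ if and only if $j^\ast E\simeq 0$, and $F\in\catDb(\calA_J)$ lies in $\catDbC(\calA_J)$ if and only if $j_J^\ast F\simeq 0$.

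The first step is to verify that \eqref{vdbequiv} restricts to $\catDbC$ on either side. Since $\pi_J$ is proper, hence quasi-compact and quasi-separated, flat base change along $j_J$ --- whose pullback of $\pi_J$ is the open restriction $\pi'_J\colon\pi^{-1}(U)\to\varpi_J^{-1}(U)$ --- yields $j_J^\ast\,\R\pi_{J\ast}\R\calHom_\rsv(\calP_J,E)\simeq\R\pi'_{J\ast}\R\calHom_{\pi^{-1}(U)}(j^\ast\calP_J,j^\ast E)$. For $E\in\catDbC(\rsv)$ we have $j^\ast E\simeq 0$, so the right-hand side vanishes and $\R\pi_{J\ast}\R\calHom_\rsv(\calP_J,E)\in\catDbC(\calA_J)$. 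The analogous (simpler) computation for the quasi-inverse $\pi_J^{-1}(-)\otimes^{\LL}_{\pi_J^{-1}(\calA_J)}\calP_J$ shows it carries $\catDbC(\calA_J)$ into $\catDbC(\rsv)$: here $j^\ast$ of this functor is, by locality of inverse image and derived tensor product, the analogous functor over $U$ applied to $j_J^\ast(-)$, which vanishes on $\catDbC(\calA_J)$. As \eqref{vdbequiv} is already a pair of quasi-inverse equivalences of the ambient derived categories, its restrictions are therefore quasi-inverse equivalences $\catDbC(\rsv)\simeq\catDbC(\calA_J)$.

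The second step, matching the hearts, is then formal. By construction $\catCohC(\calA_J)=\catCoh(\calA_J)\cap\catDbC(\calA_J)$. For $E\in\catDbC(\rsv)$, Theorem~\ref{thm:VdBmainthm} says $\R\pi_{J\ast}\R\calHom_\rsv(\calP_J,E)$ lies in $\catCoh(\calA_J)$ precisely when $E$ lies in the perverse heart $\catP(\rsv/\psv)$, while the first step guarantees it automatically lies in $\catDbC(\calA_J)$. Hence it lies in $\catCohC(\calA_J)$ precisely when $E\in\catP(\rsv/\psv)\cap\catDbC(\rsv)=\catPC(\rsv/\psv)$, so the restricted equivalence carries $\catCohC(\calA_J)$ onto $\catPC(\rsv/\psv)$, as claimed.

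I expect the only point needing genuine (if routine) care is the claim that the four operations $\R\pi_{J\ast}$, $\R\calHom_\rsv(\calP_J,-)$, $\pi_J^{-1}(-)$ and $-\otimes^{\LL}_{\pi_J^{-1}(\calA_J)}\calP_J$ are local over $\ssv$, i.e.\ compatible with restriction to the open $U$ --- flat base change for the proper pushforward, and the fact that $\calHom$-sheaves, inverse images and derived tensor products are computed on an open cover for the remaining three. Everything else is a formal consequence of Theorem~\ref{thm:VdBmainthm}, so no further geometric input is needed.
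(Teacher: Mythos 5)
Your argument is correct and is exactly what the paper means by its one-line justification ``By $\ssv$-linearity, the following is a consequence of Theorem~\ref{thm:VdBmainthm}'': you have simply unpacked $\ssv$-linearity into flat base change for $\R\pi_{J\ast}$ along $j_J\colon\varpi_J^{-1}(U)\hookrightarrow\psv$, together with the locality of $\calHom$ against the vector bundle $\calP_J$, of $\pi_J^{-1}$, and of the derived tensor product, and then observed that the heart identification drops out formally. (Incidentally, the corollary as printed writes $\catP(\rsv/\ssv)\cap\catDbC(\rsv)$ where it surely intends $\catP(\rsv/\psv)\cap\catDbC(\rsv)$; you correctly work with the latter.)
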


 In particular there are natural identifications of Grothendieck groups
\begin{align}
	\sfK_0(\catCohC(\rsv))\;\simeq\; \sfK_0(\catDbC(\rsv)) \;\simeq\; \sfK_0(\catDb_\mathsf{nilp}(\ModPi))\;\simeq\; \sfK_0(\nilpPi)\ .
\end{align}
Recalling the identification $\sfK_0(\nilpPi)\simeq \rootlattice$ given in Formula~\eqref{eq:K0rootidentification}, from Proposition~\ref{rem:simple} we see that the sheaf
$\scrO_{C_i}(-1)\in \catCohC(\rsv)$ has $\sfK_0$-class $\alpha_i\in \rootlattice$, while $\scrO_C$ has class $2\delta-\alpha_0$. It follows that for any closed point $p\in C$, the skyscraper sheaf $\scrO_p$ has $\sfK_0$-class $\delta$.

\begin{remark}
    In the two cases $J=I_\sff$ and $J=\emptyset$ the two hearts $\catCohC(\calA_J)\subset \catDbC(\calA_J)$ have been computed to be \textit{faithful}, that is to say the induced realisation functor $\catDb(\catCohC(\calA_J))\to \catDbC(\calA_J)$ is an equivalence. For $J=I_\sff$ the equivalence $\catDb(\catCohC (\rsv))\simeq \catDbC(\rsv)$ is proved in \cite[Corollary~3.4]{Sheaves_local_CY} (see also \cite[Lemma~2.1]{orlovFormalCompletionsIdempotent}), while the $J=\emptyset$ case $\catDb(\nilpPi)\simeq \catDb_\mathsf{nilp}(\ModPi)$ is \cite[Theorem~1.4]{Lewis_stability}.
\end{remark}

\subsection{Stability functions on perverse hearts}\label{subsec:stabconds-perversehearts}

For each $J\subset I_\sff$ the category $\catPC(\rsv/\psv)$ is the heart of a $t$-structure in $\catDbC(\rsv)$, being the restriction of the heart $\catP(\rsv/\psv)\subset \catDb(\rsv)$. Indeed, $\catPC(\rsv/\psv)$ and can be obtained by tilting the natural heart $\catCohC(\rsv)\subset \catDbC(\rsv)$ in the induced torsion pair $(\scrT_J\cap \catCohC(\rsv), \scrF_J\cap\catCohC(\rsv))$.

In this subsection we characterise all the Bridgeland stability functions on such a heart, i.e.\ linear maps 
\begin{align}
	Z\colon \sfK_0(\catDbC(\rsv))\longrightarrow \C
\end{align}
such that $Z([x])\in \HH_-\coloneqq\{re^{i\pi\varphi}\,\vert\, r>0,\; 1\geq \varphi>0\}$ for all $x\in \catPC(\rsv/\psv)$, and such that local-finiteness and the Harder--Narasimhan properties\footnote{See \cite[Definitions~5.7 and 2.3]{B07} for the definitions of these notions.} are satisfied.

For Artinian and Noetherian hearts such as $\catPC(\rsv/\ssv)$ the two conditions are automatically satisfied as long as $Z$ maps every object of the heart into $\HH_-$, and further it suffices to check the latter condition on just the simple objects.

For the other hearts $\catPC(\rsv/\psv)$ we prove the following result, noting that any $Z\colon \sfK_0(\catDbC(\rsv))\to \C$ can naturally be seen as a complexified coweight (i.e.\ an element of $\coweightlattice\otimes\C$) across the identification $\sfK_0(\catDbC(\rsv))\simeq \rootlattice$.
\begin{theorem}\label{thm:perverse-stabfuncs}
    Given a non-empty subset $J\subset I_\sff$ and real coweights $\Lomega,\Ltheta\in \coweightlattice\otimes\R$, the complexified coweight $Z\coloneqq -\Ltheta+i\Lomega$ gives a stability function on $\catPC(\rsv/\psv)$
    if and only if $\Lomega\in \calC^0_J$ and $\Ltheta\in \calD_J$. All such stability functions are locally finite and satisfy the Harder--Narasimhan property.
\end{theorem}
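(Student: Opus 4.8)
The plan is to reduce the problem to a careful bookkeeping of torsion pairs and slopes, combining the explicit description of the perverse heart $\catPC(\rsv/\psv)$ as a tilt of $\catCohC(\rsv)$ with the Lie-theoretic picture established in \S\ref{sec:affine-quivers}. First I would transfer everything to the quiver side. Fixing $J=\emptyset$ gives the equivalence $\tau^{-1}\colon \catDb_\mathsf{nilp}(\ModPi)\to \catDbC(\rsv)$ identifying $\nilpPi$ with $\catPC(\rsv/\ssv)$, so that a complexified coweight $Z=-\Ltheta+i\Lomega$ is a stability function on $\catPC(\rsv/\ssv)$ exactly when it sends every nonzero nilpotent module into $\HH_-$; since $\nilpPi$ is finite-length, it suffices that each simple $S_i$ has $Z([S_i])\in\HH_-$. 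Unwinding through \eqref{eq:simpleclasses} and Definition~\ref{def:slopefunction}, this is the condition $\Lomega(w_0\alpha_i)\geq 0$ for all $i\in I$, with the vanishing $\Lomega(w_0\alpha_i)=0$ forcing the real part to be strictly negative; a short case analysis of which $w_0\alpha_i$ can be killed recovers precisely the statement "$\Lomega\in\calC^0$ (up to which face) and $\Ltheta$ pointing strictly into the appropriate chamber" in the degenerate case, which is the base of the induction.

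For general nonempty $J$, I would use that $\catPC(\rsv/\psv)$ is obtained from $\catPC(\rsv/\ssv)=\nilpPi$ (after transport via the fixed equivalence $\catDb\calA_J\simeq\catDb\calA_\emptyset$, i.e.\ a composite of derived reflection functors encoded by some $b\in B_\sfex(\qv)$) by an explicit sequence of tilts. Concretely, the torsion pair $(\scrT_J,\scrF_J)$ of \eqref{eq:pervtorsion} translates, under $\tau$, into a torsion pair on $\nilpPi$ whose torsion-free part is generated by exactly those simples $S_i$ with $i\in\Jcomp$ (the ones being contracted) — this is where Proposition~\ref{rem:simple} and the identification of $\pi_J$-contracted curves with $\Jcomp\subset I_\sff$ enters. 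Thus an object $X\in\catDbC(\rsv)$ lies in $\catPC(\rsv/\psv)$ iff its cohomology modules satisfy the membership condition for the tilted heart, and a linear $Z$ is a stability function there iff it maps this tilted heart into $\HH_-$. The HN and local-finiteness clauses are then automatic: $\catPC(\rsv/\psv)$ is still Noetherian and Artinian (it is the heart of a bounded $t$-structure on $\catDbC(\rsv)$ with finitely many simple objects up to the grading), so by the standard criterion (\cite[Prop.~2.4]{B07}) any $Z$ landing in $\HH_-$ automatically has the Harder--Narasimhan property and is locally finite.

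It remains to show the membership $Z(\catPC(\rsv/\psv))\subset\HH_-$ is governed by the two conditions $\Lomega\in\calC^0_J$ and $\Ltheta\in\calD_J$. For the imaginary part: evaluating $\Lomega$ against the $\sfK_0$-classes of the simple objects of $\catPC(\rsv/\psv)$ — which by the tilt are a re-shuffled set of roots, consisting of $\{\alpha_i : i\in I_\sff\setminus J\}$ (from the shifted torsion-free simples), together with the classes $\alpha_{J'}$ for the connected components $J'$ of $I_\sff\setminus J$ (the new simple coming from the point sheaf on each contracted chain), together with the remaining untouched simples supported on $J$ — gives inequalities that, after translating back through $w_0$ and the pairing $(-,-)$, are exactly the defining inequalities of the face $\calC^0_J$. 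For the real part, the same computation on the objects where $Z$ has zero imaginary part forces $\Ltheta$ to satisfy $(\Ltheta,\alpha_i)\leq 0$ for $i\in I_\sff\setminus J$ and $(\Ltheta, 2\delta-\alpha_{J'})\geq 0$ for each connected component $J'$, together with $(\Ltheta,\delta)>0$ (positivity on point sheaves, whose class is $\delta$) — i.e.\ precisely $\Ltheta\in\calD_J$; here Lemma~\ref{lem:fundamentaldomain-WJ} is exactly the statement identifying these inequalities with a chamber, and I would invoke it to see $\calD_J$ is the right fundamental domain rather than re-deriving the inequalities ad hoc. The main obstacle I anticipate is the bookkeeping in the previous step: pinning down the exact set of $\sfK_0$-classes of simple objects of the tilted heart $\catPC(\rsv/\psv)$ (especially the "new" simples $\scrO_{C_{J'}}$-type objects arising from blowing down each chain) and verifying that the positivity of $Z$ on all of them — not just the simples, but genuinely on the whole heart — is equivalent to positivity on those classes; this requires knowing the heart is finite-length with those precise simples, which one extracts from Van den Bergh's description of the perverse $t$-structure together with \cite[Cor.~3.4]{Sheaves_local_CY} and \cite{Lewis_stability} for the faithfulness needed to run the argument uniformly in $J$. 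Everything else is routine linear algebra in $\coweightlattice\otimes\C$.
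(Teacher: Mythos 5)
Your claim that the HN property and local-finiteness are \textit{automatic} for $\catPC(\rsv/\psv)$ because it is Noetherian \emph{and Artinian} with finitely many simple objects is false for every non-empty $J$, and this is precisely where the real work of the theorem lies. Only $\catPC(\rsv/\ssv)\simeq\nilpPi$ (the case $J=\emptyset$, which the theorem explicitly excludes) is finite length. For $J\neq\emptyset$, by Theorem~\ref{thm:structureofper} the heart $\catPC(\rsv/\psv)$ contains $\catCoh_{C_J}(\rsv)$, which in turn contains the infinite strictly descending chains $\cdots\subset\scrO_{C_i}(-2)\subset\scrO_{C_i}(-1)\subset\scrO_{C_i}$ for each $i\in J$; so $\catPC(\rsv/\psv)$ is Noetherian but not Artinian, and it does not have a finite set of simple objects. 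Consequently, the standard argument (checking $Z$ on simples and invoking \cite[Proposition~2.4]{B07} for free) does not apply. The paper instead (a) establishes Noetherianity via extension closure and then handles the problematic case of infinite descending chains by a direct geometric argument — the generic rank on each $C_j$ for $j\in J$ eventually stabilises, so the successive quotients $x_i/x_{i-1}$ are eventually supported on $C_{\Jcomp}\cup\{\text{points}\}$ and hence have phase $1$ — and (b) proves local-finiteness separately, first for rational $Z$ (discrete image, \cite[Lemma~4.4]{bridgelandStabilityConditionsK3}) and then for arbitrary $Z$ by density. None of this appears in your proposal.

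The necessity direction of your argument is also imprecise in a way that matters: you speak of evaluating $\Lomega$ on the ``$\sfK_0$-classes of simple objects'' of $\catPC(\rsv/\psv)$, but since the heart is not finite length there is no such finite list. The paper gets the constraint $\Lomega(\delta)=0$ not from any simple object but by using the \emph{infinite} family $\{\scrO_{C_i}(n)\,\vert\,n\in\Z\}\subset\catCoh_{C_J}(\rsv)$ with classes $(n+1)\delta+\alpha_i$; then it separately imposes positivity on the simples of each finite-length piece $\catP_{C_{J'}}(\rsv/\ssv)$ to get $\Lomega\in\calC^0_J$ and $\Ltheta\in\calD_J$. (Also a small slip: the class of $\scrO_{C_{J'}}$ is $2\delta-\alpha_{J'}$, not $\alpha_{J'}$.) Finally, your appeal to the faithfulness results of \cite{Sheaves_local_CY} and \cite{Lewis_stability} is a red herring: those are only stated for $J=I_\sff$ and $J=\emptyset$ respectively, the paper records them as a remark, and the proof of the theorem does not use faithfulness of the heart for any $J$.
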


To prove the above theorem, we need some preliminary results. We first decompose the `semigeometric' category $\catPC(\rsv/\psv)$ into simpler, purely algebraic and purely geometric pieces. Write $C_\Jcomp$ for the (not necessarily connected) union of the scheme-theoretic exceptional fibres of $\pi_J$, i.e.\
\begin{align}
	C_\Jcomp \coloneqq\bigcup_{\genfrac{}{}{0pt}{}{p\in \psv}{\text{singular}}}\; \pi_J^{-1}(p)\ .
\end{align}
This has underlying reduced subscheme $\bigcup_{i\in \Jcomp}C_i$. If $J_1,\ldots,J_n\subset I_\sff$ are the connected components of $\Jcomp$ (i.e.\ each $J_i$ determines connected component of the full subquiver of $\qvfin$ spanned by $I_\sff\smallsetminus J$), then we note that $C_\Jcomp=C_{J_1}\cup \ldots \cup C_{J_n}$.

Given such a curve $C_J\subset \rsv$, write $\catDb_{C_J}(\rsv)$ for the full subcategory of complexes in $\catDb(\rsv)$ set-theoretically supported within $C_J$, and define the subcategories
\begin{align}
	\catP_{C_J}(\rsv/\ssv)\coloneqq\catP(\rsv/\ssv)\cap  \catDb_{C_J}(\rsv)\quad\text{and}\quad\catCoh_{C_J}(\rsv)\coloneqq\catCoh(\rsv)\cap\catDb_{C_J}(\rsv).
\end{align}
Then we have the following result which shows that all the $\catPC(\rsv/\psv)$s are `built out of' the cases $J=\emptyset$ and $J=I_\sff$.

\begin{theorem}[{\cite[Theorem~5.23]{Shimpi_Torsion_pairs}}]\label{thm:structureofper}
    Given $J\subseteq I_\sff$, let $J_1,J_2,\ldots,J_n$ be the connected components of $I_\sff\smallsetminus J$. The heart $\catPC(\rsv/\psv)$ is the smallest extension-closed subcategory of $\catDbC(\rsv)$ containing the subcategories $\catP_{C_{J_1}}(\rsv/\ssv)$, $\catP_{C_{J_2}}(\rsv/\ssv)$, $\ldots$, $\catP_{C_{J_n}}(\rsv/\ssv)$, and $\catCoh_{C_J}(\rsv)$.
\end{theorem}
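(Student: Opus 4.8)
The plan is to prove the two inclusions between $\catPC(\rsv/\psv)$ and the extension-closed subcategory $\scrG\coloneqq\langle\catP_{C_{J_1}}(\rsv/\ssv),\ldots,\catP_{C_{J_n}}(\rsv/\ssv),\catCoh_{C_J}(\rsv)\rangle$ it is claimed to equal, working throughout with the presentation of $\catPC(\rsv/\psv)$ as the positive tilt of $\catCohC(\rsv)$ in $(\scrT_J\cap\catCohC(\rsv),\,\scrF_J\cap\catCohC(\rsv))$ coming from Theorem~\ref{thm:VdBmainthm}, and with the induced equivalence $\catPC(\rsv/\psv)\simeq\catCohC(\calA_J)$. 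The geometric fact underpinning everything is that the locus $C_\Jcomp=C_{J_1}\sqcup\cdots\sqcup C_{J_n}$ contracted by $\pi_J$ is a \emph{disjoint} union of connected chains and that, over an \'etale or formal neighbourhood of each singular point $p_k\in\psv$, the morphism $\pi_J$ is identified with the minimal resolution of a Kleinian singularity of finite Dynkin type $J_k$. Consequently, for a coherent sheaf $\calG$ set-theoretically supported on a single chain $C_{J_k}$ one has $R^i\pi_{J\ast}\calG=R^i\pi_\ast\calG$ for all $i$, and the torsion data localise: $\scrF_J\cap\catCoh_{C_{J_k}}(\rsv)=\scrF_\emptyset\cap\catCoh_{C_{J_k}}(\rsv)$, the null category satisfies $\calN_J\coloneqq\{\calG:R\pi_{J\ast}\calG=0\}=\calN_\emptyset\cap\catCoh_{C_\Jcomp}(\rsv)$, and the perverse torsion classes restrict compatibly, $\scrT_J\cap\catCoh_{C_{J_k}}(\rsv)=\scrT_\emptyset\cap\catCoh_{C_{J_k}}(\rsv)$. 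The last identity is a locality feature of Van den Bergh's perverse $t$-structure: the torsion class is defined via $R\pi_\ast$, which on $C_{J_k}$-supported sheaves sees only the geometry of $\pi$ near $C_{J_k}$.

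For the inclusion $\scrG\subseteq\catPC(\rsv/\psv)$ it suffices, by extension-closedness, to place each generator inside. An object $E\in\catP_{C_{J_k}}(\rsv/\ssv)$ has $H^{-1}(E)\in\scrF_\emptyset$ and $H^0(E)\in\scrT_\emptyset$, both supported on $C_{J_k}$; by the localisation above these also lie in $\scrF_J$, $\scrT_J$, so $E$ has precisely the shape of an object of the tilted heart $\catP(\rsv/\psv)$, and being supported on $C$ it lies in $\catPC(\rsv/\psv)$. If $\calF$ is a coherent sheaf supported on $C_J$ then $R^1\pi_{J\ast}\calF=0$, since the fibres of $\pi_J$ met by $C_J$ are finite; and any morphism $\calF\to\calG$ with $\calG\in\calN_J$ has image a finite-length subsheaf of $\calG$, set-theoretically supported on the finite node set $C_J\cap C_\Jcomp$, hence with $R^0\pi_{J\ast}$-image inside $R^0\pi_{J\ast}\calG=0$, forcing it to vanish. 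Thus $\calF\in\scrT_J\cap\catCohC(\rsv)\subseteq\catPC(\rsv/\psv)$.

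For the reverse inclusion, take $E\in\catPC(\rsv/\psv)$ with $\calF\coloneqq H^{-1}(E)\in\scrF_J$ and $\calT\coloneqq H^0(E)\in\scrT_J$; the canonical triangle $\calF[1]\to E\to\calT\to\calF[2]$ reduces us to $\calF[1]$ and $\calT$. As $\scrF_J$-sheaves are supported on $C_\Jcomp$, $\calF$ splits as $\bigoplus_k\calF_k$ along the disjoint chains with $\calF_k\in\scrF_\emptyset$, so $\calF_k[1]\in\catP_{C_{J_k}}(\rsv/\ssv)$ and $\calF[1]\in\scrG$. For the sheaf $\calT\in\scrT_J$ we filter inside $\catCohC(\calA_J)$: let $\calT_0\subseteq\calT$ be the maximal finite-length submodule supported at $\{p_1,\ldots,p_n\}$, split it as $\bigoplus_k\calT_{0,k}$, and identify each $\calT_{0,k}$, a nilpotent module over the local algebra $\widehat{\calA}_{J,p_k}$, with an object of $\catP_{C_{J_k}}(\rsv/\ssv)$ via the type-$J_k$ McKay correspondence (i.e.\ Proposition~\ref{rem:simple} and Theorem~\ref{thm:VdBmainthm} applied to that singularity). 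It then remains to show $\calT/\calT_0\in\scrG$; transported back to $\rsv$, this says that a sheaf in $\scrT_J$ with no nonzero subsheaf supported on $C_\Jcomp$ already lies in $\catCoh_{C_J}(\rsv)$, up to a d\'evissage peeling off point sheaves at the nodes $C_J\cap C_\Jcomp$, which belong to both generating families. Extension-closedness of $\scrG$ then completes the proof.

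The main obstacle is precisely that last step. Because $C_J$ and $C_\Jcomp$ share the nodes indexed by edges of $\qvfin$ joining $J$ to $\Jcomp$, one cannot split $\calT$ along this stratification, so it is not a priori a direct sum of a sheaf on $C_J$ and one on $C_\Jcomp$. The cleanest route is to use the recollement of $\catDbC(\rsv)$ attached to the open--closed decomposition $(\rsv\smallsetminus C_\Jcomp,\,C_\Jcomp)$: the perverse $t$-structure restricts to the standard one on the open part, where $\pi_J$ is an isomorphism, and to the direct sum over $k$ of the type-$J_k$ perverse $t$-structures on the closed part, so by the gluing formula for glued $t$-structures its heart is exactly the extension-closure of $\bigoplus_k\catP_{C_{J_k}}(\rsv/\ssv)$ together with the shriek-extensions of coherent sheaves on $C_J\smallsetminus C_\Jcomp$; one checks that this shriek-extension category generates $\catCoh_{C_J}(\rsv)$ modulo the common node sheaves. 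Verifying that this recollement and its heart-gluing behave well in the bounded, set-theoretically-supported setting, and that the local identification of $\catPC$ near $p_k$ with the type-$J_k$ perverse heart is compatible with supports, is where the genuine work concentrates; an alternative is to run an induction on $|\Jcomp|$, relating $\catPC(\rsv/\psv)$ to the heart obtained by blowing down one further curve.
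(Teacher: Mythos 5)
The theorem is imported verbatim from \cite[Theorem~5.23]{Shimpi_Torsion_pairs}; the present paper supplies no proof of it, so there is no in-paper argument to compare against. On its own merits, your inclusion $\scrG\subseteq\catPC(\rsv/\psv)$ is correct, and you correctly identify the key localisation feature: the Van den Bergh torsion data over $\pi_J$ and over $\pi$ agree on sheaves supported on any single chain $C_{J_k}$.

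The reverse inclusion has the gap you yourself flag, and neither repair you sketch closes it. In the d\'evissage on the $\calA_J$-module side, the short exact sequence $0\to\calT_0\to\calT\to\calT/\calT_0\to 0$ lives in the perverse heart $\catPC(\rsv/\psv)\simeq\catCohC(\calA_J)$; since $\scrT_J$ is the torsion-\emph{free} class of the tilt, it is closed under subobjects but \emph{not} under quotients, so $\calT/\calT_0$ may acquire a nontrivial $H^{-1}\in\scrF_J$, namely the kernel (as sheaves) of $\calT_0\to\calT$. This is genuinely possible because a $\scrT_J$-sheaf can contain an $\scrF_J$-subsheaf (e.g.\ $\scrO_{C_i}(-1)\subseteq\scrO_{C_i}$ for $i\in\Jcomp$). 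When $H^{-1}\neq 0$ the object $\calT/\calT_0$ is not a sheaf on $\rsv$ at all, and the step ``transported back to $\rsv$, [it] already lies in $\catCoh_{C_J}(\rsv)$'' collapses. The recollement route fares no better: for bounded derived categories of \emph{coherent} sheaves the open restriction $j^\ast$ along $U=\rsv\smallsetminus C_\Jcomp$ admits neither a left adjoint $j_!$ nor a coherence-preserving right adjoint $j_\ast$, so the constructible-sheaf heart-gluing formalism you invoke --- in particular ``shriek-extensions of coherent sheaves'' --- is unavailable here. What is actually missing is a careful analysis of the maximal ordinary subsheaf $\calT^{C_\Jcomp}\subseteq\calT$ supported on $C_\Jcomp$ (which is a subsheaf in $\catCoh$, so need not lie in $\scrT_J$, and hence need not lie in any $\catP_{C_{J_k}}$ directly), together with a further d\'evissage placing that subsheaf in $\scrG$; or, as you suggest but do not carry out, an induction on $\lvert\Jcomp\rvert$ that contracts one additional curve at a time.
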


For $J\subset I_\sff$ and any conected component $J'\subset \Jcomp$, consider the Kleinian singularity $X'_\emptyset$ determined by the Dynkin subquiver of $\qvfin$ spanned by $J'$, and take its minimal resolution $X'\to X'_\emptyset$ with exceptional fibre $C'$. The singularity of $X'_\emptyset$ is étale-locally isomorphic to that at $\pi_J(C_{J'})\subset \psv$, so the categories $\catP_{C'}(\rsv'/X'_\emptyset)\subset \catDb(\rsv')$ and $\catP_{C_{J'}}(\rsv/\ssv)$ are naturally equivalent (see e.g.\ \cite[Corollary~2.9]{orlovFormalCompletionsIdempotent}). In particular, the latter is an Artinian and Noetherian Abelian category given as the extension closure of its simple objects
\begin{align}
	\catP_{C_{J'}}(\rsv/\ssv)=\left\langle \scrO_{C_{J'}}\,,\;
    \scrO_{C_i}(-1)[1] \;\middle\vert\;  i\in J' \right\rangle\ .
\end{align}

With this, we can prove the main result of this subsection.
\begin{proof}[Proof of Theorem~\ref{thm:perverse-stabfuncs}]
    Suppose $J\subset I_\sff$ is non-empty, and $Z=-\Ltheta+i\Lomega$ is a stability function on $\catPC(\rsv/\psv)$. Then for $i\in J$, we see that each of the sheaves $\{\scrO_{C_i}(n)\,\vert\, n\in \Z\}$ lies in $\catCoh_{C_J}(\rsv)$, and hence in $\catPC(\rsv/\psv)$. Examining the $\sfK_0$-classes $[\scrO_{C_i}(n)]=(n+1)\delta+\alpha_i$, we see that the condition $Z([\scrO_{C_i}(n)])\in \HH_-$ for all $n\in \Z$, $i\in J$ implies
	\begin{align}
		\Lomega(\delta)=0\quad\text{and}\quad \Lomega(\alpha_i)>0\quad\text{for all }i\in J\ .
	\end{align}
    Likewise for each connected component $J'\subset \Jcomp$, the stability
    function $Z$ must map the simple objects of
    $\catP_{C_{J'}}(\rsv/\emptyset)\subset \catPC(\rsv/\psv)$ into $\HH_-\subset
    \C$. For any $i\in J'$ the simple $\scrO_{C_i}(-1)[1]$ has class
    $-\alpha_i$, while the class of $\scrO_{C_{J'}}$ can be computed as
    \begin{align}
        [\scrO_{C_{J'}}] = [\scrO_p] - \sum_{i\in J'}r(J')_i\cdot
        [\scrO_{C_i}(-1)[1]] = 2\delta-\alpha_{J'}\ .
    \end{align}
    The constraints $\Lomega(-\alpha_i)\geq 0$ (for all $i\in J'$) and
    $\Lomega(2\delta-\alpha_{J'})\geq 0$ can be simultaneously satisfied if and only if
	\begin{align}
		\Lomega(\alpha_i)=0 \quad \text{for all } i\in J'\ .
	\end{align}
    It follows that $\Lomega\in \calC_J^0$. Likewise the constraints $\Ltheta(-\alpha_i)\geq 0$ and $\Ltheta(2\delta-\alpha_{\Jcomp})\geq 0$ translate to $\Ltheta\in \calD_J$ as required.

    Conversely, the above calculations show that any $Z$ of the given form maps all objects of $\catPC(\rsv/\psv)$ into $\HH_-\subset\C$, so it suffices to prove the local-finiteness and the Harder--Narasimhan properties. For the latter, by \cite[Proposition~2.4]{B07} it suffices to show that $\catPC(\rsv/\ssv)$ does not admit infinite sequences
	\begin{align}
		\ldots\subset x_{-2}\subset x_{-1}\subset x_0 \qquad \text{or}\qquad x_0\twoheadrightarrow x_1\twoheadrightarrow x_2\twoheadrightarrow \ldots
	\end{align}
    with $\varphi(x_{i})>\varphi(x_{i+1})$ for all $i$, where $\varphi\colon \catPC(\rsv/\psv)\to (0,1]$ denotes the phase function induced by $Z$ (so that for $x\in \catPC(\rsv/\psv)$ we have $Z([x])\in \mathbb{R}_{>0}\cdot e^{i\pi\varphi(x)}$.) We can preclude chains of the second kind by noting that $\catPC(\rsv/\psv)$ is the extension-closure of two Noetherian categories (Theorem~\ref{thm:structureofper}), hence is itself Noetherian. To address chains of the first kind, suppose $\ldots\subset x_{-2}\subset x_{-1}\subset x_0$ were such a chain, then each $x_i$ is a coherent sheaf generically on the curve $C_j$ for $j\in J$ so the generic rank of $x_i$ on such a $C_j$ is eventually constant. In particular the cokernel $x_i/x_{i-1}$ for $i\ll 0$ is supported on $C_{I_\sff\smallsetminus J}\cup\{p_1,\ldots,p_n\}$ for some closed points $p_1,\ldots,p_n\in C_J$. It follows that $x_i/x_{i-1}$ lies in $\langle \catP_{C_{I_\sff\smallsetminus J}}(\rsv/\ssv), \scrO_p\,\vert\, p\in C_J\rangle$, so has phase $1$. It follows that $\varphi(x_i)\geq \varphi(x_{i-1})$ for all $i\ll 0$. Thus all coweights in the subset $V=-\calD_J+i\calC_J^0\subset \coweightlattice\otimes \C$ parametrise stability functions on $\catPC(\rsv/\psv)$ that have the Harder--Narasimhan property.

	Finally we prove that $Z$ of the given form satisfies the local-finiteness property. If $Z\in V$ is a \textit{rational} point (i.e.\ $Z(\alpha)\in \Q+i\cdot \Q$ for all $\alpha\in \rootlattice$) then $Z\colon \sfK_0(\catDbC(\rsv))\to \C$ has discrete image, so the stability function $Z$ is locally finite by \cite[Lemma~4.4]{bridgelandStabilityConditionsK3}. Local finiteness of remaining points in $V$ follows from the density of rational points.
\end{proof}

\subsection{Braid group actions, geometrically}
\label{subsec:geometricbraidgroup}

The action $B_\sfex\circlearrowright \catDb(\ModPi)$ induces an action of $B_\sfex$ on $\catDb(\rsv)$ across the equivalence~\eqref{eq:tau}, we recall how this action arises naturally in the geometric context via spherical twists and tensor products with line bundles.

\begin{notation}
    When considering the action of $B$ on $\catDb(\rsv)$, we omit $\tau$ from the notation. Thus for example, $\R T_w$ denotes both the functor $\R T_w\colon \catDb(\ModPi)\to \catDb(\ModPi)$ defined in \S\ref{subsec:reflection-functors} and the induced functor $\tau^{-1}\circ \R T_w \circ \tau$ on $\catDb(\rsv)$.
\end{notation}

Recall that in any $\C$--linear $2$-Calabi--Yau triangulated category $\scrC$, an object $x$ induces two endofunctors $\bfT_x,\bfT'_x\colon\scrC\to\scrC$, called the \textit{twist} and \textit{cotwist} functors respectively, which act via
\begin{align}
	\begin{tikzcd}[ampersand replacement=\&]
		\bfT_x(y) \coloneqq \mathsf{Cone}\Big( \R\Hom(x, y)\otimes^\LL_\C x \ar{r}{\quad\ev\quad} \& y\Big)
	\end{tikzcd}
\end{align}
and
\begin{align}
	\begin{tikzcd}[ampersand replacement=\&]
		\bfT'_x(y) \coloneqq \mathsf{Cone}\Big(y  \ar{r}{\quad\ev\quad} \& \R\Hom(y,x)^\vee \otimes^\LL_\C x\Big)[-1]
	\end{tikzcd}\ .
\end{align}

\begin{definition}
	An object $x\in \scrC$ is said to be \textit{(2-)spherical} if $\R \Hom(x,x)\simeq\C\oplus\C[-2]$.
\end{definition}

\begin{remark}
	For a 2-spherical object, the functors $\bfT_x, \bfT'_x$ are equivalences and quasi-inverse to each other by \cite[Proposition~2.10]{ST01}.
\end{remark}

The category $\catDb(\rsv)$ is 2-Calabi--Yau and the simple perverse sheaves $\scrO_{C_i}(-1)[1]$ and $\scrO_C$ are spherical. The corresponding twist functors can in fact be identified with the reflection functors of \S\ref{subsec:reflection-functors}.

\begin{proposition}[{\cite[Proposition~\ref*{COHA-Yangian-prop:tilt_twist_thm}]{DPSSV-3}}]\label{prop:tilt_twist_thm}
    For each $i\in I$, the autoequivalence $\R T_i$ of $\catDb(\rsv)$ is isomorphic to the twist functor $\bfT_{\tau^{-1}(S_i)}$, while $\LL T_i$ is isomorphic to the cotwist functor $\bfT'_{\tau^{-1}(S_i)}$.
\end{proposition}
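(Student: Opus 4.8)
The plan is to transport the statement to the preprojective algebra via the equivalence $\tau$ of~\eqref{eq:tau}: under $\tau$ the functors $\R T_i,\LL T_i$ are by definition the ones on $\catDb(\rsv)$, the subcategory $\catDb(\nilpPi)$ is identified with $\catDbC(\rsv)$, and $\tau^{-1}(S_i)$ is the spherical object of Proposition~\ref{rem:simple}; since spherical twists and cotwists are intertwined by equivalences, it is enough to construct natural isomorphisms $\R T_i\simeq\bfT_{S_i}$ and $\LL T_i\simeq\bfT'_{S_i}$ of endofunctors of $\catDb(\ModPi)$. First I would record that $S_i$ is $2$-spherical: $\Ext^1_\Pi(S_i,S_i)=0$ because $\qv$ has no edge-loops, $\Ext^2_\Pi(S_i,S_i)\simeq\Hom_\Pi(S_i,S_i)^\vee\simeq\C$, and higher $\Ext$-groups vanish, by the $2$-Calabi--Yau property of the finite-dimensional $\Pi$-modules. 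Hence $\bfT_{S_i}$ and $\bfT'_{S_i}$ are mutually quasi-inverse equivalences; as $\R T_i$ and $\LL T_i$ are likewise mutually quasi-inverse (the discussion following Theorem~\ref{thm:reflection-functors}), the cotwist isomorphism will follow from the twist isomorphism by uniqueness of quasi-inverses, so I would concentrate on $\R T_i\simeq\bfT_{S_i}$.

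The main idea is to manufacture the triangle defining $\bfT_{S_i}$ directly out of $\R T_i=\R\Hom_\Pi(I_i,-)$. Since $\qv$ has no loop at $i$, the ideal $I_i=\Pi(1-e_i)\Pi$ has codimension one with $\Pi/I_i\cong S_i$; more precisely $\Pi/I_i$, as a $(\Pi,\Pi)$-bimodule, is the outer tensor product $S_i\boxtimes S_i$ of the simple left and right modules at $i$ (both actions factoring through $\Pi\twoheadrightarrow\Pi/I_i\cong\C$). Applying $\R\Hom_\Pi(-,M)$ --- right-module homomorphisms, the output carrying its right $\Pi$-structure from the \emph{left} structures of the bimodules --- to the short exact sequence $0\to I_i\to\Pi\to\Pi/I_i\to 0$ produces, naturally in $M$, a distinguished triangle $\R\Hom_\Pi(\Pi/I_i,M)\to M\to\R T_i(M)\xrightarrow{+1}$ in $\catDb(\ModPi)$. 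Because $I_i$ is tilting of projective dimension $\leq 1$ (Theorem~\ref{thm:reflection-functors}), $\R T_i(M)$ is bounded and hence so is the first term; and unwinding the bimodule $\Pi/I_i\simeq S_i\boxtimes S_i$ gives a natural isomorphism $\R\Hom_\Pi(\Pi/I_i,M)\simeq\R\Hom_\Pi(S_i,M)\otimes_\C S_i$ of right $\Pi$-modules, the first factor now a plain complex of $\C$-vector spaces. It remains to recognise the first arrow as the evaluation $\R\Hom_\Pi(S_i,M)\otimes^\LL_\C S_i\xrightarrow{\ev}M$: it is induced by $\Pi\twoheadrightarrow\Pi/I_i$, hence sends a homomorphism $\phi\colon\Pi/I_i\to M$ to its value $\phi(\bar e_i)$ on the canonical generator $\bar e_i$, which under the identification above is precisely $\psi\otimes s\mapsto\psi(s)$; thus $\R T_i(M)\simeq\mathsf{Cone}(\ev)=\bfT_{S_i}(M)$.

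The step I expect to be the main obstacle is the \emph{functoriality} --- promoting the pointwise cone $\mathsf{Cone}(\ev)$ to an endofunctor naturally isomorphic to $\R T_i$, which cannot be done by a naive cone-of-a-natural-transformation argument. I would settle this at the level of Fourier--Mukai kernels: write $\R T_i\simeq\R\Hom_\Pi(I_i,\Pi)\otimes^\LL_\Pi(-)$ (legitimate because $I_i$ is a perfect complex), apply $\R\Hom_\Pi(-,\Pi)$ to the short exact sequence above --- using $\R\Hom_\Pi(\Pi/I_i,\Pi)\simeq(\Pi/I_i)[-2]$, a manifestation of the $2$-Calabi--Yau property --- to obtain a bimodule triangle computing the kernel of $\R T_i$, and observe that $(-)\otimes^\LL_\Pi(-)$ transforms this triangle of kernels into the triangle of functors above, with connecting morphism the evaluation. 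The residual bookkeeping --- the bimodule identification $\Pi/I_i\simeq S_i\boxtimes S_i$, consistency of left/right module conventions throughout, and that the connecting morphism really is $\ev$ --- is routine but error-prone and must be done with care; the whole argument is uniform over $i\in I$, including $i=0$, since no vertex of $\qv$ carries a loop and every $I_i$ is tilting.
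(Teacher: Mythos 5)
The proposition is imported verbatim from the companion paper \cite{DPSSV-3}, so the present paper offers no proof to compare against. Assessed on its own merits, your argument is correct and follows the standard route for identifying reflection functors of preprojective algebras with spherical twists: pass to $\catDb(\ModPi)$, use the short exact sequence of bimodules $0\to I_i\to\Pi\to\Pi/I_i\to 0$, apply $\R\Hom_\Pi(-,M)$ to produce the triangle $\R\Hom_\Pi(\Pi/I_i,M)\to M\to\R T_i(M)$, identify the first term with $\R\Hom_\Pi(S_i,M)\otimes_\C S_i$ via $\Pi/I_i\simeq S_i\boxtimes S_i$, recognise the first arrow as the evaluation map, and settle functoriality at the kernel level using the bimodule $2$-Calabi--Yau identification $\R\Hom_\Pi(S_i,\Pi)\simeq{}_iS[-2]$. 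The reduction of the cotwist statement to the twist statement via uniqueness of quasi-inverses, and the observation that equivalences intertwine twists so that one may work entirely on the $\Pi$-module side, are both sound.

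Two small points, both on the ``error-prone bookkeeping'' you explicitly flagged. First, since $\ModPi$ is the category of \emph{right} $\Pi$-modules and $I_i$ is perfect of projective dimension $\leq 1$ as a right module, the kernel form of the functor is $\R T_i\simeq(-)\otimes^\LL_\Pi\R\Hom_\Pi(I_i,\Pi)$, not $\R\Hom_\Pi(I_i,\Pi)\otimes^\LL_\Pi(-)$ as you wrote; the tensor has to be on the side that consumes the $M$-variable and leaves the right $\Pi$-structure coming from the left action on $I_i$. Second, the duality $\R\Hom_\Pi(\Pi/I_i,\Pi)\simeq(\Pi/I_i)[-2]$ is not a formal consequence of Serre duality on the abelian category of finite-dimensional modules (where $\Pi$ does not live); it uses the stronger \emph{bimodule} $2$-Calabi--Yau property of the preprojective algebra of a non-Dynkin quiver, equivalently the length-two projective bimodule resolution of $\Pi$. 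This is a standard fact, but worth naming precisely since it is the step that makes the boundary map of the kernel triangle the coevaluation/evaluation pairing. With those clarifications the proof is complete; the Tor--Ext duality you need in the last step, $M\otimes^\LL_\Pi{}_iS[-2]\simeq\R\Hom_\Pi(S_i,M)$, indeed follows from the same bimodule CY property and gives exactly the term $\R\Hom_\Pi(S_i,M)\otimes_\C S_i$ appearing in the definition of $\bfT_{S_i}$.
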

The action of the coweight lattice $\{1\}\ltimes \coweightlatticefin\subset B_\sfex$ is also described naturally once it is appropriately identified with the Picard group as we shall now describe.

\begin{lemma}
    For each $J\subset I_\sff$, the group of numerical $1$--cycles in $\psv$ relative to $\varpi_J$ is freely generated by the classes of (proper transforms of) $\{C_i\;\vert\; i\in J\}$. Consequently, the relative Picard group $\Pic(\psv/\ssv)$ is isomorphic to $\Z^J$.
\end{lemma}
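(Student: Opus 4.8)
The plan is to reduce the statement to the geometry of the minimal resolution $\pi_J\colon\rsv\to\psv$. Recall that the curves $\{C_i\mid i\in\Jcomp\}$ contracted by $\pi_J$ split, along the connected components of $\Jcomp$, into configurations of $(-2)$-curves of $\mathsf{ADE}$ type, one over each singular point of $\psv$; thus $\pi_J$ is precisely the minimal resolution of $\psv$, and the irreducible components of the $\varpi_J$-exceptional fibre are exactly the proper transforms $\bar C_i\coloneqq\pi_J(C_i)$ for $i\in J$. Hence the group of relative numerical $1$-cycles $N_1(\psv/\ssv)$ is \emph{generated} by $\{[\bar C_i]\mid i\in J\}$, and all that remains for the first assertion is to prove these classes $\Z$-linearly independent; the computation of $\Pic(\psv/\ssv)$ will then be a formal consequence.

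First I would record the Picard group of the minimal resolution $\pi\colon\rsv\to\ssv$. Since $\ssv$ is affine with Kleinian (hence rational) singularities we have $R\pi_\ast\scrO_\rsv=\scrO_\ssv$, so $H^{>0}(\rsv,\scrO_\rsv)=0$ and the exponential sequence gives $\Pic(\rsv)\xrightarrow{\ \sim\ }H^2(\rsv,\Z)$. As $\rsv$ deformation-retracts onto the simply connected tree of $\PP^1$s $C_{\mathsf{red}}$, this group is free of finite rank and the fundamental classes $[C_i]$ ($i\in I_\sff$) form a $\Z$-basis of the dual lattice $H_2(\rsv,\Z)$; the intersection pairing $\Pic(\rsv)\times H_2(\rsv,\Z)\to\Z$ is therefore perfect, so there is a unique $\Z$-basis $\{M_i\mid i\in I_\sff\}$ of $\Pic(\rsv)$ with $\deg\big(M_i|_{C_j}\big)=\delta_{ij}$ (concretely, one may take $M_i=\det\calP_i$, by the McKay correspondence).

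Next I would descend the relevant $M_i$ to $\psv$. For $i\in J$ one has $\deg(M_i|_{C_k})=\delta_{ik}=0$ for every $k\in\Jcomp$, so $M_i$ is trivial on every fibre of $\pi_J$; by the standard descent for resolutions of rational surface singularities, $\pi_J^\ast$ identifies $\Pic(\psv)$ with $\{L\in\Pic(\rsv)\mid\deg(L|_{C_k})=0\text{ for all }k\in\Jcomp\}$, which by the previous paragraph is exactly $\bigoplus_{i\in J}\Z M_i$. Thus there are line bundles $L_i\in\Pic(\psv)$, $i\in J$, with $\pi_J^\ast L_i\cong M_i$, and by the projection formula $\deg(L_i|_{\bar C_j})=\deg(M_i|_{C_j})=\delta_{ij}$. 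Pairing the classes $[\bar C_j]$ against the $L_i$ now shows they are $\Z$-linearly independent, which together with the first paragraph proves the first assertion; it also shows $\Pic(\psv)=\bigoplus_{i\in J}\Z L_i\cong\Z^J$. Finally, any $L\in\Pic(\ssv)$ pulls back along the crepant resolution $\pi=\varpi_J\circ\pi_J$ to a line bundle orthogonal to every $[C_i]$, hence trivial on $\rsv$, and applying $\pi_{J\ast}$ (with $\pi_{J\ast}\scrO_\rsv=\scrO_\psv$ and the projection formula) gives $\varpi_J^\ast L\cong\scrO_\psv$; therefore $\varpi_J^\ast\Pic(\ssv)=0$ and $\Pic(\psv/\ssv)=\Pic(\psv)\cong\Z^J$.

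Almost everything here is classical surface geometry — the deformation retraction and the vanishing $H^{>0}(\scrO_\rsv)=0$, the triviality of a degree-$0$ line bundle on a tree of $\PP^1$s, and the projection-formula manipulations. The one step that genuinely needs care is the descent statement used in the last paragraph, namely the identification of $\pi_J^\ast\Pic(\psv)$ with the subgroup of $\Pic(\rsv)$ orthogonal to the contracted curves; I would either invoke the contraction theory for rational surface singularities, or prove it directly from $R\pi_{J\ast}\scrO_\rsv=\scrO_\psv$ by checking that a line bundle which is trivial on every $\pi_J$-fibre is pulled back from $\psv$.
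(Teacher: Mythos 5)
The paper states this lemma without proof, treating it as a standard fact about resolutions of rational surface singularities; your argument fills that in correctly and along what is surely the intended route: the exponential sequence plus $H^{>0}(\rsv,\scrO_\rsv)=0$ identifies $\Pic(\rsv)$ with $H^2(\rsv,\Z)\cong\Z^{I_\sff}$ dual to the curve classes $[C_i]$ (consistent with the explicit basis $\scrO_\rsv(D_i)=\det\calP_i$ the paper records immediately after the lemma), and contraction theory for rational singularities descends the line bundles orthogonal to the contracted curves to $\psv$. You correctly isolate the one non-formal input — that $\pi_J^\ast\Pic(\psv)$ is exactly the orthogonal complement in $\Pic(\rsv)$ of $\{[C_k]:k\in\Jcomp\}$, which is Artin/Lipman contractibility for rational surface singularities — so the proof is complete once that reference is cited.
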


In particular the Picard group $\Pic(\rsv)$ is isomorphic to $\Z^{I_\sff}$. An explicit basis dual to the curves $\{C_1,\ldots,C_e\}$ can be given by choosing divisors $D_1,\ldots,D_e\subset \rsv$ such that $\scrO_\rsv(D_i)=\det(\calP_i)$, where $\calP_i$ is the $i$th indecomposable summand of the tautological bundle on $\rsv$ as in \S\ref{subsec:kleinian-orbifolds}. Thus the intersection pairing is computed as $(\scrO_\rsv(D_i)\cdot \scrO_{C_i})\coloneqq(D_i\cdot C_j)=\delta_{ij}$. Viewing elements of $\Pic(\rsv)$ as additive functions on $\sfK_0(\catDbC(\rsv))\simeq \rootlattice$ in this fashion, we have the following observation.

\begin{proposition}\label{prop:pic-coweight}
	The identification $\sfK_0(\catDbC\rsv)\simeq \rootlattice$ induces a natural isomorphism
	\begin{align}\label{eq:Pic}
		\begin{tikzcd}[ampersand replacement=\&]
			\Pic(\rsv) \ar{r}{\sim} \& \coweightlatticefin
		\end{tikzcd}\ ,
	\end{align}
    where $\coweightlatticefin$ is identified with the hyperplane $\{\delta=0\}\subset \coweightlattice$ as in Formula~\eqref{eq:Lthetaconditions}. Under this isomorphism, the fundamental coweight $\Llambda_i$, for $i\in I_\sff$, corresponds to the line bundle $\scrO_\rsv(D_i)$, while the simple coroot $\Lalpha_i$ corresponds to $\scrO_\rsv(-C_i)$.
\end{proposition}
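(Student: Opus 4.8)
The plan is to write the homomorphism $\Pic(\rsv)\to\coweightlatticefin$ down explicitly via intersection theory and then recognise it as an isomorphism by matching bases. For a line bundle $L=\scrO_\rsv(D)$ on $\rsv$ and a complex $\calF\in\catDbC(\rsv)$, the first Chern class $c_1(\calF)$ (the alternating sum of the $c_1$ of the cohomology sheaves) is an integral codimension-one cycle class, additive over distinguished triangles, so $[\calF]\mapsto(D\cdot c_1(\calF))$ descends to an additive function $\phi_L\colon\sfK_0(\catDbC(\rsv))\to\Z$, and $L\mapsto\phi_L$ is a group homomorphism $\Pic(\rsv)\to\Hom(\sfK_0(\catDbC(\rsv)),\Z)$, which under the identification $\sfK_0(\catDbC(\rsv))\simeq\rootlattice$ of Formula~\eqref{eq:K0rootidentification} becomes a map $\Pic(\rsv)\to\Hom(\rootlattice,\Z)=\coweightlattice$. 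This is precisely the map obtained by viewing line bundles as additive functions on $\sfK_0$ via the intersection pairing, i.e.\ the one appearing in the statement; equivalently, by Hirzebruch--Riemann--Roch on the smooth surface $\rsv$ one has $\phi_L([\calF])=\chi(\rsv,L\otimes\calF)-\chi(\rsv,\calF)$.

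I would first check that $\phi_L$ factors through $\{\delta=0\}=\coweightlatticefin$. A closed point $p\in C$ is $0$-dimensional, so $c_1(\scrO_p)=0$ and hence $\phi_L([\scrO_p])=0$; since $[\scrO_p]=\delta$ (recalled just before the statement), $\phi_L$ annihilates $\delta$. That the functionals annihilating $\delta$ form exactly the image of $\coweightlatticefin$ under Formula~\eqref{eq:Lthetaconditions} is a one-line check: using $\delta=\sum_{i\in I}r_i\alpha_i$ with $r_0=1$, a functional $\mu=\sum_{k\in I}m_k\Lomega_k$ satisfies $\mu(\delta)=0$ if and only if $m_0=-\sum_{i\in I_\sff}r_im_i$, i.e.\ if and only if $\mu=\sum_{i\in I_\sff}m_i(\Lomega_i-r_i\Lomega_0)$.

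The heart of the argument is the basis computation. The sheaf $\scrO_{C_j}(-1)$ has $\sfK_0$-class $\alpha_j$ (recalled above) and first Chern class equal to the fundamental cycle $[C_j]$, so $\phi_{\scrO_\rsv(D_i)}(\alpha_j)=(D_i\cdot C_j)=\delta_{ij}$; since $\phi_{\scrO_\rsv(D_i)}$ also kills $\delta$ and $\{\delta,\alpha_1,\dots,\alpha_e\}$ is a $\Z$-basis of $\rootlattice$ (again because $r_0=1$), it follows that $\phi_{\scrO_\rsv(D_i)}=\Llambda_i$. As $\{\scrO_\rsv(D_i)\mid i\in I_\sff\}$ freely generates $\Pic(\rsv)\cong\Z^{I_\sff}$ (by the preceding lemma) while $\{\Llambda_i\mid i\in I_\sff\}$ freely generates $\coweightlatticefin$, the homomorphism $L\mapsto\phi_L$ is an isomorphism $\Pic(\rsv)\xrightarrow{\sim}\coweightlatticefin$. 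For the final assertion, the exceptional curves of the minimal resolution of a Kleinian singularity are $(-2)$-curves (by adjunction and crepancy) whose dual graph is the Dynkin diagram of $\qvfin$, so $(C_i\cdot C_j)=-\langle\Lalpha_i,\alpha_j\rangle$; therefore $\phi_{\scrO_\rsv(-C_i)}(\alpha_j)=-(C_i\cdot C_j)=\langle\Lalpha_i,\alpha_j\rangle$, which identifies $\phi_{\scrO_\rsv(-C_i)}$ with $\Lalpha_i$ under the inclusion $\corootlatticefin\hookrightarrow\coweightlatticefin$.

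The only genuinely delicate point --- more bookkeeping than difficulty --- is verifying that the intersection-theoretic recipe $L\mapsto\phi_L$ is well-defined on $\sfK_0$ and really is the map intended by the statement; this amounts to additivity of $c_1$ over short exact sequences of dimension-$\leq 1$ sheaves on the smooth surface $\rsv$, together with the identification of $\Z\delta$ with the rank-zero step of the dimension filtration on $\sfK_0(\catCohC(\rsv))$. Everything else is a matter of aligning sign and normalisation conventions: the choice $r_0=1$, the duality $(D_i\cdot C_j)=\delta_{ij}$ fixed above, the embedding $\{\delta=0\}\cong\coweightlatticefin$ of Formula~\eqref{eq:Lthetaconditions}, and the sign in $(C_i\cdot C_j)=-\langle\Lalpha_i,\alpha_j\rangle$.
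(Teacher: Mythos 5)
Your argument is correct and matches the (implicit) approach of the paper: the paper states this proposition as an observation following directly from the computation $(D_i\cdot C_j)=\delta_{ij}$ and the $\sfK_0$-classes of $\scrO_{C_i}(-1)$ and $\scrO_p$ recalled just above it, and your write-up simply spells out the intersection-theoretic definition of $\phi_L$, its descent to $\sfK_0$, the containment of its image in $\{\delta=0\}$, and the basis match. The one bookkeeping detail worth keeping explicit, which you do, is that the comparison of $\phi_{\scrO(-C_i)}$ with $\Lalpha_i$ uses the self-intersection $(C_i\cdot C_j)=-\langle\Lalpha_i,\alpha_j\rangle$ for the $(-2)$-curve configuration, consistent with the sign in the chosen embedding $\corootlatticefin\hookrightarrow\coweightlatticefin$.
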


\begin{notation}
 	We denote by $\calL_{\Llambda}$ the line bundle associated to the coweight $\Llambda\in\coweightlatticefin$ under the isomorphism \eqref{eq:Pic}.
\end{notation}

The Picard group $\Pic(\rsv)$ acts on $\catDb(\rsv)$ via tensor products, and the action $\{1\}\ltimes \coweightlatticefin\circlearrowright\catDb(\rsv)$ induces this action across the identification of Proposition~\ref{prop:pic-coweight}.

\begin{proposition}[{\cite[Proposition~\ref*{COHA-Yangian-prop:braid}]{DPSSV-3}}]\label{prop:braid-tilting-intertwiner}
	For $\Llambda\in \coweightlatticefin$, The functor $\R L_\Llambda$ is naturally isomorphic to the equivalence $(\calL_\Llambda\otimes_\rsv -)$.
\end{proposition}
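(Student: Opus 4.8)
The plan is to reduce to the case of fundamental coweights by a multiplicativity argument, and then to compare $\R T_{\Llambda_i}$ with $\calL_{\Llambda_i}\otimes -$ after transporting the question to the preprojective algebra $\Pi$.

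First I would record that both sides are multiplicative in $\Llambda$. The elements $\{L_{\Llambda}\mid\Llambda\in\coweightlatticefin\}$ form a subgroup of $B_\sfex$ isomorphic to $\coweightlatticefin$, so, the assignment of Theorem~\ref{thm:twist} and Corollary~\ref{cor:twist} being a genuine left action, $\R L_{\Llambda+\Lmu}\simeq\R L_{\Llambda}\circ\R L_{\Lmu}$; on the other side Proposition~\ref{prop:pic-coweight} identifies $\Llambda\mapsto\calL_{\Llambda}$ with a group isomorphism $\coweightlatticefin\xrightarrow{\ \sim\ }\Pic(\rsv)$, whence $\calL_{\Llambda+\Lmu}\otimes-\simeq(\calL_{\Llambda}\otimes-)\circ(\calL_{\Lmu}\otimes-)$. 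Since $\coweightlatticefin$ is free on the fundamental coweights $\Llambda_i$ and natural isomorphisms of functors compose, it is enough to treat $\Llambda=\Llambda_i$ (the case $\Llambda=-\Llambda_i$ then follows by passing to quasi-inverses). As $\Llambda_i$ is dominant we have $L_{\Llambda_i}=T_{\Llambda_i}$, so the goal becomes $\R T_{\Llambda_i}\simeq\scrO_\rsv(D_i)\otimes-$.

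Next I would transport along the tilting equivalence $\tau=\R\Hom_\rsv(\calP,-)$ of \eqref{eq:tau}, under which $\calP_j\mapsto e_j\Pi$ and, by Theorem~\ref{thm:reflection-functors}, $\R T_{\Llambda_i}$ corresponds to $\R\Hom_\Pi(I_{t_{\Llambda_i}},-)$ for the classical tilting bimodule $I_{t_{\Llambda_i}}$ with $\End_\Pi(I_{t_{\Llambda_i}})\simeq\Pi$. The functor $\calL_{\Llambda_i}\otimes-$ is likewise induced by a bimodule, namely the invertible sheaf $\calL_{\Llambda_i}=\det\calP_i$, whose transport under $\tau$ is an invertible $\Pi$-bimodule $J_i$ with $\End_\Pi(J_i)\simeq\Pi$. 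Because both functors are, by construction, given by derived tensor/Hom against these bimodules, any bimodule isomorphism $I_{t_{\Llambda_i}}\simeq J_i$ yields the desired natural isomorphism of functors; equivalently, it suffices to exhibit an isomorphism $\R T_{\Llambda_i}(\calP)\simeq\calL_{\Llambda_i}\otimes\calP$ intertwining the two resulting $\Pi$-actions. I would establish this either (a) geometrically, by fixing a reduced expression $t_{\Llambda_i}=\gamma_i\,s_{j_1}\cdots s_{j_\ell}$ in $W_\sfex=\Gamma\ltimes W$, rewriting $\R T_{\Llambda_i}$ via Theorem~\ref{thm:twist} and Proposition~\ref{prop:tilt_twist_thm} as the composite of the autoequivalence induced by $\gamma_i\in\Gamma$ and the spherical twists at $\scrO_C$ and the $\scrO_{C_k}(-1)[1]$, and tracking its effect on the summands $\calP_j$ step by step; or (b) algebraically, by appealing to the structure theory of tilting modules over the affine preprojective algebra, which identifies the ideals attached to translation elements of $W_\sfex$ with exactly the invertible $\Pi$-bimodules and matches $\coweightlatticefin\subset W_\sfex$ with $\Pic(\rsv)$.

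The main obstacle is precisely this last identification: showing that the a priori intricate composite of spherical twists which realizes $\R T_{\Llambda_i}$ in fact collapses to a single line-bundle twist. Route (a) is elementary but the word for $t_{\Llambda_i}$ is type-dependent and the bookkeeping of twists on the tautological summands is delicate; route (b) is conceptually cleaner but imports nontrivial input on noncommutative crepant resolutions. A third option, purely on $\catDb(\rsv)$, is to first prove that $\R T_{\Llambda_i}$ is $\scrO_\ssv$-linear, $t$-exact for $\catCoh(\rsv)$, and acts trivially away from the exceptional fibre (using that $I_{t_{\Llambda_i}}$ has finite colength, hence agrees with $\Pi$ over $\ssv\smallsetminus\{0\}$), then invoke the classification of $\scrO_\ssv$-linear autoequivalences of $\catCoh(\rsv)$ to write $\R T_{\Llambda_i}\simeq\calL\otimes\sigma^{\ast}$, and finally pin down $\sigma=\id$ and $[\calL]=\Llambda_i$ from the induced action on $\sfK_0(\catDb(\rsv))$ (computed from $[\scrO_{C_j}(n)]=(n+1)\delta+\alpha_j$, which forces the shear $\ell_{\Llambda_i}$); here the delicate step becomes the proof of $t$-exactness.
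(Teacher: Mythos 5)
The paper does not prove this proposition internally; it is cited verbatim from \cite[Proposition~\ref*{COHA-Yangian-prop:braid}]{DPSSV-3}, so there is no in-paper argument to compare against. Evaluating your proposal on its own terms: the reduction to fundamental coweights via the group homomorphism $B_\sfex\to\Aut(\catDb(\rsv))$ and the identification $\Pic(\rsv)\simeq\coweightlatticefin$ of Proposition~\ref{prop:pic-coweight} is correct, as is the observation that both functors are Morita equivalences induced by invertible $\Pi$-bimodules, so that it suffices to identify $\R T_{\Llambda_i}(\calP)$ with $\calL_{\Llambda_i}\otimes\calP$ together with the right $\Pi$-actions. This is a sound skeleton.

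However, the crucial step --- actually producing that bimodule isomorphism --- is not carried out. You list three candidate routes and explicitly flag each as having an unresolved obstacle. Route~(a) is feasible but left as type-dependent bookkeeping. Route~(b), appealing to ``the structure theory of tilting modules \ldots\ which identifies the ideals attached to translation elements with exactly the invertible $\Pi$-bimodules and matches $\coweightlatticefin\subset W_\sfex$ with $\Pic(\rsv)$'', is essentially a restatement of what is to be proved: invoking that identification without a precise citation is circular. Route~(c) hinges on first establishing that $\R T_{\Llambda_i}$ is $t$-exact for the geometric heart $\catCoh(\rsv)$, but the paper's intermediacy results (Theorem~\ref{thm:linebundle-tstructure}, Theorem~\ref{thm:heartfan-of-nilpPi}) concern the perverse heart $\catPC(\rsv/\ssv)\simeq\nilpPi$, not $\catCoh(\rsv)$, and I do not see how to obtain $t$-exactness for $\catCoh(\rsv)$ except via the very isomorphism you want to prove; the $\sfK_0$-level shear by $\ell_{\Llambda_i}$ is necessary but far from sufficient. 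In short, the reduction is fine but the proof has a genuine gap at the core identification; none of the proposed routes is pushed through, and the one you call ``conceptually cleaner'' begs the question.
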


This completes the description of the $B_\sfex$ action on $\catDb(\rsv)$. Note that the spherical sheaves $\scrO_C$ and $\scrO_{C_i}(-1)$, as well as the generators $\scrO_\rsv(D_i)$ of $\Pic(\rsv)$ for $i=1, \ldots, e$, all admit an $A$-equivariant sheaf structure. Thus the above action $B_\sfex\circlearrowright\catDb(\rsv)$ commutes with that of $A$, in other words image of the induced map $B_\sfex\to \Aut(\catDb(\rsv))$ centralizes $A$.


\subsection{The stability manifold}\label{subsec:stability-manifold}

Referring the reader to \cite{B07} for definitions concerning stability conditions, we recall here the main result of \cite{Stab_Kleinian}, where a connected component $\Stab^\circ(\rsv)$ of the stability manifold of $\catDbC(\rsv)$ is computed (see also \cite{Thomas_Stab}).

Recall that a the points of the stability manifold $\Stab(\rsv)\coloneqq \Stab(\catDbC(\rsv))$ parametrise (locally finite) \textit{stability conditions} $\sigma=(Z,\calP)$ on $\catDbC(\rsv)$. Here the \textit{slicing} $\calP$ is a collection of full Abelian subcategories $\calP(t)\subset \catDbC(\rsv)$ indexed over $t\in \R$, and the \textit{central charge} $Z$ is a $\Z$-linear map $\sfK_0(\catDbC(\rsv))\to \C$ which can thus be identified with a point of $\coweightlattice\otimes \C$. The definition of the slicing ensures that for each unit interval $I\subset \R$, the full extension-closed subcategory $\calP(I)\coloneqq\left\langle \calP(t)\,\vert\, t\in I\right\rangle$ is the heart of a $t$-structure. In particular $\calP(0,1]$ is said to be the \textit{standard heart} of the stability condition $(Z,\calP)$, and for this standard heart $Z$ is a stability function satisfying the Harder--Narasimhan property.

Autoequivalences $\Phi\in\Aut(\catDbC(\rsv))$ naturally induce homeomorphisms $\Stab(\rsv)\to \Stab(\rsv)$ that we denote by the same symbol $\Phi$. Bridgeland's result, stated below, analyses such self-homeomorphisms of $\Stab(\rsv)$ by expressing it as a regular covering space of the open set
\begin{align}
	\frakh_\mathsf{reg}\coloneqq(\coweightlattice\otimes \C) \smallsetminus \bigcup_{\alpha\in \Delta}\{Z\,\vert\, Z(\alpha)=0\}\ .
\end{align}
Specifically we have the following.
\begin{theorem}[{\cite[Theorem~1.3]{Stab_Kleinian}}]\label{thm:Bridgelandcovering}
    Stability conditions with standard heart $\catPC(\rsv/\ssv)$ all lie in a connected component $\Stab^\circ(\rsv)\subset \Stab(\rsv)$. The forgetful map $\Stab(\rsv)\to \coweightlattice\otimes\C$ given by $(Z,P)\mapsto Z$ when restricted to this connected component yields a surjection
    \begin{align}
        \label{eqn:bridgelandmap}
        \Stab^\circ(\rsv)\longrightarrow \frakh_\mathsf{reg}/W
    \end{align}
    that is a regular covering map, with group of deck transformations given by the image of $B$ in $\Aut(\catDb(\rsv))$, and powers of the shift functor $[2]$.
\end{theorem}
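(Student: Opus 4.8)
The plan is to run Bridgeland's standard recipe --- build a single connected component of the stability space from a finite-length heart and its orbit under autoequivalences --- specialised here to the $2$-Calabi--Yau surface $\rsv$. First I would invoke the equivalence $\catPC(\rsv/\ssv)\simeq\nilpPi$ from \S\ref{subsec:psheavesres}: this is a finite-length Abelian category whose simple objects are exactly the spherical objects $\tau^{-1}(S_0),\dots,\tau^{-1}(S_e)$, so by the construction of \cite{B07} a stability condition with this standard heart is prescribed freely by a choice of central charges $Z(\tau^{-1}(S_i))\in\HH_-$. Such stability conditions therefore fill a region $U\subset\Stab(\rsv)$ which, under the identification $\sfK_0(\catDbC(\rsv))\simeq\rootlattice$, is a product of $(e+1)$ copies of $\HH_-$; being connected, it lies in a single connected component, which we name $\Stab^\circ(\rsv)$, settling the first assertion.

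Next I would analyse the forgetful map $(Z,\calP)\mapsto Z$. Its restriction to $U$ lands in $\frakh_\mathsf{reg}$: every root $\alpha\in\rootset$ is, up to sign, a non-negative integral combination --- with not all coefficients zero --- of the classes $[\tau^{-1}(S_i)]$, which constitute the simple system $\{w_0\alpha_i\}$; hence $Z(\alpha)\in\pm\HH_-$, and in particular $Z(\alpha)\ne0$. Bridgeland's deformation theorem \cite{B07} makes the forgetful map a local homeomorphism onto an open subset of $\coweightlattice\otimes\C$, so it restricts to a homeomorphism of a neighbourhood of $U$ onto an open subset of $\frakh_\mathsf{reg}$. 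I would then let the group $\scrG$ generated by the image of $B$ in $\Aut(\catDbC(\rsv))$ and the $\widetilde{\GL}^+(2,\R)$-action (which already contains the even shifts) act on $U$: the braid group acts on $\rootset$ through the Weyl group $W$, which permutes roots, so the orbit $\scrG\cdot U$ still lies over $\frakh_\mathsf{reg}$; and since crossing a single wall $\{Z(\alpha)=0\}$ with $\alpha\in\rootsetre$ out of $U$ is realised on the level of hearts by the spherical twist $\bfT_{\tau^{-1}(S_i)}$ at the destabilised simple (cf.\ Proposition~\ref{prop:tilt_twist_thm}), iterating shows the $B$-translates of $\mathrm{forget}(U)$ sweep out all of $\frakh_\mathsf{reg}$. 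Composing with the free, properly discontinuous quotient $\frakh_\mathsf{reg}\to\frakh_\mathsf{reg}/W$ then produces the claimed surjection $\Stab^\circ(\rsv)\to\frakh_\mathsf{reg}/W$.

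The hard part will be upgrading ``surjective local homeomorphism'' to ``covering map'': equivalently, showing that $\scrG\cdot U$ is \emph{closed} in $\Stab^\circ(\rsv)$ --- hence, by connectedness, all of it --- and that the forgetful map has the path-lifting property over $\frakh_\mathsf{reg}$. I would deduce this from completeness of $\Stab(\rsv)$ in the generalised metric of \cite{B07} together with the decisive finiteness input: over any compact $K\subset\frakh_\mathsf{reg}$ only finitely many walls $\{Z(\alpha)=0\}$, $\alpha\in\rootset$, meet $K$. This rests on the explicit shape of the (simply-laced, hence untwisted) affine root system --- the real roots are the shifts $\beta+n\delta$ with $\beta\in\rootsetfin$, $n\in\Z$, and the imaginary roots the nonzero multiples of $\delta$ --- so that $Z(\delta)\ne0$ on $\frakh_\mathsf{reg}$ forces $\{\alpha\in\rootset\;\vert\;|Z(\alpha)|\le N\}$ to be finite for every $N$ (compare \cite[Lemma~4.4]{bridgelandStabilityConditionsK3}). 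A lift of a compact path then crosses only finitely many walls and continues across each one by the wall-crossing above, so that a family of stability conditions in $\scrG\cdot U$ with convergent central charges in $\frakh_\mathsf{reg}$ cannot escape to infinity in $\Stab^\circ(\rsv)$; controlling precisely this no-escape behaviour is the delicate point. Granting it, the forgetful map is a covering onto $\frakh_\mathsf{reg}$, and composing with the $W$-cover $\frakh_\mathsf{reg}\to\frakh_\mathsf{reg}/W$ gives a covering onto $\frakh_\mathsf{reg}/W$; it is regular because the covering group identified below acts transitively on fibres, by the orbit description of $\Stab^\circ(\rsv)$.

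Finally I would pin down the deck group. An autoequivalence of $\catDbC(\rsv)$ is a deck transformation for $\Stab^\circ(\rsv)\to\frakh_\mathsf{reg}/W$ exactly when it preserves $\Stab^\circ(\rsv)$ and acts on $\sfK_0(\catDbC(\rsv))\simeq\rootlattice$ through the Weyl group $W$: the spherical twists generate a copy of $B$ doing precisely this and surjecting onto $W$, while the shift $[2]$ acts trivially on $\sfK_0$ and on the lattice of $t$-structures of $\catDbC(\rsv)$, hence is a deck transformation. To see that these exhaust the group, I would compute the deck group of the intermediate covering $\Stab^\circ(\rsv)\to\frakh_\mathsf{reg}$ --- generated by the pure-braid image together with $[2]$, via the same wall-crossing analysis applied now to loops in $\frakh_\mathsf{reg}$ --- and combine with the extra $W$ introduced on passing to $\frakh_\mathsf{reg}/W$, using that the surjection $B\to W$ has kernel the pure braid group. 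This yields exactly $\langle\,\text{image of }B,\ [2]\,\rangle$ as the group of deck transformations, completing the proof.
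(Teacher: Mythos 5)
The paper does not prove this result: Theorem~\ref{thm:Bridgelandcovering} is stated with the citation \cite[Theorem~1.3]{Stab_Kleinian} and no argument is offered, so there is no in-paper proof to compare your attempt against. What you have written is instead an attempt to reconstruct Bridgeland's original argument.

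As a reconstruction it has the right shape --- finite-length heart $\catPC(\rsv/\ssv)\simeq\nilpPi$ with spherical simples, the product-of-half-planes region $U$, the orbit of $U$ under spherical twists and $\widetilde{\GL}^+(2,\R)$, and a wall-and-chamber completeness argument to promote the local homeomorphism to a covering. Two remarks, though. First, you explicitly defer the decisive ``no-escape'' step (``Granting it, ...''). That deferred step \emph{is} the theorem: Bridgeland's proof hinges on an abstract criterion for a local homeomorphism equivariant under a group action to be a covering of the quotient (\cite[Prop.~2.5 and the argument of \S4]{Stab_Kleinian}), and the finiteness-of-walls input plus metric completeness is packaged precisely to verify its hypotheses. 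Flagging the gap is honest, but as written the argument does not close. Second, your route factors through an intermediate map $\Stab^\circ(\rsv)\to\frakh_\mathsf{reg}$ which you then want to be a \emph{regular} covering with deck group the pure braid group times $\langle[2]\rangle$. Bridgeland never establishes (nor needs) this intermediate statement --- he works directly with $\frakh_\mathsf{reg}/W$ --- and your sketch neither shows the intermediate map is a covering independently nor verifies its regularity; the appeal to ``the same wall-crossing analysis applied now to loops in $\frakh_\mathsf{reg}$'' is not a proof, since regularity of the composite $\Stab^\circ(\rsv)\to\frakh_\mathsf{reg}/W$ does not automatically descend to the intermediate covering. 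If you want the deck-group computation via the $B\twoheadrightarrow W$ short exact sequence, you must first establish that the intermediate map is a regular covering, which is an additional statement beyond what is being cited.
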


Recall that by \cite[Proposition~5.3]{B07} stability conditions $(Z,\calP)$ on $\catDbC(\rsv)$ can be uniquely specified by giving the heart $\calP(0,1]$ and the stability function $Z$ on this heart satisfying the Harder--Narasimhan property.
\begin{lemma}
    For each $J\subset I_\sff$, if $Z:\sfK_0(\catDbC(\rsv))\to \C$ is a locally finite stability function on $\catPC(\rsv/\psv)$ with the Harder--Narasimhan property, then the corresponding stability condition $(Z,\calP)$ lies in Bridgeland's distinguished component $\Stab^\circ(\rsv)$.
\end{lemma}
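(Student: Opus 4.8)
The plan is to realise $(Z,\calP)$ as a limit of stability conditions whose standard heart is a braid translate $b\cdot\catPC(\rsv/\ssv)$ for some $b\in B_\sfex$. All such stability conditions lie in $\Stab^\circ(\rsv)$ by Theorem~\ref{thm:Bridgelandcovering}: the component $\Stab^\circ(\rsv)$ is preserved by the $B_\sfex$-action, since its deck group already contains the image of $B$ in $\Aut(\catDb(\rsv))$, and each $\gamma\in\Gamma$ fixes the heart $\catPC(\rsv/\ssv)$ (it merely permutes its simples $\tau^{-1}(S_i)$), hence carries the component to itself. The case $J=\emptyset$ is Theorem~\ref{thm:Bridgelandcovering} verbatim, so assume $J\neq\emptyset$.

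Next I would record that the locus $S_J:=\{\sigma\in\Stab(\rsv)\mid\calP_\sigma(0,1]=\catPC(\rsv/\psv)\}$ is connected. Indeed, by \cite[Proposition~5.3]{B07} a stability condition in $S_J$ is determined by its central charge; the admissible central charges form the intersection of two convex cones by Theorem~\ref{thm:perverse-stabfuncs} (local finiteness and the Harder--Narasimhan property being automatic there); and the straight-line path joining two such charges stays admissible, so by continuity of the assignment $Z\mapsto\sigma$ it lifts to a path in $S_J$. Since the connected components of the manifold $\Stab(\rsv)$ are clopen, it therefore suffices to exhibit a single $\sigma_0\in S_J$ lying in $\overline{\Stab^\circ(\rsv)}=\Stab^\circ(\rsv)$.

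To produce the approximation, fix a strictly dominant integral coweight $\nu$ of $\frakg$, and write $Z=-\Ltheta+i\Lomega$ with $\Lomega\in\calC^0_J\subset\{\delta=0\}$ and $\Ltheta\in\calD_J$ (Theorem~\ref{thm:perverse-stabfuncs}). Since $\nu(\delta)=\sum_i r_i\,\nu(\alpha_i)>0$, the real coweight $\Lomega+s\nu$ lies in the half-space $\{\delta>0\}$ for all $s>0$, hence in the interior of some Tits chamber $w_k\calC^+$ (notation of \S\ref{sec:affine-quivers}) for a sequence $s_k\to0^+$ chosen to avoid the walls of the arrangement cut out by $\Delta$ — possible because those walls accumulate at $s=0$ but form a countable set. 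Choosing $b_k\in B_\sfex$ over $w_k\in W$, the perturbed central charge $Z_k:=-\Ltheta+i(\Lomega+s_k\nu)$ is a stability function on the length heart $b_k\cdot\catPC(\rsv/\ssv)$: its simples $b_k\cdot\tau^{-1}(S_i)$ have classes $w_kw_0(\alpha_i)$, and positivity of $\mathrm{Im}\,Z_k$ there is precisely membership of $\Lomega+s_k\nu$ in $\mathrm{int}(w_k\calC^+)$. Hence \cite[Proposition~5.3]{B07} furnishes $\sigma_k\in\Stab^\circ(\rsv)$ with $Z_{\sigma_k}=Z_k\to Z$.

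The hard part is upgrading $Z_{\sigma_k}\to Z$ to an honest convergence $\sigma_k\to(Z,\calP)$ in $\Stab(\rsv)$, which would give the desired $\sigma_0=(Z,\calP)$. Although the hearts $b_k\cdot\catPC(\rsv/\ssv)$ do not stabilise as $k\to\infty$, one must check — using the structure theorem (Theorem~\ref{thm:structureofper}) together with the explicit classes $[\scrO_{C_i}(n)]=(n+1)\delta+\alpha_i$ and $[\scrO_{C_{J'}}]=2\delta-\alpha_{J'}$ — that along a suitable sequence one has $\catPC(\rsv/\psv)=\bigwedge_k b_k\cdot\catPC(\rsv/\ssv)$ in the lattice of $t$-structures intermediate with respect to $\catPC(\rsv/\ssv)$, that the objects whose $Z_k$-phase tends to $1$ are exactly those supported on the contracted exceptional fibres $\{C_i\mid i\in\Jcomp\}$ (and point sheaves), and that the resulting uniform phase bounds force the slicings $\calP_{\sigma_k}$ to converge to $\calP$. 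An essentially equivalent alternative is to join $S_J$ to the locus with heart $\catPC(\rsv/\ssv)$ by a finite chain of Harder--Narasimhan wall-crossings inside $\Stab(\rsv)$, one per curve $C_i$ with $i\in\Jcomp$, each realised by an explicit central charge in the region of Theorem~\ref{thm:perverse-stabfuncs}; in either guise the crux is to identify the change of standard heart — across the wall, or in the limit — with the tilt relating $\catPC(\rsv/\psv)$ to its neighbours in this lattice, a computation one may also import from \cite{Shimpi_Torsion_pairs}.
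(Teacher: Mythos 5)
Your opening reductions are correct and essentially match the paper's first paragraph: $J=\emptyset$ is tautological, the locus $S_J$ of stability conditions with standard heart $\catPC(\rsv/\psv)$ is homeomorphic (via the forgetful map) to the connected convex region $V_J=-\calD_J+i\calC_J^0$ computed in Theorem~\ref{thm:perverse-stabfuncs}, so it suffices to place a single point of $S_J$ inside $\Stab^\circ(\rsv)$.

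After that the two arguments diverge, and yours has a genuine gap. You attempt to produce the required point by a limiting argument: perturb $\Lomega$ into the half-space $\{\delta>0\}$, locate a translated Weyl chamber $w_k\calC^+$, and obtain $\sigma_k\in\Stab^\circ(\rsv)$ whose central charge converges to $Z$. But convergence of central charges does \emph{not} imply convergence of stability conditions, and --- as you yourself acknowledge --- the step showing $\sigma_k\to(Z,\calP)$ in the metric topology on $\Stab(\rsv)$ is never executed. Everything from ``one must check, using the structure theorem\ldots'' onward is a to-do list, not a proof. In particular you would need to establish uniform phase-mass estimates (or something equivalent) for the sequence of finite-length hearts $b_k\cdot\catPC(\rsv/\ssv)$, and you would also need to rule out the possibility that $(Z,\calP)$ is a boundary point of $\Stab^\circ(\rsv)$ not in $\Stab^\circ(\rsv)$ itself; components are clopen in $\Stab(\rsv)$, but the limit of the $\sigma_k$ must actually exist in $\Stab(\rsv)$, and that is the content you are missing. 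The same remark applies to your sketched ``wall-crossing'' alternative.

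The paper sidesteps the limit altogether. It picks the specific point $(Z',\calP')\in S_J$ with $\Ltheta$ generic in $\calC^+$ and $\Lomega$ generic in $\calC^0_J$, checks directly that the spherical sheaves $\scrO_{C_i}(-1)$ and $\scrO_C$ are all $(Z',\calP')$-stable with phase in $(\tfrac12,\tfrac32]$, and then \emph{rotates} $(Z',\calP')$ by the $\C$-action (\cite[Lemma~8.2]{B07}) so that the standard heart becomes $\catPC(\rsv/\ssv)$. Since the $\C$-action preserves connected components and the resulting point is in $\Stab^\circ(\rsv)$ by definition, so is $(Z',\calP')$, and hence so is $(Z,\calP)$. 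That rotation step is what is absent from your argument; without it, your proposal does not close.
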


\begin{proof}
    For $J=\emptyset$ the statement is tautological. For $J\subset I_\sff$ non-empty, Theorem~\ref{thm:perverse-stabfuncs} shows that the forgetful map $\Stab(\rsv)\to \coweightlattice\otimes \C$ maps the subset $\{(Z,\calP)\,\vert\, \calP(0,1]=\catPC(\rsv/\psv)\}\subset \Stab(\rsv)$ homeomorphically onto $V_J\coloneqq-\calD_J+i\calC_J^0$. Now $V_J$ is connected and contains the subset $-\calC^++i\calC_J^0$, so any point $(Z,\calP)$ with $\calP(0,1]=\catPC(\rsv/\psv)$ is connected in $\Stab(\rsv)$ to a point $(Z', \calP')$ with the same standard heart, but $Z' =-\Ltheta + i\Lomega$ for $\Lomega$ generic in $\calC_J^0$, and $\Ltheta$ generic in $\calC^+$.
    
    A straightforward check shows that each of the sheaves $\scrO_{C_i}(-1)$ for $i\in I_\sff$ and $\scrO_{C}$ are all $(Z',\calP')$-stable, and the simple objects of $\catPC(\rsv/\ssv)$ thus all lie in $\calP'(\half, 3/2]$. It
    follows that $(Z',\calP')$ can be rotated (via the action $\C\circlearrowright \Stab(\rsv)$, seen $\C$ as a subgroup of the universal cover of the group $\GL^+(2,\R)$ and the action of the latter is defined in \cite[Lemma~8.2]{B07}) to a point of $\Stab^\circ (\rsv)$. The result follows.
\end{proof}

\begin{remark}
    The manifold $\Stab(\rsv)$ is known to be connected when $\ssv$ is of type $\sfA_n$, as proved in \cite{ishiiStabilityConditionsAnSingularities}.
\end{remark}

Consider one of the above stability conditions $(Z,\calP)\in \Stab^\circ(\rsv)$ with $\calP(0,1]=\catPC(\rsv/\psv)$ ($J\neq \emptyset$) and $Z=-\Ltheta + i\Lomega$. The semistable objects of phase $1$ can be computed as
\begin{align}
    \calP(1)&=\{x\in \catPC(\rsv/\psv)\,\vert\, \Lomega[x]=0\} \\
            &=\left\langle \catP_{C_{J_1}}(\rsv/\ssv) \cup \ldots\cup \catP_{C_{J_n}}(\rsv/\ssv)\cup \{\scrO_p\,\vert\, p\in C\} \right\rangle\ ,
\end{align}
where $J_1,\ldots,J_n$ are connected components of $I_\sff\smallsetminus J$ and $\langle\cdot\rangle$ denotes extension-closure. In particular the torsion pair $(T_J,F_J)\coloneq(\calP(1), \calP(0,1))$ on $\catPC(\rsv/\psv)$, and hence the tilted heart $\calP[0,1)$, is independent of the stability condition chosen. This is the \textit{`reversed' (semi-)geometric heart} considered in \cite[\S5.5]{Shimpi_Torsion_pairs}.

\begin{definition}
    For $J\subset I_\sff$ as above, write $\barcatPC(\rsv/\psv)$ for the heart obtained by tilting $\catPC(\rsv/\psv)$ in the torsion class $T_J$, i.e.\
	\begin{align}
		\barcatPC(\rsv/\psv) \coloneqq\left\langle T_J[-1]\cup \{x\in\catPC(\rsv/\psv)\,\vert\, \Hom(t,x)=0\text{ for all }x\in T_J\} \right\rangle\ .
	\end{align}
    If $J=I_\sff$ we also write $\barcatCoh(\rsv)$ for the category $\barcatPC(\rsv/\psv)$.
\end{definition}

\begin{theorem}\label{thm:allstabs}
    Given any stability condition $(Z,\calP)\in \Stab^\circ(\rsv)$ and any unit interval $\calI\subset \R$ such that $\calP(\calI)$ is Abelian, there is a $J\subset I_\sff$ and an element $b\in B$ such that the heart $b\cdot \calP(\calI)$ is, up to shift, one of $\catPC(\rsv/\psv)$ or $\barcatPC(\rsv/\psv)$.
\end{theorem}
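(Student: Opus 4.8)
The plan is to combine the classification of stability functions on the orbifold hearts (Theorem~\ref{thm:perverse-stabfuncs}) with Bridgeland's covering description of the component $\Stab^\circ(\rsv)$ (Theorem~\ref{thm:Bridgelandcovering}), reducing the assertion to a purely combinatorial covering property of the hyperplane complement $\frakh_\mathsf{reg}$. First I would make the harmless reductions: since $\calP(\calI)$ is Abelian it is the heart of a bounded $t$-structure, so $\calI$ may be taken half-open, $\calI=(a,a+1]$ or $\calI=[a,a+1)$; acting by the $\widetilde{\GL}^+(2,\R)$-action on $\Stab^\circ(\rsv)$ of \cite[Lemma~8.2]{B07} (which preserves the component) I may assume $a=0$. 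I would then dispatch the case $\calI=(0,1]$ first and deduce $\calI=[0,1)$ at the end: once $b\cdot\calP(0,1]$ is shown to equal $\catPC(\rsv/\psv)$ or $\barcatPC(\rsv/\psv)$ up to shift, the heart $b\cdot\calP[0,1)$ is the corresponding reverse tilt at the phase-$1$ torsion class, and reverse tilting interchanges the two families $\{\catPC(\rsv/\psv)\}$ and $\{\barcatPC(\rsv/\psv)\}$ up to shift by \cite[\S5.5]{Shimpi_Torsion_pairs}.

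So suppose $H\coloneqq\calP(0,1]$ is the standard heart of $\sigma=(Z,\calP)\in\Stab^\circ(\rsv)$. By Theorem~\ref{thm:Bridgelandcovering} the forgetful map $\Stab^\circ(\rsv)\to\frakh_\mathsf{reg}/W$ is a regular covering whose deck group is generated by the image of $B$ and the shift $[2]$; since $[2]$ acts trivially on central charges and $B$ acts on $\sfK_0(\catDbC(\rsv))\simeq\rootlattice$ through $W$, which fixes $\delta$, it is enough to exhibit in the fibre over $[Z]$ some stability condition whose standard heart is $\catPC(\rsv/\psv)$ or $\barcatPC(\rsv/\psv)$: by the Lemma above stating that stability conditions with standard heart $\catPC(\rsv/\psv)$ lie in $\Stab^\circ(\rsv)$ (and its mirror, proved symmetrically) such a point lies in $\Stab^\circ(\rsv)$, differs from $\sigma$ by a deck transformation, and applying a suitable element of $B$ to $\sigma$ then yields the theorem. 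Writing $Z=-\Ltheta+i\Lomega$, Theorem~\ref{thm:perverse-stabfuncs} (together with its mirror for $\barcatPC$) says $wZ$ is a stability function on $\catPC(\rsv/\psv)$ iff $(w\Lomega,w\Ltheta)\in\calC^0_J\times\calD_J$, and on $\barcatPC(\rsv/\psv)$ iff $(w\Lomega,w\Ltheta)\in\calC^0_J\times(-\calD_J)$; and since the simple objects of $\catPC(\rsv/\ssv)$ have classes $[S_i]=w_0\alpha_i$, a direct check shows $wZ$ is a stability function on $\catPC(\rsv/\ssv)$ (the case $J=\emptyset$) iff $w\Lomega\in\calC^+$ and $(w\Ltheta,w_0\alpha_i)>0$ on every wall $(\,\cdot\,,w_0\alpha_i)=0$ of $\calC^+$ that contains $w\Lomega$ --- and on $\barcatPC(\rsv/\ssv)=\catPC(\rsv/\ssv)[-1]$ iff the same holds with $\calC^-$. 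Thus the problem becomes combinatorial: every regular complexified coweight $Z$ has a $W$-translate in one of these regions.

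For this last step I would classify $\Lomega$ using the Tits decomposition~\eqref{eq:titsdecomp}. After a $W$-translate one of the following holds: (i) $\Lomega$ lies in $\calC^0$, hence in a unique face $\calC^0_J$ with $J\neq\emptyset$; (ii) $\Lomega$ lies in $\calC^+$ or $\calC^-$, possibly on a wall; or (iii) $\Lomega=0$. In case (i), the subgroup $W(J)$ fixes $\calC^0_J$ pointwise and, by Lemma~\ref{lem:fundamentaldomain-WJ}, has $\calD_J$ (resp.\ $-\calD_J$) as a fundamental set on $\{\delta>0\}$ (resp.\ $\{\delta<0\}$); regularity of $Z$ together with $\Lomega(\delta)=0$ forces $\Ltheta(\delta)\neq 0$, so a $W(J)$-translate carries $\Ltheta$ into $\calD_J$ or $-\calD_J$ while fixing $\Lomega$, placing $Z$ in the region for $\catPC(\rsv/\psv)$ or $\barcatPC(\rsv/\psv)$. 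In case (ii) with $\Lomega\in\calC^+$, the stabiliser in $W$ of the face of $\calC^+$ containing $\Lomega$ is a parabolic subgroup, which is finite because deleting a node from an affine Dynkin diagram leaves a finite-type diagram; a further translate by it makes $\Ltheta$ strictly dominant on the walls of $\calC^+$ through $\Lomega$ without moving $\Lomega$, so $Z$ lands in the region for $\catPC(\rsv/\ssv)$ (the sub-case $\Lomega\in\calC^-$ is the mirror). In case (iii), $Z=-\Ltheta$ is real and regular, hence $\Ltheta$ lies in the interior of some $w\calC^+$ or $w\calC^-$ --- but in no $w\calC^0$, as that would force $Z(\delta)=0$ --- and translating by $w^{-1}$ places $Z$ in the region for $\catPC(\rsv/\ssv)$ or its shift. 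This exhausts all cases. I expect the main obstacle to be precisely this last step: establishing the mirror of Theorem~\ref{thm:perverse-stabfuncs} for $\barcatPC(\rsv/\psv)$ together with the corresponding membership in $\Stab^\circ(\rsv)$, and carefully matching the boundary faces of $\calC^+$, $\calC^-$ and $\calC^0$ against the index sets $J$ so that the required adjustments to the real part $\Ltheta$ are always available.
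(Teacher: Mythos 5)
Your proposal is correct and follows essentially the same route as the paper's proof: rotate the stability condition, write $Z=-\Ltheta+i\Lomega$, locate $\Lomega$ in the Tits decomposition \eqref{eq:titsdecomp} up to the $W$-action, use the stabiliser of $\Lomega$'s cone to adjust $\Ltheta$ into $\calD_J$ (or its analogue), and invoke the covering map of Theorem~\ref{thm:Bridgelandcovering} together with Theorem~\ref{thm:perverse-stabfuncs} to identify the heart. Your slight reorganisation (three cases for $\Lomega$ instead of the paper's dichotomy on $\Lomega(\delta)$) is harmless.

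One step deserves caution: the claim that the reverse tilt at the phase-$1$ torsion class interchanges $\{\catPC(\rsv/\psv)\}$ and $\{\barcatPC(\rsv/\psv)\}$ is only true for non-empty $J$, where $\calP(1)$ is pinned down by Theorem~\ref{thm:perverse-stabfuncs} to be the canonical torsion class $T_J$. When $b\cdot\calP(0,1]$ is a $B$-translate of $\catPC(\rsv/\ssv)=\nilpPi$, i.e.\ $J=\emptyset$, the phase-$1$ torsion class depends on the particular stability function $Z$ (it is trivial for $\Lomega$ in the interior of $\calC^+$, and a proper Serre subcategory of $\nilpPi$ otherwise), so the reverse tilt is generally a further $B$-translate of $\nilpPi$, not $\barcatPC$. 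The theorem's conclusion still holds, but your one-line reduction does not deliver it directly; the paper instead re-runs the chamber argument for $\Ltheta$ in the opposite parabolic chamber. You would likewise not need the mirror version of Theorem~\ref{thm:perverse-stabfuncs} for $\barcatPC$, since placing $Z\in V_J$ already forces $\calP[0,1)=\barcatPC(\rsv/\psv)$ when $J\neq\emptyset$.
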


\begin{proof}
    Noting that the actions of $B$ and $\C$ on the stability manifold commute, we may rotate the stability condition by $e^{i\pi\cdot \text{min}(\calI)}$ and assume $\calI\subseteq [0,1]$. Write $Z=-\Ltheta+i\Lomega$ for real coweights $\Ltheta,\Lomega\in\coweightlattice\otimes R$.

    If $\Lomega(\delta)\neq 0$, then up replacing $(Z,\calP)$ by its shift we may assume $\Lomega(\delta)>0$, and so up to the action of some element of $B$, the coweight $\Lomega$ lies in the Weyl chamber $\calC^+$. Let $J\subset I$ denote the indices $i$ where $\Lomega(\alpha_i)=0$, and note that $\Lomega$ is invariant under the action of the parabolic subgroup $W_J=\langle  s_i\,\vert\, i\in J \rangle$. Furthermore, $\Lomega(\delta)\neq 0$ implies that $J$ is a proper subset of $I_\sff$, so that the parabolic subgroup above is of finite type. Now $Z\in\frakh_\mathsf{reg}$, so we necessarily have $\Ltheta(\alpha_i)\neq 0$ for all $i\in J$. It follows that the action of some element $w\in W_J$ (and hence some element of $B$) moves $\Ltheta$ into the chamber $\bigcap_J\{\alpha_i>0\}$ while leaving $\Lomega$ invariant in $\calC^+$.

    On the other hand, $Z$ is clearly a stability function on $\catPC(\rsv/\ssv)$, i.e.\ there is a stability condition $(Z,\calP')$ with $\calP'(0,1]=\catPC(\rsv/\ssv)$ that lies in the same fibre as $(Z,\calP)$ under Bridgeland's regular covering map \eqref{eqn:bridgelandmap}. Since the deck group of a regular cover acts transitively on fibres, we see that $\calP=\calP'$ up to even shifts and the action of $B$. If $\calI=(0,1]$ then we are done, if $\calI=[0,1)$ then we could instead move $\Ltheta$ into the chamber $\bigcap_J\{\alpha_i<0\}$ and continue as before to again conclude $\calP(\calI)=\catPC(\rsv/\ssv)$.
    
    \medskip

    If $\Lomega(\delta)=0$, we again manipulate $Z$ via shifts and the $B$-action till it becomes a stability function on $\catPC(\rsv/\psv)$ for some $J$. Since $Z\in \frakh_\mathsf{reg}$, we necessarily have $\Ltheta(\delta)\neq 0$ and hence, up to shift, $\Ltheta(\delta)>0$. Then as before, we can begin by moving $\Lomega$ into the chamber $\calC^0$, and let $J$ be such that $\calC^0_J$ is the smallest face of $\calC^0$ containing $\Lomega$. Thus $J=\{i\in I_\sff\,\vert\,\Lomega(\alpha_i)=0\}$. The $\Ltheta$ coordinate is then taken care of by the stabiliser $W(J)$ of $\calC_J^0$, which by Lemma~\ref{lem:fundamentaldomain-WJ} can move $\Ltheta$ into $\calD_J$ whilst leaving $\Lomega$ invariant. Thus up to shifts and the action of $B$, we see that $Z$ lies in the region $V_J=-\calD_J+i\calC_J^0$, and is hence a stability function on $\catPC(\rsv/\psv)$. The result follows, with $\calP(\calI)$ lying in the $B$-orbit of $\catPC(\rsv/\psv)$ or $\barcatPC(\rsv/\psv)$ depending on whether $\calI=(0,1]$ or $\calI=[0,1)$ holds.
\end{proof}

\section{Numerics of $t$-structure variation}\label{sec:numerical-tilts}

Theorem~\ref{thm:allstabs} shows that all hearts in $\catDbC(\rsv)$, that admit stability conditions in $\Stab^\circ(\rsv)$, arise from the Kleinian orbifolds $\calX_J$ with $J\subseteq I_\sff$. The \textit{cohomological Hall algebra} of $\catCoh(\calX_\emptyset)\simeq \nilpPi$ has been the object of extensive study since its introduction in \cite{SV_Cherednik} (see, for instance, \cite{SV_Yangians=COHA} and the references therein), and our strategy to compute the cohomological Hall algebras associated to the remaining hearts is to approximate them by the $\Aut(\catDb(\rsv))$-orbit of $\nilpPi$. More specifically we will consider the action of $\Pic(\rsv)$, which can be identified with $\coweightlatticefin$ as in Proposition~\ref{prop:pic-coweight}.

\begin{definition}
    Given $J\subseteq I_\sff$, we say a line bundle $\calL\in \Pic(\rsv)$ is \textit{$\varpi_J$-ample} if it is the pullback (under $\pi_J$) of some $\varpi_J$-ample line bundle on $\psv$, i.e.\ if $(\calL\cdot C_i)\geq 0$ for all $i\in I_\sff$ with equality holding if and only if $i\in \Jcomp$.
\end{definition}

Fix a $\varpi_J$-ample line bundle $\calL$ with corresponding coweight $\Llambda$, which we use to define two stability functions -- first, as in Definition~\ref{def:slopefunction}, the $\Llambda$-slope function
$\mu_\Llambda\colon \nilpPi\smallsetminus \{0\} \to \R$. Second, a Bridgeland stability condition $(Z_\Llambda,\calP_\Llambda)$ determined by its standard heart and charge
\begin{align}
	\calP_\Llambda(0,1]=\catPC(\rsv/\ssv)\quad\text{and}\quad Z_\Llambda = \Llambda + i\cdot w_0\Lrho \; \in\; \calC_J^0+i\cdot \calC^+\ ,
\end{align}
where $\Lrho\coloneqq\sum_{i\in I} \Lomega_i$ is the sum of fundamental coweights.

This section proves the following key result, generalising \cite[Theorem~\ref*{COHA-Yangian-thm:tau-slicing}--(\ref*{COHA-Yangian-thm:tau-slicing-2})]{DPSSV-3}.
\begin{theorem}\label{thm:slicing}
    For $J\subseteq I_\sff$ and $\varpi_J$-ample line bundle $\calL\in \Pic(\rsv)$ corresponding to the coweight $\Llambda\in \coweightlatticefin$, the
    stability condition $(Z_\Llambda,\calP_\Llambda)$ defined above satisfies
	\begin{align}	
   		\calP_\Llambda(-1/2, 1/2] = \catPC(\rsv/\psv)\quad\text{and}\quad\calP_\Llambda[-1/2,1/2)=\barcatPC(\rsv/\psv)\ .
	\end{align}
    Further, consider the bi-infinite sequence of real numbers 
    \begin{align}
    	t_n\coloneqq \frac{1}{\pi}\arctan(nh)\ ,
    \end{align}
	where $h$ is the Coxeter number for the Dynkin diagram $\Delta$, which converges to $t_{\pm\infty}\coloneqq\pm 1/2$ in the limit $n\to \pm \infty$. For any $n\in \Z$ the $A$-equivariant functor
    \begin{align}\label{eq:tk}
    	\tau^{-1}\circ (\R L_\Llambda)^{-n}\colon \catDb(\nilpPi)\longrightarrow \catDbC(\rsv)
    \end{align}
    restricts to the equivalences
	\begin{align}
		\begin{tikzcd}[ampersand replacement=\&, row sep=tiny]
            \nilp^{\leq 0}\,\Pi \ar{r}{\sim}\& \calP_\Llambda(-t_n,\phantom{-}\half] \\
			\nilp^{> 0}\,\Pi \ar{r}{\sim}\&\calP_\Llambda(\half,1-t_n] \\
            \nilp^{\phantom{<0}}\,\Pi\ar{r}{\sim}\&\calP_\Llambda(-t_n,1-t_n]
		\end{tikzcd}
	\end{align}
    where the Harder--Narasimhan strata $\nilp^{\geq 0}\Pi, \nilp^{<0}\Pi$ are with respect to the slope function $\mu_{\Llambda}$ as above.
\end{theorem}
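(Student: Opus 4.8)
The plan is to transport everything to the algebraic side along the equivalence $\tau$ of~\eqref{eq:tau}, under which the standard heart $\calP_\Llambda(0,1]=\catPC(\rsv/\ssv)$ becomes $\nilpPi$ and, by Proposition~\ref{prop:braid-tilting-intertwiner}, $(\R L_\Llambda)^{-n}$ becomes the autoequivalence $\calL_\Llambda^{\otimes(-n)}\otimes(-)$ of $\catDb(\nilpPi)\simeq\catDbC(\rsv)$, acting on $\sfK_0\simeq\rootlattice$ by the shear $\ell_{-n\Llambda}$. Two elementary inputs drive the proof. First, combining the $\sfK_0$-identification~\eqref{eq:K0rootidentification} with $w_0(\alpha_i)=-\alpha_{\kappa(i)}$ gives, after a short computation, $(\widetilde{w}_0\Llambda,\beta)=-(\Llambda,\beta)$ for every $\beta\in\rootlattice$; since $(w_0\Lrho,[x])=\dim(x)$ for $x\in\nilpPi$, this means $Z_\Llambda([x])=\dim(x)\bigl(i-\mu_\Llambda(x)\bigr)$, so the restriction of $Z_\Llambda$ to $\nilpPi$ is the Bridgeland stability function attached to the slope $\mu_\Llambda$ through the increasing reparametrisation $\varphi=\tfrac12+\tfrac1\pi\arctan(\mu_\Llambda)$. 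As $\nilpPi$ is of finite length, this $Z_\Llambda$ has the Harder--Narasimhan property and its Harder--Narasimhan filtrations coincide with the $\mu_\Llambda$-ones, whence $\tau^{-1}(\nilp^{\leq 0}\,\Pi)=\calP_\Llambda(0,\half]$ and $\tau^{-1}(\nilp^{>0}\,\Pi)=\calP_\Llambda(\half,1]$; together with the defining equality $\tau^{-1}(\nilpPi)=\calP_\Llambda(0,1]$ this is the case $n=0$. Second, $Z_\Llambda(\delta)=0+i\,(w_0\Lrho)(\delta)=i\sum_{i\in I}r_i=ih$, which is the source of the Coxeter number $h$ in the definition of $t_n$.

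Given this, the general-$n$ statement reduces to computing the images of $\calP_\Llambda(0,\half]$, $\calP_\Llambda(\half,1]$ and $\calP_\Llambda(0,1]$ under the self-homeomorphism of $\Stab^\circ(\rsv)$ induced by $\calL_\Llambda^{\otimes(-n)}\otimes(-)$. Applying this autoequivalence to $\sigma_\Llambda\coloneqq(Z_\Llambda,\calP_\Llambda)$ replaces $Z_\Llambda$ by $Z_\Llambda\circ\ell_{n\Llambda}$, and because $(\Llambda,\beta)$ is the real part of $Z_\Llambda(\beta)$ and $Z_\Llambda(\delta)=ih$, this is the precomposition of $Z_\Llambda$ with the unipotent shear $g_n\colon(a,b)\mapsto(a,b+nha)$ of $\R^2\simeq\C$. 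Reducing by induction on $|n|$ to the cases $n=\pm1$, and using that the universal-cover action of $\GL^+(2,\R)$ commutes with $\Aut(\catDbC(\rsv))$ and that a stability condition is determined by its heart together with its central charge \cite[Proposition~5.3]{B07} --- exactly as in the proof of \cite[Theorem~\ref*{COHA-Yangian-thm:tau-slicing}]{DPSSV-3} --- one identifies $\bigl(\calL_\Llambda^{\otimes(-n)}\otimes(-)\bigr)\cdot\sigma_\Llambda$ with $\sigma_\Llambda\cdot\widetilde{g}_n$, where $\widetilde{g}_n$ is the lift of $g_n$ fixing the phase-$\half$ line. A direct trigonometric check shows that the phase reparametrisation attached to $\widetilde{g}_n$ sends $0\mapsto-t_n$, $\half\mapsto\half$ and $1\mapsto1-t_n$, so that $\tau^{-1}\circ(\R L_\Llambda)^{-n}(\nilp^{\leq 0}\,\Pi)=\calL_\Llambda^{\otimes(-n)}\otimes\calP_\Llambda(0,\half]=\calP_\Llambda(-t_n,\half]$, and likewise $\nilp^{>0}\,\Pi$ and $\nilpPi$ map to $\calP_\Llambda(\half,1-t_n]$ and $\calP_\Llambda(-t_n,1-t_n]$, as claimed.

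Finally, part~(1): by the first paragraph $\calP_\Llambda(-\half,\half]=\langle\calP_\Llambda(\half,1][-1],\calP_\Llambda(0,\half]\rangle=\langle\tau^{-1}(\nilp^{>0}\,\Pi)[-1],\tau^{-1}(\nilp^{\leq 0}\,\Pi)\rangle$, the tilt of $\catPC(\rsv/\ssv)$ (transported along $\tau$) at the torsion pair $(\nilp^{>0}\,\Pi,\nilp^{\leq 0}\,\Pi)$ cut out by the $\mu_\Llambda$-slope, and dually $\calP_\Llambda[-\half,\half)$ is the tilt at $(\nilp^{\geq 0}\,\Pi,\nilp^{<0}\,\Pi)$. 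Here the $\varpi_J$-ampleness of $\calL=\calL_\Llambda$ is essential: it forces $(\calL\cdot C_i)=0$ for $i\in\Jcomp$ and $>0$ for $i\in J$, so that $\mu_\Llambda$ vanishes on every object of the algebraic pieces $\catP_{C_{J'}}(\rsv/\ssv)$ (for $J'$ a connected component of $\Jcomp$) and on the skyscraper sheaves $\scrO_p$, while it is nonzero on the simple objects $\scrO_{C_i}(-1)[1]$ with $i\in J$; thus the above torsion pair is precisely the one exhibiting $\catPC(\rsv/\psv)$, via Theorem~\ref{thm:structureofper}, as the extension closure of those algebraic pieces together with the geometric piece $\catCoh_{C_J}(\rsv)$ (and, via Theorem~\ref{thm:perverse-stabfuncs}, the resulting stability condition lies in $\Stab^\circ(\rsv)$). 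This gives $\calP_\Llambda(-\half,\half]=\catPC(\rsv/\psv)$ and $\calP_\Llambda[-\half,\half)=\barcatPC(\rsv/\psv)$. I expect this last step to be the main obstacle: reconciling the slope-theoretic tilt of the perverse heart with the geometrically defined $\catPC(\rsv/\psv)$, and verifying that $\calL_\Llambda^{\otimes(-n)}\otimes(-)$ acts on $\Stab^\circ(\rsv)$ by the shear $\widetilde{g}_n$ on the nose rather than up to a deck transformation of Bridgeland's cover, both require the connectedness arguments of \cite{DPSSV-3}, now fed the structural input of \cite{Shimpi_Torsion_pairs}.
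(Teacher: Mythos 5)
Your argument is essentially the one used for the case $J=I_\sff$ in \cite{DPSSV-3}: transport to the algebraic side, interpret $Z_\Llambda$ via the slope $\mu_\Llambda$ with phase $\tfrac12+\tfrac1\pi\arctan\mu_\Llambda$, then track the autoequivalence $(\R L_\Llambda)^{-n}$ as a $\widetilde{\GL}^+(2,\R)$ shear fixing the phase-$\tfrac12$ line. Your numerics check out ($Z_\Llambda(\delta)=ih$, $(\widetilde w_0\Llambda,-)=-(\Llambda,-)$, $f(0)=-t_n$, $f(\tfrac12)=\tfrac12$, $f(1)=1-t_n$ for $g_{-n}$), and the reduction to $n=\pm1$ is fine. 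The paper, however, takes a genuinely different and cleaner route: it parametrises the arc $\theta_t$ of~\eqref{eq:thetatarc} and invokes Lemma~\ref{lem:bridgeland-to-king} to identify $\calP_\Llambda(-t,1-t]$ directly with the maximal heart $\sfH^{\theta_t}$ in the heart fan, then reads off $\sfH^{\theta_t}$ from the explicit description of $\mathsf{HFan}(\sfH)$ in Theorem~\ref{thm:heartfan-of-nilpPi} --- $\theta_{1/2}=\Llambda$ lies in the interior of $\calC^0_J$, giving the $(-\tfrac12,\tfrac12]$ and $[-\tfrac12,\tfrac12)$ claims at once, and $\theta_{t_n}$ lies in $\ell_\Llambda^{-n}(\calC^+)$ (resp.\ $\ell_\Llambda^{-n}(\calC^-)$), giving the translated hearts. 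What this buys is exactly the closure of the two caveats you flagged: there is no need to show that $(\R L_\Llambda)^{-n}\cdot\sigma_\Llambda$ agrees with $\sigma_\Llambda\cdot\widetilde g_n$ ``on the nose'' rather than up to a deck transformation of Bridgeland's cover, and there is no need to reconcile, by hand, the $\mu_\Llambda$-slope tilt of $\catPC(\rsv/\ssv)$ with $\catPC(\rsv/\psv)$ via Theorem~\ref{thm:structureofper} --- both identifications fall out of the heart-fan classification. So the ``connectedness arguments, now fed the structural input of \cite{Shimpi_Torsion_pairs}'' you anticipate needing are not a small supplement to your argument: filling them in properly essentially collapses your proof onto the paper's. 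The one thing you do more explicitly than the paper, and which is a helpful sanity check, is identifying the shear matrix $g_n$ and verifying the trigonometry of the phase reparametrisation against the formula $t_n=\tfrac1\pi\arctan(nh)$. Finally, note your proof omits the restriction step: once $\calP_\Llambda(-t_n,1-t_n]=(\calL_\Llambda)^{-n}\otimes\sfH$ is known, one still has to show the equivalence respects the strata $\nilp^{>0}\,\Pi$ and $\nilp^{\leq 0}\,\Pi$; the paper does this by intersecting with $\sfK[1]=(\calL_\Llambda)^{-n}\otimes\sfK[1]$ (using $\varpi_J$-ampleness of $\calL_\Llambda$ to get invariance of $\sfK=\catPC(\rsv/\psv)$) and then invoking Lemma~\ref{lem:bridgeland-to-slope}, and your argument should say something analogous.
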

The proof uses and builds upon an interface between
Broomhead--Pauksztello--Ploog--Woolf's \textit{Heart fans}~\cite{HeartFan} and
Bridgeland stability conditions.

\subsection{Heart fans and stability arcs}\label{subsec:stabilityarc}

Hearts of bounded $t$-structures on a triangulated category $\scrC$ form a partially ordered set, where the partial order is determined by containment of \textit{coaisles}: given two hearts $H, K$ of bounded $t$-structures on $\scrC$, we have
\begin{align}
	H\leq K \quad \iff \quad H[\leq 0] \subseteq K[\leq 0]\ .
\end{align}
Elements of the interval $[H[-1],H]$ are called \textit{hearts of intermediate $t$-structures} (or simply \textit{intermediate hearts}) with respect to $H$. 
Fix a bounded heart $H\subset \scrC$, and suppose $H$ is Artinian and Noetherian with finitely many simple objects. These conditions ensure $\sfK_0(H)\simeq \sfK_0(\scrC)$ is a free Abelian group of finite rank, so that $\Theta\coloneqq\Hom(\sfK_0(\scrC),\R)$ is a finite dimensional $\R$-vector space. The central construction of \cite{HeartFan}, the \textit{heart fan} of $H$, is a convex-geometric ensemble in $\Theta$ that facilitates the analysis of the interval $[H[-1],H]$.

\begin{definition}
    Given the heart of an intermediate $t$-structure $K\in [H[-1],H]$, the \textit{heart cone} of $K$ is the closed convex cone $\calC(K)\subset \Theta$ defined as
	\begin{align}
		\calC(K) \coloneqq \left\{\theta\in \Theta\;\middle\vert\; \theta(k)\geq 0\quad\text{for all }k\in K\right\}\ .
	\end{align}
    We say $C\subseteq\Theta$ is an \textit{intermediate heart cone} if $C=\calC(K)$ for some $K\in [H[-1],H]$.
\end{definition}

\begin{theorem}[{\cite[Theorem~A, Corollary~3.3]{HeartFan}}]
    For $H$ as above, the set
	\begin{align}
		\mathsf{HFan}(H)\coloneqq \smashoperator[r]{\bigcup_{K\in [H[-1],H]}}\; \mathsf{faces}(\calC(K))
	\end{align}
    of all faces of intermediate heart cones is a complete simplicial fan in $\Theta$, called the \textit{heart fan of $H$}.

    Moreover, for any $\theta\in \Theta$, the subset $\{K\,\vert\, \theta\in \calC(K)\}\subseteq  [H[-1],H]$ of intermediate hearts whose heart cone contains $\theta$ is an \textit{interval} of the form $\{K\,\vert\, H_\theta\leq K\leq H^\theta\}$, with maximum and minimum elements given by
    \begin{align}
        H_\theta &=
        \left\langle
                \left\{ x\in H
                    \;\middle\vert\; \begin{array}{l}\theta(s)>0 \text{ for all}\\
                \text{sub-objects }s\hookrightarrow x\end{array}
                \right\} \;\cup\;
                \left\{ x[-1]\in H[-1]
                    \;\middle\vert\; \begin{array}{l}\theta(f)\leq 0 \text{ for all}\\
                \text{factors }x\twoheadrightarrow f\end{array}
                \right\}
        \right\rangle\ ,\\[6pt]
        H^\theta &=
        \left\langle
                \left\{ x\in H
                    \;\middle\vert\; \begin{array}{l}\theta(s)\geq 0 \text{ for all}\\
                \text{sub-objects }s\hookrightarrow x\end{array}
                \right\} \;\cup\;
                \left\{ x[-1]\in H[-1]
                    \;\middle\vert\; \begin{array}{l}\theta(f) < 0 \text{ for all}\\
                \text{factors }x\twoheadrightarrow f\end{array}
                \right\}
        \right\rangle\ .
    \end{align}
    These hearts $H_\theta,H^\theta$ remain constant as $\theta$ varies in the relative interior of an intermediate heart cone.
\end{theorem}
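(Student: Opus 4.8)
The plan is to reduce the statement to the combinatorics of torsion pairs in $H$; the ``moreover'' clause will then be a bookkeeping exercise, and only the fan structure requires real work. First I would invoke the Happel--Reiten--Smal{\o} dictionary: since $H$ is Artinian and Noetherian, the interval $[H[-1],H]$ of intermediate hearts is identified, order-reversingly, with the lattice $\mathsf{tors}(H)$ of torsion classes of $H$ --- a torsion pair $(\calT,\calF)$ corresponds to $K_{(\calT,\calF)}:=\langle\calF,\calT[-1]\rangle$, with $H\leftrightarrow(0,H)$ and $H[-1]\leftrightarrow(H,0)$. Because $\theta$ is additive on $\sfK_0(\scrC)$ and $K_{(\calT,\calF)}$ is the extension closure of $\calF$ and $\calT[-1]$, the heart cone unwinds to $\calC(K_{(\calT,\calF)})=\{\theta\in\Theta:\theta|_{\calF}\geq 0\text{ and }\theta|_{\calT}\leq 0\}$. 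Using that torsion classes are closed under quotients and torsion-free classes under subobjects, one checks $\theta|_{\calT}\leq 0\iff\calT\subseteq\calT_1(\theta)$ and $\theta|_{\calF}\geq 0\iff\calF\subseteq\calT_2(\theta)^{\perp}$, where
\[
\begin{aligned}
\calT_1(\theta)&:=\{x\in H:\theta(g)\leq 0\text{ for every quotient }x\twoheadrightarrow g\},\\
\calT_2(\theta)&:=\{x\in H:\theta(g)<0\text{ for every nonzero quotient }x\twoheadrightarrow g\}.
\end{aligned}
\]
A short extension-closure argument shows $\calT_1(\theta),\calT_2(\theta)$ are genuine torsion classes, and a Noetherian/Artinian induction identifies $\calT_1(\theta)^{\perp}=\{y:\theta(s)>0\text{ for all nonzero }s\hookrightarrow y\}$ and $\calT_2(\theta)^{\perp}=\{y:\theta(s)\geq 0\text{ for all }s\hookrightarrow y\}$.

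The two equivalences above say precisely that $\theta\in\calC(K_{(\calT,\calF)})$ if and only if $\calT_2(\theta)\subseteq\calT\subseteq\calT_1(\theta)$. Hence $\{K:\theta\in\calC(K)\}$ is the image of the lattice interval $[\calT_2(\theta),\calT_1(\theta)]$ under the dictionary, so it is an interval $[H_\theta,H^\theta]$ with $H_\theta\leftrightarrow\calT_1(\theta)$ and $H^\theta\leftrightarrow\calT_2(\theta)$; substituting the torsion-free parts just computed reproduces the displayed formulas for $H_\theta$ and $H^\theta$ verbatim. Since $\calT_1(\theta)$ is always a torsion class we get $\theta\in\calC(H_\theta)$ for every $\theta$, so the heart cones cover $\Theta$. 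For constancy on relative interiors: fix a heart cone $\calC(K)$ and $\theta,\theta'$ in its relative interior; were there $x\in\calT_1(\theta)$ with a quotient $g$ such that $\theta'(g)>0$, decomposing $g$ along the torsion pair of $K$ into $g'\in\calF_K$ and $g''\in\calT_K$ and using that a functional which is $\geq 0$ on $\calC(K)$ and vanishes at a relative-interior point vanishes on all of $\calC(K)$ forces $\theta'(g')=0$ and hence $\theta'(g)=\theta'(g'')\leq 0$, a contradiction; so $\calT_1(\theta)=\calT_1(\theta')$, and symmetrically $\calT_2(\theta)=\calT_2(\theta')$.

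To obtain the fan structure I would argue that the faces of heart cones form a simplicial fan. A maximal (full-dimensional) heart cone is $\calC(K)$ for an intermediate heart $K$ which is itself a length category; such a $K$ has exactly $\dim_\R\Theta$ simple objects, whose classes form a $\Z$-basis of $\sfK_0(\scrC)$, so $\calC(K)$ is the simplicial cone dual to that basis and its facets are cut out by the simple (mutation) tilts of $K$. Two points remain: (i) every heart cone is a face of some maximal one --- obtained by tilting $K$ further along the directions unseen by $\calC(K)$ until a numerically length heart is reached, the formulas for $H_\theta,H^\theta$ supplying the candidate; and (ii) the intersection of two heart cones is a common face of both --- if $\theta\in\calC(K_1)\cap\calC(K_2)$ then by the first paragraph $\calT_{K_1},\calT_{K_2}\in[\calT_2(\theta),\calT_1(\theta)]$, hence so are their meet and join, and one verifies $\calC(K_1)\cap\calC(K_2)=\calC(H_\theta)\cap\calC(H^\theta)$ is exposed in each. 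Granting (i)--(ii), simpliciality of all cones follows from that of the maximal ones, and the fan and covering axioms follow from the first two paragraphs together with (ii).

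The hard part will be (i)--(ii) of the last paragraph, which is the genuine content of \cite{HeartFan}: it amounts to the wall-and-chamber analysis, i.e.\ showing that each codimension-one wall is governed by a single brick of $H$ and that adjacent chambers differ by a single mutation --- precisely the input that forces local finiteness and simpliciality of $\mathsf{HFan}(H)$. Everything preceding it is the formal torsion-theoretic bookkeeping indicated above.
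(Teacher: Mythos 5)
This theorem is not proved in the paper at all: it is quoted verbatim from \cite[Theorem~A and Corollary~3.3]{HeartFan}, so there is no in-paper argument to compare your proposal against, and you should be evaluating yourself against that reference. On the torsion-theoretic side your reconstruction is correct and in fact matches the logic of \cite{HeartFan}: the Happel--Reiten--Smal\o{} order-reversing bijection $[H[-1],H]\leftrightarrow\mathsf{tors}(H)$, the description $\calC(K_{(\calT,\calF)})=\{\theta:\theta|_\calF\geq 0,\ \theta|_\calT\leq 0\}$, the check that $\calT_1(\theta)$ and $\calT_2(\theta)$ are torsion classes, and the equivalence $\theta\in\calC(K)\iff\calT_2(\theta)\subseteq\calT_K\subseteq\calT_1(\theta)$ are precisely what give both the interval statement and the displayed formulas for $H_\theta$, $H^\theta$ (modulo reading ``sub-objects $s$'' as nonzero, which is the intended convention).

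Two points to tighten. In the constancy-on-relative-interiors argument you pass directly to ``$\theta'(g')=0$'', but the hypothesis of the linearity lemma you invoke is that the functional $\theta''\mapsto\theta''(g')$ vanishes \emph{at $\theta$}; you need to first record $\theta(g')=0$, which follows by combining $\theta(g')\leq 0$ (since $g'$ is a quotient of $x\in\calT_1(\theta)$) with $\theta(g')\geq 0$ (since $g'\in\calF_K$ and $\theta\in\calC(K)$). Second and more substantively: your steps (i) and (ii) --- that every heart cone is a face of a full-dimensional cone attached to a length heart, and that the intersection of two heart cones is a common face --- are the entire content of \cite[Theorem~A]{HeartFan}, and as you yourself say you are ``granting'' them. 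As written they are assertions, not arguments; establishing that maximal cones are exactly those of length tilts, that each wall is cut out by a single brick and crossed by a single HRS mutation, and that the resulting wall-and-chamber structure is locally finite and simplicial is the genuine work, and the ``tilting along unseen directions'' and ``one verifies $\calC(K_1)\cap\calC(K_2)=\calC(H_\theta)\cap\calC(H^\theta)$'' sentences gloss rather than prove it. So: the ``Moreover'' clause is essentially complete in your proposal, but the fan clause is deferred to the reference rather than demonstrated.
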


We use the heart fan to study families of Bridgeland stability functions on $H$. Consider a pair of non-zero vectors $\theta_0,\theta_{\half}\in \Theta$ such that $\theta_0$ lies in the (relative) interior of $\sfC(H)$. The conditions guarantee that
\begin{align}
	Z\coloneqq \theta_{\half} + i\cdot \theta_0 \, \in \Theta\otimes\C
\end{align}
is a stability function on $H$, and hence by \cite[Proposition~5.3]{B07} we have a Bridgeland stability condition $(Z,\calP)$ on $\scrC$ with $\calP(0,1]=H$.

Following \cite[\S6.2]{HeartFan}, the slicing $\calP$ can be explicitly described by extending the pair of vectors $\theta_0,\theta_{\half}$ to an arc $[0,1]\to \Theta$ given by
\begin{align}\label{eq:thetatarc}
    \theta_t \coloneqq \cos(\pi t)\,\theta_0 + \sin(\pi t)\,\theta_{\half}\ .
\end{align}

\begin{lemma}
    For $(Z,\calP)$ as above and $t\in (0,1]$, we have
    \begin{align}
        \calP(1-t)
        &= H_{\theta_t}[-1]\cap H^{\theta_t}\\
        &= \left\{
            h\in H \;\middle\vert\;
            \begin{array}{l}
                \theta_t(x)=0\ ,\text{ and }\theta_t(s)\geq 0\\[2pt]
                \text{for all sub-objects }s\hookrightarrow x
            \end{array}
        \right\}\ .
    \end{align}
\end{lemma}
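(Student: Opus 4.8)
The plan is to deduce both equalities from a single identity relating the linear form $\theta_t$ to the phase function of $Z$. Since $\theta_0$ lies in the (relative) interior of the heart cone $\calC(H)$, it is strictly positive on every nonzero object of $H$; hence $Z=\theta_{\half}+i\theta_0$ sends no nonzero object of $H$ onto the negative real axis, so $\calP(1)=0$ and every nonzero $x\in H$ has a well-defined phase $\varphi(x)\in(0,1)$ given by $Z([x])=\lvert Z([x])\rvert\,e^{i\pi\varphi(x)}$. Writing $\theta_0(x)=\lvert Z([x])\rvert\sin(\pi\varphi(x))$ and $\theta_{\half}(x)=\lvert Z([x])\rvert\cos(\pi\varphi(x))$, the addition formula for the sine yields, for every nonzero $x\in H$,
\begin{align}
	\theta_t(x)=\cos(\pi t)\,\theta_0(x)+\sin(\pi t)\,\theta_{\half}(x)=\lvert Z([x])\rvert\,\sin\bigl(\pi(t+\varphi(x))\bigr)\ .
\end{align}
As $t+\varphi(x)\in(0,2)$, the sign of $\theta_t(x)$ depends only on $\varphi(x)$: for $t\in(0,1)$ one has $\theta_t(x)>0$, $=0$, or $<0$ according as $\varphi(x)<1-t$, $\varphi(x)=1-t$, or $\varphi(x)>1-t$. (When $t=1$ both sides of the asserted equalities vanish, since $\theta_1=-\theta_0$ is negative on all nonzero objects and $\calP(0)=\calP(1)[-1]=0$.)

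For the second equality fix $t\in(0,1)$, so that $\calP(1-t)\subseteq\calP(0,1]=H$. By the definition of the slicing, a nonzero $x\in H$ lies in $\calP(1-t)$ precisely when it is $Z$-semistable of phase $1-t$, that is, when $\varphi(x)=1-t$ and $\varphi(s)\le1-t$ for every nonzero subobject $s\hookrightarrow x$ in $H$. By the trichotomy above, $\varphi(x)=1-t$ is the condition $\theta_t(x)=0$ and $\varphi(s)\le1-t$ is the condition $\theta_t(s)\ge0$; conversely any nonzero $x$ with $\theta_t(x)=0$ and $\theta_t(s)\ge0$ for all subobjects $s$ has $\varphi(x)=1-t$ and $\varphi(s)\le1-t$ for all $s$, hence is semistable of phase $1-t$. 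This establishes the second equality.

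For the first equality we feed this into the heart fan theorem. That theorem describes $H^{\theta_t}$ and $H_{\theta_t}$ by generators lying in $H$ and in $H[-1]$; in particular $H^{\theta_t}\cap H=\{x\in H\mid\theta_t(s)\ge0\ \text{for all }s\hookrightarrow x\}$ and $H_{\theta_t}\cap H=\{x\in H\mid\theta_t(s)>0\ \text{for all }s\hookrightarrow x\}$, with the analogous formulas in terms of quotients governing the parts lying in $H[-1]$. Using the trichotomy, together with the standard facts that a subobject of $x$ has largest Harder--Narasimhan phase at most that of $x$ and a quotient of $x$ has smallest Harder--Narasimhan phase at least that of $x$, these sets become unions of the $\calP(\psi)$ over $\psi$ in suitable half-open intervals --- for instance $H^{\theta_t}\cap H=\calP(0,1-t]$ and $H_{\theta_t}\cap H=\calP(0,1-t)$ --- so that $H^{\theta_t}$ and $H_{\theta_t}$ are themselves identified with explicit slices $\calP(I)$. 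Intersecting $H_{\theta_t}[-1]$ with $H^{\theta_t}$ then comes down to intersecting these half-open intervals of phases, which leaves exactly the single phase $1-t$, hence the subcategory of objects of $H$ all of whose Harder--Narasimhan factors have phase $1-t$ --- namely $\calP(1-t)$. This is precisely the computation performed in \cite[\S6.2]{HeartFan}. I expect the one real subtlety to be the bookkeeping in this last step: keeping the phase conventions for the shift functor, for the Bridgeland slicing, and for the heart fan mutually compatible --- in particular fixing the direction of the shift and deciding which of $H_{\theta_t}$ and $H^{\theta_t}$ receives the strict versus the non-strict inequality. The genuinely new input, by contrast, is the elementary observation in the first paragraph that $\theta_0$ lying in the interior of $\calC(H)$ forces $\calP(1)=0$, so that all phases live in the open interval $(0,1)$ and the sine in the displayed identity is never pinned at an interior zero.
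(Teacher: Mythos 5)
Your treatment of the \emph{second} equality is correct and is, modulo packaging, the same computation the paper performs: the paper expresses $\theta_t$ as the composite $w_t\circ Z$ for a specific $\R$-linear functional $w_t\colon\C\to\R$, and your identity $\theta_t(x)=\lvert Z([x])\rvert\,\sin\bigl(\pi(t+\varphi(x))\bigr)$ is exactly that functional evaluated on $Z([x])$. Your observation that $\theta_0$ lying in the \emph{interior} of $\calC(H)$ forces $\calP(1)=0$, so that $\varphi(x)\in(0,1)$ rather than merely $(0,1]$, is correct and a harmless strengthening; the paper's argument does not need it. So the substantive content of the lemma is fine.

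The sketch of the \emph{first} equality, however, does not check out if you actually carry out the interval intersection you describe. From the trichotomy and the heart-fan formulas one gets $H^{\theta_t}=\calP(-t,\,1-t]$ and $H_{\theta_t}=\calP[-t,\,1-t)$, hence $H_{\theta_t}[-1]=\calP[-t-1,\,-t)$. The intervals $[-t-1,-t)$ and $(-t,1-t]$ are disjoint, so $H_{\theta_t}[-1]\cap H^{\theta_t}=0$, not $\calP(1-t)$. The intersection that actually produces $\calP(1-t)$ is $H_{\theta_t}[1]\cap H^{\theta_t}$, since $H_{\theta_t}[1]=\calP[1-t,\,2-t)$ meets $\calP(-t,1-t]$ precisely at the phase $1-t$; equivalently, $\calP(1-t)=T\cap F'$ where $T=H^{\theta_t}\cap H$ is the torsion class of $H^{\theta_t}$ and $F'=H_{\theta_t}[1]\cap H$ is the torsion-free class of $H_{\theta_t}$. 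So the ``bookkeeping subtlety'' you flag at the end is real, and under the shift and order conventions fixed elsewhere in the paper the displayed $[-1]$ ought to be a $[1]$; your assertion that the intersection ``leaves exactly the single phase $1-t$'' is the step that would have exposed this had it been verified rather than asserted. (Note that the paper's own proof only argues the second equality explicitly and treats the first as a restatement, so the gap is shared; but a blind proof of the statement as written should not claim the first equality holds on the nose without noticing the sign.)
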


\begin{proof}
	Begin by observing that $\theta_t\colon \sfK_0(\scrC)\to \R$ is the functional $w_t\circ Z$, where $w_t\colon \C\to \R$ is the unique linear map satisfying
	\begin{align}
		w_t(e^{i\pi(1-t)})=0\quad\text{and} \quad w_t(ie^{i\pi(1-t)})=-1\ .
	\end{align}
	Now given $x\in H $, since $Z$ is a stability function on $H$ we see that $Z(x)$ has phase in the range $(0,1]$ and hence
	\begin{align}
		\arg(Z(x)) &\leq 1-t \quad \text{ if and only if }
		w_t\left(Z(x)\right)\geq 0\ , \text{ i.e. } \theta_t(x)\geq 0\ ,
		\\
		\arg(Z(x)) &\geq 1-t \quad \text{ if and only if }
		w_t\left(Z(x)\right)\leq 0\ , \text{ i.e. } \theta_t(x)\leq 0\ .
	\end{align}
	The result follows, since $x$ lies in $\calP(1-t)$ if and only if $\arg(Z(x))=1-t$ and $\arg(Z(s))\leq 1-t$ for all sub-objects $s\hookrightarrow x$.
\end{proof}

\begin{lemma}\label{lem:bridgeland-to-king}
	For $(Z,\calP)$ as above and $t\in (0,1]$, the heart $K=\calP(-t,1-t]$ satisfies $\theta_t\in \calC(K)$. In fact, we have $K=H^{\theta_t}$, i.e.\ $K$ is the maximal among intermediate hearts whose heart cone contains $\theta_t$. Likewise, the heart $K=\calP[-t,1-t)$ is equal to $H_{\theta_t}$, the minimal heart whose heart cone contains $\theta_t$.
\end{lemma}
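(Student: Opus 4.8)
The plan is to exploit the identity $\theta_t=w_t\circ Z$ recorded in the proof of the preceding lemma, where $w_t\colon\C\to\R$ is the $\R$-linear map with $w_t(e^{i\pi(1-t)})=0$ and $w_t(ie^{i\pi(1-t)})=-1$. The key consequence is a \emph{sign rule}: if $x$ is $(Z,\calP)$-semistable of phase $\phi$, then writing $Z(x)=m\,e^{i\pi\phi}$ with $m>0$ gives $\theta_t(x)=m\sin(\pi(1-t-\phi))$, so whenever $\phi-(1-t)\in(-1,1)$ the number $\theta_t(x)$ is positive, zero, or negative exactly according as $\phi<1-t$, $\phi=1-t$, or $\phi>1-t$. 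Since $\theta_t$ is additive on $\sfK_0$ and every object of a slice category is a finite iterated extension of semistable objects with phases in the corresponding interval, taking the interval $(-t,1-t]$ (where $1-t-\phi\in[0,1)$, hence $\sin\geq 0$) yields $\theta_t(k)\geq 0$ for all $k\in K\coloneqq\calP(-t,1-t]$, i.e.\ $\theta_t\in\calC(K)$; the same argument gives $\theta_t\in\calC(\calP[-t,1-t))$. For the rest we treat $t\in(0,1)$ (the endpoint $t=1$ makes the first identity trivial).

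Next I would note that $K=\calP(-t,1-t]$ and $\calP[-t,1-t)$ are intermediate hearts with respect to $H\coloneqq\calP(0,1]$: for $t\in(0,1)$ the relevant phase intervals lie inside $(-1,1]$, so these slice categories are contained in $\calP(-1,1]=\langle H[-1],H\rangle$. By \cite[Theorem~A, Corollary~3.3]{HeartFan}, $H^{\theta_t}$ is the extension closure of the objects $x\in H$ all of whose subobjects $s\hookrightarrow x$ in $H$ satisfy $\theta_t(s)\geq 0$, together with the objects $x[-1]$ ($x\in H$) all of whose quotients $x\twoheadrightarrow f$ in $H$ satisfy $\theta_t(f)<0$. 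I claim both families lie in $K$. Given $x\in H$, let $0=A_0\subset\cdots\subset A_m=x$ be its Harder--Narasimhan filtration with semistable quotients of strictly decreasing phases $\phi_1>\cdots>\phi_m$, all in $(0,1]$. Because the HN factors of $A_1$ and of $x/A_{m-1}$ are among those of $x$, both $A_1$ and $x/A_{m-1}$ lie in $H$, so the triangles $A_1\to x\to x/A_1$ and $A_{m-1}\to x\to x/A_{m-1}$ are short exact in $H$; thus $A_1$ is an honest subobject and $Q\coloneqq x/A_{m-1}$ an honest quotient of $x$ in $H$, of phases $\phi_1$ and $\phi_m$. In the first family $\theta_t(A_1)\geq 0$ forces $\phi_1\leq 1-t$ by the sign rule, hence every $\phi_i\leq 1-t$ and $x\in\calP(-t,1-t]=K$; in the second $\theta_t(Q)<0$ forces $\phi_m>1-t$, hence every $\phi_i\in(1-t,1]\subseteq(1-t,2-t]$ and $x[-1]\in\calP(-t,1-t]=K$. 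As $K$ is extension-closed, $H^{\theta_t}\subseteq K$.

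Finally, both $K$ and $H^{\theta_t}$ are intermediate hearts with respect to $H$, and such a heart $L$ is determined by the torsion pair it cuts out on $H$, namely $(L\cap H,\ \{y\in H\mid y[-1]\in L\})$. The inclusion $H^{\theta_t}\subseteq K$ makes the torsion class of $H^{\theta_t}$ contained in that of $K$ and likewise for the torsion-free classes; but two torsion pairs in $H$ with both classes nested must coincide (if $\calT_1\subseteq\calT_2$ and $\calF_1\subseteq\calF_2$, then for $x\in\calT_2$ its $(\calT_1,\calF_1)$-quotient lies in $\calT_2\cap\calF_2=0$, so $x\in\calT_1$). Hence $H^{\theta_t}=K$. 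The identity $\calP[-t,1-t)=H_{\theta_t}$ follows by the same route, reading off $H_{\theta_t}$ from \cite{HeartFan} with ``$\theta_t\geq 0$ on subobjects'' and ``$\theta_t<0$ on quotients'' replaced by ``$\theta_t>0$ on subobjects'' and ``$\theta_t\leq 0$ on quotients'', which through the sign rule become $\phi_1<1-t$ and $\phi_m\geq 1-t$. The only step with genuine content is matching the combinatorial description of $H^{\theta_t}$, $H_{\theta_t}$ from \cite{HeartFan} to this Harder--Narasimhan picture; the care needed there -- ensuring the extremal HN layers are honest sub/quotient objects \emph{in $H$} and tracking strict versus non-strict inequalities -- is the main, though modest, obstacle.
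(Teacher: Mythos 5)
Your proof is correct, but it follows a genuinely different route from the paper's. The paper dispatches the identity $K=H^{\theta_t}$ in two lines by observing that $\calP(1-t)=H_{\theta_t}[-1]\cap H^{\theta_t}$ (the preceding lemma) is contained in $K$, and then invoking \cite[Corollary~2.14]{Shimpi_Torsion_pairs}, which characterises $H^{\theta_t}$ (resp.\ $H_{\theta_t}$) as the unique intermediate heart whose cone contains $\theta$ and which contains this intersection. You instead unwind the explicit extension-closure description of $H^{\theta_t}$ and $H_{\theta_t}$ in Broomhead--Pauksztello--Ploog--Woolf, translate the inequality constraints on sub- and quotient objects into phase bounds on the top and bottom Harder--Narasimhan layers via your ``sign rule'', and then promote the containment $H^{\theta_t}\subseteq K$ to equality using the fact that intermediate hearts of a fixed heart are mutually incomparable under containment (your nested torsion pair lemma is one way to see this; the general ``no strict inclusions between hearts of bounded $t$-structures'' is another). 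The trade-off: your argument is self-contained modulo the BPPW theorem already quoted in the paper, whereas the paper's is shorter but routes through an extra citation. One small note on the endpoint: at $t=1$ your sign rule as stated does not apply to $\phi=1$; you correctly set this case aside, and indeed it is trivial because $\theta_1=-\theta_0$ lies in the interior of $\calC(H[-1])$, forcing $\calP(0)=0$ and hence $\calP(-1,0]=\calP[-1,0)=H[-1]=H_{\theta_1}=H^{\theta_1}$.
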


\begin{proof}
	Note we can write $K$ as the extension-closure of $\calP(0,1-t]$ and $\calP(-t,0]=\calP(1-t,1][-1]$. Evidently, we have $\theta_t(x)\geq 0$ whenever $x\in \calP(0,1-t]$ and $\theta_t(x)< 0$ whenever $x\in \calP(1-t,1]$. It follows that $\theta_t$ lies in $\calC(K)$.

	Since $K$ also contains the subcategory $\calP(1-t)=H_{\theta_t}[-1]\cap H^{\theta_t}$, we must also have $K=\sfH^{\theta_t}$ by \cite[Corollary~2.14]{Shimpi_Torsion_pairs}. The statement for $H_{\theta_t}$ is analogous.
\end{proof}

\subsection{The heart fan of $\nilpPi$}

In the category $\catDbC(\rsv)$ we fix the standard heart $\sfH\coloneqq \catPC(\rsv/\ssv)$ that is Artinian and Noetherian, and recall the construction of its heart fan following \cite{Shimpi_Torsion_pairs}. By Theorem~\ref{thm:VdBmainthm} the heart $\catP(\rsv/\ssv)$ is a positive tilt of $\catCoh(\rsv)$, and hence the heart $\catCohC(\rsv)\subset \catDbC(\rsv)$ is intermediate with respect to $\sfH$. Likewise, the torsion pairs introduced in Formula~\eqref{eq:pervtorsion} satisfy $\calF_{J}\subseteq \calF_{\Jcomp}$ whenever $\Jcomp\subseteq J\subseteq I_\sff$, consequently we have the inequalities
\begin{align}
    \sfH[-1] \,\leq\, \barcatPC(\rsv/X_{\Jcomp}) \,\leq\, \barcatPC(\rsv/\psv) \,\leq\, \catPC(\rsv/\psv) \,\leq\, \catPC(\rsv/\rsv_{\Jcomp}) \,\leq\, \sfH\ .
\end{align}
In particular, both categories $\catPC(\rsv/\psv)$ and $\barcatPC(\rsv/\psv)$ are intermediate with respect to $\sfH$.

More intermediate hearts can be enumerated by examining the action of $\Aut(\catDbC(\rsv))$ on the category, in particular we look at the subgroups $B,\coweightlatticefin\subset B_\sfex$ which together generate $B_\sfex$ and give rise to functors of algebraic and geometric significance respectively.
\begin{theorem}[{\cite[Theorem 6.5]{IR08}}]
    Given an element $b\in B$  acting via the functor $\beta\in \Aut(\catDb(\rsv))$, we have the following.
    \begin{enumerate}\itemsep0.2cm
        \item The heart $\beta(\sfH)$ is intermediate with respect to $\sfH$ if and only if there is an element $w\in W$ with $b=T_w$, equivalently $\beta=\R T_w$.
        \item The heart $\beta(\sfH)[-1]$ is intermediate with respect to $\sfH$ if and only if there is an element $w\in W$ with $b=(T_w)^{-1}$, equivalently $\beta=\LL T_w $.
    \end{enumerate}
    The above hearts respect the Bruhat order on $W$, i.e.\ if $w=uv$ with $\ell(w)=\ell(u)+\ell(v)$ then
	\begin{align}
	   \R T_u (\sfH) \,\geq\, \R T_w (\sfH) \quad \text{and} \quad \LL  T_w (\sfH)[-1] \,\geq\, \LL  T_u (\sfH)[-1]\ .
\end{align}
\end{theorem}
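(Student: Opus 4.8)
The plan is to transport the whole statement across the derived McKay equivalence $\tau$ of \eqref{eq:tau} and then read it off from the tilting theory of $\Pi$ recorded in Theorem~\ref{thm:reflection-functors}. Since $\tau$ restricts to an equivalence $\catDbC(\rsv)\simeq\catDb_\mathsf{nilp}(\ModPi)$ carrying the Artinian--Noetherian heart $\sfH=\catPC(\rsv/\ssv)$ onto $\nilpPi$ and the $B_\sfex$-action onto the one generated by the derived reflection functors, it is enough to work inside $\catDb(\nilpPi)$ with $\sfH=\nilpPi$, $\R T_w=\R\Hom_\Pi(I_w,-)$ and $\LL T_w=(-)\otimes^\LL_\Pi I_w$. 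By Theorem~\ref{thm:reflection-functors} each $I_w$ is a classical tilting $\Pi$-module --- so $\mathrm{pd}_\Pi I_w\le1$, and also $I_w$ has projective dimension $\le1$ as a \emph{left} $\Pi$-module --- with $\End_\Pi(I_w)\simeq\Pi$ and $I_w\simeq I_u\otimes^\LL_\Pi I_v\simeq I_uI_v$ whenever $\ell(w)=\ell(u)+\ell(v)$. I will also use the standard dictionary between intermediate hearts and torsion pairs (see e.g.\ \cite[\S2]{Shimpi_Torsion_pairs}): a heart $K\subset\catDbC(\rsv)$ satisfies $\sfH[-1]\le K\le\sfH$ if and only if $K\subseteq\langle\sfH[-1],\sfH\rangle$, in which case $K$ is the backward Happel--Reiten--Smal\o{} tilt of $\sfH$ at a unique torsion pair and the assignment $\text{(torsion class)}\mapsto K$ is an order-reversing bijection onto $[\sfH[-1],\sfH]$; dually, the forward tilt gives an order-reversing bijection from torsion classes onto $[\sfH,\sfH[1]]$.

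The ``if'' directions are the quickest part. If $b=T_w$, then for every $M\in\nilpPi$ the complex $\R\Hom_\Pi(I_w,M)$ has cohomology concentrated in degrees $0$ and $1$, because $\mathrm{pd}_\Pi I_w\le1$ forces $\Ext^{\ge2}_\Pi(I_w,-)=0$; hence $\R T_w(\sfH)\subseteq\langle\sfH[-1],\sfH\rangle$, so $\R T_w(\sfH)=\beta(\sfH)$ is intermediate. Explicitly it is the backward tilt of $\sfH$ at the torsion pair attached to $I_w$ in \cite{SY13,IR08}, which for $w=s_i$ recovers the simple tilt realised by the spherical twist $\bfT_{\tau^{-1}(S_i)}$ of Proposition~\ref{prop:tilt_twist_thm}, recalling that $\bfT_{\tau^{-1}(S_i)}$ sends the simple object $\tau^{-1}(S_i)$ to $\tau^{-1}(S_i)[-1]$. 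Dually, using that $I_w$ has projective dimension $\le1$ as a left module, each $M\otimes^\LL_\Pi I_w$ has cohomology in degrees $-1,0$, so $\LL T_w(\sfH)\subseteq\langle\sfH,\sfH[1]\rangle$; thus $\LL T_w(\sfH)\in[\sfH,\sfH[1]]$, i.e.\ $\LL T_w(\sfH)[-1]$ is intermediate with respect to $\sfH$, which is the ``if'' direction of (2).

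For the ``only if'' directions, suppose $\beta(\sfH)$ is intermediate. Then $\beta(\sfH)$ is simultaneously a heart in the $B$-orbit of $\nilpPi$ and a heart in $[\sfH[-1],\sfH]$; by the classification of classical tilting $\Pi$-modules and their associated torsion classes (\cite[\S2]{SY13}, \cite[\S6]{IR08}) together with Theorem~\ref{thm:reflection-functors}, any such heart equals $\R T_w(\sfH)$ for a \emph{unique} $w\in W$, so the functor attached to $b^{-1}T_w$ fixes $\sfH$. Since the $B$-action on $\catDbC(\rsv)$ is faithful (cf.\ \cite{SY13} and Theorem~\ref{thm:Bridgelandcovering}) it remains to see that the only braid fixing $\sfH$ is the identity: an autoequivalence $\phi$ in the image of $B$ fixing $\sfH$ permutes the simple objects of $\sfH$, hence acts on the set $\{(Z,\calP)\in\Stab^\circ(\rsv)\mid\calP(0,1]=\sfH\}$ --- identified via the values of $Z$ on the simples with $\HH_-^{\,e+1}$ --- by a permutation of coordinates, and therefore fixes the diagonal point; by freeness of the deck-transformation action (Theorem~\ref{thm:Bridgelandcovering}) this forces $\phi=\id$, so $b=T_w$. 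The ``only if'' direction of (2) is then this applied to $b^{-1}$, because $\beta(\sfH)[-1]$ is intermediate with respect to $\sfH$ if and only if $\beta^{-1}(\sfH)$ is (apply the order-preserving autoequivalence $\beta^{-1}$ to $\sfH\le\beta(\sfH)\le\sfH[1]$). Finally, for the Bruhat comparison let $w=uv$ with $\ell(w)=\ell(u)+\ell(v)$: then $\R T_w\simeq\R T_u\circ\R T_v$ by Theorem~\ref{thm:twist}, and since $\R T_v(\sfH)\le\sfH$ by the ``if'' direction of (1), applying the order-preserving autoequivalence $\R T_u$ gives $\R T_w(\sfH)\le\R T_u(\sfH)$; symmetrically, tracking generation through $I_w\simeq I_uI_v\in\mathsf{Gen}(I_u)$ shows that the torsion class of the forward tilt $\LL T_w(\sfH)$ is contained in that of $\LL T_u(\sfH)$, whence $\LL T_w(\sfH)\ge\LL T_u(\sfH)$, i.e.\ $\LL T_w(\sfH)[-1]\ge\LL T_u(\sfH)[-1]$.

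I expect the genuine obstacle to be the ``only if'' assertion: pinning down exactly which braids can carry $\sfH$ into an intermediate heart is precisely the surjectivity half of the $W$-parametrisation of classical tilting $\Pi$-modules, together with the rigidity that the only braid fixing $\sfH$ is trivial --- these are the substantive results of \cite{SY13,IR08} rather than formal consequences of Theorems~\ref{thm:reflection-functors} and~\ref{thm:twist}. The ``if'' directions and the Bruhat comparison, by contrast, reduce to the projective-dimension bound $\mathrm{pd}\,I_w\le1$ and the multiplicativity $I_w\simeq I_uI_v$ already supplied by those theorems, so the main work lies in importing the tilting-theoretic classification and transporting it along $\tau$.
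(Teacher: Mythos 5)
The paper gives no proof of this theorem: it is imported verbatim from \cite[Theorem~6.5]{IR08}, and the surrounding text has already set up (via Theorem~\ref{thm:reflection-functors}, Theorem~\ref{thm:twist}, and the equivalence $\tau$) exactly the dictionary you use. Your sketch is therefore essentially the same route as the paper's, namely ``transport across $\tau$ and cite the Iyama--Reiten tilting classification'', with the difference that you flesh out the two easy pieces explicitly: the ``if'' direction from $\mathrm{pd}_\Pi I_w\le 1$ on both sides (which is correct, and your care to note the left projective-dimension bound is warranted), and the Bruhat comparison from $I_w\simeq I_u\otimes^{\LL}_\Pi I_v$ together with applying the order-preserving autoequivalence $\R T_u$ to $\R T_v(\sfH)\le\sfH$. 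As you yourself flag, the ``only if'' direction is not a formal consequence of the paper's Theorems~\ref{thm:reflection-functors}--\ref{thm:twist}; invoking ``the classification of classical tilting $\Pi$-modules and their associated torsion classes'' there is still a citation of IR08 \S6, i.e.\ of the theorem itself, so nothing has been made more self-contained. Your supplementary rigidity argument --- that the only braid fixing $\sfH$ is trivial, proved by locating a fixed point in the fibre $\{(Z,\calP)\mid\calP(0,1]=\sfH\}\simeq\HH_-^{e+1}$ and invoking freeness of the deck action in Theorem~\ref{thm:Bridgelandcovering} --- is correct and a pleasant alternative to a direct argument, though note it leans on Bridgeland's theorem, which is logically downstream of (not independent of) the IR08 material, so one should not read it as removing the dependence.
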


\begin{theorem}[{\cite[Theorem~6.3--1]{Shimpi_Torsion_pairs}}]\label{thm:linebundle-tstructure}
    Given an element $\Llambda\in \coweightlatticefin$ acting via the line bundle twist $(\calL_{\Llambda}\otimes -)$, the following statements are equivalent.
    \begin{enumerate}\itemsep0.2cm
        \item The heart $\calL_{\Llambda}^\vee\otimes \sfH$ is intermediate with respect to $\sfH$.
        \item The heart $\calL_{\Llambda}\otimes \sfH[-1]$ is intermediate with respect to $H$.
        \item The bundle $\calL_{\Llambda}$ is nef, equivalently $\Llambda$ is dominant.
    \end{enumerate}
\end{theorem}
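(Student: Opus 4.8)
The plan is to prove the two equivalences by rather different means: $(1)\Leftrightarrow(2)$ follows purely formally from $\calL_\Llambda^\vee\otimes(-)$ being an autoequivalence, while $(1)\Leftrightarrow(3)$ uses the braid autoequivalences of \S\ref{subsec:reflection-functors}--\S\ref{subsec:geometricbraidgroup} together with the heart-fan combinatorics, the substantive content being concentrated in the converse $\neg(3)\Rightarrow\neg(1)$.

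For $(1)\Leftrightarrow(2)$: the twist $\calL_\Llambda^\vee\otimes(-)$ is an autoequivalence of $\catDbC(\rsv)$ commuting with the shift, hence preserves the partial order on bounded hearts. The heart in $(2)$ is $\calL_\Llambda\otimes\sfH[-1]=\bigl(\calL_\Llambda^\vee\otimes(-)\bigr)^{-1}\bigl(\sfH[-1]\bigr)$, so applying $\calL_\Llambda^\vee\otimes(-)$ to the two inequalities $\sfH[-1]\leq\calL_\Llambda\otimes\sfH[-1]\leq\sfH$ that express $(2)$ produces $\calL_\Llambda^\vee\otimes\sfH[-1]\leq\sfH[-1]$ and $\sfH[-1]\leq\calL_\Llambda^\vee\otimes\sfH$; reindexing the first by a shift and combining, these say exactly $\sfH[-1]\leq\calL_\Llambda^\vee\otimes\sfH\leq\sfH$, which is $(1)$. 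The reverse implication just runs this computation backwards, and no geometry enters.

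For $(3)\Rightarrow(1)$: suppose $\Llambda$ is dominant, equivalently (Proposition~\ref{prop:pic-coweight}) that $\calL_\Llambda$ is nef. By Proposition~\ref{prop:braid-tilting-intertwiner} the twist $\calL_\Llambda\otimes(-)$ is the autoequivalence $\R L_\Llambda$ attached to $L_\Llambda\in B_\sfex$, and for dominant $\Llambda$ the element $L_\Llambda=T_{t_\Llambda}$ is the positive lift of the translation $t_\Llambda\in W_\sfex$. Decomposing $t_\Llambda=\gamma\ltimes w$ in $W_\sfex=\Gamma\ltimes W$ with $\ell(t_\Llambda)=\ell(w)$, additivity of length identifies $T_{t_\Llambda}$ with $T_\gamma T_w$ inside $B_\sfex=\Gamma\ltimes B$, so that, modulo the shift-reindexing above and the diagram automorphism, the relevant autoequivalence is $\pi_\gamma\circ\beta$ for some $\beta\in\{\R T_w,\LL T_w\}$ with $w\in W$ and $\ell(w)=\ell(t_\Llambda)$, where $\pi_\gamma$ is the autoequivalence induced by $\gamma\in\Gamma$ (Corollary~\ref{cor:twist}). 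Since $\Gamma$ merely permutes the simple modules $S_0,\dots,S_e$, the functor $\pi_\gamma$ fixes the heart $\sfH=\nilpPi$ and is order preserving, so it can be disregarded; and that $\R T_w(\sfH)$ and $\LL T_w(\sfH)[-1]$ are intermediate with respect to $\sfH$ for every $w\in W$ is exactly \cite[Theorem 6.5]{IR08}. Matching these against the two equivalent formulations $(1)\Leftrightarrow(2)$ yields the claim; lining up the various shift conventions between Propositions~\ref{prop:braid-tilting-intertwiner} and \ref{prop:pic-coweight} and \cite[Theorem 6.5]{IR08} takes some care but introduces no new idea.

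The main obstacle is the converse, $\neg(3)\Rightarrow\neg(1)$. If $\Llambda$ is not dominant then $\calL_\Llambda$ is not nef, so $(\calL_\Llambda\cdot C_i)\leq-1$ for some exceptional curve $C_i$. The most conceptual route is through the heart fan: for $K$ to be intermediate with respect to $\sfH=\catPC(\rsv/\ssv)$ it is necessary that its heart cone $\calC(K)$ occur among the cones of $\mathsf{HFan}(\sfH)$ in $\coweightlattice\otimes\R$, and $\calC\bigl(\calL_\Llambda^\vee\otimes\sfH\bigr)$ is the image of $\calC(\sfH)$ under the shear of $\sfK_0(\catDbC(\rsv))\simeq\rootlattice$ induced by $\calL_\Llambda^\vee\otimes(-)$, namely $\ell_{-\Llambda}$. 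One then checks --- from the explicit combinatorial model of $\mathsf{HFan}(\catPC(\rsv/\ssv))$ in \cite{Shimpi_Torsion_pairs}, equivalently from the torsion-pair description of the orbifold hearts $\catPC(\rsv/\psv)$ in Theorem~\ref{thm:structureofper} --- that this shear carries $\calC(\sfH)$ out of $\mathsf{HFan}(\sfH)$ precisely when $\Llambda$ fails to be dominant. A more hands-on alternative is to produce an explicit witness: $\calL_\Llambda^\vee\otimes(-)$ sends the simple $\scrO_{C_i}(-1)[1]$ of $\sfH$ (Proposition~\ref{rem:simple}) to $\scrO_{C_i}(m)[1]$ with $m=-1-(\calL_\Llambda\cdot C_i)\geq0$, and one argues, via the local structure of $\sfH$ along $C_i$ used in the proof of Theorem~\ref{thm:perverse-stabfuncs}, that $\scrO_{C_i}(m)[1]$ with $m\geq0$ cannot be a simple object of any heart in $[\sfH[-1],\sfH]$. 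Either way, Steps~1--2 are essentially bookkeeping, and the genuine content is the classification of line-bundle twists preserving the interval $[\sfH[-1],\sfH]$, which is the heart of \cite[Theorem~6.3--1]{Shimpi_Torsion_pairs}.
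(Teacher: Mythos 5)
This statement is imported from \cite[Theorem~6.3--1]{Shimpi_Torsion_pairs} without proof, so there is no argument of the paper's to compare yours against; I can only assess the attempt on its own terms.

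Your $(1)\Leftrightarrow(2)$ step and your ``explicit witness'' argument for $\neg(3)\Rightarrow\neg(1)$ are both sound: if $(\calL_\Llambda\cdot C_i)\leq -1$ then $\calL_\Llambda^\vee\otimes\scrO_{C_i}(-1)[1]=\scrO_{C_i}(m)[1]$ with $m\geq 0$, and since $\scrO_{C_i}(m)\in\scrT_\emptyset\subset\sfH$ for $m\geq 0$, this object lies in $\sfH[1]$; as any heart $K\in[\sfH[-1],\sfH]$ satisfies $K\cap\sfH[1]=0$, the twisted heart cannot be intermediate. That argument, suitably written out, does the job.

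However, your preferred ``conceptual route'' through the heart fan does not work and must be dropped. You claim the shear induced by $\calL_\Llambda^\vee\otimes(-)$ carries $\calC(\sfH)=\calC^+$ outside $\mathsf{HFan}(\sfH)$ exactly when $\Llambda$ is not dominant. This is false: $\calL_\Llambda^\vee\otimes(-)$ acts on $K_0\simeq\rootlattice$ by the shear $\ell_{-\Llambda}=t_\Llambda^{-1}\in W_\sfex$, so $\calC(\calL_\Llambda^\vee\otimes\sfH)$ is obtained from $\calC^+$ by an element of $W_\sfex$, and the $W_\sfex$-orbit of $\calC^+$ coincides with its $W$-orbit. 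Hence the heart cone is always of the form $w\calC^+$ for some $w\in W$ and therefore always lies in $\mathsf{HFan}(\sfH)$, regardless of whether $\Llambda$ is dominant. The cone test is merely necessary, and two distinct hearts --- one intermediate, one not --- can share the cone $w\calC^+$; Theorem~\ref{thm:heartfan-of-nilpPi} singles out $\R T_w(\sfH)$ as the \emph{unique intermediate} heart with that cone, but says nothing about which autoequivalence images of $\sfH$ are intermediate. So this route is vacuous and the witness argument is not an optional alternative but the actual content.

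Finally, the $(3)\Rightarrow(1)$ step is stated too loosely: ``the relevant autoequivalence is $\pi_\gamma\circ\beta$ for some $\beta\in\{\R T_w,\LL T_w\}$\dots matching these against the two equivalent formulations $(1)\Leftrightarrow(2)$ yields the claim'' glosses over precisely the point at issue, namely which of $\R T_w(\sfH)$ (intermediate by \cite[Theorem 6.5]{IR08}) and $\LL T_w(\sfH)[-1]$ (also intermediate) corresponds to $\calL_\Llambda^\vee\otimes\sfH$ versus $\calL_\Llambda\otimes\sfH[-1]$. The shifts in the two halves of \cite[Theorem 6.5]{IR08} are asymmetric and do not automatically line up with the shifts in $(1)$ and $(2)$; one has to pin down the sign convention in Proposition~\ref{prop:braid-tilting-intertwiner} (already flagged in the paper as differing from \cite{DPSSV-3}) to know which case applies. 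Until that is done this step is not a proof but a plausibility sketch.
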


We recall the calculation of $\mathsf{HFan}(\sfH)$, which under the identification $\Hom(\sfK_0(\catDbC\rsv),\R)\simeq \coweightlattice\otimes \R$ coincides with the fan underlying the Tits cone decomposition \eqref{eq:titsdecomp}.
\begin{theorem}[{\cite[Theorem~C]{Shimpi_Torsion_pairs}}]\label{thm:heartfan-of-nilpPi}
    Given a closed convex cone $\calC\subseteq \coweightlattice\otimes\R$ and a vector $\theta$ in its relative interior, $\calC$ is an intermediate heart cone for $\sfH\coloneqq\catPC(\rsv/\ssv)$ if and only if one of the following holds.
    \begin{enumerate}\itemsep0.2cm
        \item The cone $\calC$ is of the form $w\calC^+$ for some $w\in W$. In this case, $\sfH^\theta=\R T_w (\sfH)$ is the unique intermediate heart with heart cone $\calC$.
        \item The cone $\calC$ is of the form $w^{-1}\calC^-$ for some $w\in W$. In this case, $\sfH^\theta=\LL T_w (\sfH)[-1]$ is the unique intermediate heart with heart cone $\calC$.
        \item The cone $\calC$ is a non-zero face of $w\calC^0$ for some $w\in W_\sff$. Such cones are of the form $w\calC^0_J$ for $J\subseteq I_\sff$ non-empty. Choosing $(w,J)$ such that $\calC=w\calC^0_J$ and $w$ has minimal length among all such representatives, the maximal and minimal hearts with heart cone $\calC$ are given as
		\begin{align}
	        \sfH^\theta=\R T_w(\catPC(\rsv/\psv))\quad\text{and}\quad \sfH_\theta=\R T_w(\barcatPC(\rsv/\psv))
		\end{align}
        where $\theta\in w\calC^0_J$ is a generic vector.
    \end{enumerate}
\end{theorem}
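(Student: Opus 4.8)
The plan is to identify the heart fan $\mathsf{HFan}(\sfH)$ of $\sfH\coloneqq\catPC(\rsv/\ssv)$ with the complete fan underlying the Tits decomposition~\eqref{eq:titsdecomp}. Since $\sfH\simeq\nilpPi$ is Artinian and Noetherian with the finitely many simples $\tau^{-1}(S_0),\dots,\tau^{-1}(S_e)$, its Grothendieck group is $\sfK_0(\catDbC\rsv)\simeq\rootlattice$ and, by \cite[Theorem~A]{HeartFan}, $\mathsf{HFan}(\sfH)$ is a complete simplicial fan in $\Theta=\Hom(\sfK_0(\catDbC\rsv),\R)\simeq\coweightlattice\otimes\R$. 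The strategy is then to realise every maximal cone of the Tits fan as the heart cone of one of the explicit intermediate hearts in the statement, and to use that a fan containing all maximal cones of a complete fan coincides with it.

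First I would compute the heart cones. By the remark after Theorem~\ref{thm:twist} the functor $\R T_w$ acts as $w$ on $\sfK_0\simeq\rootlattice$, hence contragrediently on $\Theta$, so the identification $[\tau^{-1}(S_i)]=w_0\alpha_i$ of~\eqref{eq:K0rootidentification} gives $\calC(\sfH)=\calC^+$ immediately, whence by equivariance $\calC(\R T_w(\sfH))=w\calC^+$ and — using $\calC^-=-\calC^+$ and the sign introduced by the shift — $\calC(\LL T_w(\sfH)[-1])=w^{-1}\calC^-$ for all $w\in W$. For the perverse hearts I would argue as in the proof of Theorem~\ref{thm:perverse-stabfuncs}: the $\sfK_0$-classes of \S\ref{subsec:psheavesres}, together with the presentation of $\catPC(\rsv/\psv)$ as the extension-closure of the $\catP_{C_{J'}}(\rsv/\ssv)$ (over the connected components $J'$ of $\Jcomp$) and of $\catCoh_{C_J}(\rsv)$ from Theorem~\ref{thm:structureofper}, show that $\theta\ge 0$ on all of $\catPC(\rsv/\psv)$ exactly when $\theta(\delta)=0$, $\theta(\alpha_i)\ge 0$ for $i\in J$ and $\theta(\alpha_i)=0$ for $i\in\Jcomp$; that is, $\calC(\catPC(\rsv/\psv))=\calC^0_J$, and the torsion class defining $\barcatPC(\rsv/\psv)$ is $\theta$-null for such $\theta$, so $\calC(\barcatPC(\rsv/\psv))=\calC^0_J$ as well. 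Equivariance then gives $\calC(\R T_w(\catPC(\rsv/\psv)))=\calC(\R T_w(\barcatPC(\rsv/\psv)))=w\calC^0_J$; as $\corootlatticefin\subset W$ fixes $\calC^0_J$ pointwise, these cones are already exhausted by $w\in W_\sff$.

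Next I would run the completeness argument. By \cite[Theorem~6.5]{IR08} the hearts $\R T_w(\sfH)$ and $\LL T_w(\sfH)[-1]$ ($w\in W$) are intermediate with respect to $\sfH$, as are $\catPC(\rsv/\psv)$, $\barcatPC(\rsv/\psv)$ and hence $\catCohC(\rsv)=\catPC(\rsv/\rsv)$; the Bruhat-order monotonicity in \emph{loc.\ cit.}\ shows in addition that the minimal-length translates $\R T_w(\catCohC(\rsv))$ ($w\in W_\sff$) remain intermediate. Thus all the cones $w\calC^+$, $w^{-1}\calC^-$ ($w\in W$) and $w\calC^0$ ($w\in W_\sff$), and all their faces, lie in $\mathsf{HFan}(\sfH)$. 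But these are precisely the maximal cones of the Tits fan~\eqref{eq:titsdecomp}: the $w\calC^\pm$ are the $(e{+}1)$-dimensional chambers, and — since $W$ fixes $\delta$, forcing $w\calC^\pm\cap\delta^\perp=\{0\}$ — the $e$-dimensional cones $w\calC^0$ tiling the limiting hyperplane $\delta^\perp$ are maximal too. Since the Tits fan is complete, $\mathsf{HFan}(\sfH)$ equals it, and every intermediate heart cone is of one of the three listed forms. Cases~(1) and~(2) of the theorem follow because a full-dimensional heart cone is the heart cone of a unique intermediate heart (\cite{HeartFan}; a generic vector of such a cone is strictly positive on the heart, so the interval $[\sfH_\theta,\sfH^\theta]$ degenerates), which by the first step is $\R T_w(\sfH)$, resp.\ $\LL T_w(\sfH)[-1]$.

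For case~(3), fix $\calC=w\calC^0_J$ and $\theta$ generic in $\calC$; by the heart-fan theorem the intermediate hearts with heart cone containing $\theta$ form an interval $[\sfH_\theta,\sfH^\theta]$, which by the first step contains $\R T_w(\catPC(\rsv/\psv))$ and $\R T_w(\barcatPC(\rsv/\psv))$, the former above the latter since $\barcatPC(\rsv/\psv)$ is obtained from $\catPC(\rsv/\psv)$ by tilting down at the torsion class $T_J=\langle\catP_{C_{J_1}}(\rsv/\ssv),\dots,\catP_{C_{J_n}}(\rsv/\ssv),\{\scrO_p\mid p\in C\}\rangle$. It remains to show these two are the endpoints; conjugating by $\R T_w$ this reduces to $w=e$, where one compares $\sfH^\theta$ and $\sfH_\theta$ — given explicitly by the heart-fan theorem as the two tilts of $\sfH=\nilpPi$ in the $\theta$-torsion theory — with the generators of $\catPC(\rsv/\psv)$ from Theorem~\ref{thm:structureofper}, using Proposition~\ref{rem:simple} to place each generator in the correct shift of $\nilpPi$ and the computation already in the text that the $\theta$-null subcategory of $\catPC(\rsv/\psv)$ equals $T_J$; this yields $\sfH^\theta=\catPC(\rsv/\psv)$ (the heart in which $T_J$ is a torsion class) and $\sfH_\theta=\barcatPC(\rsv/\psv)$. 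The main obstacle is exactly this last bookkeeping: one must check generator by generator that the $\theta$-torsion theory on $\nilpPi$ cuts out precisely the perverse-coherent heart, and in particular that the twists $\R T_w(\catPC(\rsv/\psv))$ for $w$ in the parabolic $W_{\Jcomp}=\langle s_i\mid i\in\Jcomp\rangle$ — which by the first step also have heart cone $\calC^0_J$ and so lie in $[\sfH_\theta,\sfH^\theta]$ — are \emph{interior} to that interval, so that they do not interfere with the identification of its endpoints; the subsidiary intermediacy claims in the completeness step are handled by the Bruhat-order monotonicity of \cite[Theorem~6.5]{IR08}.
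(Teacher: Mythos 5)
This theorem is cited verbatim from \cite[Theorem~C]{Shimpi_Torsion_pairs}; the present paper gives no proof, so there is nothing in-paper to compare your argument against. What follows is a review of your blind attempt on its own terms.

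Your overall strategy --- compute heart cones by reading off $\sfK_0$-classes of generators, invoke $W$-equivariance of the action and the sign flip introduced by $[-1]$ to get $\calC(\R T_w(\sfH))=w\calC^+$, $\calC(\LL T_w(\sfH)[-1])=w^{-1}\calC^-$, $\calC(\catPC(\rsv/\psv))=\calC^0_J$, and then argue by completeness that nothing else can appear --- is the natural one and very likely mirrors the cited reference. The heart-cone computations themselves are correct (the calculation for $\calC^0_J$ falls out of the $\sfK_0$-classes of the sheaves $\scrO_{C_i}(n)$, $\scrO_{C_i}(-1)[1]$ and $\scrO_{C_{J'}}$ exactly as in the proof of Theorem~\ref{thm:perverse-stabfuncs}), and the ``two complete fans, one refining the other, must coincide'' step is sound.

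The genuine gap is the intermediacy of $\R T_w(\catCohC(\rsv))$ for $w\in W_\sff$, which your completeness step needs to place the cones $w\calC^0$ (hence their faces $w\calC^0_J$) inside $\mathsf{HFan}(\sfH)$. The Bruhat monotonicity of \cite[Theorem~6.5]{IR08} concerns $\R T_w(\sfH)$, not $\R T_w(\catCohC(\rsv))$: from $\sfH[-1]\leq\catCohC(\rsv)$ one only gets $\R T_w(\sfH)[-1]\leq\R T_w(\catCohC(\rsv))$, and $\R T_w(\sfH)[-1]\geq\sfH[-2]$, which is one shift too weak; the upper bound $\R T_w(\catCohC(\rsv))\leq\R T_w(\sfH)\leq\sfH$ is fine, but the lower bound is not established. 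This is not a cosmetic issue: without intermediacy you cannot legitimately feed $w\calC^0$ into the completeness argument. The cleanest repair, which also disposes of the ``bookkeeping obstacle'' you flag at the end, is to reverse the order of operations: for $\theta$ generic in $w\calC^0_J$, compute $\sfH^\theta$ and $\sfH_\theta$ \emph{directly} from the tilting formulas in the HeartFan theorem (the $\theta$-torsion theory on $\sfH\simeq\nilpPi$ is the one whose torsion class is the $\mu_\theta$-positive stratum, exactly as in Lemma~\ref{lem:bridgeland-to-slope}), and identify the resulting tilts with $\R T_w(\catPC(\rsv/\psv))$ and $\R T_w(\barcatPC(\rsv/\psv))$ using Theorem~\ref{thm:structureofper}. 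Since $\sfH^\theta$ and $\sfH_\theta$ are intermediate by construction, this simultaneously establishes intermediacy, the membership $w\calC^0_J\in\mathsf{HFan}(\sfH)$, and part~(3), rendering the separate Bruhat argument unnecessary. The ``interior translates do not interfere'' remark at the end is then also automatic, since the interval $[\sfH_\theta,\sfH^\theta]$ is defined intrinsically.
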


\begin{remark}
	If $J\subseteq I_\sff$ is non-empty, then for any $\varpi_J$-ample line bundle $\calL$ with corresponding coweight $\Llambda$ (seen as a vector in $\coweightlattice\otimes \R$ under the identifications $\Pic(\rsv)\simeq \coweightlatticefin\hookrightarrow \coweightlattice$) we have
    \begin{align}
    \sfH^{\Llambda} = \catPC(\rsv/\psv)\quad\text{and}\quad \sfH_{\Llambda} = \barcatPC(\rsv/\psv) \ .
	\end{align}
    In particular if $\calL$ is $\pi$--ample, then $\sfH^{\Llambda}=\catCohC(\rsv)$.
\end{remark}

\subsection{Harder--Narasimhan strata revisited}\label{subsec:HN-revisited}

We now prove Theorem~\ref{thm:slicing} by considering appropriate stability arcs in the heart fan of $\sfH$. Accordingly, fix a non-empty subset $J\subseteq I_\sff$ and a coweight $\Llambda\in \coweightlatticefin\subset \coweightlattice$ such that the associated line bundle $\calL_{\Llambda}$ is $\varpi_J$-ample. The stability condition $(Z_\Llambda, P_\Llambda)$ is (by definition) constructed from the stability function $Z_\Llambda=\theta_{\half}+i\cdot\theta_0$ on $\sfH$, where
\begin{align}
	\theta_0\coloneqq w_0(\Lrho) \quad\text{and}\quad \theta_{\half} \coloneqq \Llambda \quad \in \coweightlattice\otimes\R\ .
\end{align}

\begin{lemma}\label{lem:bridgeland-to-slope}
    For $t\in (0,1]$, the objects of $\calP_\Llambda(1-t)$ are precisely of the form $\tau^{-1}(M)$ where $M\in \nilpPi$ is a $\mu_\Llambda$--semistable module with slope $\cot(\pi t)$. In particular, the $A$-equivariant functor
    $\tau$ restricts to equivalences
	\begin{align}
		\begin{tikzcd}[ampersand replacement=\&]
				\nilp^{\leq 0}\,\Pi\ar{r}{\sim}\& \calP_\Llambda (0,\half]
		\end{tikzcd}
	\quad\text{and}\quad
	\begin{tikzcd}[ampersand replacement=\&]
		\nilp^{> 0}\,\Pi\ar{r}{\sim}\& \calP_\Llambda (\half,1]
	\end{tikzcd}\ .
	\end{align}
\end{lemma}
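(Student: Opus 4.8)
The plan is to transport the whole statement along the exact equivalence of Abelian categories $\tau$ of \eqref{eq:tau}, which restricts to an exact equivalence $\sfH\coloneqq\catPC(\rsv/\ssv)\xrightarrow{\ \sim\ }\nilpPi$ (the Corollary following Proposition~\ref{rem:simple}) and which by \eqref{eq:K0rootidentification} intertwines the two identifications of the Grothendieck groups with $\rootlattice$. Recall that $(Z_\Llambda,\calP_\Llambda)$ is built from the stability function $Z_\Llambda=\theta_{\half}+i\,\theta_0$ on $\sfH$ with $\theta_0=w_0(\Lrho)$ (which sits in the interior of the heart cone $\calC(\sfH)=\calC^+$, since $(w_0\Lrho,w_0\alpha_i)=(\Lrho,\alpha_i)=1>0$ for every $i\in I$) and $\theta_{\half}=\Llambda$. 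The first step is then to apply the stability-arc machinery of \S\ref{subsec:stabilityarc}: forming the arc $\theta_t=\cos(\pi t)\theta_0+\sin(\pi t)\theta_{\half}$ of \eqref{eq:thetatarc}, the Lemma preceding Lemma~\ref{lem:bridgeland-to-king} identifies $\calP_\Llambda(1-t)$, for $t\in(0,1]$, with the full subcategory of those $h\in\sfH$ satisfying $\theta_t(h)=0$ and $\theta_t(s)\geq 0$ for all sub-objects $s\hookrightarrow h$. Everything then reduces to rereading these two conditions through $\tau$.

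Second, I would record two numerical identities. For $0\neq x\in\sfH$ set $M\coloneqq\tau(x)\in\nilpPi$ and let $d_i$ be the multiplicity of $S_i$ in a composition series, so that $[M]=\sum_{i\in I}d_i\,w_0(\alpha_i)$. Using $(\Lrho,\alpha_i)=1$ for all $i\in I$ and $W$-invariance of the canonical pairing, $\theta_0(x)=(w_0\Lrho,[M])=\sum_i d_i=\dim(M)$. A short computation from Definition~\ref{def:slopefunction} and \eqref{eq:K0rootidentification} then gives $\theta_{\half}(x)=\Llambda([M])=-\,\mu_\Llambda(M)\,\dim(M)$ — here the $2\delta$-shift appearing in $w_0(\alpha_0)$ (cf.\ \eqref{eq:simpleclasses}) drops out because $\Llambda\in\coweightlatticefin$ pairs trivially with $\delta$. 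Combining, $\theta_t(x)=\dim(M)\bigl(\cos(\pi t)-\sin(\pi t)\,\mu_\Llambda(M)\bigr)$, so for $t\in(0,1)$ the conditions $\theta_t(x)=0$ and $\theta_t(s)\geq0$ become $\mu_\Llambda(M)=\cot(\pi t)$ and $\mu_\Llambda(\tau(s))\leq\cot(\pi t)$ respectively; since $\tau$ is exact the sub-objects $s\hookrightarrow x$ are exactly the submodules of $M$, so the description of $\calP_\Llambda(1-t)$ becomes precisely ``$M$ is $\mu_\Llambda$-semistable of slope $\cot(\pi t)$''. At $t=1$ one gets $\theta_1(x)=-\dim(M)<0$, i.e.\ $\calP_\Llambda(0)=0$, the degenerate ``$\cot\pi$'' case; and as $\cot\colon(0,1)\to\R$ is a bijection this settles the first assertion.

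Finally, for the two displayed equivalences I would feed this into $\calP_\Llambda(0,\half]=\langle\calP_\Llambda(\phi)\mid\phi\in(0,\half]\rangle$ and $\calP_\Llambda(\half,1]=\langle\calP_\Llambda(\phi)\mid\phi\in(\half,1]\rangle$, tracking the slopes: $\cot(\pi(1-\phi))$ runs over $(-\infty,0]$ as $\phi$ runs over $(0,\half]$, and over $(0,\infty)$ as $\phi$ runs over $(\half,1)$, while $\calP_\Llambda(1)=0$. Hence $\tau$ carries $\calP_\Llambda(0,\half]$ (resp.\ $\calP_\Llambda(\half,1]$) onto the extension closure of the $\mu_\Llambda$-semistable modules of non-positive (resp.\ positive) slope, and these are $\nilp^{\leq 0}\,\Pi$ and $\nilp^{>0}\,\Pi$ because $(\nilp^{>0}\,\Pi,\nilp^{\leq 0}\,\Pi)$ is the Harder--Narasimhan torsion pair (Definition~\ref{def:kappa}), so each class is the extension closure of its semistable objects; $A$-equivariance is automatic since $\tau$ is an $A$-equivariant functor. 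The only step that is not pure formality is the identity $\theta_{\half}(x)=-\mu_\Llambda(M)\dim(M)$: the mild subtlety there is to keep track of the twist by $w_0$ (and the induced diagram involution $\kappa$) hard-wired into both \eqref{eq:K0rootidentification} and the definition of $\mu_\Llambda$, and to check that it contributes exactly the overall sign, the $\delta$-terms being annihilated by $\Llambda\in\coweightlatticefin$. Everything else is bookkeeping layered on top of the heart-fan lemmas already established.
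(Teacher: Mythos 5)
Your proposal is correct and takes essentially the same route as the paper's proof: compute $\theta_0([M])=\dim(M)$ using $W$-invariance of the pairing, compute $\theta_{\half}([M])=-\mu_\Llambda(M)\dim(M)$ using the fact that $\widetilde{w}_0$ acts by $-1$ on $\coweightlatticefin$, and then read off the semistability condition from the description of $\calP_\Llambda(1-t)$ furnished by the stability-arc lemma. Your version is somewhat more expansive than the paper's, which simply records the two numerical identities and says ``the result follows''; the extra care you give to the $t=1$ endpoint, the verification that $\theta_0$ is interior to the heart cone, and the explicit tracking of $\cot(\pi(1-\phi))$ over the two half-intervals are all sound and in the spirit of what the paper leaves implicit.
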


\begin{proof}
	Let $M$ be a nilpotent $\Pi$--module, and observe that $(w_0\Lrho, [M])=\dim(M)$. Since the coweight $\Llambda$ lies in the hyperplane $\coweightlatticefin\otimes \R$, we have $w_0(\Llambda)=-\Llambda$. Thus the slope of $M$ may be computed as
	\begin{align}
        \mu_\Llambda(M)=-\,\frac{(\theta_{\half}, [M])}{(\theta_{0\phantom{/2}}, [M])}\ .
	\end{align}
	Defining $\theta_t$ as in Formula~\eqref{eq:thetatarc}, we see that if $t\in (0,1]$ is such that $\mu_\Llambda(M)=\cot(\pi t)$, then $(\theta_t, [M])=0$. Further, for any submodule $N\subset M$ we have $\mu_\Llambda(N)\leq \mu_\Llambda(M)$ if and only if $(\theta_t,[N])\geq 0$, i.e.\ $M$ is $\mu_\Llambda$--semistable if and only if $\tau^{-1}(M)$ lies in $\sfH_{\theta_t}[-1]\cap \sfH^{\theta_t}=\calP_\Llambda(1-t)$. The result follows.
\end{proof}

We can then prove the key result of this section.
\begin{proof}[Proof of Theorem~\ref{thm:slicing}]
    Defining the arc $(\theta_t\,\vert\, 0<t\leq 1)$ as in Formula~\eqref{eq:thetatarc}, it is clear from Lemma~\ref{lem:bridgeland-to-king} and Theorem~\ref{thm:heartfan-of-nilpPi} that $\calP_\Llambda(-1/2,1/2]=\catPC(\rsv/\psv)$ and $\calP_\Llambda[-1/2,1/2)=\barcatPC(\rsv/\psv)$.

    For brevity write $\vartheta_n\coloneqq\theta_{t_n}$ when $n\geq 0$, and $\vartheta_n\coloneqq \theta_{1+t_n}$ when $n<0$.

	Note that $(\R L_\Llambda)^{-n}$ acts on $\sfK_0(\nilpPi)\simeq \rootlattice$ via $\ell_\Llambda^{-n}\in W$ (see Formula~\eqref{eq:ell_Llambda}), and thus on the coweight lattice via the inverse--transpose
    \begin{align}
		\ell_\Llambda^{-n}(\theta)= \theta + (\theta,\delta)\cdot n\Llambda\ .
	\end{align}
	In particular we see that if $n\geq 0$ then $\vartheta_n$ lies in the ray spanned by $\ell_\Llambda^{-n}(\vartheta_0)$, and hence in the interior of the intermediate heart cone $\ell_\Llambda^{-n}(\calC^+)$. On the other hand if $n<0$, then $\vartheta_n$ lies in the ray spanned by $\ell_{\Llambda}^{-n}(-\vartheta_0)$, i.e.\ in the intermediate heart cone $\ell_\Llambda^{-n}(\calC^-)$.

    It then follows from Theorem~\ref{thm:heartfan-of-nilpPi} and Lemma~\ref{lem:bridgeland-to-king} that
	\begin{align}
        \calP_\Llambda(-t_n,1-t_n]
        &= (\calL_\Llambda)^{-n}\otimes \sfH & \text{ if }n\geq 0\ , \\
        \calP_\Llambda(-1-t_n,-t_n] &=
        (\calL_\Llambda)^{-n}\otimes \sfH[-1] & \text{ if }n<0\ .
	\end{align}
	In other words, $\tau^{-1}\circ(\R L_\Llambda)^{-n}$ is an equivalence between $\nilpPi$ and $\calP_\Llambda(-t_n,1-t_n]$ for all $n\in \Z$, as required.

	To show that this restricts to the given equivalence on $\nilp^{>0}\,\Pi$, we note that $\sfK\coloneqq\catPC(\rsv/\psv)$ is invariant under the action of the $\varpi_J$-ample bundle $\calL_\Llambda$ and hence
	\begin{align}
		\calP_\Llambda (\half,1-t_n] =\underbrace{\calP_\Llambda
        (-\half,\half]\;[1]}_{\sfK[1]=(\calL_\Llambda)^{-n}\otimes \sfK[1]}\;\cap\;
		\underbrace{\calP_\Llambda
        (-t_n,1-t_n]}_{(\calL_\Llambda)^{-n}\otimes\sfH}	=
        (\calL_\Llambda)^{-n}\otimes(\underbrace{\calP_\Llambda(\half,1]}_{\sfK[1]\cap \sfH})\ .
	\end{align}
	Thus Lemma~\ref{lem:bridgeland-to-slope} yields $\calP_\Llambda(\half,1-t_n]=\tau^{-1}\circ (\R L_\Llambda)^{-n}(\nilp^{>0}\,\Pi)$ as required. The corresponding statement for $\nilp^{\leq 0}\,\Pi$ is proved similarly.
\end{proof}

\begin{remark}
    A consequence of Theorem~\ref{thm:slicing} is that the coaisle corresponding to the heart $\catPC(\rsv/\psv)$ can be obtained as the intersection
	\begin{align}
		\calP_\Llambda(-\infty, 1/2]=\bigcap_{n\geq 0}\calP_\Llambda(-\infty, 1-t_n]
	\end{align}
    of coailses corresponding to the hearts $(\calL_\Llambda)^{-n}\otimes \sfH$. In other words in the poset of all $t$-structures on $\catDbC(\rsv)$, the heart $\catPC(\rsv/\psv)$ is the \textit{infimum} of the decreasing sequence
	\begin{align}
		\sfH > (\calL_\Llambda)^{-1}\otimes \sfH>(\calL_\Llambda)^{-2}\otimes \sfH> \ldots
	\end{align}
    Likewise, one can show that $\barcatPC(\rsv/\psv)$ is the \textit{supremum} of the increasing sequence
	\begin{align}
		\sfH[-1]< (\calL_\Llambda)\otimes \sfH[-1] < (\calL_\Llambda)^2\otimes \sfH[-1] < \ldots
	\end{align}
    This gives an alternate proof of \cite[Theorem~6.3--(2)]{Shimpi_Torsion_pairs}.
\end{remark}

\section{Limiting COHAs}\label{sec:limitingCOHAs}

In this section, we introduce a cohomological Hall algebra structure on the $A$-equivariant Borel--Moore homology of the moduli stack of objects belonging to $\sfH^{\Llambda}$. This construction is performed by using the framework of \textit{limiting COHAs} introduced in \cite[Part~\ref*{COHA-Yangian-part:COHA-stability-condition}]{DPSSV-3}.

\medskip

We fix a subset $J\subseteq I_\sff$ and a coweight $\Llambda\in \coweightlatticefin$ such that the associated line bundle $\calL_{\Llambda}$ is $\varpi_J$-ample on $\rsv$. Recall that we have identified $\Z I=\sfK_0(\nilpPi)$ and $\rootlattice$ via \eqref{eq:simpleclasses}.

\subsection{Moduli stack of nilpotent representations}

\subsubsection{Preliminaries}

We denote by $\stackRep(\Pi_\qv)$ the classical moduli stack parametrising finite-dimensional representations of $\Pi_\qv$. It splits as a disjoint union
\begin{align}
	\stackRep(\Pi_\qv)=\bigsqcup_\bfd\stackRep_\bfd(\Pi_\qv)
\end{align}
into closed and open connected components, according to the dimension vector $\bfd \in \N I$. Each $\stackRep_\bfd(\Pi_\qv)$ is a finite type classical geometric stack. We let $\Lambda_\qv$ stand for the closed substack of $\stackRep(\Pi_\qv)$ parametrising nilpotent representations. Thus,
\begin{align}
	\Lambda_\qv=\bigsqcup_{\bfd \in \N I} \Lambda_{\bfd}\ .
\end{align}
This is a finite type classical geometric stack. Furthermore,  is pure of dimension
\begin{align}
	\dim \Lambda_{\bfd}=-\langle \bfd, \bfd \rangle\ .
\end{align}

There exists a derived enhancement $\dstackRep_\bfd(\Pi_\qv)$ of $\stackRep_\bfd(\Pi_\qv)$ (cf.\ \cite[\S2.1.4]{VV_KHA} or \cite[\S~I.2]{DPS_Torsion-pairs}). Since the category of representations of $\Pi_\qv$ is 2-Calabi-Yau, $\dstackRep(\Pi_\qv)$ is a derived lci geometric derived stack of finite type over $\C$.
\begin{definition}
	The \textit{derived} moduli stack $\dLambda_\qv$ of nilpotent finite-dimensional representations of $\Pi_\qv$ is the formal completion of $\Lambda_\qv$ inside $\dstackRep(\Pi_\qv)$.
\end{definition}

Let $\Ltheta \in \coweightlattice\otimes \R$ be a stability condition. The subfunctor of $\Ltheta$-semistable nilpotent $\Pi_\qv$-representations of dimension $\bfd\in \rootlattice$ forms an open substack $\Lambda_\bfd^{\Ltheta\textrm{-}\mathsf{ss}}$ of $\Lambda_\bfd$. The latter admits a canonical enhancement\footnote{The construction of a canonical derived enhancement of an open embedding of a geometric classical stack into a geometric derived stack follows from \cite[Proposition~2.1]{STV}.}, so there is also a derived open substack  $\dLambda_\bfd^{\Ltheta\textrm{-}\mathsf{ss}}$ of $\dLambda_\bfd$. 
\subsubsection{Moduli stacks of Harder-Narasimhan strata}

Following Formula~\eqref{eq:tk}, for $k \in \Z$ we set $\nu_k\coloneqq t_{2k}$. Let $\calP_{\Llambda}$ be the slicing associated to $\Llambda$, which is introduced in \S\ref{subsec:HN-revisited}. Let us denote by $\tau_k$ and $\tau_{\sfrac{1}{2}}$ the $t$-structure on $\catDb(\nilpPi)$ having heart $\calP_{\Llambda}(\nu_{\textrm{-}k}, \nu_{\textrm{-}k}+1]$  for $k \in \Z$ and $\calP_{\Llambda}(-1/2, 1/2]$, respectively. We denote by $\dLambda(\tau_k)$ the derived stack of complexes of finite-dimensional nilpotent $\Pi_\qv$ representations which are flat with respect to $\tau_k$ for $k\in \Z\cup \{1/2\}$ (cf.\ \cite[Construction~\ref*{COHA-Yangian-construction:derived-stack-D_0}]{DPSSV-3}) and by $\dLambda(\tau_k; \bfv)$ its connected component corresponding to $\bfd\in \Z I$.

For any $k\in \Z$, define the derived stack
\begin{align}
	\kdLambdaqv\coloneqq \dLambda(\tau_k) \cap \dLambda(\tau_{\sfrac{1}{2}}) \ .
\end{align}
Then, $\kdLambdaqv$ parametrizes the complexes of finite-dimensional nilpotent $\Pi_\qv$ representations belonging to the category $\calP_{\Llambda}(\nu_{\textrm{-}k}, 1/2]$.

\subsection{Limiting COHAs}\label{subsec:limit-coha-affine-quiver}

Note that $T\coloneqq \C^\ast \times \C^\ast$ acts on $\Lambda_\qv$ and $\dLambda_\qv$. There is a subtorus of $T$ which maps isomorphically onto $A$. For simplicity, we shall denote it by $A$ as well. All the stacks introduced in the previous section admit an $A$-action. We set
\begin{align}
	\coha_J^A\coloneqq \HBMbulletA(\dLambda(\tau_{\sfrac{1}{2}}))\ .
\end{align}
For any $k\in \Z$, define
\begin{align}
	\coha_{J, (k)}^A\coloneqq \HBMbulletA(\kdLambdaqv )\ .
\end{align}
Thus, we have a chain of projections
\begin{align}\label{eq:chain-cohas-biinfinite}
	\cdots \twoheadrightarrow \coha_{J,(1)}^A \twoheadrightarrow \coha_{J,(0)}^A \twoheadrightarrow \coha_{J,(-1)}^A \twoheadrightarrow \cdots \ .
\end{align}
Note that all the maps $\rho_{k,k-1}\colon  \coha_{J,(k)}^A \to  \coha_{J,(k-1)}^A$ occuring in Formula~\eqref{eq:chain-cohas-biinfinite} are induced by the open embeddings $\tensor*[^{(k-1)}]{\dLambda}{_\qv}\to \kdLambdaqv$ and hence $(\N \times \Z I)$-graded.

The following isomorphism is a consequence of the definition of Borel--Moore homology (cf.\ \cite[\S\ref*{foundations-sec:BM-homology}]{DPSSV-1}):
\begin{align}
	\coha_J^A\simeq \lim_k \coha_{J, (k)}^A
\end{align}
with respect to the maps $\rho_{k,k-1}$, where the limit is equipped with the \textit{quasi-compact topology}. This is a topological $(\N \times \Z I)$-graded vector space.

We denote by $\coha_\qv^A$ the \textit{nilpotent cohomological Hall algebra of $\qv$}, whose underlying vector space is the $A$-equivariant Borel--Moore homology $\HBMbulletA(\dLambda_\qv)$ of $\dLambda_\qv$ (see \cite[\S\ref*{COHA-Yangian-sec:nilpotent-quiver-COHA}]{DPSSV-3}). Now, we endow $\coha_J^A$ with an associative algebra structure:
\begin{theorem}\label{thm:coha-surface-as-limit1}
	There is a canonical $(\N \times \Z I)$-graded unital associative algebra structure on $\coha_J^A$ induced by the COHA multiplication on $\coha_\qv^A$.
\end{theorem}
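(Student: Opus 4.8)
## Proof plan for Theorem 4.17 (=\ref{thm:coha-surface-as-limit1})

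The plan is to realise $\coha_J^A$ as a limit of a pro-system of algebras, where each term carries an algebra structure inherited from the nilpotent COHA $\coha_\qv^A$, and where the transition maps are algebra morphisms; the topological algebra structure on the limit then follows formally. The key observation underpinning everything is that each heart $\calP_\Llambda(\nu_{-k},\nu_{-k}+1]$, being of the form $(\calL_\Llambda)^{-k}\otimes \sfH$ or $(\calL_\Llambda)^{-k}\otimes \sfH[-1]$ by Theorem~\ref{thm:slicing}, is the image of $\nilpPi$ under an \emph{autoequivalence} of $\catDb(\nilpPi)$ lying in the $B_\sfex$-action; consequently the derived stack $\dLambda(\tau_k)$ is identified, via the induced automorphism of $\dstackRep(\Pi_\qv)$, with $\dLambda_\qv$ itself (up to the appropriate shift, which does not affect the moduli stack of objects). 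Under this identification the stacks $\kdLambdaqv = \dLambda(\tau_k)\cap \dLambda(\tau_{1/2})$ become open substacks of $\dLambda_\qv$ — precisely the (derived enhancements of the) Harder--Narasimhan loci $\Lambda^{\mu_\Llambda\textrm{-}\mathsf{ss}}$ of objects whose HN factors have $\mu_\Llambda$-slopes in a prescribed half-open interval, by Lemma~\ref{lem:bridgeland-to-slope}.

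The first step is therefore to record this: fix isomorphisms $\kdLambdaqv \simeq \dLambda_\qv^{(\ge \nu_{-k})}$ (say, the open substack of $\dLambda_\qv$ whose closed points are nilpotent representations all of whose HN factors, with respect to $\mu_\Llambda$, have slope $\ge \cot(\pi\cdot\text{something})$ — the precise half-open range being read off from $\calP_\Llambda(\nu_{-k},1/2]$ via Lemma~\ref{lem:bridgeland-to-slope}), compatibly with the open embeddings $\tensor*[^{(k-1)}]{\dLambda}{_\qv}\hookrightarrow \kdLambdaqv$. The second step is to equip each $\coha_{J,(k)}^A = \HBMbulletA(\kdLambdaqv)$ with an algebra structure. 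Here one invokes the mechanism of \cite[Part~\ref*{COHA-Yangian-part:COHA-stability-condition}]{DPSSV-3}: the convolution diagram defining the COHA product on $\coha_\qv^A = \HBMbulletA(\dLambda_\qv)$ restricts to HN strata because an extension $0\to A'\to A\to A''\to 0$ of nilpotent representations has all HN factors in a given slope interval as soon as both $A'$ and $A''$ do — the extension-closedness of $\nilp^{\kappa}_\Llambda\Pi$ (Definition~\ref{def:kappa}) together with the interval-preservation of HN filtrations under extensions. Thus $\stackRep^{\mathsf{ext}}$ restricts over the open substack $\dLambda_\qv^{(\ge\nu_{-k})}$, giving a correspondence $q,p$ between $\kdLambdaqv\times\kdLambdaqv$ (or rather the appropriate product of strata) and $\kdLambdaqv$; since $q$ remains derived lci and finitely connected and $p$ remains locally rpas (these are local conditions preserved under base change along the open embeddings), the operation $p_\ast\circ q^!$ is defined and associativity is inherited from that on $\coha_\qv^A$. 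This makes each $\coha_{J,(k)}^A$ an $(\N\times\Z I)$-graded unital associative algebra.

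The third step is to check that the transition maps $\rho_{k,k-1}\colon \coha_{J,(k)}^A\to \coha_{J,(k-1)}^A$, induced by the open embeddings, are algebra homomorphisms: this is the standard base-change compatibility of $(-)^!$ and $(-)_\ast$ along open immersions inside the convolution diagram, which is exactly how restriction of COHAs to open substacks is shown to be multiplicative in \cite{DPSSV-3}. Granting this, $\{\coha_{J,(k)}^A\}_{k\in\Z}$ is a pro-system in the category of $(\N\times\Z I)$-graded algebras, and the already-noted isomorphism $\coha_J^A\simeq \lim_k \coha_{J,(k)}^A$ of graded topological vector spaces (with the quasi-compact topology) upgrades to an isomorphism of topological algebras simply because a limit of algebras in vector spaces is an algebra and the limit topology is compatible with the operations. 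The unit is the image of the fundamental class of the point-stratum. I expect the main obstacle to be the second step — verifying that the convolution correspondence genuinely restricts to the HN open substacks at the derived level and that the morphisms $p,q$ retain exactly the properties (locally rpas; quasi-compact, finitely connected, derived lci) needed for $p_\ast$ and $q^!$ to exist. While each individual point is either a local check or cited from \cite{DPSSV-3, DPS_Torsion-pairs}, assembling them coherently for the bi-infinite family indexed by $k$, and in the derived rather than classical setting, is where the real work lies; the passage to the limit and the graded-algebra bookkeeping are then formal.
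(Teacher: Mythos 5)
Your plan agrees with the paper's in its broad strokes: both use the equivalences supplied by Theorem~\ref{thm:slicing} to identify $\kdLambdaqv$ with Harder--Narasimhan open substacks of $\dLambda_\qv$, both exploit extension-closedness of the HN strata, and both recognise that the whole question lives inside the ``limiting COHA'' machinery of \cite{DPSSV-3}. But there is a genuine gap in your second step. You claim that after restricting the convolution correspondence $q,p$ to the open substack $\kdLambdaqv$, the morphism $p$ ``remains locally rpas'' because this is a ``local condition preserved under base change along the open embeddings''. That is not the base change that is actually being performed: to obtain a multiplication on $\coha^A_{J,(k)}$ you must restrict the $q$-codomain to $\kdLambdaqv\times\kdLambdaqv$, i.e.\ restrict the 2-Segal space to short exact sequences whose \emph{outer} terms both lie in the interval $(\nu_{-k},\tfrac12]$. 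Over a fixed middle term $A\in\kdLambdaqv$, the resulting fibre of $p$ is the locus inside the Quot-type fibre where the quotient has HN slopes $\leq\tfrac12$ and the sub has HN slopes $>\nu_{-k}$; these are \emph{open}, not closed, conditions (a subobject of $A$ is not forced to have its minimal HN slope above $\nu_{-k}$, nor a quotient of $A$ to have its maximal HN slope below $\tfrac12$). So the restricted $p$ is generically not proper, and $p_*\circ q^!$ is not defined level-by-level in the way you propose. Consequently neither the claim that each $\coha^A_{J,(k)}$ is itself an algebra via the naive restricted convolution, nor that the transition maps $\rho_{k,k-1}$ are algebra morphisms, can be taken as given.

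This is precisely the difficulty the paper's proof is built to circumvent. Rather than endowing each finite level with an algebra structure, the proof verifies a list of specific assumptions from \cite{DPSSV-3} (quasi-compactness and local closedness of HN strata, openness of the standard $t$-structure, compatibility of multiplication across levels) and then invokes the general limiting-COHA proposition, whose output is a $\lim_k\,\colim_{s\geq k}$ of Borel--Moore homologies of stacks $\dstackCohps(\scrD_0,(a_k-1,a_s];\bfd)$. The inner $\colim$ over $s$ is what replaces the missing properness: a product of two classes is constructed by passing to a high enough $s$ where the requisite pushforward exists, and the $\lim$ over $k$ then assembles this into a topological algebra, which is finally identified with $\coha^A_J$ via the theorem comparing the $\colim$ with the honest Borel--Moore homology of the limit stratum. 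The upshot is that the associativity and unitality are inherited from the ambient $\coha^A_\qv$, as you say, but not through a pro-system of algebras; the multiplication exists only at the limit. To repair your proposal you would need to replace the claim about $p$ by an appeal to the actual assumptions of the DPSSV-3 framework (in particular the two parts of their ``limiting CoHA'' assumption and the quasi-compact-interval assumption), checked for $k=0$ where the HN strata of $\dLambda_\qv$ are known to be quasi-compact and locally closed, and propagated to all $k$ via the autoequivalences $\tau^{-1}\circ(\R L_\Llambda)^{-2k}$.
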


\begin{proof}
	The proof follows by combining Theorem~\ref{thm:slicing} and the same arguments as those in the proof of \cite[Theorem~\ref*{COHA-Yangian-thm:coha-surface-as-limit1}--(\ref*{COHA-Yangian-item:coha-surface-as-limit1.1})]{DPSSV-3}. We include it for completeness.

	We apply the framework developed in \cite[\S\ref*{COHA-Yangian-sec:limiting-COHA}]{DPSSV-3}. For $k\in \N$, set $a_k\coloneqq \nu_{\textrm{-}k}+1$. Then, we have
	\begin{align}
		\lim_{k\to +\infty} a_k= -\frac{1}{2}+1=\frac{1}{2}\eqqcolon a_\infty \ .
	\end{align}
	Let $\Lambda\coloneqq \Z I$ and let $v\colon K_0(\nilpPi) \to \Lambda$ be the map that associates to the K-theory class of a nilpotent finite-dimensional representation of $\Pi_\qv$ its dimension vector.
	Now, since
	\begin{align}\label{eq:equivalence}
		(\tau^{-1}\circ (\R L_{\Llambda})^{-2k}) (\nilpPi) \simeq \calP_{\Llambda}(a_k-1,a_k]
	\end{align}
	for any $k\in \N$ by Theorem~\ref{thm:slicing} and the standard $t$-structure on $\Pi_\qv\Mod$ is open, \cite[Assumption~\ref*{COHA-Yangian-assumption:limiting_2_Segal_stack}]{DPSSV-3} holds.
	Again, thanks to the equivalence~\eqref{eq:equivalence}, \cite[Assumption~\ref*{COHA-Yangian-assumption:quasi-compact_interval}]{DPSSV-3} holds for any $k\in \N$ since it is evidently true for $k=0$: in this case, Harder-Narasimhan strata of the moduli stack $\Lambda_\qv$ are known to be quasi-compact and locally closed.
	Since \cite[Assumption~\ref*{COHA-Yangian-assumption:limiting_CoHA_I}--(\ref*{COHA-Yangian-assumption:limiting_CoHA_I-1})]{DPSSV-3} holds for $k=0$, by the equivalence~\eqref{eq:equivalence} it holds for any $k\in \N$. Moreover, \cite[Assumption~\ref*{COHA-Yangian-assumption:limiting_CoHA_I}--(\ref*{COHA-Yangian-assumption:limiting_CoHA_I-2})]{DPSSV-3} holds for $k=k'=0$, by using again  the equivalence~\eqref{eq:equivalence} and \cite[Corollary~\ref*{COHA-Yangian-cor:assumption-holds}]{DPSSV-3}, we obtain that \cite[Assumption~\ref*{COHA-Yangian-assumption:limiting_CoHA_I}]{DPSSV-3} holds for any $k\in \N$.
	Thus, we can apply \cite[Proposition~\ref*{COHA-Yangian-prop:limiting_CoHA}]{DPSSV-3} and we obtain an $A$-equivariant limiting cohomological Hall algebra
	\begin{align}
		\coha_{\nilpPi, \tau_{1/2}}^A\coloneqq \bigoplus_{\bfd \in \Z I} \lim_{k} \colim_{s \geqslant k} \sfH_\bullet^A\big( \dstackCohps\big( \scrD_0, (a_k-1, a_s]; \bfd \big) \big)
	\end{align}
	as a $\Lambda$-graded vector space, endowed with the quasi-compact topology. Here, $\scrD_0\coloneqq \catDb(\nilpPi)$ and $\dstackCohps\big( \scrD_0, (a_k-1, a_s]; \bfd \big)\coloneqq \dLambda(\tau_k; \bfd)\cap \dLambda(\tau_s; \bfd)$ (see \cite[Construction~\ref*{COHA-Yangian-construction:limiting_COHA-1}]{DPSSV-3} for its definition). Now, \cite[Theorem~\ref*{COHA-Yangian-thm:limiting_vs_limit}]{DPSSV-3} yields
	\begin{align}
		\colim_{s \geqslant k} \sfH_\bullet^A\big( \dstackCohps\big( \scrD_0, (a_k-1, a_s]; \bfd \big) \big) \simeq \sfH_\bullet^A\big( \dstackCohps\big( \scrD_0, (a_k-1, \sfrac{1}{2}]; \bfd \big) \big) = \sfH_\bullet^A\big( \tensor*[^k]{\dLambda}{_\bfd} \big) \ .
	\end{align}
	Therefore, as $\Lambda$-graded vector spaces, $\coha_{\nilpPi, \tau_{1/2}}^A$ is isomorphic to $\coha_J^A$, endowed with the quasi-compact topology.
\end{proof}

\begin{remark}
	Consider the derived moduli stack $\dLambda(\tau_{1/2})$ of complexes of finite-dimensional nilpotent $\Pi_\qv$ representations which are flat with respect to $\tau_{1/2}$. As explained in \cite[\S\ref*{torsion-pairs-sec:COHA-t-structure}]{DPS_Torsion-pairs}, there is a \textit{$2$-Segal space} $\calS_\bullet \dLambda(\tau_{1/2})$ canonically associated to $\dLambda(\tau_{1/2})$. In particular, we have a convolution diagram
	\begin{align}
		\begin{tikzcd}[ampersand replacement=\&]
			\dLambda(\tau_{1/2}) \times \dLambda(\tau_{1/2}) \& \calS_2\dLambda(\tau_{1/2}) \ar{r}{p} \ar{l}[swap]{q}  \& \dLambda(\tau_{1/2})
		\end{tikzcd}\ ,
	\end{align}
	where $\calS_2\dLambda(\tau_{1/2})$ is equivalent the derived stack parametrising distinguished triangles of complexes of finite-dimensional nilpotent $\Pi_\qv$ representations which are flat with respect to $\tau_{1/2}$. Here the maps $p$ and $q$ sends a triangle $E_1\to E_2 \to E_3 \to E_1[1]$ to $E_2$ and $(E_3, E_1)$, respectively. 
	
	As explained in \textit{loc.\ cit.}, the above convolution diagram induces a COHA structure on the equivariant Borel--Moore homology $\HBMbulletA(\dLambda(\tau_{\sfrac{1}{2}}))$ if $q$ is quasi-compact, \textit{finitely connected}\footnote{In the sense of \cite[Definition~\ref*{torsion-pairs-def:modified_classes_of_morphisms}--(\ref*{torsion-pairs-item:finitely-connected})]{DPS_Torsion-pairs}.}, and derived lci, and $p$ is \textit{locally rpas}\footnote{In the sense of \cite[Definition~\ref*{torsion-pairs-def:modified_classes_of_morphisms}--(\ref*{torsion-pairs-item:locally-rpas})]{DPS_Torsion-pairs}.}. Now, the approximation procedure, in the sense of \cite[\S\ref*{COHA-Yangian-sec:stabilization}]{DPSSV-3}, performed in the proof of Theorem~\ref{thm:coha-surface-as-limit1} implies that these properties hold. Thus, we can also define an `intrisic' COHA structure on $\coha_J^A$, which is canonically associated to the $t$-structure $\tau_{1/2}$. 
	
	The equivalence of this `intrisic' COHA with the one given by Theorem~\ref{thm:coha-surface-as-limit1} follows from \cite[Theorem~\ref*{COHA-Yangian-thm:coha-surface-as-limit1}--(\ref*{COHA-Yangian-item:coha-surface-as-limit1.1})]{DPSSV-3}.
\end{remark}

\section{Limits of affine Yangians}\label{sec:limit-affine-Yangian}

In this final section, we use the results of \cite{DPSSV-3} to explicitly describe, in terms of suitable limits of subquotients of affine Yangians, the COHA $\coha_J^A$ of the category $\catCohC(\calA_J)$ of (nilpotent) coherent sheaves on a Kleinian orbifold $\calX_J$. Since there are no interesting cases of Kleinian orbifolds\footnote{besides the two extreme cases $J=\emptyset$ and $J=\{1\}$, which are covered in \cite{DPSSV-3}} when $\qv\simeq A_1^{(1)}$, we assume throughout that $\qv \neq A_1^{(1)}$.

\subsection{Affine Yangians}

\subsubsection{Presentation of the affine Yangian}

We briefly recall the definition of the affine Yangian $\Y_\qv$ relevant to this work. We put $R_T=\Q[\varepsilon^{\pm 1}_1, \varepsilon^{\pm 1}_2]$ and $\hbar\coloneqq \varepsilon_1+\varepsilon_2$. We keep the notation in force regarding the quiver $\qv$. In order to avoid confusion caused by the unconventional choice of $\rootlattice$-grading (see Formula~\ref{eq:simpleclasses}), we will work with $\Z I$-grading and only convert to $\rootlattice$-grading later. To this end, we denote by $\{\epsilon_i\;|\; i \in I\}$ the canonical basis of $\Z I$ and put $\partial\coloneq \epsilon_0 + \sum_i r_i \epsilon_i$.

\begin{definition}\label{def:affine-Yangian}
	Let $\qv$ be an affine ADE quiver, $\qv\neq A_1^{(1)}$. The (affine, two-parameter) Yangian $\Y_{\qv;\, T}$ of $\qv$ is the unital associative $R_T$-algebra generated by $x_{i, \ell}^\pm, h_{i, \ell}$, with $i \in I$ and $\ell \in \N$, subject to the relations
	\begin{itemize}\itemsep0.2cm
		
		\item for any $i, j\in I$ and $r,s\in \N$
		\begin{align}\label{eq:affine-Yangian-Lie-algebra-2}
			\Big[h_{i,r}, h_{j,s}\Big]  & = 0\ ,\\[4pt]  \label{eq:affine-Yangian-Lie-algebra-3}
			\Big[x_{i,r}^{+}, x_{j,s}^{-}\Big] &= \delta_{i,j} h_{i, r+s} \ , \\[4pt] \label{eq:affine-Yangian-4}
			\Big[h_{i,0}, x_{j,r}^\pm\Big]&=\pm a_{i,j}x_{j,r}^\pm\ , \\[4pt]\label{eq:affine-Yangian-5}
			\Big[h_{i, r+1}, x_{j, s}^{\pm}\Big] - \Big[h_{i, r}, x_{j, s+1}^{\pm}\Big]& = \pm \frac{\hbar}{2} a_{i,j} \Big\{h_{i, r}, x_{j, s}^{\pm}\Big\}-m_{i,j}\frac{\varepsilon_1-\varepsilon_2}{2}\Big[h_{i,r},x_{j,s}^{\pm}\Big]\ ,\\[4pt] \label{eq:affine-Yangian-6}
			\Big[x_{i, r+1}^{\pm}, x_{j, s}^{\pm}\Big] - \Big[x_{i, r}^{\pm}, x_{j, s+1}^{\pm}\Big] &= \pm \frac{\hbar}{2} a_{i,j} \Big\{x_{i, r}^{\pm}, x_{j, s}^{\pm}\Big\}-m_{i,j}\frac{\varepsilon_1-\varepsilon_2}{2}\Big[x_{i,r}^{\pm},x_{j,s}^{\pm}\Big] \ ,
		\end{align}
		where
		\begin{align}
			m_{i,j}\coloneqq \begin{cases}
				1 & \text{if } i\to j\in \Omega\ , \\
				-1 & \text{if } j\to i \in \Omega \ , \\
				0 & \text{otherwise} \ .
			\end{cases}
		\end{align}
		
		\item Serre relations: 
		\begin{align}\label{eq:affine-Yangian-Serre}
			\sum_{\sigma \in \frakS_m}\Big[x_{i, r_{\sigma(1)}}^{\pm}, \Big[x_{i, r_{\sigma(2)}}^{\pm}, \Big[\cdots, \Big[x_{i, r_{\sigma(m)}}^{\pm}, x_{j,s}^{\pm}\Big]\cdots\Big]\Big]\Big] = 0 
		\end{align}
		for $i, j\in I$, with $i \neq j$, where $m\coloneqq 1 - a_{i,j}$ and $\frakS_m$ denotes the $m$-th symmetric group. 
	\end{itemize}	
\end{definition}

\begin{notation}
	Identifying $R_T$ with the $T$-equivariant cohomology ring $\Hbullet_T$, we have a surjection $R_T \to \Hbullet_A$. We denote by $\Y_{\qv;\, A}\coloneq \Y_{\qv;\,T} \otimes_{R_T} \Hbullet_A$ the specialization of $\Y_{\qv;\,T}$. 
\end{notation}

\begin{remark} 
		Giving the generators $x^{\pm 1}_{i,\ell}$ and $h_{i,\ell}$ the respective degree $\pm \epsilon_i$ and $0$ induces a $\Z I$-grading on $\Y_{\qv;\,A}$.
	
	The $\rootlattice$-grading conventions here and below differ from those of \cite{DPSSV-3}, by the automorphism $-w_0$. This is in accordance with the non-standard identification between $\Z I$ and $\rootlattice$. 
\end{remark}	

The positive and negative halves $\Y^\pm_{\qv;\,A}$ of $\Y_{\qv,A}$ are defined to be the $A$-subalgebras respectively generated by $\{x^{\pm}_{i,\ell}\,|\, i \in I, \ell \in \N\}$. The loop Cartan subalgebra $\zeroeY_\qv$ is defined to be the subalgebra generated by $\{h_{i,\ell}\,|\, i \in I, \ell \in \N\}$. There is a PBW-type isomorphism
\begin{align}
	\Y^+_{\qv;\,A}\otimes \zeroeY_{\qv;\,A}\otimes \Y^-_{\qv;\,A} \simeq \Y_{\qv;\,A}
\end{align}
(see \cite[Theorem~\ref*{COHA-Yangian-thm:triangular-decomposition-affine}]{DPSSV-3}) and an associated projection map
\begin{align}\label{eq:projection}
	\pr\colon \Y_{\qv;\,A} \longrightarrow \Y^-_{\qv;\,A}\ .
\end{align}

We next describe the classical limit $\varepsilon_1=\varepsilon_2=0$ of $\Y_{\qv;\,T}$. Recall that the \textit{elliptic Lie algebra} $\frakgell$ associated to the Dynkin quiver $\qvfin$ associated to $\qv$ is the universal central extension 
\begin{align}\label{eq:elliptic}
	\frakgell \coloneqq \frakgfin[s^{\pm 1}, t]\oplus K\quad \text{with } K\coloneqq \bigoplus_{\ell\in \N}\Q c_\ell \oplus \bigoplus_{\genfrac{}{}{0pt}{}{\ell\in \N, \; \ell\geqslant 1}{k\in \Z, \; k\neq 0}} \Q c_{k, \ell}
\end{align} 
of the double loop algebra $\frakgfin[s^{\pm 1},t]$. Here, $c_{\ell}, c_{k, \ell}$ are central elements, and the Lie bracket is given by
\begin{align}
	[x\otimes s^k t^\ell, y\otimes s^h t^n]=
	\begin{cases}
		[x, y]\otimes t^{\ell+n}+ k(x, y)\cdot c_{\ell+n}  & \text{if } k+h=0\ ,\\[4pt]
		[x, y]\otimes s^{k+h} t^{\ell+n}+ (kh-\ell n) \cdot (x, y)\cdot c_{m+n, g+k} &\text{if } k+h\neq 0\ ,
	\end{cases}
\end{align}
where $(\,,\,)$ is an invariant nondegenerate pairing on $\frakgfin$. We equip the Lie algebra $\frakgell$ with the $\Z\times\rootlattice$-grading such that 
\begin{align}
	\deg\big(x\otimes s^k t^\ell\big)\coloneqq (-2\ell, \bfd+k\delta) \ , \quad \deg\big(c_{k, \ell}\big)\coloneqq (-2\ell, k\delta) \ , \quad \deg\big(c_{\ell}\big)\coloneqq (-2\ell, 0)\ ,
\end{align}
where $x$ belongs to the root space $(\frakgfin)_{\bfd}, k\in \Z$, and $\ell\in \N$. We'll call the first term of the grading the \textit{horizontal grading} and the second term the \textit{vertical grading}. 

The \textit{negative half} $\fraknell$ of $\frakgell$ is defined as the Lie subalgebra spanned by the homogeneous elements whose horizontal grading belongs to $-\N I\smallsetminus\{0\}$, i.e.
\begin{align}\label{eq:negative-half}
	\fraknell\coloneqq \frakn[t] \oplus K_- \quad\text{where } K_-\coloneqq \bigoplus _{k<0} \Q c_{k, \ell}\ ,
\end{align}
where $\frakn\coloneqq s^{-1}\frakgfin[s^{-1}] \oplus \fraknfin$ is the (standard) \textit{negative nilpotent half} of $\frakg$.

Let us denote by $X_i^\pm$ and $H_i$, with $i=1, \ldots, e$, the Chevalley generators for $\frakgfin$ normalized so that $(X_i^+,X_i^-) = 1$ and $H_i= [X_i^+,X_i^-]$. Let $X_{\pm \varphi}$ be root vectors of $\frakgfin$ for the roots $\pm\varphi$ normalized so that $(X_\varphi,X_{\textrm{-}\varphi}) = 1$, where $\varphi$ is the highest root of $\frakgfin$. Set $H_\varphi\coloneqq [X_\varphi^+,X_\varphi^-]$.
	
\begin{theorem}[{\cite[Propositions~\ref*{COHA-Yangian-prop:classical-limit} and \ref*{COHA-Yangian-prop:iso-s-uce-1}]{DPSSV-3}}]
	The assignment
	\begin{align}
		x_{i, \ell}^\pm &\longmapsto X_i^\pm \otimes t^\ell \quad \text{for } i \in I\;\text{and}\; \ell\in \N\ , \\
		x_{0, \ell}^\pm &\longmapsto X_{\mp\varphi}\otimes t^\ell s^{\pm 1}\quad \text{for } \ell\in \N\ , \\
		h_{i, \ell} &\longmapsto H_i\otimes t^\ell \quad \text{for } i=1, \ldots, e\;\text{and}\; \ell\in \N\ , \\
		h_{0, \ell} &\longmapsto H_\varphi\otimes t^\ell+t^\ell s^{-1} ds \quad \ell\in \N\ .
	\end{align}
	extends to an algebra isomorphism 
	\begin{align}
		\Psi\colon \Y_{\qv;\,T}\otimes_{R_T} \Q \simeq \sfU(\frakgell) \ ,
	\end{align} 
	where the morphism $R_T \to \Q$ is given by $\varepsilon_1, \varepsilon_2 \mapsto 0$. This isomorphism is compatible with negative halves, i.e. $\Psi(\Y_{\qv;\,T}^-\otimes_{R_T} \Q)= \sfU(\fraknell)$.
\end{theorem}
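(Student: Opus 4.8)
The plan is to reduce the statement to an identification of graded Lie algebras, build the comparison map on generators, and then prove it is bijective by a generation argument combined with a graded‑dimension count. First, observe that setting $\varepsilon_1=\varepsilon_2=0$ kills both $\hbar=\varepsilon_1+\varepsilon_2$ and $\varepsilon_1-\varepsilon_2$, so that the right‑hand sides of \eqref{eq:affine-Yangian-5}--\eqref{eq:affine-Yangian-6} vanish and every defining relation of $\Y_{\qv;\,T}$ becomes an identity among Lie brackets. Let $\frakL$ be the $\Z\times\rootlattice$‑graded Lie $\Q$‑algebra presented by the symbols $x_{i,\ell}^{\pm},h_{i,\ell}$ subject to these degenerate relations; there is a tautological surjection $\sfU(\frakL)\twoheadrightarrow\Y_{\qv;\,T}\otimes_{R_T}\Q$. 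I would upgrade this to an isomorphism using the PBW‑type triangular decomposition of the affine Yangian over $R_T$ (\cite{DPSSV-3}; cf.\ \S\ref{sec:limit-affine-Yangian}): it exhibits $\Y_{\qv;\,T}$ as a free $R_T$‑module on ordered monomials in an explicit PBW generating system, these monomials stay a basis after $\otimes_{R_T}\Q$, and they are precisely the PBW monomials for $\sfU(\frakL)$; hence the surjection carries a spanning set to a basis, so it is injective. Thus $\Y_{\qv;\,T}\otimes_{R_T}\Q\simeq\sfU(\frakL)$ and $\frakL$ is genuinely presented by the degenerate relations.

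Next I would check that the elements of $\sfU(\frakgell)$ in the statement satisfy the degenerate relations, i.e.\ define a Lie homomorphism $\psi\colon\frakL\to\frakgell$, and set $\Psi\coloneqq\sfU(\psi)$. Relations \eqref{eq:affine-Yangian-5}--\eqref{eq:affine-Yangian-6} hold automatically, because each generator's image depends on $\ell$ only through the factor $t^\ell$, so both sides equal the corresponding bracket tensored with $t^{r+s+1}$ (times the appropriate power of $s$). Relation \eqref{eq:affine-Yangian-Lie-algebra-2} and the Serre relations \eqref{eq:affine-Yangian-Serre} reduce, for $i,j\in I_\sff$, to the defining relations of $\frakgfin$ tensored with $\Q[t]$, and for the node $0$ to the affine Serre relations of $\frakg$ realized via the highest‑root vectors $X_{\pm\varphi}$. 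Relations \eqref{eq:affine-Yangian-Lie-algebra-3}--\eqref{eq:affine-Yangian-4} are a short computation with the $2$‑cocycle defining $\frakgell$; the only delicate points are the central (derivation‑type) summand in the image of $h_{0,\ell}$ and the normalizations $(X_{\pm\varphi},X_{\mp\varphi})=1$, $H_\varphi=[X_\varphi^+,X_\varphi^-]$, which are exactly what makes $[x_{0,r}^+,x_{0,s}^-]\mapsto h_{0,r+s}$ and $[h_{0,0},x_{j,r}^{\pm}]\mapsto\pm a_{0j}x_{j,r}^{\pm}$ hold on the nose.

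The map $\psi$ is surjective: the images of $x_{i,0}^{\pm},h_{i,0}$ for $i\in I_\sff$ generate $\frakgfin\subset\frakgell$; bracketing with their degree‑$t$ counterparts gives $\frakgfin\otimes t$, hence all of $\frakgfin[t]$ since $\frakgfin$ is simple; bracketing with the images $X_{\mp\varphi}\otimes t^\ell s^{\pm1}$ of $x_{0,\ell}^{\pm}$, and using that each $X_{\mp\varphi}$ generates the adjoint $\frakgfin$‑module under the appropriate nilpotent subalgebra, produces $\frakgfin[s^{\pm1},t]$ modulo the centre; and since $\frakgfin[s^{\pm1},t]$ is perfect, its universal central extension $\frakgell$ is perfect, so the entire centre $K$ lies in $[\frakgell,\frakgell]\subseteq\psi(\frakL)$. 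Hence $\Psi$ is onto.

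It remains to show $\psi$ is injective. For this I would compare the $\Z\times\rootlattice$‑bigraded dimensions of the two sides: the reduction above expresses that of $\sfU(\frakL)=\Y_{\qv;\,T}\otimes_{R_T}\Q$ in terms of the PBW data of $\Y_{\qv;\,T}$, while the explicit description $\frakgell=\frakgfin[s^{\pm1},t]\oplus K$ together with PBW for $\sfU(\frakgell)$ gives that of the target; matching the two amounts to a finite identity relating the Yangian's PBW generators to the root‑ and imaginary‑root multiplicities of the elliptic root system, and a surjection of bigraded vector spaces with equal finite graded dimensions is bijective. (Alternatively one may exhibit $\frakL$ as a central extension of $\frakgfin[s^{\pm1},t]$ and invoke the universal property of $\uce(\frakgfin[s^{\pm1},t])=\frakgell$ together with a known Serre‑type presentation of $\frakgell$ of Moody--Eswara Rao--Yokonuma type.) Finally, compatibility with negative halves: $\Psi$ is $\Z I$‑graded and $\Y_{\qv;\,T}^-$ is generated by the degree‑$(-\epsilon_i)$ elements $x_{i,\ell}^-$, whose images lie in $\fraknfin[t]\oplus s^{-1}\frakgfin[s^{-1},t]\oplus K_-=\fraknell$, so $\Psi(\Y_{\qv;\,T}^-\otimes_{R_T}\Q)\subseteq\sfU(\fraknell)$; equality then follows since $\Psi$, being a graded isomorphism, respects the triangular decompositions on both sides (whose positive, zero and negative parts are cut out by the $\Z I$‑grading). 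The main obstacle is exactly this injectivity step: it is tantamount to the assertion that the degenerate Yangian relations already form a complete presentation of $\frakgell$, so it relies essentially on the sharp PBW theorem for the affine Yangian — or, equivalently, on an independent presentation of the universal central extension of the double‑loop algebra.
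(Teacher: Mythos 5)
This theorem is recalled verbatim from \cite{DPSSV-3} and is not re-proved in the present paper, so there is no internal proof to compare your attempt against. Evaluated on its own terms, the architecture of your argument is sound, and your surjectivity argument --- in particular the clean use of perfectness of $\frakgell$ to absorb the centre once $\frakgfin[s^{\pm1},t]$ is hit modulo centre --- is correct. Your step~1 is slightly muddled, however: the identification $\Y_{\qv;\,T}\otimes_{R_T}\Q\simeq\sfU(\mathfrak{L})$ does not require any PBW theorem, and your appeal to ``the PBW monomials for $\sfU(\mathfrak{L})$'' is circular, since at that point you do not yet know a basis for $\mathfrak{L}$. That identification is in fact automatic: applying the right-exact functor $-\otimes_{R_T}\Q$ to the defining presentation of $\Y_{\qv;\,T}$ shows that the specialization is the free $\Q$-algebra modulo the ideal generated by the specialized relations, and since those relations are all of commutator type, the quotient is precisely $\sfU(\mathfrak{L})$.

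The genuine gap --- which you flag yourself --- is the injectivity of $\psi\colon\mathfrak{L}\to\frakgell$. The dimension-count strategy is viable, but it requires a precise form of the PBW theorem for $\Y_{\qv;\,T}$ over $R_T$, one that explicitly indexes the PBW parameters by the positive elliptic root data with the correct multiplicities; gesturing at ``a finite identity'' does not discharge this, and it is exactly the hard input. Your alternative route via the universal property of $\uce(\frakgfin[s^{\pm1},t])$ also leaves real work: to use it you would first have to show that the kernel of $\mathfrak{L}\to\frakgfin[s^{\pm1},t]$ is central in $\mathfrak{L}$, which is not evident from the degenerate relations and already amounts to a substantial part of what is being claimed. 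As submitted this is a correct plan of attack, not a complete proof.
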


The relation between the nilpotent COHA of the preprojective algebra $\Pi_\qv$ and the affine Yangian is given by the following.
\begin{theorem}[{\cite[Theorem~\ref*{COHA-Yangian-thm:PBW}]{DPSSV-3}}]\label{thm:coha=yangian}
	There is an isomorphism of $\N I$-graded $\Hbullet_A$-algebras 
	\begin{align}\label{eq:Phi}
		\begin{split}
			\Phi\colon\Y_{\qv;\, A}^-&\longrightarrow \cohaqv^{A}\ ,\\
			x^-_{i,\ell} &\longmapsto (z_{i,1})^\ell\cap[\Lambda_{\alpha_i}]
		\end{split}
	\end{align}
	for $i\in I$ and $\ell\in \N$. Here, $z_{i,1}$ is the first Chern class of the tautological bundle on $\Lambda_{\alpha_i}$.
\end{theorem}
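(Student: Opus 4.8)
The plan is to define $\Phi$ on the generators $x^-_{i,\ell}$, check that it extends to an algebra homomorphism by computing in the shuffle-algebra model of the COHA, and then promote it to an isomorphism via a surjectivity (generation) argument together with an injectivity (PBW/character) argument; the latter is the delicate point. Since $\qv$ has no edge-loop at $i$, for a simple root $\alpha_i$ the stack $\Lambda_{\alpha_i}$ consists of a single point with automorphism group $\G_m$, so $\HBMbulletA(\Lambda_{\alpha_i})$ is a free $\Hbullet_A$-module with basis $\{(z_{i,1})^\ell\cap[\Lambda_{\alpha_i}]\mid \ell\geq 0\}$; this prescribes the image of each $x^-_{i,\ell}$. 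To see that the assignment respects the relations defining the negative half, I would work $T$-equivariantly over $R_T$ (specialising along $R_T\to\Hbullet_A$ only at the end) and invert the equivariant parameters: each localised summand $\HBMbulletA(\Lambda_\bfd)$ then has a shuffle-algebra realisation inside a space of symmetric rational functions, the Hall product being given by an explicit shuffle formula whose kernel records the tangent-obstruction complex of $\dstackRep(\Pi_\qv)$ (equivalently, the Euler form of the $2$-Calabi--Yau category of $\Pi_\qv$-representations). A direct manipulation of this formula verifies the quadratic relations \eqref{eq:affine-Yangian-6} and the Serre relations \eqref{eq:affine-Yangian-Serre} for the images of the $x^-_{i,\ell}$; this is the computational core but is by now routine, being the quiver incarnation of the Schiffmann--Vasserot calculation. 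One also uses here that $\Y^-_{\qv;\,A}$ is genuinely presented by the generators $x^-_{i,\ell}$ and those relations alone, which is part of the triangular-decomposition package $\Y^+_{\qv;\,A}\otimes\zeroeY_{\qv;\,A}\otimes\Y^-_{\qv;\,A}\simeq\Y_{\qv;\,A}$.

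For surjectivity I would invoke the generation theorem for the preprojective COHA: $\coha_\qv^A$ is generated as an $\Hbullet_A$-algebra by the classes $(z_{i,1})^\ell\cap[\Lambda_{\alpha_i}]$ over $i\in I$ and $\ell\in\N$ (Schiffmann--Vasserot). As these all lie in the image of $\Phi$, the map $\Phi$ is surjective.

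Injectivity is the step I expect to be the main obstacle; the cleanest route is to compare graded ranks over $\Hbullet_A$. On the Yangian side, the triangular decomposition together with the classical-limit isomorphism $\Psi\colon\Y_{\qv;\,T}\otimes_{R_T}\Q\simeq\sfU(\frakgell)$ --- which restricts to $\Psi(\Y^-)=\sfU(\fraknell)$ on negative halves --- shows that $\Y^-_{\qv;\,A}$ is a free $\Hbullet_A$-module whose graded character is that of $\sfU(\fraknell)$, computed from the elliptic root-space multiplicities by PBW. On the COHA side one needs the corresponding cohomological-integrality/PBW statement for $\coha_\qv^A$: a basis in each dimension vector $\bfd$ indexed by ``BPS'' generators whose count matches $\dim(\fraknell)_\bfd$, obtainable from Davison--Meinhardt-type integrality for the $2$-Calabi--Yau category or from analysing the spherical subalgebra of the shuffle algebra. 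Granted this, both sides have equal graded $\Hbullet_A$-ranks in every degree, so the surjection $\Phi$ is forced to be an isomorphism. An alternative sidestepping an a priori PBW theorem for the COHA is to specialise $\varepsilon_1=\varepsilon_2=0$, note that $\Phi$ then degenerates to the restriction of $\Psi^{-1}$ to negative halves --- hence is injective after specialisation --- and run a standard flatness/filtered-degeneration argument to lift injectivity back to the $A$-equivariant level. Finally, one records the bookkeeping: the dimension-vector ($\N I$-) grading on $\coha_\qv^A$ matches the natural grading on $\Y^-_{\qv;\,A}$, and the further passage to the $\rootlattice$-grading via \eqref{eq:simpleclasses} introduces exactly the twist by $-w_0$ flagged in the preceding remark, requiring no extra work beyond tracking that twist.
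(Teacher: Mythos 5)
The paper does not prove this theorem at all: it is imported verbatim as a citation from the companion paper \cite{DPSSV-3} (where it appears as a PBW-type theorem), and no argument is given in the present text. So there is nothing in the paper to compare your sketch against line by line. That said, your reconstruction is a reasonable account of the standard Schiffmann--Vasserot strategy that underlies the cited result, and the individual steps you list (generators, shuffle verification, surjectivity via the generation theorem, injectivity via graded-rank comparison or classical-limit degeneration) are the right ones to have in mind.

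Two cautions worth flagging. First, the heaviest lifting in your outline is silently outsourced: both the ``routine'' shuffle computation verifying the Yangian relations and the cohomological-integrality/PBW statement for $\coha_\qv^A$ are substantial theorems (the latter is essentially the Bozec--Davison--Schiffmann--Vasserot circle of results on Kac polynomials and BPS Lie algebras), so a complete proof cannot treat them as black boxes without precise citations. Second, your alternative injectivity route --- specialise to $\varepsilon_1=\varepsilon_2=0$ and lift by flatness --- needs care: you must first know that both $\Y^-_{\qv;A}$ and $\coha_\qv^A$ are \emph{free} $\Hbullet_A$-modules with degree-wise finite rank (so that injectivity after specialisation actually implies injectivity), and for the COHA this freeness is itself part of the PBW theorem you are trying to avoid invoking. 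In other words, the two routes you offer for injectivity are not as independent as your phrasing suggests; both ultimately lean on a cohomological-integrality input on the COHA side. If you intend the proposal as a genuine proof rather than a roadmap, that is the step to nail down.
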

When there is no risk of confusion, we will henceforth simply write $\Y_\qv$ for $\Y_{\qv;\ A}$.

\subsubsection{Truncated braid group action}

There is a well-known action of the affine braid group by automorphisms on the affine Yangian, first considered in \cite{GNW18} (cf.\ Formula~(3.15) in \textit{loc.cit.}). It is given by the following formulas:
\begin{align}\label{eq:Ti}
	B_\qv&\longrightarrow \Aut(\Y_\qv) \ , \\
	T_i&\longmapsto \exp\big(\ad\big(x_i^+\big)\big)\circ\exp\big(-\ad\big(x_i^-\big)\big)\circ\exp\big(\ad\big(x_i^+\big)\big)
\end{align}
for $i\in I$. There is a similar (obvious) action of the group $\Gamma$ of diagram automorphisms, and the two actions induce an action of $B_{\sfex}$ on $\Y_\qv$.

Denoting by $\Y_{\qv,\bfd}$ the $\bfd$-weight space of $\Y_{\qv}$, we have $T_i(\Y_{\qv,\bfd})=\Y_{\qv,s_i(\bfd)}$. Obviously, the action of $B_\qv$ does not preserve $\Y^-_\qv$ (rather, it maps $\Y^-_\qv$ isomorphically into a `twisted' negative half of $\Y_\qv$). For any $w\in W_\qv$, we may however define a linear operator $\overline T_w$ as the composition
\begin{align}\label{eq:overlineT}
	\overline T_w\colon
	\begin{tikzcd}[ampersand replacement=\&]
		\Y_\qv^-\arrow{r} \& \Y_\qv\arrow{r}{T_w} \&\Y_\qv\arrow{r}{\pr} \&\Y_\qv^-
	\end{tikzcd}\ ,
\end{align} 
where the last map is the projection \eqref{eq:projection}. We call $\overline T_w$ the \textit{truncated} braid group operator associated to $w$.

Let $B^+_\qv\subset B_\qv$ be the submonoid generated by the elements $T_w$ with $w\in W_\qv$.
\begin{proposition}[{\cite[Propositions~\ref*{COHA-Yangian-prop:truncated-action} and \ref*{COHA-Yangian-prop:truncated-action-affine}]{DPSSV-3}}]
	The assignment $T_w \mapsto \overline T_w$ for $w \in W_\qv$ gives rise to a representation of $B^+_\qv$ on $\Y_\qv^-$, i.e.\ to a morphism of groupoids $B^+_\qv \to \End(\Y_\qv^-)$. It extends to a morphism $B^+_{\sfex} \to \End(\Y_\qv^-)$ where  $B^+_{\sfex} \coloneq B^+_\qv \ltimes \Gamma$.
\end{proposition}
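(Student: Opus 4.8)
\emph{Proof proposal.} The plan is to reduce the claim to a single algebraic identity about the projection $\pr$ and then to establish that identity by bootstrapping from the rank-one case attached to a simple reflection. First I would use the standard presentation of the braid monoid: $B^+_\qv$ is generated by $\{T_w\mid w\in W_\qv\}$ with defining relations $T_uT_v=T_{uv}$ whenever $\ell(uv)=\ell(u)+\ell(v)$, so since $\overline T_e=\id$ it suffices to verify $\overline T_u\circ\overline T_v=\overline T_{uv}$ under that length hypothesis. Here I would invoke the already-recalled fact that $w\mapsto T_w$ is an honest action of the (extended) braid group on $\Y_\qv$ by automorphisms, so that $T_u\circ T_v=T_{uv}$ holds on all of $\Y_\qv$; consequently $\overline T_{uv}=\pr\circ T_u\circ T_v|_{\Y^-_\qv}$, and the desired equality becomes the assertion that the intermediate projection is harmless, namely $\pr\big(T_u(\eta-\pr\,\eta)\big)=0$ for every $\eta\in T_v(\Y^-_\qv)$, whenever $\ell(uv)=\ell(u)+\ell(v)$.

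Next I would peel simple reflections off the left: if $u=s_iu'$ with $\ell(u)=1+\ell(u')$, then $\ell(uv)=\ell(u)+\ell(v)$ forces both $\ell(u'v)=\ell(u')+\ell(v)$ and $\ell(s_i\cdot u'v)=1+\ell(u'v)$, so a strong induction on $\ell(u)$ reduces the whole statement to the rank-one \textbf{Key Lemma}: for $i\in I$, $v\in W_\qv$ with $\ell(s_iv)=1+\ell(v)$ (equivalently, $s_i$ is not an inversion of $v^{-1}$), and $\xi\in\Y^-_\qv$, one has $\pr\big(T_i(T_v\xi)\big)=\pr\big(T_i(\pr(T_v\xi))\big)$. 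I would prove this by decomposing $\Y_\qv$ along the copy of a deformation of $\mathfrak{sl}_2$ spanned by the $x^+_{i,\bullet},x^-_{i,\bullet},h_{i,\bullet}$, choosing a PBW basis adapted to this $\alpha_i$-slice, and observing that $T_i=\exp(\ad x_i^+)\exp(-\ad x_i^-)\exp(\ad x_i^+)$ acts on the slice by the corresponding Weyl reflection. The hypothesis $\ell(s_iv)>\ell(v)$ is precisely what places $T_v(\Y^-_\qv)$ in the subspace on which this reflection is $\pr$-compatible; writing $\eta=T_v\xi$ and expressing $\eta-\pr\,\eta$ as a combination of PBW monomials each carrying a factor $x^+_{i,\bullet}$ or $h_{i,\bullet}$, one then checks — using the deformed relations \eqref{eq:affine-Yangian-5}--\eqref{eq:affine-Yangian-Serre} — that $\pr\circ T_i$ annihilates every such monomial that can occur.

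Finally, the extension to $B^+_\sfex=B^+_\qv\ltimes\Gamma$ is formal: a diagram automorphism $\gamma\in\Gamma$ acts on $\Y_\qv$ by $x^\pm_{i,\ell}\mapsto x^\pm_{\gamma(i),\ell}$ and $h_{i,\ell}\mapsto h_{\gamma(i),\ell}$, hence preserves $\Y^-_\qv$, commutes with $\pr$, and satisfies $\gamma\circ T_w\circ\gamma^{-1}=T_{\gamma(w)}$ on $\Y_\qv$; setting $\overline{\gamma T_w}\coloneqq\gamma|_{\Y^-_\qv}\circ\overline T_w$ then respects the only additional relations $\gamma T_w=T_{\gamma(w)}\gamma$ of the semidirect product, so the monoid morphism $B^+_\qv\to\End(\Y^-_\qv)$ extends over $B^+_\sfex$.

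The hard part will be the Key Lemma. One must track exactly how the exponentiated adjoint actions of $x_i^\pm$ redistribute the positive, Cartan, and negative PBW factors of an $s_i$-admissible element, and prove that every contribution which leaks into the positive part is divisible by $x_i^+$ or some $h_{i,\bullet}$ and is therefore killed by $\pr$. This is the step where the precise deformed relations of $\Y_\qv$ — not merely its associated graded Lie algebra — genuinely enter, and where the choice of a PBW basis adapted to the $\alpha_i$-slice is essential; a convenient reformulation might be to isolate the subalgebra ${}^i\Y^-_\qv\coloneqq\Y^-_\qv\cap T_i(\Y^-_\qv)$ and a direct-sum decomposition of $\Y^-_\qv$ into it and a ``string'' complement, in the spirit of the corresponding Lusztig-type statements for quantized enveloping algebras.
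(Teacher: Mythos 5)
This Proposition is not proved in the paper at all; it is imported verbatim from the companion work \cite{DPSSV-3}, so there is no ``paper's own proof'' to compare against. Evaluated on its own merits, the outer frame of your proposal is correct: since $B^+_\qv$ is presented by $T_uT_v=T_{uv}$ whenever $\ell(uv)=\ell(u)+\ell(v)$, it suffices to check $\overline T_u\circ\overline T_v=\overline T_{uv}$ under that length hypothesis; peeling $s_i$ off the left of $u$ and inducting on $\ell(u)$ correctly reduces everything to the rank-one identity $\pr\circ T_i\circ T_v=\pr\circ T_i\circ\pr\circ T_v$ on $\Y^-_\qv$ when $\ell(s_iv)=1+\ell(v)$; and the extension to $B^+_\sfex=B^+_\qv\ltimes\Gamma$ is indeed formal, since $\Gamma$ preserves $\Y^-_\qv$, commutes with $\pr$, and conjugates the $T_w$'s appropriately.

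The Key Lemma sketch, however, rests on a claim that is backwards. You assert that $\eta'\coloneqq T_v\xi-\pr(T_v\xi)$ is ``a combination of PBW monomials each carrying a factor $x^+_{i,\bullet}$ or $h_{i,\bullet}$'' and that $\pr\circ T_i$ annihilates such monomials. Neither half is right. The hypothesis $\ell(s_iv)>\ell(v)$, i.e.\ $\alpha_i\notin v(\Delta^-)$, forces the positive leakage of $T_v(\Y^-_\qv)$ to \emph{avoid} colour $i$: concretely for $\qvfin=\sfA_2$, $v=s_1$, $i=2$, $\xi=x^-_{1,0}x^-_{2,0}$, at $\hbar=0$ one computes $T_1(\xi)=-c\,x^+_{1,0}\,f_{\alpha_1+\alpha_2}$ (with $f_{\alpha_1+\alpha_2}$ the negative root vector), so $\eta'=T_1(\xi)$ carries $x^+_{1,0}$ and neither $x^+_{2,\bullet}$ nor $h_{2,\bullet}$. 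Moreover, had $\eta'$ carried an $x^+_{i,\bullet}$ factor, $T_i$ would send it into $\Y^-$ --- exactly what defeats $\pr$, not what helps it. The correct statement is the opposite: because the leakage avoids $\alpha_i$ and because $T_i$ acts as $s_i$ on weights, preserves $\Y^0$, and carries $\Delta^+\smallsetminus\{\alpha_i\}$ into $\Delta^+$, the image $T_i(\eta')$ is built from factors that remain in $\Y^{+,>0}$ or $\Y^{0,>0}$, hence dies under $\pr$. Establishing that this colour-avoidance is visible in a PBW basis of the deformed algebra, and that it survives the reordering after applying $T_i$, is the actual technical content; you correctly flag it as the hard part, but the specific claim the sketch rests on must be reversed before the check has any chance of going through.
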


\subsubsection{Quotients of affine Yangians}

In this section, we describe the quotients of $\Y^-_\qv$ which correspond, under the isomorphism $\Phi$, to the quotients $\coha_{J, (k)}^A$, for $k \leq 0$. Recall that we have fixed $\Llambda \in \coweightlatticefin$ and that we are considering the Bridgeland stability condition
\begin{align}
	 Z_\Llambda \coloneqq \Llambda + i\cdot w_0\Lrho \; \in\; \calC_J^0+i\cdot \calC^+\ .
\end{align}
We keep the notations from \S\ref{sec:numerical-tilts}; we set $\nu_{k}\coloneqq t_{2k}$. Through the identification $\Z I \simeq \sfK_0(\nilpPi_\qv)\simeq \rootlattice$ we may view the phase function as a map $\varphi\colon \Z I \to \R$. Put, for $k \leq 0$,
\begin{align}
	\J_{J,(k)}\coloneqq \sum_{ \varphi(\bfd)\leq\nu_{-k}-1} \Y^-_{\qv,\bfd} \Y^-_\qv + \sum_{ \varphi(\bfd)>1/2} \Y^-_{\qv} \Y^-_{\qv,\bfd} \qquad \text{and} \qquad \Y_{J,(k)}\coloneqq \Y^-_\qv/\J_{J,(k)}\ .
\end{align}

Observe that for any $k_1<k_2\leq 0$ there exists a canonical (surjective) quotient map
\begin{align}
	\pi_{k_2,k_1}\colon \Y_{J,(k_2)} \longrightarrow \Y_{J,(k_1)}\ . 
\end{align}

We next summarize the main properties of the quotients $\Y_{J,(k)}$, whose proofs are verbatim the same as in \cite[\S\ref*{COHA-Yangian-subsec:quotients}]{DPSSV-3}.
\begin{theorem}\label{prop:quotients_affine_yangian}
The following holds:
	\begin{enumerate}\itemsep0.2cm
		\item For any $k_1<k_2\leq 0$ we have a commutative diagram
			\begin{align}
			\begin{tikzcd}[ampersand replacement=\&, column sep=large]
				\Y^-_\qv \ar{r}{\overline{T}_{2(k_2-k_1)\Llambda}} \ar{d}{} \& \Y^-_\qv \ar{d}{}\\
				\Y_{J, (k_2)}  \ar{r}{} \&
				\Y_{J, (k_1)} 
			\end{tikzcd}\ ,
		\end{align}
		where the lower horizontal map is an isomorphism, which we denote $T_{2(k_2-k_1)\Llambda}$.
		
		\item \label{item:quotients_affine_yangian_1}
		For any $k \leq 0$, the isomorphism $\Phi$ of Theorem~\ref{thm:coha=yangian} induces an isomorphism of graded vector spaces
		\begin{align}
			\Phi_{J,(k)}\colon \Y^-_{J,(k)} \longrightarrow \coha^A_{J,(k)}\ .
		\end{align}
		
		\item \label{item:quotients_affine_yangian_2}
		For any $k_1<k_2\leq 0$ there is a commutative diagram
		\begin{align}
			\begin{tikzcd}[ampersand replacement=\&, column sep=large]
			\Y_{J,(k_2)} \ar{r}{\pi_{k_2,k_1}} \ar{d}{\Phi_{J,(k_2)}} \& \Y_{J, (k_1)} \ar{d}{\Phi_{J,(k_1)}}\\
			\coha_{J, (k_2)}^A  \ar{r}{\rho_{k_2,k_1}} \&
			\coha_{J, (k_1)}^A 
			\end{tikzcd}
		\end{align}
		
		\item \label{item:quotients_affine_yangian_3}
		For any $k_1<k_2\leq 0$ there is a commutative diagram
		\begin{align}
			\begin{tikzcd}[ampersand replacement=\&, column sep=large]
			\Y_{J,(k_2)} \ar{r}{T_{2(k_2-k_1)\Llambda}} \ar{d}{\Phi_{J,(k_2)}} \& \Y_{J, (k_1)} \ar{d}{\Phi_{J,(k_1)}}\\
			\coha_{J, (k_2)}^A  \ar{r}{\R L_{2(k_2-k_1)\Llambda}} \&
			\coha_{J, (k_1)}^A 
		\end{tikzcd}
		\end{align}
	\end{enumerate}
\end{theorem}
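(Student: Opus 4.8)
The plan is to transport, essentially verbatim, the arguments of \cite[\S\ref*{COHA-Yangian-subsec:quotients}]{DPSSV-3}, where the same four assertions are proved in the case $J=I_\sff$ (so that $\calX_J=\rsv$ and $\calP_\Llambda(-1/2,1/2]=\catCohC(\rsv)$). The only ingredient used there that is peculiar to that case is a slicing statement relating the Bridgeland stability condition $(Z_\Llambda,\calP_\Llambda)$ to the $\mu_\Llambda$-Harder--Narasimhan filtration on $\nilpPi$; for general $J$ this is supplied by Theorem~\ref{thm:slicing}. Accordingly, the first thing I would do is fix the dictionary: by definition $\coha_{J,(k)}^A=\HBMbulletA(\kdLambdaqv)$, and Theorem~\ref{thm:slicing} together with Lemma~\ref{lem:bridgeland-to-slope} identifies, for each $k\le 0$, the stack $\kdLambdaqv$ with the locally closed substack of $\dLambda_\qv$ parametrising those nilpotent representations all of whose $\mu_\Llambda$-Harder--Narasimhan factors have slope in a prescribed interval $(-c_k,0]$ --- that is, with a Harder--Narasimhan stratum of precisely the kind considered in \cite{DPSSV-3}. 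Once this is in force, the remaining arguments are formal.

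The heart of the matter is part~(\ref{item:quotients_affine_yangian_1}), the descent of $\Phi$ to the quotients. Here the point is that the isomorphism $\Phi$ of Theorem~\ref{thm:coha=yangian} carries each weight space $\Y^-_{\qv,\bfd}$ into $\HBMbulletA(\dLambda_\bfd)$, so a product $\Y^-_{\qv,\bfd}\cdot\Y^-_\qv$ (resp.\ $\Y^-_\qv\cdot\Y^-_{\qv,\bfd}$) lands in the part of $\coha_\qv^A$ set-theoretically supported on the closed locus of representations admitting a subobject (resp.\ quotient) of dimension vector $\bfd$. Using the support/localization long exact sequence in $A$-equivariant Borel--Moore homology, such a class restricts to zero on the stratum $\kdLambdaqv$ exactly when that closed locus is disjoint from it, which --- reading the slope interval above through the identification $\Z I\simeq\rootlattice$ and the phase function $\varphi$ --- happens precisely when $\varphi(\bfd)\le\nu_{-k}-1$ (resp.\ $\varphi(\bfd)>1/2$). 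I would then check, as in \emph{loc.\ cit.}, that these classes span the whole kernel of the restriction map $\coha_\qv^A\to\coha_{J,(k)}^A$, so that $\Phi$ descends to an isomorphism of graded vector spaces $\Phi_{J,(k)}\colon\Y^-_{J,(k)}\xrightarrow{\ \sim\ }\coha_{J,(k)}^A$. I expect this matching --- of the combinatorially defined two-sided ideal $\J_{J,(k)}$ with a geometrically defined kernel in Borel--Moore homology --- to be the main obstacle, since it requires controlling precisely how $\Phi$ interacts with the stratification of $\dLambda_\qv$; but it is exactly the computation performed in \cite[\S\ref*{COHA-Yangian-subsec:quotients}]{DPSSV-3}, which Theorem~\ref{thm:slicing} now lets us reuse unchanged.

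The other three items are then bookkeeping. For part~(1), I would note that the truncated operator $\overline T_{2(k_2-k_1)\Llambda}=\pr\circ T_{2(k_2-k_1)\Llambda}$ of \eqref{eq:overlineT} shifts the defining phase ranges by the shear $\ell_{2(k_2-k_1)\Llambda}$ of \eqref{eq:ell_Llambda}, and check directly that it therefore sends $\J_{J,(k_2)}$ into $\J_{J,(k_1)}$, inducing a map $T_{2(k_2-k_1)\Llambda}$ on quotients; this map is invertible because the opposite braid induces its inverse, and commutativity of the square is immediate from the definition of $\overline T$. Part~(\ref{item:quotients_affine_yangian_2}) then follows at once from part~(\ref{item:quotients_affine_yangian_1}) and the observation that $\pi_{k_2,k_1}$ and $\rho_{k_2,k_1}$ are both induced by the open inclusion $\tensor*[^{(k_1)}]{\dLambda}{_\qv}\hookrightarrow\tensor*[^{(k_2)}]{\dLambda}{_\qv}$ of the smaller stratum. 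Finally part~(\ref{item:quotients_affine_yangian_3}) combines part~(1) with the compatibility of $\Phi$ with the $B_\sfex$-actions on $\Y^-_\qv$ and on $\coha_\qv^A$ established in \cite{DPSSV-3}, and with the identification of $\R L_{2(k_2-k_1)\Llambda}$ with the twist by the line bundle $\calL_{2(k_2-k_1)\Llambda}$ from Proposition~\ref{prop:braid-tilting-intertwiner}. As every map in sight is $\Z I$-graded, each square need only be verified on underlying graded vector spaces, which is what the above accomplishes.
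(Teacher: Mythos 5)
Your proposal is correct and matches the paper's approach: the paper itself simply cites \cite[\S\ref*{COHA-Yangian-subsec:quotients}]{DPSSV-3} and notes the proofs carry over verbatim, and you have correctly identified Theorem~\ref{thm:slicing} as the only case-specific input needed to make that transport work for general $J$. The additional detail you supply (the support/localization argument for part~(\ref{item:quotients_affine_yangian_1}), the shear computation for part~(1), etc.) is a faithful unpacking of the cited argument rather than a different route.
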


Observe that the projection $\pi_{k_2,k_1}$ preserves the weight, but is not an isomorphism, while the braid operator $T_{2(k_2-k_1)\Llambda}$ is an isomorphism but acts on the weight spaces as the translation $t_{2(k_2-k_1)\Llambda}$.
  
\subsection{Limit}\label{subsec:def-limit-affine-Yangian}

We are now in position to describe the limit COHA $\coha^A_{J}$ as a projective limit of quotients of $\Y^-_\qv$. For an arbitrary $k \in \N$, we define
\begin{align}
	\Y_{J, (k)}\coloneqq T_{\textrm{-}2k\Llambda}(\Y^-_\qv)/T_{\textrm{-}2k\Llambda}(\J_{J,(0)})\ .
\end{align}
We have, by \textit{transport de structure}, an isomorphism $T_{\textrm{-}2\Llambda}\colon \Y_{J, (k)} \to \Y_{J, (k-1)}$ which is $\N$-graded but acts as $t_{\textrm{-}2\Llambda}$ on the weight as well as a restriction map $\pi_{k,k-1}\colon \Y_{J, (k)} \to \Y_{J, (k-1)}$ which is a map of $\N \times \rootlattice$-graded vector spaces. 

Define
\begin{align}
	\Y_J^+ \coloneqq \lim_k \Y_{J, (k)}\ ,
\end{align}
the limit being equipped with the quasi-compact topology again. The following result is now proved in the same way as \cite[Theorem~\ref*{COHA-Yangian-thm:coha-surface-as-limit2}]{DPSSV-3}.
\begin{theorem}\label{thm:coha-surface-as-limit2}
	The following holds:
	\begin{enumerate}\itemsep0.2cm
		\item \label{item:coha-surface-as-limit2.1} There is a canonical $(\N \times \rootlattice)$-graded algebra structure on $\Y_J^+$ induced by the multiplication on $\Y_\qv$.

		\item \label{item:coha-surface-as-limit2.2} There is a canonical $(\N \times \rootlattice)$-graded algebra isomorphism
		\begin{align}
			\begin{tikzcd}[ampersand replacement=\&]
				\Phi_{\Llambda}\colon \coha_J^A\ar{r}{\sim} \& \Y_J^+
			\end{tikzcd}\ ,
		\end{align}
		induced by the maps $\Phi_{(k)}$.
	\end{enumerate}
\end{theorem}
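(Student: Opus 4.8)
The proof runs parallel to that of \cite[Theorem~\ref*{COHA-Yangian-thm:coha-surface-as-limit2}]{DPSSV-3}; the one new geometric input, Theorem~\ref{thm:slicing}, has already been fed into the limiting-COHA machinery in the proof of Theorem~\ref{thm:coha-surface-as-limit1}, so what remains is largely organizational.

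First I would observe that the graded-vector-space isomorphisms $\Phi_{J,(k)}$ of Theorem~\ref{prop:quotients_affine_yangian}--(\ref{item:quotients_affine_yangian_1}) assemble into an isomorphism of towers. Indeed, item~(\ref{item:quotients_affine_yangian_2}) provides commuting squares with the restriction maps $\pi_{k,k-1}$ and $\rho_{k,k-1}$, while item~(\ref{item:quotients_affine_yangian_3}) provides commuting squares with the braid-operator isomorphisms $T_{2(k_2-k_1)\Llambda}$ and $\R L_{2(k_2-k_1)\Llambda}$. The latter is what reconciles the indexing over $k\leq 0$ used in Theorem~\ref{prop:quotients_affine_yangian} with the indexing over $k\in\N$ used to define $\Y_J^+=\lim_k\Y_{J,(k)}$, and it simultaneously matches the gradings: the weight-shifts $t_{\textrm{-}2\Llambda}$ introduced by the braid operators are exactly absorbed in the passage from the $\N$-grading at each finite level to the $(\N\times\rootlattice)$-grading on the limit, while on the COHA side these weights are the dimension vectors read in $\rootlattice$ via \eqref{eq:simpleclasses}. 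As all transition maps are surjective there is no $\lim^1$ contribution, and passing to the inverse limit --- endowed on both sides with the quasi-compact topology --- yields a canonical isomorphism $\Phi_\Llambda\colon\coha^A_J\xrightarrow{\sim}\Y_J^+$ of topological $(\N\times\rootlattice)$-graded vector spaces. This disposes of the vector-space content of part~(\ref{item:coha-surface-as-limit2.2}).

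For part~(\ref{item:coha-surface-as-limit2.1}) and the algebra statement of~(\ref{item:coha-surface-as-limit2.2}) I would transport the COHA product. Under the isomorphism $\Phi$ of Theorem~\ref{thm:coha=yangian} the multiplication on $\coha^A_\qv=\HBMbulletA(\dLambda_\qv)$ becomes the multiplication of $\Y^-_\qv\subseteq\Y_\qv$, and the natural level-wise surjections $\coha^A_\qv\to\coha^A_{J,(k)}$ become, through the commuting squares of Theorem~\ref{prop:quotients_affine_yangian}, the composites $\Y^-_\qv\xrightarrow{\overline T_{\textrm{-}2k\Llambda}}\Y^-_\qv\twoheadrightarrow\Y_{J,(k)}$. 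The limiting-COHA formalism of \cite[\S\ref*{COHA-Yangian-sec:limiting-COHA}]{DPSSV-3}, whose hypotheses hold by Theorem~\ref{thm:slicing} exactly as in the proof of Theorem~\ref{thm:coha-surface-as-limit1}, asserts that the multiplication of $\coha^A_\qv$ descends to a (necessarily unique) continuous multiplication on the limit $\coha^A_J$. Transporting this assertion through $\Phi$, $\Phi_\Llambda$ and the tower isomorphism of the preceding paragraph produces on $\Y_J^+$ the claimed canonical $(\N\times\rootlattice)$-graded algebra structure, visibly induced by the multiplication on $\Y_\qv$; and $\Phi_\Llambda$ is then an algebra isomorphism by construction, both products being pulled back from the single compatible system of data on $(\coha^A_\qv,\Y^-_\qv)$. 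Canonicity --- independence of $\Llambda$, of the chosen $\varpi_J$-ample bundle, and of the sequence $(t_n)$ --- follows as in the remark after Theorem~\ref{thm:coha-surface-as-limit1}: all such choices yield the same $t$-structure $\tau_{1/2}$, hence the same intrinsic ($2$-Segal) COHA on $\coha^A_J$.

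The main obstacle is the bookkeeping just alluded to: one must keep the two indexing conventions for $\Y_{J,(k)}$ rigorously synchronised through the braid operators, and check that the weight-shift accounting recovers the honest $\rootlattice$-grading on the limit rather than a translated copy of it. The genuinely non-formal ingredient --- properness of the relevant Quot schemes, equivalently applicability of the limiting-COHA formalism --- is supplied wholesale by Theorem~\ref{thm:slicing}, so beyond this the proof is a diagram chase through Theorems~\ref{thm:coha=yangian} and~\ref{prop:quotients_affine_yangian}.
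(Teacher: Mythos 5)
Your proposal is correct and follows essentially the same approach the paper intends: the paper's own text for this theorem consists solely of the remark that it ``is now proved in the same way as'' the corresponding theorem in \cite{DPSSV-3}, and your reconstruction — assembling the level-wise isomorphisms of Theorem~\ref{prop:quotients_affine_yangian} into an isomorphism of towers via its parts (\ref{item:quotients_affine_yangian_2}) and (\ref{item:quotients_affine_yangian_3}), then transporting the limiting-COHA product whose existence is guaranteed by Theorem~\ref{thm:coha-surface-as-limit1} (which in turn invokes Theorem~\ref{thm:slicing}), and handling the reindexing and weight-shifts introduced by the braid operators — is exactly the argument being invoked.
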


\section{Classical limit of $\coha_J$}\label{sec:classical-limit}

In this final section, we use Theorem~\ref{thm:coha-surface-as-limit2} to compute the classical limit of $\coha^A_J$, and identify it with a twisted positive half of the elliptic Lie algebra $\frakgell$.

As the forgetful map $\HBMbulletA(\dLambda_\qv) \otimes_{\Hbullet_A} \Q \simeq \HBMbullet(\dLambda_\qv)$ is functorial, it induces an isomorphism of algebras $\HBMbulletA(\kdLambdaqv) \otimes_{\Hbullet_A} \Q \simeq \HBMbullet(\kdLambdaqv)$ for any $k$, hence by Theorem~\ref{thm:coha-surface-as-limit1} we get
\begin{align}
	\coha_J^A\otimes_{\Hbullet_A} \Q \simeq \coha_J\ ,
\end{align}
where the right-hand-side is now the non-equivariant cohomological Hall algebra $\coha_J$. In view of \cite[Propositions~\ref*{COHA-Yangian-prop:classical-limit} and \ref*{COHA-Yangian-prop:iso-s-uce-1}]{DPSSV-3}, this gives, for $J=\emptyset$, an isomorphism of algebras
\begin{align}\label{eq:cohaquiver-classical-limit}
	\cohaqv \simeq \sfU(\fraknell)\ ,
\end{align}
where $\fraknell$ is given by Formula~\eqref{eq:negative-half}. Since Theorem~\ref{thm:coha-surface-as-limit1} works regardless of torus actions, the non-equivariant versions of Theorems~\ref{thm:coha-surface-as-limit1} and \ref{thm:coha-surface-as-limit2} yield an isomorphism of topological algebras
\begin{align}\label{eq:cohasurface-classical-limit}
	\coha_J \simeq \lim_k \Y_{J, (k)}\otimes_{\Hbullet_A} \Q\ .
\end{align}

We now use the isomorphisms~\eqref{eq:cohaquiver-classical-limit} and \eqref{eq:cohasurface-classical-limit} to obtain an explicit description of $\coha_J$. From \cite[\S\ref*{COHA-Yangian-sec:quotients-Yangians}]{DPSSV-3}, we have
\begin{align}
	\Y_{J, (0)}\otimes_{\sfH^\bullet_A} \Q \simeq \sfU((\fraknell)_{J,(0)})\quad \text{where} \quad (\fraknell)_{J,(0)}\coloneqq\bigoplus_{\beta \in \Delta_{J, (0)}} (\frakn)_{\beta}[t] \oplus K_-\ ,
\end{align}
with
\begin{align}
	\Delta_{J, (0)}\coloneqq \Big\{\alpha +n\delta \; \Big\vert\; (\Llambda, \alpha)\geq 0;\ (\alpha\in \Delta^-_\sff,\ n=0) \;\text{or}\; (\alpha \in \Delta_\sff \cup\{0\},\ n<0) \Big\}\ .
\end{align}
Now, set $\Jcomp\coloneqq I_\sff\smallsetminus J$ and
\begin{align}
	\Delta_{\Jcomp}\coloneqq \Delta_\sff \cap \bigoplus_{i\in \Jcomp} \Z \alpha_i\quad \text{and} \quad \Delta_{\Jcomp}^\pm\coloneqq \Delta_\sff^\pm \cap \bigoplus_{i\in \Jcomp} \Z \alpha_i\ .
\end{align}
We have $(\Llambda,\alpha) \geq 0$ if and only $\alpha \in \Delta_{\Jcomp}$ or $\alpha \in \Delta_\sff^+ \smallsetminus \Delta^+_{\Jcomp}$. Thus we obtain that
\begin{align}
	\Delta_{J, (0)} = \Big\{\alpha +n\delta \; \Big\vert\; (\alpha\in \Delta^-_{\Jcomp},\  n=0)\; \text{or}\; (\alpha \in \Delta_{\Jcomp}\cup (\Delta^+_{\sff} \smallsetminus \Delta^+_{\Jcomp}) \cup \{0\},\  n<0) \Big\} \ .
\end{align}

The subquotient $\Y_{J, (k)}\otimes_{\sfH^\bullet_A} \Q$ of $\sfU(\frakgell)$ is obtained from $\Y_{J, (0)}\otimes_{\sfH^\bullet_A} \C$ by applying the automorphism $T_{\textrm{-}2k\Llambda}$. It follows that
\begin{align}
	\Y_{J, (k)}\otimes_{\sfH^\bullet_A} \Q \simeq \sfU((\fraknell)_{J,(k)}) \quad \text{where} \quad (\fraknell)_{J,(k)}\coloneqq \bigoplus_{\beta \in \Delta_{J, (k)}} (\frakn)_{\beta}[t] \oplus K_-\ ,
\end{align}
with
\begin{multline}
	\Delta_{J, (k)}\coloneqq \Big\{\alpha +n\delta \; \Big\vert\; (\alpha\in \Delta^-_{\Jcomp},\ n=0) \;\text{or}\;(\alpha\in \Delta^+_{\Jcomp}\cup \{0\},\ n<0)\;\text{or}\;(\alpha\in \Delta^+_\sff \smallsetminus \Delta^+_{\Jcomp},\  n<2k(\Llambda, \beta))\Big\} \ .
\end{multline}

Define
\begin{align}
	\frakr_{(k)}\coloneqq\bigoplus_{\beta \in \Delta_{J, (k)} \smallsetminus \Delta_{J, (k-1)}} (\frakn)_{\beta}[t]
\end{align}
so that for $k \geq 1$ we have $(\fraknell)_{J,(k)}=(\fraknell)_{J,(k-1)} \oplus \frakr_{(k)}$. The PBW isomorphism
\begin{align}
	\sfU((\fraknell)_{J,(k)}) \simeq  \sfU(\frakr_{(k)})\otimes \sfU((\fraknell)_{J,(k-1)})
\end{align}
gives rise to a projection morphism $\sfU((\fraknell)_{J,(k)}) \to \sfU(\frakn_{J,(k-1)})$, and we have
\begin{align}
	\coha_J \simeq \Y_J^+\otimes_{\sfH^\bullet_A} \Q\simeq \lim_k \sfU((\fraknell)_{J,(k)})\ .
\end{align}

Note that, as subsets of $\Delta$, we have $\Delta_{J,(0)} \subset \Delta_{J,(1)} \subset \cdots$ and, setting $\Delta_J\coloneqq\bigcup_k \Delta_{J,(k)}$ we get
\begin{multline}
	\Big\{\alpha +n\delta \; \Big\vert\; (\alpha\in \Delta^-_{\Jcomp},\ n=0) \;\text{or}\;(\alpha\in \Delta^+_{\Jcomp}\cup \{0\},\ n<0)\;\text{or}\;(\alpha\in \Delta^+_\sff \smallsetminus \Delta^+_{\Jcomp},\  n\in \Z, )\Big\} \ .
\end{multline}

\begin{remark}
	Note that $\Delta_J$ corresponds exactly to the set determined by Jacobsen and Kac in \cite[Formula(1.5)]{Jakobsen-Kac-Borel-II} for $\rsv$, in their notation, equals, $J$. As proved in \cite{Jakobsen-Kac-Borel} (see \cite[Proposition~1.7]{Jakobsen-Kac-Borel-II}), any set of positive roots of $\Delta$ is $W\times \{\pm 1\}$-conjugate to one of the sets $\Delta_J$.
\end{remark}

In terms of the isomorphism between $\frakgell$ and the universal central extension of $\frakgfin[s^{\pm 1}, t]$, we have
\begin{align}\label{eq:Ln'}
	\fraknellJ^+\coloneqq& \bigoplus_{\beta \in \Delta_{J}} (\frakn)_\beta[t] \oplus K_-\\
	=& \bigoplus_{\alpha \in \Delta^+_\sff \smallsetminus \Delta^+_{\Jcomp}}\hspace{-5pt}\frakg_{\alpha}[s^{\pm 1},t] \ \oplus\   \frakn_{\Jcomp}[t] \ +\  s^{-1}\frakh[s^{-1},t]\  \oplus \ K_-\ ,
\end{align}
where $\mathfrak{l}_{\Jcomp}$ is the Levi subalgebra of $\frakgfin$ corresponding to $\Jcomp$ and $\frakn_{\Jcomp}\coloneqq\frakn_{\mathfrak{l}_{\Jcomp}}^- \oplus s^{-1} \mathfrak{l}_{\Jcomp}[s^{-1}]$ is the standard negative half of the affinization of $\mathfrak{l}_{\Jcomp}$.
Note that $ \frakn_{\Jcomp} \cap s^{-1}\frakh[s^{-1}] = s^{-1}\frakh_{\Jcomp}[s^{-1}]$, hence the sign $+$ instead of $\oplus$ in the above formula.

Let $\widehat{\sfU}(\fraknellJ^+)$ be the completion, in the sense of \cite[Lemma~\ref*{COHA-Yangian-lem:completion-graded-Lie-algebra}]{DPSSV-3}, of $\sfU(\fraknellJ^+)$, with respect to the slope function $\mu_{\Llambda}$. Recall that $\fraknellJ^+$ is introduced in Formula~\eqref{eq:Ln'}.
\begin{theorem}\label{thm:coha-surface-as-limit3}
	There is a canonical isomorphism of complete topological algebras
	\begin{align}
		\begin{tikzcd}[ampersand replacement=\&]
			\Phi_J\colon\coha_J \ar{r}{\sim}\& \widehat{\sfU}(\fraknellJ^+)
		\end{tikzcd}\ .
	\end{align}
\end{theorem}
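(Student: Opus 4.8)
The plan is to assemble the isomorphism $\Phi_J$ by passing to the limit in the commutative diagrams already established, and then to match the abstract limit object with the completed enveloping algebra $\widehat{\sfU}(\fraknellJ^+)$ via the explicit combinatorics of the root sets $\Delta_{J,(k)}$. First I would invoke the non-equivariant specialisation of Theorem~\ref{thm:coha-surface-as-limit2}, which (via the isomorphisms \eqref{eq:cohaquiver-classical-limit} and \eqref{eq:cohasurface-classical-limit}) already gives
\begin{align}
	\coha_J \simeq \Y_J^+\otimes_{\sfH^\bullet_A}\Q \simeq \lim_k\, \sfU\big((\fraknell)_{J,(k)}\big)\ ,
\end{align}
where each $(\fraknell)_{J,(k)}=\bigoplus_{\beta\in\Delta_{J,(k)}}(\frakn)_\beta[t]\oplus K_-$ and the transition maps are the PBW projections $\sfU((\fraknell)_{J,(k)})\to\sfU((\fraknell)_{J,(k-1)})$ dual to the filtration $\Delta_{J,(0)}\subset\Delta_{J,(1)}\subset\cdots$. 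So the only remaining content is to identify this projective limit of enveloping algebras with $\widehat{\sfU}(\fraknellJ^+)$.

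The key step is therefore the following compatibility of the completion with the limit: since $\Delta_J=\bigcup_k\Delta_{J,(k)}$ and each inclusion $(\fraknell)_{J,(k-1)}\hookrightarrow(\fraknell)_{J,(k)}$ splits off a graded summand $\frakr_{(k)}=\bigoplus_{\beta\in\Delta_{J,(k)}\smallsetminus\Delta_{J,(k-1)}}(\frakn)_\beta[t]$, I would show that the poset of truncations $\{\Delta_{J,(k)}\}_k$ is cofinal among the finite-codimension graded subsets of $\Delta_J$ used to define the slope-completion of \cite[Lemma~\ref*{COHA-Yangian-lem:completion-graded-Lie-algebra}]{DPSSV-3}. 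Concretely, a weight $\alpha+n\delta$ lies in $\Delta_{J,(k)}$ but not $\Delta_{J,(k-1)}$ precisely when $\alpha\in\Delta^+_\sff\smallsetminus\Delta^+_{\Jcomp}$ and $2(k-1)(\Llambda,\alpha)\le n<2k(\Llambda,\alpha)$; since $(\Llambda,\alpha)>0$ for such $\alpha$ (as $\calL_\Llambda$ is $\varpi_J$-ample), the $\mu_\Llambda$-slope of these weights tends to $+\infty$ (resp.\ the phases tend to the boundary) as $k\to\infty$. Hence $\lim_k\sfU((\fraknell)_{J,(k)})$, with the quasi-compact topology, is exactly the completion of $\sfU(\fraknellJ^+)=\sfU(\bigoplus_{\beta\in\Delta_J}(\frakn)_\beta[t]\oplus K_-)$ along slope-bounded subspaces, i.e.\ $\widehat{\sfU}(\fraknellJ^+)$ as defined before the theorem statement. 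One then reads the second displayed description of $\fraknellJ^+$ in \eqref{eq:Ln'} off the definition of $\Delta_J$, treating $\Delta^+_{\Jcomp}$-roots (which acquire all integer multiples of $\delta$) as giving $\frakg_\alpha[s^{\pm1},t]$ and the Cartan part carefully (the sign $+$ rather than $\oplus$ accounts for $\frakn_{\Jcomp}\cap s^{-1}\frakh[s^{-1}]$).

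Finally I would verify that the isomorphism so obtained is an isomorphism of \emph{topological algebras}, not merely graded vector spaces: the algebra structure on $\Y_J^+$ from Theorem~\ref{thm:coha-surface-as-limit2}--(\ref{item:coha-surface-as-limit2.1}) is induced from multiplication in $\Y_\qv$, which under the classical-limit isomorphism $\Psi$ of $\Y_{\qv;T}\otimes_{R_T}\Q\simeq\sfU(\frakgell)$ becomes the multiplication in $\sfU(\frakgell)$; the quotients $\Y_{J,(k)}\otimes\Q$ are the quotients $\sfU((\fraknell)_{J,(k)})$ of $\sfU(\frakgell^-)$ as algebras (not just vector spaces), by the ``verbatim'' argument cited after Theorem~\ref{prop:quotients_affine_yangian}; and the limit topology on both sides is the quasi-compact one, so continuity is automatic. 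Composing $\Phi_\Llambda$ (from Theorem~\ref{thm:coha-surface-as-limit2}) with this identification of $\Y_J^+\otimes\Q$ and $\widehat{\sfU}(\fraknellJ^+)$ produces the desired $\Phi_J$.

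\medskip

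\emph{Expected main obstacle.} The delicate point is the cofinality/topology-matching in the middle step: one must check that the ad-hoc truncations $\Delta_{J,(k)}$ defined by the stability function $Z_\Llambda$ really are cofinal in the system of slope-bounded subspaces defining $\widehat{\sfU}(-)$, uniformly over the (finitely many, up to $W$) roots $\alpha\in\Delta^+_\sff\smallsetminus\Delta^+_{\Jcomp}$, so that neither the ``horizontal'' direction (powers of $s$, governed by $n$) nor the ``vertical'' direction (powers of $t$, i.e.\ the cohomological degree, which is \emph{not} truncated at any finite stage) spoils the identification of the two completions. This is essentially the assertion that the phase function $\varphi$ on $\Z I$ separates the summands $\frakr_{(k)}$ into a sequence of arcs exhausting $(0,1)$, which follows from $\nu_k=t_{2k}=\tfrac1\pi\arctan(2kh)\to\tfrac12$ together with $(\Llambda,\alpha)>0$, but writing this out carefully — including the behaviour on the imaginary-root / central part $K_-$ — is where the real work lies.
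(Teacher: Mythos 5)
Your proposal follows essentially the same route as the paper: reduce to the non-equivariant specialisation $\coha_J\simeq\lim_k\Y_{J,(k)}\otimes_{\Hbullet_A}\Q$, compute $\Y_{J,(k)}\otimes\Q\simeq\sfU((\fraknell)_{J,(k)})$ via the explicit root sets $\Delta_{J,(k)}$, and identify the projective limit with $\widehat{\sfU}(\fraknellJ^+)$ using $\Delta_J=\bigcup_k\Delta_{J,(k)}$. The cofinality of the $\Delta_{J,(k)}$-truncations in the slope-completion, which you correctly flag as the delicate point, is handled in the paper simply by defining $\widehat{\sfU}$ with respect to $\mu_\Llambda$ and reading the phase windows off the $\nu_k\to\tfrac12$ asymptotics; your reconstruction of this is accurate.
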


\begin{remark}
	The above computation also allows one to understand the classical limit, in the sense of the standard filtration, of $\coha_J^A$. More precisely, the standard filtration of $\Y_\qv$ induces one on both $\Y^-_\qv$ and the quotients $\Y_{J, (2k)}$ for $k \geq 0$. The transition map $\Y_{J, (0)} \to \Y_{J, (1)}$ is compatible with this filtration because the braid operators $T_i$ and and their truncated versions $\overline{T}_i$ are, by \cite[Formula~(\ref*{COHA-Yangian-eq:Ti})]{DPSSV-3}, compatible with this filtration. This induces 
	a filtration on $\coha_J$. We then have an isomorphism
	\begin{align}
		\begin{tikzcd}[ampersand replacement=\&]
			\gr\, \Phi_{\Llambda}\colon\gr\coha_J^A  \ar{r}{\sim}\& \gr \Y_J^+\simeq \lim_k\gr \Y_{J, (k)} \simeq \lim_k \sfU((\fraknell)_{\Llambda,(k)}\big)\otimes \Hbullet_A \simeq \widehat{\sfU}(\fraknellLlambda^+)\otimes \Hbullet_A
		\end{tikzcd}\ .
	\end{align}
\end{remark}

\bigskip


\newcommand{\etalchar}[1]{$^{#1}$}
\providecommand{\noopsort}[1]{}
\providecommand{\bysame}{\leavevmode\hbox to3em{\hrulefill}\thinspace}
\providecommand{\href}[2]{#2}

\end{document}